\newcommand{\klockan}{\the\hours:{\ifnum\minutes<10 0\fi}\the\minutes}
\newcommand{\tid}{\today\ \klockan}
\newcommand{\prtid}{\smash{\raise 10mm \hbox{\LaTeX ed \tid}}}
\renewcommand{\prtid}{}
\def\sectionmark#1{} 
\def\subsectionmark#1{}
\newcommand{\sectnr}{\ifnum \c@secnumdepth >\z@
                 \thesection.\hskip 1em\relax \fi}
\def\@evenhead{\footnotesize\rm\thepage\hfil\leftmark\hfil\llap{\prtid}}
\def\@oddhead{\footnotesize\rm\rlap{\prtid}\hfil\rightmark\hfil\thepage}
\def\tableofcontents{\section*{Contents} 
 \@starttoc{toc}}
\def\@biblabel#1{#1.}
\let\Thebibliography=\thebibliography
\renewcommand{\thebibliography}[1]{\def\@mkboth##1##2{}\Thebibliography{#1}
\addcontentsline{toc}{section}{References}
\frenchspacing 
\setlength{\@topsep}{0pt}
\setlength{\itemsep}{0pt}%
\setlength{\parskip}{0pt plus 2pt}%
}
\def\mdots@{\mathinner.\nonscript\!.%
 \ifx\next,.\else\ifx\next;.\else\ifx\next..\else
 \nonscript\!\mathinner.\fi\fi\fi}
\let\ldots\mdots@
\let\cdots\mdots@
\let\dotso\mdots@
\let\dotsb\mdots@
\let\dotsm\mdots@
\let\dotsc\mdots@
\def\vdots{\vbox{\baselineskip2.8\p@ \lineskiplimit\z@
    \kern6\p@\hbox{.}\hbox{.}\hbox{.}\kern3\p@}}
\def\ddots{\mathinner{\mkern1mu\raise8.6\p@\vbox{\kern7\p@\hbox{.}}%
    \raise5.8\p@\hbox{.}\raise3\p@\hbox{.}\mkern1mu}}
\let\Enumerate=\enumerate
\renewcommand{\enumerate}{\Enumerate%
\setlength{\@topsep}{0pt}
\setlength{\itemsep}{0pt}%
\setlength{\parskip}{0pt plus 1pt}%
\renewcommand{\theenumi}{\textup{(\alph{enumi})}}%
\renewcommand{\labelenumi}{\theenumi}%
}
\let\endEnumerate=\endenumerate
\renewcommand{\endenumerate}{\endEnumerate\unskip}
\def\@seccntformat#1{\csname the#1\endcsname.\quad}
\newcommand{\authortitle}[2]{\author{#1}\title{#2}\markboth{#1}{#2}}
\newcommand{\auth}[2]{{#2. #1}}
\newcommand{\art}[6]{{\sc #1, \rm #2, \it #3\/ \bf #4 \rm (#5), \mbox{#6}.}}
\newcommand{\artnopt}[6]{{\sc #1, \rm #2, \it #3\/ \bf #4 \rm (#5), \mbox{#6}}}
\newcommand{\artprep}[3]{{\sc #1, \rm #2, \it #3.}}
\newcommand{\artin}[3]{{\sc #1, \rm #2,  in #3.}}
\newcommand{\book}[3]{{\sc #1, \it #2, \rm #3.}}
\newcommand{\AND}{{\rm and }}
\newtheoremstyle{descriptive}%
  {\topsep}   
  {\topsep}   
  {\rmfamily} 
  {}          
  {\bfseries} 
  {.}         
  { }         
  {}          
\newtheoremstyle{propositional}%
  {\topsep}   
  {\topsep}   
  {\itshape}  
  {}          
  {\bfseries} 
  {.}         
  { }         
  {}          
\newtheoremstyle{remarkstyle}%
  {\topsep}   
  {\topsep}   
  {\rmfamily}  
  {}          
  {\itshape} 
  {.}         
  { }         
  {}          
\theoremstyle{propositional}
\newtheorem{thm}{Theorem}[section]
\newtheorem{prop}[thm]{Proposition}
\newtheorem{lem}[thm]{Lemma}
\newtheorem{cor}[thm]{Corollary}
\theoremstyle{descriptive}
\newtheorem{deff}[thm]{Definition}
\newtheorem{example}[thm]{Example}
\newtheorem{remark}[thm]{Remark}
\renewenvironment{proof}[1][\proofname]{\par
  \pushQED{\qed}%
  \normalfont
  \trivlist
  \item[\hskip\labelsep
        \itshape
    #1\@addpunct{.}]\ignorespaces
}{%
  \popQED\endtrivlist\@endpefalse
}
\newcommand{\setm}{\setminus}
\renewcommand{\emptyset}{\varnothing}
\def\vint{\mathop{\mathchoice%
          {\setbox0\hbox{$\displaystyle\intop$}\kern 0.22\wd0%
           \vcenter{\hrule width 0.6\wd0}\kern -0.82\wd0}%
          {\setbox0\hbox{$\textstyle\intop$}\kern 0.2\wd0%
           \vcenter{\hrule width 0.6\wd0}\kern -0.8\wd0}%
          {\setbox0\hbox{$\scriptstyle\intop$}\kern 0.2\wd0%
           \vcenter{\hrule width 0.6\wd0}\kern -0.8\wd0}%
          {\setbox0\hbox{$\scriptscriptstyle\intop$}\kern 0.2\wd0%
           \vcenter{\hrule width 0.6\wd0}\kern -0.8\wd0}}%
          \mathopen{}\int}
{\catcode`p =12 \catcode`t =12 \gdef\eeaa#1pt{#1}}      
\def\accentadjtext#1{\setbox0\hbox{$#1$}\kern   
                \expandafter\eeaa\the\fontdimen1\textfont1 \ht0 }
\def\accentadjscript#1{\setbox0\hbox{$#1$}\kern 
                \expandafter\eeaa\the\fontdimen1\scriptfont1 \ht0 }
\def\accentadjscriptscript#1{\setbox0\hbox{$#1$}\kern   
                \expandafter\eeaa\the\fontdimen1\scriptscriptfont1 \ht0 }
\def\accentadjtextback#1{\setbox0\hbox{$#1$}\kern       
                -\expandafter\eeaa\the\fontdimen1\textfont1 \ht0 }
\def\accentadjscriptback#1{\setbox0\hbox{$#1$}\kern     
                -\expandafter\eeaa\the\fontdimen1\scriptfont1 \ht0 }
\def\accentadjscriptscriptback#1{\setbox0\hbox{$#1$}\kern 
                -\expandafter\eeaa\the\fontdimen1\scriptscriptfont1 \ht0 }
\def\itoverline#1{{\mathsurround0pt\mathchoice
        {\rlap{$\accentadjtext{\displaystyle #1}
                \accentadjtext{\vrule height1.593pt}
                \overline{\phantom{\displaystyle #1}
                \accentadjtextback{\displaystyle #1}}$}{#1}}
        {\rlap{$\accentadjtext{\textstyle #1}
                \accentadjtext{\vrule height1.593pt}
                \overline{\phantom{\textstyle #1}
                \accentadjtextback{\textstyle #1}}$}{#1}}
        {\rlap{$\accentadjscript{\scriptstyle #1}
                \accentadjscript{\vrule height1.593pt}
                \overline{\phantom{\scriptstyle #1}
                \accentadjscriptback{\scriptstyle #1}}$}{#1}}
        {\rlap{$\accentadjscriptscript{\scriptscriptstyle #1}
                \accentadjscriptscript{\vrule height1.593pt}
                \overline{\phantom{\scriptscriptstyle #1}
                \accentadjscriptscriptback{\scriptscriptstyle #1}}$}{#1}}}}
\def\itunderline#1{{\mathsurround0pt\mathchoice
        {\rlap{$\underline{\phantom{\displaystyle #1}
                \accentadjtextback{\displaystyle #1}}$}{#1}}
        {\rlap{$\underline{\phantom{\textstyle #1}
                \accentadjtextback{\textstyle #1}}$}{#1}}
        {\rlap{$\underline{\phantom{\scriptstyle #1}
                \accentadjscriptback{\scriptstyle #1}}$}{#1}}
        {\rlap{$\underline{\phantom{\scriptscriptstyle #1}
                \accentadjscriptscriptback{\scriptscriptstyle #1}}$}{#1}}}}
\newcommand{\Cp}{{C_p}}
\newcommand{\Cpmu}{{C_{p,\mu}}}
\DeclareMathOperator{\diam}{diam}
\DeclareMathOperator{\Div}{div}
\DeclareMathOperator{\capp}{cap}
\newcommand{\cp}{\capp_p}
\newcommand{\cpmu}{\capp_{p,\mu}}
\newcommand{\ctp}{\capp_{t_p}}
\newcommand{\grad}{\nabla}
\DeclareMathOperator{\Lip}{Lip}
\newcommand{\Lipc}{{\Lip_c}}
\DeclareMathOperator*{\essliminf}{ess\,lim\,inf}
\DeclareMathOperator*{\essinf}{ess\,inf}
\newcommand{\bdry}{\partial}
\newcommand{\bdy}{\bdry}
\newcommand{\loc}{_{\rm loc}}
\newcommand{\simge}{\gtrsim}
\newcommand{\simle}{\lesssim}
\newcommand{\al}{\alpha}
\newcommand{\alp}{\alpha}
\newcommand{\be}{\beta}
\newcommand{\ga}{\gamma}
\newcommand{\de}{\delta}
\newcommand{\eps}{\varepsilon}
\newcommand{\la}{\lambda}
\newcommand{\om}{\omega}
\newcommand{\Om}{\Omega}
\renewcommand{\phi}{\varphi}
\newcommand{\p}{{$p\mspace{1mu}$}}
\newcommand{\R}{\mathbf{R}}
\newcommand{\Sphere}{\mathbf{S}}
\def\cprime{{\mathsurround0pt$'$}}
\newcommand{\limplus}{{\mathchoice{\vcenter{\hbox{$\scriptstyle +$}}}
  {\vcenter{\hbox{$\scriptstyle +$}}}
  {\vcenter{\hbox{$\scriptscriptstyle +$}}}
  {\vcenter{\hbox{$\scriptscriptstyle +$}}}
}}
\newcommand{\Np}{N^{1,p}}
\newcommand{\Npoo}{N^{1,p_0}_0}
\newcommand{\Nploc}{N^{1,p}\loc}
\newcommand{\ut}{\tilde{u}}
\newcommand{\lQo}{\itunderline{Q}_0}
\newcommand{\uQo}{\itoverline{Q}_0}
\newcommand{\lSo}{\itunderline{S}_0}
\newcommand{\uSo}{\itoverline{S}_0}
\newcommand{\lQ}{\itunderline{Q}}
\newcommand{\uq}{\overline{q}}
\newcommand{\lqq}{\underline{q}} 
\newcommand{\us}{\itoverline{s}}
\newcommand{\ls}{\itunderline{s}}
\newcommand{\uth}{\itoverline{\theta}}
\newcommand{\uthtilde}{\tilde{\theta}}
\newcommand{\lth}{\itunderline{\theta}}
\newcommand{\uTh}{\overline{\Theta}}
\newcommand{\lTh}{\underline{\Theta}}
\newcommand{\uqo}{\uq_0}
\newcommand{\lqo}{\lqq_0}
\newcommand{\uso}{\us_0}
\newcommand{\lso}{\ls_0}
\newcommand{\utho}{\uth_0}
\newcommand{\ltho}{\lth_0}
\newcommand{\Ga}{\Gamma}
\newcommand{\Lploc}{L^p\loc}
\newcommand{\qhat}{\hat{q}}
\numberwithin{equation}{section}
\newcommand{\eqv}{\ensuremath{\mathchoice{\quad \Longleftrightarrow \quad}{\Leftrightarrow}}
                {\Leftrightarrow}{\Leftrightarrow}}
\newenvironment{ack}{\medskip{\it Acknowledgement.}}{}
\begin{document}

\authortitle{Anders Bj\"orn, Jana Bj\"orn
    and Juha Lehrb\"ack}
{Volume growth, \p-parabolicity and 
integrability of \p-harmonic Green functions}
\title{Volume growth, capacity estimates, \p-parabolicity  and 
sharp integrability properties \\ of 
\p-harmonic Green functions}
\author{
Anders Bj\"orn \\
\it\small Department of Mathematics, Link\"oping University, SE-581 83 Link\"oping, Sweden\\
\it \small anders.bjorn@liu.se, ORCID\/\textup{:} 0000-0002-9677-8321
\\
\\
Jana Bj\"orn \\
\it\small Department of Mathematics, Link\"oping University, SE-581 83 Link\"oping, Sweden\\
\it \small jana.bjorn@liu.se, ORCID\/\textup{:} 0000-0002-1238-6751
\\
\\
Juha Lehrb\"ack \\
\it\small Department of Mathematics and Statistics, University of Jyv\"askyl\"a,\\
\it\small P.O. Box 35 (MaD), FI-40014 University of Jyv\"askyl\"a, Finland\/{\rm ;}
\it \small juha.lehrback@jyu.fi \\
\it \small ORCID\/\textup{:} 0000-0002-2880-7979
\\
}

\date{Preliminary version, \today}
\date{}

\maketitle

\noindent{\small
{\bf Abstract}. 
In a complete metric space equipped with a doubling measure
supporting a \p-Poincar\'e inequality,
we prove sharp growth and integrability results
for \p-harmonic Green functions 
and their minimal \p-weak upper gradients.
We show that these properties are determined
by the growth of the underlying measure near the singularity.
Corresponding results are obtained also for more general \p-harmonic
functions with poles, as well as for singular solutions of elliptic differential
equations in divergence form on weighted $\R^n$ and on manifolds.

The proofs are based on 
a new general capacity estimate for annuli,
which implies precise
pointwise estimates for 
\p-harmonic Green functions. 
The capacity estimate is valid under considerably milder assumptions
than above. We also use it, under these milder assumptions, to
characterize singletons of zero capacity and
the \p-parabolicity of the space. 
This generalizes and improves
earlier results that have been important especially in
the context of Riemannian manifolds.
}

\medskip

\noindent {\small \emph{Key words and phrases}:
capacity,
doubling measure,
Green function,
integrability,
metric space,
\p-harmonic function,
\p-hyperbolic space,
Poincar\'e inequality,
\p-parabolic space,
singular function,
volume growth,
weak upper gradient.
}

\medskip

\noindent {\small Mathematics Subject Classification (2020):
Primary: 31C45; 
Secondary:  
30L99, 
31C12, 
31C15, 
31E05, 
35J08, 
35J92, 
46E36, 
49Q20. 
}

\section{Introduction}

In this paper we study the growth and $L^\tau$-integrability
of \p-harmonic Green (and singular) functions in metric measure spaces, as well as
$L^t$-integrability of their minimal \p-weak upper gradients, 
with $1<p<\infty$.
We show that these properties are determined
by the growth of the measure  near the singularity.
We also obtain corresponding results for more general \p-harmonic
functions with poles, as well as for singular solutions of elliptic differential
equations in divergence form on weighted $\R^n$ and on manifolds.

Recall that
$u$ is a \emph{\p-harmonic Green function} in 
a bounded domain $\Om\subset \R^n$
with singularity at $x_0\in\Om$ if
\begin{equation} \label{eq-Green-Rn}
\Delta_p u:= \Div(|\nabla u|^{p-2}\nabla u) = -\delta_{x_0}
\quad \text{in } \Om
\end{equation}
with zero boundary values on $\bdy\Om$ (in Sobolev sense),
where $\delta_{x_0}$ is the Dirac measure 
at $x_0$.
Such a function $u$ is 
\p-harmonic (i.e.\ $\Delta_p u=0$)
in $\Omega\setminus\{x_0\}$
and \p-superharmonic (i.e.\ $\Delta_p u \le 0$) in the whole domain $\Omega$.
If $1<p\le n$, then also $\lim_{x \to x_0}u(x)=\infty$.

In a metric measure space $X=(X,d,\mu)$
there is (a priori) no equation available for defining \p-harmonic
functions, and they are instead defined as local minimizers of the 
\p-energy integral
\[
     \int g_u^p \, d\mu,         
\]
where $g_u$ is the minimal \p-weak upper gradient of $u$.
For example, on $\R^n$ we have $g_u=|\grad u|$ and 
these definitions 
of \p-harmonic functions and \p-harmonic Green functions are equivalent to the definitions
using the \p-Laplace operator $\Delta_p u$.

Let $\Omega\subset X$ be a bounded domain 
and assume that $x_0\in\Om$ with \p-capacity $\Cp(\{x_0\})=0$. 
Following our earlier paper \cite{BBLehGreen}, we
say that $u$ is a \emph{singular function} in $\Om$
with singularity at $x_0$ if 
$u$ is \p-harmonic in $\Omega\setminus\{x_0\}$ and
\p-superharmonic in $\Omega$, 
$u=0$ on $\bdy\Om$ in the Sobolev sense
and $\lim_{x \to x_0}u(x)=\infty$. 
A \emph{Green function} is then a precisely
scaled singular function.
See Definition~\ref{deff-sing} for
exact definitions.
Earlier definitions
are due to Holo\-pai\-nen~\cite{Ho} for manifolds,
Heinonen--Kilpel\"ainen--Martio~\cite[Section~7.38]{HeKiMa} for weighted $\R^n$ and 
Holo\-pai\-nen--Shan\-mu\-ga\-lin\-gam~\cite{HoSha} for metric spaces.

\medskip

\emph{
Throughout the paper, we fix $1<p<\infty$ and a point $x_0 \in X$
and write $B_r=B(x_0,r)$.
For the rest of the introduction, we also assume that 
$X$ is a complete metric space equipped with
a doubling measure $\mu$ that supports a \p-Poincar\'e inequality.}

\medskip

The following result summarizes some 
of the main results in this paper,
many of which are new also in weighted $\R^n$ and on manifolds.
Let
\begin{align*}
  \uso
         & =\inf \{s>0 : \text{there is $C_s>0$ so that } 
        \mu(B(x_0,r))  \ge C_s r^s 
        \text{ for } 0 < r  \le 1
        \}, \nonumber \\
\tau_p&= \begin{cases}
     \displaystyle \frac{\uso(p-1)}{\uso-p}, & \text{if } p < \uso, \\
     \infty, & \text{if } p = \uso,
     \end{cases}
\qquad \text{and} \qquad
t_p=\frac{\uso(p-1)}{\uso-1}.
\end{align*}

\begin{thm} \label{thm-intro}
Let $\Om \subset X$ be a bounded domain containing $x_0$, and 
$u$ be a singular or Green function in $\Om$ with singularity at $x_0$.
Assume that $\Cp(\{x_0\})=0$.
Then the following are true\/\textup{:}
\begin{enumerate}
\item \label{intro-a}
$p \le \uso$ and $u$ is unbounded\textup{;}
\item \label{intro-b}
  $u \in L^\tau(\Om)$ for all $0 < \tau < \tau_p$\textup{;}
\item \label{intro-c}
  $u \notin L^\tau(\Om)$ if $\tau > \tau_p$\textup{;}
\item \label{intro-d}
$g_u\in L^t(\Om)$ for all 
$0<t<t_p$\textup{;}
\item \label{intro-e}
if $p = \uso$, then 
$g_u \in L^t(\Om)$ if and only if $0<t<p$\textup{;}
\item \label{intro-f}
if $p <\uso$, then $g_u\notin L^t(\Om)$ 
whenever $\mu$ supports a $t$-Poincar\'e inequality\/
\textup{(}at $x_0$ and for small radii\/\textup{)},
$t>t_p$ and $t \ge 1$.
\end{enumerate}
\end{thm}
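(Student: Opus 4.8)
The plan is to derive all six assertions from the new annular capacity estimate and its consequences. Under the running hypotheses this estimate reads, for small radii $0<2r\le R$,
\[
\cp(\overline{B_r},B_R)\simeq\Psi(r,R)^{1-p},\qquad \Psi(r,R):=\int_r^R\Bigl(\frac{\rho}{\mu(B_\rho)}\Bigr)^{1/(p-1)}d\rho,
\]
so that $\Cp(\{x_0\})=0$ iff $\Psi(r,R)\to\infty$ as $r\to0^+$. Combining it with the comparison principle, a Harnack chain argument along the connected annuli $B_{2r}\setminus B_{r/2}$, and the identity $\int_{\{a<u<b\}}g_u^p\,d\mu\simeq b-a$ for singular and Green functions, one obtains, on a fixed ball $B_{R_0}\subset\Om$ close to $x_0$, the sharp two-sided estimates
\[
u(x)\simeq\Psi(d(x,x_0),R_0),\qquad \operatorname{osc}_{B_{2r}\setminus B_{r/2}}u\simeq\Psi(r,2r),\qquad \int_{B_{2r}\setminus B_{r/2}}g_u^p\,d\mu\simeq\Psi(r,2r),
\]
with constants depending on $u$ and $\Om$. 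Writing $a_k:=\bigl(2^{-kp}/\mu(B_{2^{-k}})\bigr)^{1/(p-1)}\simeq\Psi(2^{-k},2^{-k+1})$, these become $u(x)\simeq\sum_{2^{-j}\ge d(x,x_0)}a_j$ and $\int_{B_{2^{-k}}\setminus B_{2^{-k-1}}}g_u^p\,d\mu\simeq a_k$, with $\Cp(\{x_0\})=0\iff\sum_k a_k=\infty$. I will also use freely that balls have positive finite measure, that $X$ is connected (so reverse doubling holds), and that, by the definition of $\uso$, $\mu(B_\rho)\ge C_s\rho^s$ for all small $\rho$ when $s>\uso$, whereas this fails along some $\rho_j\to0$ when $s<\uso$.

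Part \eqref{intro-a}: if $p>\uso$, choose $s\in[\uso,p)$; then $a_k\lesssim 2^{-k(p-s)/(p-1)}$, so $\sum_k a_k<\infty$, contradicting $\Cp(\{x_0\})=0$; hence $p\le\uso$, and $u(x)\gtrsim\sum_{2^{-j}\ge d(x,x_0)}a_j\to\infty$ as $x\to x_0$. For \eqref{intro-b} and \eqref{intro-d} I would discretise: using the above estimates and a dyadic splitting of $B_{R_0}$ — with H\"older on each annulus for $g_u$ (allowed as $t<t_p\le p$), subadditivity $(\sum_j c_j)^\tau\le\sum_j c_j^\tau$ when $\tau<1$, or Minkowski's inequality for $\sum_j a_j\chi_{B_{2^{-j}}}$ when $\tau\ge1$ — both $\int_\Om u^\tau\,d\mu$ and $\int_\Om g_u^t\,d\mu$ reduce to series $\sum_k 2^{-k\alpha}\mu(B_{2^{-k}})^\beta$. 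Where $\beta\ge0$ one bounds $\mu(B_{2^{-k}})\le\mu(\Om)$, giving a geometric series; where $\beta<0$ one inserts $\mu(B_{2^{-k}})\ge C_s2^{-ks}$ with $s>\uso$, and a direct computation shows the series converges exactly when $\tau<s(p-1)/(s-p)$, resp.\ $t<s(p-1)/(s-1)$ — bounds that increase to $\tau_p$, resp.\ $t_p$, as $s\downarrow\uso$. This yields \eqref{intro-b} and \eqref{intro-d}. Since $t_p=p$ when $p=\uso$, \eqref{intro-d} gives the ``if'' part of \eqref{intro-e}; conversely $g_u\notin L^t(\Om)$ for $t\ge p$ because $L^t(\Om)\subset L^p(\Om)$ and $\int_\Om g_u^p\,d\mu\ge\int_{B_{R_0}\setminus B_r}g_u^p\,d\mu\gtrsim\Psi(r,R_0)\to\infty$ as $r\to0^+$.

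For the sharpness statements \eqref{intro-c} and \eqref{intro-f} assume $p<\uso$, fix $s\in(p,\uso)$, and take $\rho_j\to0$ with $\mu(B_{\rho_j})\lesssim\rho_j^s$. For \eqref{intro-c}, with $k$ such that $2^{-k}\simeq\rho_j$,
\[
\int_\Om u^\tau\,d\mu\ge\int_{B_{\rho_j}}u^\tau\,d\mu\gtrsim a_k^\tau\,\mu(B_{\rho_j})\simeq\rho_j^{p\tau/(p-1)}\mu(B_{\rho_j})^{1-\tau/(p-1)}\gtrsim\rho_j^{\,s-\tau(s-p)/(p-1)},
\]
where in the last step, valid since $\tau>\tau_p>p-1$ makes the exponent of $\mu(B_{\rho_j})$ negative, we used $\mu(B_{\rho_j})\lesssim\rho_j^s$; this blows up as $j\to\infty$ once $\tau>s(p-1)/(s-p)$, which holds for any $\tau>\tau_p$ when $s$ is close enough to $\uso$. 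Part \eqref{intro-f} is the same with $u$ replaced by $g_u$: over a fixed wide annulus $A_j\approx B_{C\rho_j}\setminus B_{\rho_j}$, which carries measure $\gtrsim\mu(B_{\rho_j})$ by reverse doubling, the $t$-Poincar\'e inequality ($t\ge1$) turns $\operatorname{osc}_{A_j}u\gtrsim a_k$ into $\bigl(\vint_{A_j}g_u^t\,d\mu\bigr)^{1/t}\gtrsim a_k/\rho_j\simeq\bigl(\rho_j/\mu(B_{\rho_j})\bigr)^{1/(p-1)}$, so $\int_\Om g_u^t\,d\mu\gtrsim\rho_j^{t/(p-1)}\mu(B_{\rho_j})^{1-t/(p-1)}\gtrsim\rho_j^{\,s-t(s-1)/(p-1)}\to\infty$ once $t>s(p-1)/(s-1)$, again arranged for any $t>t_p$.

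The main obstacle lies not in this bookkeeping but in its two inputs: proving the annular capacity estimate $\cp(\overline{B_r},B_R)\simeq\Psi(r,R)^{1-p}$ under the stated mild hypotheses, and upgrading it to the \emph{two-sided} pointwise, oscillation and annular-energy estimates above — especially the lower bounds, which say that $u$ ``spends'' capacity $\simeq a_k$ on each dyadic annulus and for which the identity $\int_{\{a<u<b\}}g_u^p\,d\mu\simeq b-a$ is the key. A recurring minor point is that the measure is bounded below scale-invariantly (by $\uso$, and by $\rho^{\log_2 C_\mu}$ via doubling) but above only through reverse doubling, so matching the endpoints $\tau_p,t_p$ forces using each one-sided bound on the correct side of every inequality; and the range $\tau<1$ in \eqref{intro-b} must be handled by summation rather than by Minkowski's inequality.
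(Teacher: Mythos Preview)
Your strategy for \ref{intro-a}, \ref{intro-b}, \ref{intro-c} and \ref{intro-e} is correct and essentially the paper's. The gaps are in \ref{intro-d} and \ref{intro-f}, and they have a common root: the two-sided annular estimates $\operatorname{osc}_{B_{2r}\setminus B_{r/2}}u\simeq a_k$ and $\int_{B_{2r}\setminus B_{r/2}}g_u^p\,d\mu\simeq a_k$ do \emph{not} follow from the level-set identity $\int_{\{a<u<b\}}g_u^p\,d\mu\simeq b-a$ together with the pointwise bound $u\simeq\Psi(d(x,x_0),R_0)$. The pointwise bound only gives $C_1\psi(r)\le u\le C_2\psi(r)$ on $S_r$ with $C_1<C_2$, so the range of $u$-values on a dyadic annulus has width at most $C_2\psi(r/2)-C_1\psi(2r)=(C_2-C_1)\psi(2r)+C_2 a_k$, which is of order $\psi(2r)=\sum_{j\le k}a_j$, not $a_k$; when $p$ is close to $\uso$ the $a_j$ are nearly constant and $\psi(2r)\gg a_k$. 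The same obstruction kills the oscillation \emph{lower} bound you need for \ref{intro-f}. Your appeal to a $t$-Poincar\'e inequality on the annulus is separately problematic: the hypothesis supplies Poincar\'e only on balls $B(x_0,r)$, on which $u$ is unbounded, and annuli in a general doubling Poincar\'e space need not support Poincar\'e inequalities.

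The paper sidesteps both issues. For \ref{intro-d} it never attempts an annular $L^p$-energy bound; instead it proves a general implication ``$u\in L^\tau\loc$ for all $\tau<\tau_0$ $\Rightarrow$ $G_u\in L^t\loc$ for all $t<p\tau_0/(\tau_0+1)$'' via the Caccioppoli inequality for superminimizers applied to the truncations $\min\{u-m,k\}+1$ (Theorem~\ref{thm-int-Gu-superh-new}), and then feeds in \ref{intro-b}. For \ref{intro-f} it observes that $\min\{1,u/m_r\}$ is admissible for $\capp_t(B_r,\Om)$, giving $\int_{\Om\setminus B_r}g_u^t\,d\mu\ge m_r^t\,\capp_t(B_r,\Om)$, and then uses a lower bound for $\capp_t(B_r,B_R)$; this needs only the \emph{one-sided} pointwise lower bound $m_r\gtrsim(r^p/\mu(B_r))^{1/(p-1)}$, not an annular oscillation estimate. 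Your approach could be repaired for \ref{intro-d} by inserting Caccioppoli on slightly fattened annuli (which does yield $\int_{A_k}g_u^p\,d\mu\lesssim a_k$), but for \ref{intro-f} the capacitary route is the right replacement for the Poincar\'e-on-annuli step.
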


The case $\Cp(\{x_0\})>0$
is not of interest here, since in this case
every singular (and Green) function $u$
in $\Om$ with singularity at $x_0$ is bounded and $g_u \in L^p(\Om)$;
see Theorem~\ref{thm-Green-cp-x0},
which also 
shows that the Green function 
(with singularity at $x_0$) is unbounded if and only if
\begin{equation}  \label{eq-div-int}
\int_0^1 \biggl( \frac{\rho}{\mu(B_\rho)} \biggr)^{1/(p-1)} \,d\rho = 
\infty.
\end{equation}

In the borderline case $\tau=\tau_p$ we completely characterize
when $u \in L^{\tau_p}(\Om)$
in terms of integrals similar to the one in 
\eqref{eq-div-int},
see Theorem~\ref{thm-u-Q}.
We also provide sharp
results on the integrability of the minimal 
\p-weak upper gradient $g_u$ in the borderline case $t=t_p$,
see Theorem~\ref{thm-nonintegrability-gu}.
In the locally pointwise Ahlfors $Q$-regular case we obtain the following complete
characterization.  
In particular, it applies to Riemannian
manifolds with nonnegative Ricci curvature and to Carnot groups.

\begin{thm} \label{thm-intro-Q-reg}
Let $\Om \subset X$ be a bounded domain containing $x_0$, and 
$u$ be a singular or Green function in $\Om$ with singularity
at $x_0$.
Assume that $Q\ge p$ and that $\mu$ is Ahlfors $Q$-regular around $x_0$ 
for small radii, i.e. 
\[
               \mu(B(x_0,r)) \simeq r^Q,
               \quad \text{if } 0<r\le 1.
\]
Then in a neighbourhood of $x_0$,
\begin{equation} \label{eq-formula-Qreg}
u(x) \simeq \begin{cases}
      d(x,x_0)^{(p-Q)/(p-1)}, & \text{if } p < Q, \\
      -\log d(x,x_0),  &\text{if } p =Q,
\end{cases}
\end{equation}
and the following are true\/\textup{:}
\begin{enumerate}
\item \label{i-a}
$u \in L^\tau(\Om)$ if and only if 
$0<\tau< \tau_p:=Q(p-1)/(Q-p)$ {\rm(}where $\tau_Q=\infty${\rm)}\textup{;}
\item \label{i-b}
$g_u\in L^t(\Om)$ for all $0<t<t_p:= Q(p-1)/(Q-1)$\textup{;}
\item \label{i-c}
$g_u \notin L^{t}(\Om)$ if $t \ge \max\{1,t_p\}$ and $X$ supports a 
$t$-Poincar\'e inequality\/
\textup{(}at $x_0$ and for small radii\/\textup{)}.
\end{enumerate}
\end{thm}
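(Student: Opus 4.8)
The plan is to deduce the theorem from Theorem~\ref{thm-intro} together with the two-sided pointwise bounds for singular functions that the paper derives from its capacity estimate for annuli, after first checking that the present hypotheses pin down $\uso=Q$.

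First I would record the reductions. If $\mu(B(x_0,r))\simeq r^Q$ for $0<r\le1$, then $\uso=Q$: the lower bound $\mu(B_r)\gtrsim r^Q$ shows $Q$ belongs to the set defining $\uso$, while for $s<Q$ the upper bound gives $\mu(B_r)/r^s\lesssim r^{Q-s}\to0$ as $r\to0$, excluding every such $s$. Next, since $Q\ge p$, the singleton $\{x_0\}$ has zero \p-capacity --- either by the paper's characterization of zero-capacity singletons, or directly, since testing with suitable truncations of $-\log d(\cdot,x_0)$ (when $p=Q$) or of $d(\cdot,x_0)^{(p-Q)/(p-1)}$ (when $p<Q$) on dyadic annuli about $x_0$ drives the \p-energy to $0$; equivalently \eqref{eq-div-int} holds, since $\int_0^1\rho^{(1-Q)/(p-1)}\,d\rho=\infty$ exactly when $Q\ge p$. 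Thus Theorem~\ref{thm-intro} applies with $\uso$ replaced by $Q$ and yields at once: $u$ is unbounded; $u\in L^\tau(\Om)$ for $0<\tau<\tau_p$ and $u\notin L^\tau(\Om)$ for $\tau>\tau_p$ (so, when $p=Q$, $\tau_Q=\infty$ and $u\in L^\tau(\Om)$ for all $\tau>0$, which is \ref{i-a} in that case); the integrability in \ref{i-b}; and, towards \ref{i-c}, when $p=Q$ part~\ref{intro-e} gives $g_u\notin L^t(\Om)$ for every $t\ge p=\max\{1,t_p\}$, while when $p<Q$ part~\ref{intro-f} gives $g_u\notin L^t(\Om)$ for every $t>t_p$ with $t\ge1$ for which $\mu$ supports a $t$-Poincar\'e inequality.

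Next comes the pointwise formula, which is the heart of the matter. I would substitute $\mu(B_\rho)\simeq\rho^Q$ into the two-sided pointwise estimate for singular functions (the Green function being a special case) coming from the annular capacity estimate; its governing quantity is the Riesz-type integral $\int_{d(x,x_0)}^{r_0}\bigl(\rho/\mu(B_\rho)\bigr)^{1/(p-1)}\,d\rho$, which for $r:=d(x,x_0)$ small becomes $\int_r^{r_0}\rho^{(1-Q)/(p-1)}\,d\rho$. Since $Q\ge p$, the exponent $(1-Q)/(p-1)$ is $\le-1$, so this integral is $\simeq r^{(p-Q)/(p-1)}$ when $p<Q$ and $\simeq-\log r$ when $p=Q$, which is exactly \eqref{eq-formula-Qreg} in a neighbourhood of $x_0$ (with constants possibly depending on $\Om$). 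Given \eqref{eq-formula-Qreg}, the remaining borderline case $\tau=\tau_p$ of \ref{i-a} with $p<Q$ follows: by its lower bound $u(x)^{\tau_p}\gtrsim d(x,x_0)^{\tau_p(p-Q)/(p-1)}=d(x,x_0)^{-Q}$ on some ball $B_{r_0}\subset\Om$, so by $\mu(B_\rho)\simeq\rho^Q$ and the distribution-function identity
\begin{align*}
\int_{B_{r_0}}u^{\tau_p}\,d\mu
&\gtrsim\int_{B_{r_0}}d(x,x_0)^{-Q}\,d\mu(x)
=\int_0^\infty\mu\bigl(\{x\in B_{r_0}:d(x,x_0)<\la^{-1/Q}\}\bigr)\,d\la\\
&\gtrsim\int_{r_0^{-Q}}^\infty\mu(B_{\la^{-1/Q}})\,d\la
\simeq\int_{r_0^{-Q}}^\infty\frac{d\la}{\la}=\infty,
\end{align*}
whence $u\notin L^{\tau_p}(\Om)$ and \ref{i-a} is proved (alternatively this case is covered by Theorem~\ref{thm-u-Q}). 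It remains to complete \ref{i-c} when $p<Q$: if $t_p\ge1$ the only value not yet treated is $t=t_p$, handled by the sharp non-integrability result Theorem~\ref{thm-nonintegrability-gu}, which here gives $g_u\notin L^{t_p}(\Om)$ under the $t_p$-Poincar\'e inequality; if $t_p<1$ then $\max\{1,t_p\}=1$ and every admissible $t$ satisfies $t>t_p$, already covered above.

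I expect the genuine obstacle to be the two-sided pointwise estimate used here --- in particular producing the matching \emph{lower} bound for $u$ near $x_0$, where the doubling property, the \p-Poincar\'e inequality and the new capacity estimate for annuli enter essentially; once that is in hand the rest is bookkeeping. The only other non-soft ingredient is the sharp borderline statement $g_u\notin L^{t_p}(\Om)$, which rests on Theorem~\ref{thm-nonintegrability-gu} and cannot be extracted from Theorem~\ref{thm-intro} alone.
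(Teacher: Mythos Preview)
Your proposal is correct and follows essentially the same route as the paper: the pointwise formula comes from the integral estimate \eqref{eq-uappr-2}, the bulk of \ref{i-a}--\ref{i-c} from Theorem~\ref{thm-intro}, the borderline $\tau=\tau_p$ from Theorem~\ref{thm-nonintegrability-S}\,\ref{hh-b} (your direct layer-cake computation is an equivalent substitute), and the borderline $t=t_p$ from Theorem~\ref{thm-nonintegrability-gu}\,\ref{m-c}. The one point worth making explicit when you invoke Theorem~\ref{thm-nonintegrability-gu} is that Ahlfors $Q$-regularity at $x_0$ gives $\lqo=\uso=Q\in\lSo$, so that $\qhat=Q>p$ and $\uso\notin\uSo\setminus\lSo$, which are exactly the hypotheses of part~\ref{m-c}.
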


Using the flexibility of our definition
of singular functions, with no a priori superlevel set requirements,
Theorem~\ref{thm-intro} implies the following very similar growth
properties for general \p-harmonic functions with poles.

\begin{thm} \label{thm-intro-gen-pharm-new}
Let $\Om \subset X$ be an open set containing $x_0$.
Assume that $u\ge0$ is a \p-harmonic function in $\Om \setm \{x_0\}$
such that $\lim_{x \to x_0} u(x)=\infty$.
Then $\Cp(\{x_0\})=0$
and the statements \ref{intro-a}--\ref{intro-f}
in Theorem~\ref{thm-intro} about {\rm(}non\/{\rm)}integrability 
hold true with  $L^\tau\loc(\Om)$ and  $L^t\loc(\Om)$ instead of 
$L^\tau(\Om)$ and  $L^t(\Om)$.

Moreover, there exists $R>0$ such that in a neighbourhood of $x_0$,
\begin{equation} \label{eq-Wolff-intro}
u(x) \simeq \inf_{B_{R}} u 
    + \int_{d(x,x_0)}^{R} \biggl( \frac{\rho}{\mu(B_\rho)} \biggr)^{1/(p-1)} \,d\rho.
\end{equation}
For Green and singular functions, $\inf_{B_{R}} u$ can be replaced by $0$.
\end{thm}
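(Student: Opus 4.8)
The plan is to show $\Cp(\{x_0\})=0$, reduce to a singular function $g$ on a small ball via a two-sided comparison $u\simeq\inf_{B_R}u+g$ near $x_0$, and then read off \eqref{eq-Wolff-intro} and the (non)integrability statements from the corresponding facts for $g$ in Theorem~\ref{thm-intro} and from the pointwise estimate for Green functions that has already been deduced from the annular capacity estimate. For the first part, extend $u$ to $\Om$ by $u(x_0):=\infty$; the extension is lower semicontinuous, and it is \p-superharmonic in $\Om$ since, given $\Om'\Subset\Om$ and $h$ \p-harmonic in $\Om'$ with $h\le u$ on $\bdy\Om'$, one deletes a small ball $B_\rho\ni x_0$, compares on $\Om'\setm\overline{B}_\rho$ (where $u$ is \p-harmonic, $h\le u$ on $\bdy\Om'$, and $h\le u$ on $\bdy B_\rho$ for $\rho$ small because $u\to\infty$ there), and lets $\rho\to0$. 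As a \p-superharmonic function is finite quasi-everywhere and $u(x_0)=\infty$, we get $\Cp(\{x_0\})=0$. Fix $B_R$ with $\overline{B}_R\subset\Om$, put $m_R:=\inf_{\bdy B_R}u$; by the minimum principle applied to $u$ in $B_R\setm\{x_0\}$ one has $\inf_{B_R}u=m_R$ and $u-m_R\ge0$ in $B_R\setm\{x_0\}$. Moreover $u$ is not \p-harmonic near $x_0$, so its Riesz measure, being supported on $\{x_0\}$, equals $c\delta_{x_0}$ with $c>0$; let $g$ be a singular function in $B_R$ with singularity at $x_0$, which exists by \cite{BBLehGreen} because $\Cp(\{x_0\})=0$ and $B_R$ is bounded.

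The core step is the pointwise comparison $u\simeq m_R+g$ in some ball $B_{R'}$, $R'\le R$. Both $u-m_R$ and $g$ are nonnegative, \p-harmonic in $B_R\setm\{x_0\}$, and tend to $\infty$ at $x_0$, so Harnack's inequality on dyadic annuli around $x_0$ reduces the comparison to comparing the sphere infima $\phi(\rho):=\inf_{\bdy B_\rho}(u-m_R)$ and $\psi(\rho):=\inf_{\bdy B_\rho}g$. Caccioppoli/energy estimates on these annuli together with the annular capacity estimate (which pins $\cp(\overline{B}_\rho,B_{2\rho})$ down to the relevant quantity) show that the increments $\phi(\rho)-\phi(2\rho)$ and $\psi(\rho)-\psi(2\rho)$ are both comparable to $(\rho^p/\mu(B_\rho))^{1/(p-1)}$ up to a scale-independent factor coming from the finite, positive fluxes $c^{1/(p-1)}$ and $1$; since $\phi(R)=\psi(R)=0$, telescoping over dyadic scales between $d(x,x_0)$ and $R$ gives $\phi\simeq\psi$ and hence $u\simeq m_R+g$ by Harnack. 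The already established estimate $g(x)\simeq\int_{d(x,x_0)}^{R}(\rho/\mu(B_\rho))^{1/(p-1)}\,d\rho$ near $x_0$ then yields \eqref{eq-Wolff-intro}. For Green and singular functions in $\Om$ one has $u=0$ on $\bdy\Om$, and then that pointwise estimate applies to $u$ itself with lower limit $d(x,x_0)$ and no additive constant, which is the content of the last sentence of the theorem.

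To transfer the (non)integrability statements: away from $x_0$ the function $u$ is continuous, hence locally bounded, so for $\Om'\Subset\Om$ the finiteness of $\int_{\Om'}u^\tau\,d\mu$ is equivalent to that of $\int_{\Om'\cap B_{R'}}u^\tau\,d\mu$; by $u\simeq m_R+g$, together with $(m_R+g)^\tau\simeq m_R^\tau+g^\tau$ and $m_R^\tau\mu(B_{R'})<\infty$, this is equivalent to $\int_{B_{R'}}g^\tau\,d\mu<\infty$, so Theorem~\ref{thm-intro}\ref{intro-a}--\ref{intro-c} applied to $g$ in $B_R$ gives \ref{intro-a}--\ref{intro-c} with $L^\tau\loc(\Om)$ in place of $L^\tau(\Om)$ (in particular $p\le\uso$ because a singular function exists). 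For the minimal \p-weak upper gradient, $g_u=g_{u-m_R}$ a.e., and on each dyadic annulus $A_\rho=B_{2\rho}\setm\overline{B}_\rho$ near $x_0$ the Caccioppoli inequality together with Harnack and $u-m_R\simeq g$ give $\int_{A_\rho}g_u^t\,d\mu\simeq\int_{A_\rho}g_g^t\,d\mu$, so summing over dyadic scales shows that $\int_{B_{R'}}g_u^t\,d\mu$ and $\int_{B_{R'}}g_g^t\,d\mu$ are comparable up to a finite term; Theorem~\ref{thm-intro}\ref{intro-d}--\ref{intro-f} applied to $g$ then yields \ref{intro-d}--\ref{intro-f} for $u$ with $L^t\loc(\Om)$, the $t$-Poincar\'e hypotheses in \ref{intro-e}--\ref{intro-f} being carried over verbatim.

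\textbf{Main obstacle.} The delicate point is the two-sided pointwise comparison $u\simeq m_R+g$, and especially its upper half, which requires controlling the ``\p-flux'' of $u$ at the pole uniformly across dyadic scales; this is exactly where the new annular capacity estimate is indispensable. The lower bound, the reduction $\Cp(\{x_0\})=0$, and the passage of (non)integrability from $g$ to $u$ are by comparison routine.
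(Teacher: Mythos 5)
Your overall architecture matches the paper's: show $\Cp(\{x_0\})=0$, reduce $u$ near $x_0$ to a Green/singular function, read off \eqref{eq-Wolff-intro} from the annular capacity estimate (Theorem~\ref{thm-comp-u-R1-R2}), and transfer the (non)integrability statements. The paper's actual proof is a two-line citation of Theorems~\ref{thm-poles-int}, \ref{thm-est-pole} and \ref{thm-comp-u-R1-R2}, whose substance is the structural fact from \cite{BBLehGreen} (Theorems~1.6, 9.3 and 10.1 there) that $u-k_0$ with $k_0=\max_{\bdy B_R}u$ is a \emph{singular function} on the superlevel set $\Om_0=\{u>k_0\}$, and that dividing by the explicit constant $A=\bigl(\int_{a<u<a+1}g_u^p\,d\mu\bigr)^{1/(p-1)}$ makes it a \emph{Green function}; after that, every claim is literally a statement about a Green function and no further comparison is needed (in particular $g_{au+b}=a\,g_u$, so the gradient statements transfer exactly rather than via annulus-by-annulus estimates).

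The genuine gap in your write-up is precisely the step you flag as the main obstacle, and your sketch of it does not close it. You invoke "the Riesz measure $c\delta_{x_0}$ with $c>0$" and a "scale-independent flux" to argue that the increments $\phi(\rho)-\phi(2\rho)$ are comparable to $(\rho^p/\mu(B_\rho))^{1/(p-1)}$ with a uniform constant. In the metric setting there is no equation and no a priori Riesz measure, and the annular capacity estimate \eqref{eq-cp-intro} by itself says nothing about $u$; what plays the role of the constant flux is exactly the normalization $\cp(\Om^b,\Om)=Ab^{1-p}$ \emph{for all levels} $b$, i.e.\ the assertion that $u$'s superlevel sets define a singular function. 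Establishing that for a general nonnegative \p-harmonic function with a pole is a theorem (\cite[Theorems~1.6 and~10.1]{BBLehGreen}), not a consequence of Harnack plus Caccioppoli plus the capacity of annuli; as written, your telescoping argument presupposes what it needs to prove and is moreover circular (the two-sided increment estimate for $\phi$ is equivalent to the conclusion $u\simeq m_R+g$). A secondary, repairable weakness: the claimed two-sided comparability $\int_{A_\rho}g_u^t\,d\mu\simeq\int_{A_\rho}g_g^t\,d\mu$ does not follow from $u-m_R\simeq g$ alone; the cleaner route (which the paper takes) is that once $au+b$ is a Green function, the positive direction for $g_u$ follows from Theorem~\ref{thm-int-Gu-superh-new} applied to $u$ itself, and the negative direction from the capacity test-function lower bounds of Theorem~\ref{thm-nonintegrability-gu}, neither of which requires comparing gradients of two different functions.
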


See also Theorem~\ref{thm-sum-A-superh}
for corresponding integrability properties of 
singular functions for elliptic differential
equations in divergence form on weighted $\R^n$.
The comparison \eqref{eq-Wolff-intro} can be seen as a Wolff potential estimate in terms of the 
Dirac measure $\de_{x_0}$, cf.\ Remark~\ref{rmk-Wolff}.
This is natural in view of 
\eqref{eq-Green-Rn} even though there need not be such an equation 
in the metric setting.

Existence of singular and Green functions
was proved in 
\cite[Theorem~1.3]{BBLehGreen} for bounded open sets $\Om$ 
with
$\Cp(X \setm \Om)>0$.
It was also shown therein that any two
Green functions in $\Om$ with the same singularity 
are comparable to each other and thus have the same growth behaviour
near the singularity; see Theorem~\ref{thm-main-intro}.
More explicitly,
for a Green function $u$ in $\Om$
with singularity at $x_0$, we have
by Theorem~\ref{thm-comp-u-R1-R2}
that
\begin{equation} \label{eq-Green-cp}
     u(x) \simeq \cp(B_r,\Om)^{1/(1-p)},
     \quad \text{if }
     0<d(x,x_0)=r<R,
\end{equation}
where $R$ depends only on $\Om$.
In many cases, estimate \eqref{eq-Green-cp}
can be expressed in terms of $r$ and $\mu(B_r)$
by using the critical exponents and exponent sets for the volume growth, 
studied in~\cite{BBLeh1}; see Section~\ref{sect-exponents} and Corollary~\ref{cor-BBL-conseq}.

The pointwise estimates and integrability properties 
of Green functions and their minimal \p-weak upper gradients in
Theorems~\ref{thm-intro}--\ref{thm-intro-gen-pharm-new} 
are based on \eqref{eq-Green-cp} 
and
the following new general capacity estimate, 
\begin{equation} \label{eq-cp-intro}
\cp(B_r,B_R)
\simeq \biggl( \int_r^R \biggl( \frac{\rho}{\mu(B_\rho)} \biggr)^{1/(p-1)}
           \,d\rho \biggr)^{1-p}
\end{equation}
for $0<2r\le R\le \tfrac14 \diam X$.
We prove \eqref{eq-cp-intro} in Theorem~\ref{thm-metaestimate-int-only}
only assuming that the Poincar\'e inequality and the 
doubling (and reverse-doubling) condition for $\mu$ hold for balls centred at 
$x_0$.
In Propositions~\ref{prop-meta-zero-cap} and~\ref{thm-p-parab}
we characterize when singletons have zero capacity
and when $X$ is \p-parabolic, 
by letting $r\to 0$ and $R\to\infty$ in~\eqref{eq-cp-intro}, respectively.

In the weighted linear case on $\R^n$ (with $p=2$),
Fabes--Jerison--Kenig~\cite[Lemma~3.1 and Theorem~3.3]{FaJeKe} 
obtained 
\eqref{eq-Green-cp} and \eqref{eq-cp-intro} already in 1982.
Nevertheless, as far as we know, even in this case 
the sharp integrability results as in Theorem~\ref{thm-intro}
do not appear in the existing literature.
Neither do the nonintegrability results, as in \ref{intro-c} and \ref{intro-f},
although they are well known for unweighted $\R^n$
(\ref{intro-f} follows from 
\ref{intro-c} and the Sobolev inequality,
see also (0.8) in Kichenassamy--V\'eron~\cite{KichVeron}).

Our general assumptions on doubling and \p-Poincar\'e inequality
are fulfilled on weighted $\R^n$
equipped with a \p-admissible weight as in 
Heinonen--Kilpel\"ainen--Martio~\cite{HeKiMa}, 
on Riemannian manifolds with nonnegative Ricci curvature,
on Carnot groups, 
and for vector fields satisfying the H\"ormander condition,
as well as in many other situations.
Thus, our results hold for \p-harmonic functions and 
corresponding subelliptic equations in all these settings, 
see Haj\l asz--Koskela~\cite[Sections~10--13]{HaKo} 
for further details.
Moreover, as in \cite[Section~11]{BBLehGreen}
the assumptions can be relaxed
to similar local assumptions.

The exponents in Theorem~\ref{thm-intro} are often better
than in the integrability results for general \p-superharmonic functions
from Heinonen--Kilpel\"ainen--Martio~\cite[Theorem~7.46]{HeKiMa}
(on weighted $\R^n$) and 
Kin\-nu\-nen--Martio~\cite[Section~5]{KiMa} (on metric spaces). 
This happens e.g.\ if the local dimension $\uso$ at $x_0$ is smaller than
the global dimension of the space, provided by the doubling property of $\mu$.
For example, the $1$-admissible weight 
$w(x)=|x|^{-\alp}$ on $\R^n$, with $0<\alp<n$,
has $\uso=n-\alp$ at $x_0$, while the general
integrability for \p-superharmonic functions is dictated by the dimension $n$
(see also Example~\ref{ex-power}).
In the globally Ahlfors $Q$-regular case, i.e.\ when
\[
  \mu(B(x,r)) \simeq r^Q
  \quad \text{for all } x \in X \text{ and } r>0,
\]
the integrability conditions in Theorem~\ref{thm-intro-Q-reg}
follow from the general integrability
results in Kinnunen--Martio~\cite{KiMa},
but even in this case the nonintegrability conditions
in Theorem~\ref{thm-intro-Q-reg} 
seem to be new.
For corresponding singular solutions in 
Carnot--Carath\'eodory spaces, the nonintegrability in
Theorem~\ref{thm-intro}\,\ref{intro-c} follows from
Capogna--Danielli--Garofalo~\cite[Corollary~6.1]{CaDaGa2}.
See also~\cite[Theorem~7.1]{CaDaGa2} for pointwise
estimates related to~\eqref{eq-Green-cp}.

Danielli--Garofalo--Marola~\cite[Corollary~5.4]{DaGaMa}
obtained positive integrability results as in 
Theorem~\ref{thm-intro}\,\ref{intro-b}
and \ref{intro-d} for singular functions defined as 
in Holo\-pai\-nen--Shan\-mu\-ga\-lin\-gam~\cite{HoSha}, 
under some additional assumptions on $X$.
However they had smaller ranges of $p$, $\tau$ and $t$,
see Remark~\ref{rmk-DMG} and 
the comments after Theorem~\ref{thm-integrability-gu}.
On the other hand, as shown by \ref{intro-c}, \ref{intro-e} 
and \ref{intro-f}, the ranges in Theorem~\ref{thm-intro} 
are optimal up to the borderline cases.

Estimates \eqref{eq-Green-cp} and \eqref{eq-cp-intro} generalize 
and improve many results obtained earlier
mainly in the setting of Riemannian manifolds and under additional 
geometric and curvature assumptions. 
For example,  
estimates using chains of balls along geodesics were obtained 
in Holo\-pai\-nen--Koskela~\cite{HoKo}, while in
Coulhon--Holo\-pai\-nen--Saloff-Coste~\cite[Theorem~3.5]{CoHoSC}, 
capacity was estimated by means of the \p-isometric profile.  
See also Holo\-pai\-nen~\cite[p.~329]{HoDuke}.
Different but equivalent formulas for $A_p$-weighted $\R^n$ were 
given in Heinonen--Kilpel\"ainen--Martio~\cite[Theorems~2.18 and~2.19]{HeKiMa}.

Even though we only deal with Green and singular functions on 
bounded domains, 
the capacity estimate~\eqref{eq-cp-intro} has
consequences for the existence of global
Green functions as well.
More precisely, a complete noncompact (sub)Riemannian manifold 
is  called \p-\emph{parabolic} if it does \emph{not} carry a global Green
function.
The property of \p-parabolicity has applications for quasiconformal 
mappings and Picard theorems, 
and has been extensively studied in e.g.\ 
Coulhon--Holo\-pai\-nen--Saloff-Coste~\cite{CoHoSC}, 
Gri\-gor\cprime yan~\cite{Grig2} ($p=2$) and 
Holo\-pai\-nen~\cite{Ho} and~\cite{HoDuke}. 
In the manifold setting,
it is known that \p-parabolicity is implied
by the condition
\begin{equation}   \label{eq-int-parab-intro}
\int_{r_0}^\infty \biggl( \frac{\rho}{\mu(B_\rho)} \biggr)^{1/(p-1)} \,d\rho
= \infty,
\end{equation}
see e.g.\ \cite[Corollary~3.2]{CoHoSC},
\cite[Theorem~7.3]{Grig2} ($p=2$), \cite[Proposition~1.7]{HoDuke} and
Kesel\cprime man--Zorich~\cite{KeZor} ($p=n$).
The converse is in general not true 
(by \cite[p.~322]{HoDuke} or Varopoulos~\cite{Varo}),
but has been proved in some 
(sub)Riemannian manifolds.
In particular, \p-parabolicity and \eqref{eq-int-parab-intro}
are equivalent in Riemannian manifolds with nonnegative Ricci 
curvature or, more generally, satisfying a global doubling condition 
and a global Poincar\'e inequality,
see \cite[Proposition~3.4]{CoHoSC}, \cite[Corollary~4.12]{HoDuke}
and~\cite[Theorem~1.7]{HoKo} for more details.

One of the well-known equivalent characterizations of
\p-parabolicity of Riemannian manifolds is that all balls have
global variational \p-capacity 
zero, see~\cite[Theorem~5.1]{Grig2} ($p=2$), 
Holo\-pai\-nen~\cite[Theorem~3.27]{Ho} and \cite[p.~322]{HoDuke}.
This property is used as the definition of \p-parabolicity in
metric spaces by Holo\-pai\-nen--Koskela~\cite[p.~3428]{HoKo} and 
Holo\-pai\-nen--Shanmugalingam~\cite[Definition~3.13]{HoSha}.
Following the same definition, we show in Theorem~\ref{thm-p-parab} 
that under rather mild assumptions, an unbounded metric space 
is \p-parabolic if and only if \eqref{eq-int-parab-intro} holds.
This recovers and complements the sufficient condition proved in
\cite[Proposition~2.3]{HoKo} and generalizes several of the above
results.
Recall also that an unbounded space is said to be
\p-\emph{hyperbolic} if it is not \p-parabolic.

The outline of the paper is as follows.
In Section~\ref{sect-prelim} we recall basic definitions and assumptions
related to the analysis on metric spaces and in Section~\ref{sect-exponents}
we introduce the pointwise exponent sets, which govern the volume
growth near $x_0$. The capacity estimate~\eqref{eq-cp-intro}
is proved in Section~\ref{sect-meta}, where we also study some of
its consequences, while the applications of~\eqref{eq-cp-intro}
for the \p-parabolicity and capacity of singletons are
discussed in Section~\ref{sect-cap-zero}.

Background material on \p-(super)harmonic functions
as well as the definitions of singular and Green functions
are given in Section~\ref{sect-harm}.
Note that in the rest of the paper, we suppress the dependence on $p$ 
and write ``superharmonic'' instead of ``\p-superharmonic'', but keep the
term ``\p-harmonic''.
The pointwise behaviour of Green (and singular) functions near the
singularity is studied in Section~\ref{sect-ptwise-est}, where
we also show comparability of Green functions
(having the same singularity $x_0$) on comparable open sets
with comparable measures.

General integrability properties of superharmonic functions,
recalling and extending the results in Kinnunen--Martio~\cite{KiMa},
are reviewed in Section~\ref{sect-int-superh}.
Sections~\ref{sect-int-Green} and~\ref{sect-int-gu}
contain our main results concerning the (non)integrability
of Green functions and their minimal \p-weak upper gradients, respectively.
In particular, Theorems~\ref{thm-intro} and~\ref{thm-intro-Q-reg}
are proved at the end of Section~\ref{sect-int-gu}.
Some examples, based on radial weights on $\R^n$
and complementing the general integrability results,
are given in Section~\ref{sect-radial-weights}.
In Section~\ref{sect-poles}
we generalize the growth and integrability results to
\p-harmonic functions having a pole 
at $x_0$, and explain the connection between estimate~\eqref{eq-Wolff-intro}
and the Wolff potential.
Theorem~\ref{thm-intro-gen-pharm-new}
is proved at the end of Section~\ref{sect-poles}.
Finally, in Section~\ref{sect-elliptic-eq} we discuss 
how the (non)integrability results for Green functions can be
extended to singular functions for elliptic differential
equations in divergence form on weighted $\R^n$
and on Riemannian manifolds.

\begin{ack}
A.B. and J.B.  were supported by the Swedish Research Council,
grants 2016-03424 resp.\ 621-2014-3974 and 2018-04106.
Part of this research was done during
several visits of J.~L. to Link\"oping University; 
he is grateful for the support
and hospitality.
\end{ack}

\section{Preliminaries}
\label{sect-prelim}

We assume throughout the paper that $1 < p<\infty$ 
and that $X=(X,d,\mu)$ is a metric space equipped
with a metric $d$ and a positive complete  Borel  measure $\mu$ 
such that $0<\mu(B)<\infty$ for all balls $B \subset X$. 
Under these assumptions, $X$ is separable.
The $\sigma$-algebra on which $\mu$ is defined
is obtained by the completion of the Borel $\sigma$-algebra.
To avoid pathological situations we assume that $X$ contains
at least two points.
We also write $B(x,r)=\{y \in X : d(x,y)<r\}$.

Next we are going to introduce the
necessary background on Sobolev spaces and capacities in metric spaces.
Proofs of most of the results mentioned here
can be found in the monographs
Bj\"orn--Bj\"orn~\cite{BBbook} and
Heinonen--Koskela--Shanmugalingam--Tyson~\cite{HKSTbook}.

A \emph{curve} is a continuous mapping from an interval;
it is \emph{rectifiable} if it has finite length,
in which case it can be  parameterized by its arc length $ds$.
A property holds for \emph{\p-almost every curve}
if it fails only for a curve family $\Ga$ with zero \p-modulus, 
i.e.\ there is  $\rho\in L^p\loc(X)$ such that 
$\int_\ga \rho\,ds=\infty$ for every 
$\ga\in\Ga$.

A measurable function $g\colon X \to [0,\infty]$ is a 
\emph{\p-weak upper gradient}
of $u\colon X \to [-\infty,\infty]$ if for \p-almost every 
nonconstant compact rectifiable
curve
$\gamma\colon  [0,l_{\gamma}] \to X$,
\begin{equation}   \label{eq-def-u-grad}
         |u(\gamma(0)) - u(\gamma(l_{\gamma}))| \le \int_{\gamma} g\,ds,
\end{equation}
where we follow the convention that the left-hand side is $\infty$ 
whenever at least one of the 
terms therein is $\pm \infty$.
Weak upper gradients were
introduced by Koskela--MacManus~\cite{KoMc}, 
see also Heinonen--Koskela~\cite{HeKo98}.
If $u$ has a \p-weak upper gradient in $\Lploc(X)$, then
it has an a.e.\ unique \emph{minimal \p-weak upper gradient} $g_{u} \in \Lploc(X)$
in the sense that 
$g_{u} \le g$ a.e.\ for 
every \p-weak upper gradient $g \in \Lploc(X)$ of $u$.

Following Shanmugalingam~\cite{Sh-rev}, 
we define a version of Sobolev spaces on 
$X$.
For a measurable function $u\colon X\to [-\infty,\infty]$, let 
\[
        \|u\|_{\Np(X)} = \biggl( \int_X |u|^p \, d\mu 
                + \inf_g  \int_X g^p \, d\mu \biggr)^{1/p},
\]
where the infimum is taken over all \p-weak upper gradients of $u$.
The \emph{Newtonian space} on $X$ is 
\[
        \Np (X) = \{u: \|u\|_{\Np(X)} <\infty \}.
\]

The space $\Np(X)/{\sim}$, where $u \sim v$ if and only if $\|u-v\|_{\Np(X)}=0$,
is a Banach space and a lattice. 
In this paper we assume that functions in $\Np(X)$ are defined everywhere,
not just up to an equivalence class in the corresponding function space.
This is needed for \eqref{eq-def-u-grad} in
the definition of \p-weak upper gradients to make sense.
For an open set $\Om\subset X$, the Newtonian space $\Np(\Om)$ is defined by
considering $(\Om,d|_\Om,\mu|_\Om)$ as a metric space in its own right. 
Moreover,
$u \in \Nploc(\Om)$ if
for every $x \in \Om$ there exists $r_x>0$ such that 
$B(x,r_x)\subset\Om$ and $u \in \Np(B(x,r_x))$.
The space $\Lploc(\Om)$ is defined similarly.
If $u,v \in \Nploc(X)$, then $g_u=g_v$ a.e.\ in $\{x \in X : u(x)=v(x)\}$.
In particular $g_{\min\{u,c\}}=g_u \chi_{\{u < c\}}$ for $c \in \R$.

The \emph{Sobolev capacity} of an arbitrary set $E\subset X$ is
\[
\Cp(E) =\inf_u\|u\|_{\Np(X)}^p,
\]
where the infimum is taken over all $u \in \Np(X)$ such that
$u\geq 1$ on $E$.
The capacity is the correct gauge 
for distinguishing between two Newtonian functions. 
If $u \in \Nploc(X)$,  
then $u \sim v$ if and only if $u=v$ q.e.\ 
(quasieverywhere),
that is $\Cp(\{x: u(x) \ne v(x)\})=0$.
Moreover, 
if $u,v \in \Nploc(X)$ and $u= v$ a.e., then $u=v$ q.e.
Both the Sobolev and the following variational capacity 
are countably subadditive.

For an open set $\Om \subset X$, let
\[
\Np_0(\Om) 
  =\{u|_{\Om} : u \in \Np(X) \text{ and }
        u=0 \text{ on } X \setm \Om\}.
\]
The \emph{variational capacity} of $E \subset \Om$ with respect to $\Om$ is
\[ 
\cp(E,\Om) = \inf_u\int_{\Om} g_u^p\, d\mu,
\] 
where the infimum is taken over all $u \in \Np_0(\Om)$
such that $u \ge 1$ in $E$.
One can equivalently take the above infimum over all 
$u \in \Np(X)$ such that
$u=1$  on $E$ and $u=0$ on $X \setm \Om$;
we call such $u$ \emph{admissible} for the capacity
$\cp(E,\Om)$.
Similarly, whenever convenient, $u\in\Np_0(\Om)$ will be regarded 
as extended by $0$ outside $\Om$.

The measure
$\mu$ is \emph{\textup{(}globally\/\textup{)} doubling} if  there is a constant $C>0$ such that 
for all balls $B\subset X$
we have
\[ 
  \mu(2B)\le C \mu(B),
\] 
where $\la B(x,r)=B(x,\la r)$  for $\la>0$.
If $X$ is complete and $\mu$ is doubling, then $X$ is also \emph{proper},
i.e.\ sets which are closed and bounded are compact.

The space $X$ (or the measure $\mu$) supports a 
\emph{\textup{(}global\/\textup{)} \p-Poincar\'e inequality} if
there exist constants $C>0$ and $\lambda \ge 1$
such that for all balls $B=B(x,r)\subset X$, 
all integrable functions $u$ on $X$, and all 
\p-weak upper gradients $g$ of $u$, 
\begin{equation}  \label{eq-deff-PI}
        \vint_{B} |u-u_B| \,d\mu
        \le C r \biggl( \vint_{\lambda B} g^{p} \,d\mu \biggr)^{1/p},
\end{equation}
where $ u_B :=\vint_B u \,d\mu 
:= \int_B u\, d\mu/\mu(B)$.
If $X$ supports a Poincar\'e inequality, then $X$ is connected.

If $X=\R^n$ is equipped with $d\mu=w\,dx$, then 
$w\ge0$ is a \p-admissible weight
in the sense of 
Heinonen--Kilpel\"ainen--Martio~\cite{HeKiMa}
if and only if $\mu$ is a doubling measure which
supports a \p-Poincar\'e inequality,
see
Corollary~20.9 in~\cite{HeKiMa} (which is only in the second edition)
and Proposition~A.17 in~\cite{BBbook}.
In this case,
$\Np(\R^n)$ and $\Np(\Om)$ are the 
refined Sobolev spaces 
defined in~\cite[p.~96]{HeKiMa},
and moreover the above
Sobolev and variational capacities
coincide with those in \cite{HeKiMa};
see Bj\"orn--Bj\"orn~\cite[Theorem~6.7\,(ix) and Appendix~A.2]{BBbook} 
and~\cite[Theorem~5.1]{BBvarcap}.
The situation is similar
on Riemannian  manifolds with nonnegative Ricci curvature and 
on Carnot groups 
equipped with their natural measures; 
see Haj\l asz--Koskela~\cite[Sections~10 and~11]{HaKo}
for further details.

Throughout the paper, we write $Y \simle Z$ if there is an implicit
constant $C>0$ such that $Y \le CZ$.
We also write $Y \simge Z$ if $Z \simle Y$,
and $Y \simeq Z$ if $Y \simle Z \simle Y$.
Unless otherwise stated, we always allow the implicit comparison constants
to depend on the standard parameters, such as $p$, the doubling constant 
and the constants in the Poincar\'e inequality.

\section{Exponent sets}
\label{sect-exponents}

If $X$ is connected 
(which in particular holds
if it supports a Poincar\'e inequality) 
and $\mu$ is doubling, then there are positive constants 
$\lth\le \uth$ and $C$ such that 
\[ 
  \frac{1}{C} \Bigl(\frac{r}{R}\Bigr)^{\uth} \le 
  \frac{\mu(B(x,r))}{\mu(B(x,R))}
  \le C \Bigl(\frac{r}{R}\Bigr)^\lth
\] 
whenever $x\in X$ and $0<r\le R < 2 \diam X$.
It is easy to see that this condition is equivalent
to the corresponding noncentred conditions (3.1) and (3.2)
in \cite{BBbook}, provided that $\mu$ is doubling.
Example~\ref{ex-power} below 
shows that $\lth$ may need to be close to~$0$.
On the other hand, if $X$ is connected, then $\uth \ge 1$, see
Proposition~\ref{prop-uth-1}. 

The exponent $\uth$ plays a crucial role
in various results in the nonlinear potential theory,
such as in optimal exponents in Sobolev and
$(q,p)$-Poincar\'e inequalities, see Theorem~5.1 in 
Haj\l asz--Koskela~\cite{HaKo} or
\cite[Section~4.4]{BBbook}.
It also plays a prominent role in the integrability results
for superharmonic functions by 
Kinnunen--Martio~\cite{KiMa}, see Section~\ref{sect-int-superh}.

There may or may not be optimal values for $\uth$ and $\lth$
and it will therefore be useful to introduce 
the \emph{exponent sets}
\begin{align*}
  \lTh =\biggl\{\lth>0 :\, & \text{there is $C_{\lth}>0$ so that } 
  \frac{\mu(B(x,r))}{\mu(B(x,R))} \le C_{\lth} \Bigl(\frac{r}{R}\Bigr)^{\lth} \\
 &\text{for all } x\in X \text{ and } 0<r\le R < 2 \diam X
        \biggr\}, \\
  \uTh =\biggl\{\uth>0 :\, & \text{there is $C_{\uth}>0$ so that } 
  \frac{\mu(B(x,r))}{\mu(B(x,R))} \ge C_{\uth} \Bigl(\frac{r}{R}\Bigr)^{\uth} \\
 &\text{for all } x\in X \text{ and } 0<r\le R < 2 \diam X
        \biggr\}.
\end{align*}

Also the following pointwise \emph{exponent sets} at the fixed $x_0\in X$,
introduced in Bj\"orn--Bj\"orn--Lehrb\"ack~\cite{BBLeh1},
will be crucial in this paper.
Recall from the introduction that $B_r=B(x_0,r)$.
\begin{align*}
  \lQo  &=\biggl\{q>0 : \text{there is $C_q>0$ so that } 
        \frac{\mu(B_r)}{\mu(B_R)}  \le C_q \Bigl(\frac{r}{R}\Bigr)^q 
        \text{ for } 0 < r < R \le 1
        \biggr\}, \\
  \lSo
          &=\{s>0 : \text{there is $C_s>0$ so that } 
        \mu(B_r)  \le C_s r^s 
        \text{ for } 0 < r  \le 1
        \}, \\
  \uSo
          &=\{s>0 : \text{there is $C_s>0$ so that } 
        \mu(B_r)  \ge C_s r^s 
        \text{ for } 0 < r  \le 1
        \}, \\
  \uQo
       &=\biggl\{q>0 : \text{there is $C_q>0$ so that } 
       \frac{\mu(B_r)}{\mu(B_R)} 
       \ge C_q \Bigl(\frac{r}{R}\Bigr)^q 
       \text{ for } 0 < r < R \le 1
       \biggr\}.
\end{align*}

The subscript $0$ in the above definitions  stands for the fact
that the inequalities are required to hold for small radii.
All these sets are intervals and the reason for introducing them
as sets is that they may or may not contain their endpoints
\[
\ltho = \sup \lTh, 
\quad     \lqo = \sup \lQo, 
\quad     \lso = \sup \lSo, 
\quad   \uso = \inf \uSo,
\quad \uqo = \inf \uQo,
\quad 
\utho= \inf \uTh,
\]
respectively.
Nevertheless, it is always true that
\[
    \lTh \subset \lQo \subset \lSo,
\quad
    \uTh \subset \uQo \subset \uSo
\quad \text{and} \quad
    \ltho \le \lqo \le \lso \le \uso \le \uqo \le \utho.
\]

It was shown in \cite[Lemmas~2.4 and~2.5]{BBLeh1}
that the ranges $0 < r < R \le 1$  and  $0 < r  \le 1$,
in $\lQo$, $\lSo$, $\uSo$ and $\uQo$,
can equivalently be replaced by $0 < r < R \le R_0$ and  $0 < r  \le R_0$
for any fixed $R_0>0$ without changing the resulting
exponent sets.
The constants $C_q$ and $C_s$ may however change.
By Remark~4.10 in \cite{BBLeh1}, the 
capacity estimates in that paper hold for the exponent
sets defined above,
under appropriate restrictions of the radii.
We will use these facts without further ado.

The following example shows that it is possible to have
$\uqo < 1$, while as already mentioned 
we always have $\utho \ge 1$
provided that $X$ is connected, 
see Proposition~\ref{prop-uth-1} below.

\begin{example} \label{ex-power}
Let $X= \R^n$, $n \ge 2$, and $0< \alp < n$.
Then it is well known that $w(x)=|x|^{-\alp}$
is a Muckenhoupt $A_1$-weight and is thus 
$1$-admissible, by Theorem~4 in Bj\"orn~\cite{JBFennAnn}.
For $x_0=0$, it is easily verified that $\mu(B_r) \simeq r^{n-\al}$
and thus
\[
   \lQo=\lSo=(0,n-\al] 
\quad \text{and} \quad
   \uSo=\uQo=[n-\al,\infty).
\]
In particular, if $n-1 < \alp < n$, then 
$\lqo=\lso=\uso=\uqo=n-\alp<1$.
Moreover, 
$\ltho=n-\al$ and $\utho=n$.
\end{example}

For other examples with the exponent sets $\lQo$, $\lSo$, $\uSo$
and $\uQo$ having various properties,
see \cite[Section~3]{BBLeh1},
H.~Svensson~\cite{SvenssonH} and S.~Svensson~\cite{SvenssonS}.

\begin{prop} \label{prop-uth-1}
If $X$ is connected, then $\utho \ge 1$.
\end{prop}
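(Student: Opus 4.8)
The plan is to prove the equivalent statement that every $\uth\in\uTh$ satisfies $\uth\ge1$; since $\utho=\inf\uTh$, this gives the proposition (and it is vacuous if $\uTh=\emptyset$). So I would start from an arbitrary $\uth\in\uTh$, with a constant $C_{\uth}>0$ such that
\[
  \frac{\mu(B(x,r))}{\mu(B(x,R))}\ge C_{\uth}\Bigl(\frac{r}{R}\Bigr)^{\uth}
  \qquad\text{whenever }x\in X,\ 0<r\le R<2\diam X,
\]
fix any $x\in X$ and any radius $R$ with $0<R<\diam X$ (any $R>0$ if $\diam X=\infty$; recall $\diam X>0$ since $X$ has at least two points), and, for each integer $N\ge1$, pack $N+1$ pairwise disjoint balls of radius $R/(4N)$ into $B(x,R)$, each having measure comparable to $\mu(B(x,R))$. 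Letting $N\to\infty$ will then force $\uth\ge1$.

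For the packing I would use connectedness. The map $y\mapsto d(x,y)$ is $1$-Lipschitz, so its image is a subinterval of $[0,\infty)$ containing $0$ whose supremum is $\sup_y d(x,y)\ge\tfrac12\diam X$; hence every value in $[0,\tfrac12\diam X)$ is attained. I would then choose points $p_0=x,p_1,\dots,p_N$ with $d(x,p_k)=kR/(2N)$ for $k=0,\dots,N$, which is possible since $kR/(2N)\le R/2<\tfrac12\diam X$. For $j\ne k$ the reverse triangle inequality gives $d(p_j,p_k)\ge|d(x,p_j)-d(x,p_k)|=|j-k|R/(2N)\ge R/(2N)$, so the balls $B(p_k,R/(4N))$, $k=0,\dots,N$, are pairwise disjoint; and each lies in $B(x,R)$ since on it $d(\cdot,x)\le d(\cdot,p_k)+d(p_k,x)<R/(4N)+R/2<R$. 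Therefore
\[
  \mu(B(x,R))\ \ge\ \sum_{k=0}^{N}\mu\bigl(B(p_k,R/(4N))\bigr).
\]

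To bound each summand from below, I would note that $B(x,R)\subset B(p_k,2R)$ (as $d(x,p_k)\le R/2$) and $2R<2\diam X$, so the defining inequality of $\uTh$, applied at the centre $p_k$ with radii $R/(4N)\le 2R$, yields
\[
  \mu\bigl(B(p_k,R/(4N))\bigr)\ \ge\ C_{\uth}\Bigl(\tfrac{R/(4N)}{2R}\Bigr)^{\uth}\mu\bigl(B(p_k,2R)\bigr)\ \ge\ C_{\uth}(8N)^{-\uth}\,\mu(B(x,R)).
\]
Summing over $k$ and dividing by $\mu(B(x,R))\in(0,\infty)$ gives $1\ge(N+1)C_{\uth}(8N)^{-\uth}\ge C_{\uth}8^{-\uth}N^{1-\uth}$ for every $N\ge1$. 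If $\uth<1$, the right-hand side tends to $\infty$ as $N\to\infty$, a contradiction; hence $\uth\ge1$, and consequently $\utho\ge1$.

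I do not expect a serious obstacle here. The only points that need a little care are the elementary use of connectedness to realise all the intermediate distances $kR/(2N)$ (so that the chain $p_0,\dots,p_N$ exists), and keeping every pair of radii inside the admissible range $(0,2\diam X)$ when invoking the inequality that defines $\uTh$ — both handled by the choice $R<\diam X$. Note, in particular, that doubling of $\mu$ is not used anywhere in this argument.
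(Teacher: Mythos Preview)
Your proof is correct and follows essentially the same approach as the paper: use connectedness to place a chain of points at prescribed distances from $x$, observe that the small balls centred there are disjoint and contained in $B(x,R)$, and compare their measures to $\mu(B(x,R))$ via the defining inequality of $\uTh$ (applied at each $p_k$ with the larger ball $B(p_k,2R)\supset B(x,R)$), concluding that $\uth\ge1$ by letting $N\to\infty$. The only cosmetic difference is that the paper uses pigeonhole to single out one ball of small measure, whereas you sum over all of them; the resulting inequalities are equivalent.
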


\begin{proof}
Let $\uth \in \uTh$, $0<R<\frac{1}{4} \diam X$ and $x \in X$.
Then $X \setm B(x,2R)$ is nonempty.
As $X$ is connected, for each $0<\rho<2R$
there is $x_\rho$ such that $d(x,x_\rho)=\rho$.
Let $N\ge 2$ be an integer.

Then the balls $B^j:=B(x_{jR/N},R/2N)$, $j=1,\ldots,N-1$,
are pairwise disjoint and contained in $B(x,R)$,
and hence there is some $1
\le k \le N-1$ so that
\[
    \frac{\mu(B^{k})}{\mu(B(x,R))} \le  \frac{1}{N-1}.
\]
Thus, as $\uth \in \uTh$,
\[
    \frac{1}{N-1} 
    \ge \frac{\mu(B^{k})}  
{\mu(B(x,R))} 
    \ge  \frac{\mu(B^{k})}  
{\mu(B(x_{kR/N},2R))}  
    \ge \frac{1}{C} \biggl(\frac{1}{4N}\biggr)^\uth,
\]
where $C$ is the constant dictated by $\uth$.
As $N$ was
arbitrary this is possible only if $\uth \ge 1$.
Thus also  $\utho \ge 1$.
\end{proof}

\section{General capacity estimates for annuli}
\label{sect-meta}

Before going on to the core of this paper
-- the study of \p-harmonic Green functions --
we establish precise 
general estimates for the 
capacities of annuli.
These results will play 
an important role for instance in the pointwise estimates for
Green functions, see Theorem~\ref{thm-comp-u-R1-R2}.
Unlike in most of this paper, the estimates
in this section hold under rather weak assumptions.
We will consider the following pointwise properties.
Some of these conditions were introduced in \cite{BBLeh1}, but 
here it will be enough to
have them for certain radii.
Recall that $x_0 \in X$ is fixed.

\begin{deff}
We say that $\mu$ is \emph{doubling at $x_0$} if 
\[ 
  \mu(B(x_0,2r))\simle \mu(B(x_0,r))
\] 
for all radii $r>0$. 
Similarly, $\mu$ supports a \emph{\p-Poincar\'e inequality at $x_0$} 
if~\eqref{eq-deff-PI} holds for all balls $B=B(x_0,r)$.
Moreover, $\mu$ is \emph{reverse-doubling at $x_0$} if there
are constants $\xi, C>1$ 
such that 
\[ 
  \mu(B(x_0,\xi r))\ge C \mu(B(x_0,r))
\] 
for all $0<r\le \diam X/2\xi$.

We also say that a property, as above, holds at $x_0$ 
\emph{for radii up to $R_0$} if it holds for all $0< r \le R_0$.
(Here we allow for $R_0=\infty$, 
while $r$ is always finite, i.e.\ $r<R_0$ if $R_0=\infty$.)
Finally, a property
holds \emph{for small\/ \textup{(}resp.\ large\/\textup{)} radii},
if there is some $0<R_0<\infty$ such that the property
holds at $x_0$ for all $0< r \le R_0$ (resp.\ all $R_0 \le r <\infty$). 
\end{deff}

Note that if $X$ is bounded
and $\mu$ is reverse-doubling 
at $x_0$ for radii up to $R_0$, then necessarily $R_0 \le \diam X$.
(Letting $X=B(0,2) \subset \R^n$, equipped with
the metric $d(x,y)=\min\{|x-y|,1\}$ and the Lebesgue measure,
shows that it is possible to satisfy the reverse-doubling
condition with $R_0=\diam X$.)
It is easy to see by iteration
that if $\mu$ is doubling at $x_0$ for small radii, then $\uqo<\infty$,
and if $\mu$ is reverse-doubling at $x_0$ for small radii, then $\lqo>0$.

If 
$X$ is connected 
(which in particular holds 
if $\mu$ supports a global
Poincar\'e inequality)
and $\mu$ is globally
doubling, 
then $\mu$ is reverse-doubling
at every $x$ with constants $C>1$ and $\xi=2$ independent of $x$, 
see Lemma~3.7 in \cite{BBbook}
($\theta$ therein corresponds to $\xi$ here).

It is straightforward to show
that if $\mu$ is doubling or
reverse-doubling for large radii at one point $x_0$, then 
the same property holds at any other point, although the constants 
and radial bounds may change from point to point.
Similarly, if $\mu$ 
supports a \p-Poincar\'e inequality at $x_0$ for large radii, 
and $\mu$ is doubling at $x_0$ for large radii, 
then $\mu$ supports a \p-Poincar\'e inequality at any point for large radii.

Our main result in this section is the following estimate.

\begin{thm}\label{thm-metaestimate-int-only} 
Let $0 < R_0 \le \infty$.
Assume that 
\begin{enumerate}
\renewcommand{\theenumi}{\textup{(\roman{enumi})}}%
\item \label{a-i}
$\mu$ is reverse-doubling at $x_0$ for radii up to $R_0$,
with constant $\xi$,
\item
 $\mu$ is doubling at $x_0$ for radii up to $\max\bigl\{1,\tfrac12 \xi\bigr\}R_0$,
and
\item \label{a-iii}
$\mu$ supports a $p_0$-Poincar\'e inequality at $x_0$ for radii up to 
$\max\{2,\xi\}R_0$ for some $1\le p_0<p$.
\end{enumerate}
Then for all $0<2r\le R\le R_0$, 
\begin{equation}  \label{eq-lower-est-f}
\cp(B_r,B_R) 
\simeq \biggl( \int_r^R \biggl( \frac{\rho}{\mu(B_\rho)} \biggr)^{1/(p-1)} 
           \,d\rho \biggr)^{1-p},
\end{equation}
where the implicit comparison constants depend on $p$, $p_0$ and the
doubling, reverse-doubling
and Poincar\'e constants 
from \ref{a-i}--\ref{a-iii},
but not on $R_0$.
\end{thm}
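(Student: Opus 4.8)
The plan is to prove the two inequalities in \eqref{eq-lower-est-f} separately, and for each to reduce from an annulus $B_R \setm B_r$ to a dyadic decomposition into annuli on which the measure is roughly constant. Write $r=r_0<r_1<\dots<r_N=R$ with $r_{j+1}=2r_j$ (so $N\simeq\log_2(R/r)$, adjusting the last step), and note that by the doubling hypothesis $\mu(B_\rho)\simeq\mu(B_{r_j})$ for $\rho\in[r_j,r_{j+1}]$, so that
\[
   \int_r^R\Bigl(\frac{\rho}{\mu(B_\rho)}\Bigr)^{1/(p-1)}\,d\rho
   \simeq \sum_{j=0}^{N-1} \frac{r_j^{p/(p-1)}}{\mu(B_{r_j})^{1/(p-1)}}.
\]
Call this sum $S$, so the target is $\cp(B_r,B_R)\simeq S^{1-p}$.

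\emph{Upper bound} $\cp(B_r,B_R)\simle S^{1-p}$. Here I would build an explicit admissible test function $u$ for $\cp(B_r,B_R)$. The natural choice is radial: set $u=1$ on $B_r$, $u=0$ outside $B_R$, and on each dyadic annulus $B_{r_{j+1}}\setm B_{r_j}$ let $u$ interpolate linearly in $d(\cdot,x_0)$ with a slope chosen so that the total drop across all annuli is $1$; the optimal choice (by the usual Lagrange/Hölder computation) makes the drop $a_j$ across the $j$-th annulus proportional to $(r_j^{p/(p-1)}/\mu(B_{r_j})^{1/(p-1)})\big/S$. A $1$-Lipschitz-in-distance function has $g_u\le$ (its slope) as an upper gradient, so $\int_{B_{r_{j+1}}\setm B_{r_j}} g_u^p\,d\mu\simle (a_j/r_j)^p\mu(B_{r_j})$; summing and simplifying with the chosen $a_j$ yields $\int g_u^p\,d\mu\simle S^{1-p}$. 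This direction only uses doubling, not the Poincaré inequality. One must check $u\in\Np_0(B_R)$, which is routine since $u$ is bounded and Lipschitz off $x_0$ and the radii are bounded.

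\emph{Lower bound} $\cp(B_r,B_R)\simge S^{1-p}$. This is where the Poincaré inequality (and reverse-doubling) enters, and it is the main obstacle. The standard mechanism: for any admissible $u$ (so $u=1$ on $B_r$, $u=0$ off $B_R$), a telescoping/chaining argument using the $p_0$-Poincaré inequality on the balls $B_{r_j}$ together with doubling and reverse-doubling gives, for suitable comparison balls, $|u_{B_{r_j}}-u_{B_{r_{j+1}}}|\simle r_j\,(\vint_{\lambda B_{r_{j+1}}} g_u^{p_0}\,d\mu)^{1/p_0}$, and one bounds $|u_{B_{r_0}}-u_{B_{r_N}}|$ from below by a positive constant (using reverse-doubling to control the fraction of $B_R$ taken up by $B_r$, so that $u_{B_r}$ is close to $1$ and $u_{B_R}$ is bounded away from $1$ — or alternatively truncating). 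The key inequality to extract is then, after summing the telescoping estimate and applying Hölder in $j$ with exponents $p$ and $p'$ against the weights that produce $S$:
\[
   1 \simle \sum_{j} r_j\Bigl(\vint_{\lambda B_{r_{j+1}}} g_u^{p_0}\,d\mu\Bigr)^{1/p_0}
     \simle S^{1/p'}\Bigl(\sum_j \frac{r_j^{p}}{\mu(B_{r_j})}\Bigl(\int_{\lambda B_{r_{j+1}}} g_u^{p_0}\,d\mu\Bigr)^{p/p_0}\Bigr)^{1/p}.
\]
The passage from the $g_u^{p_0}$-averages to $\int g_u^p$ requires the strict inequality $p_0<p$: by the self-improvement of the Poincaré inequality under doubling (or directly Hölder, paying a factor $\mu(\lambda B_{r_{j+1}})^{1/p_0-1/p}$ that cancels against $\mu(B_{r_j})$ up to constants by doubling), one replaces $(\vint g_u^{p_0})^{p/p_0}$ by $\vint g_u^p$, and the bounded-overlap of the dilated annuli $\lambda B_{r_{j+1}}\setm(\text{inner core})$ lets one sum $\sum_j\int_{\lambda B_{r_{j+1}}}g_u^p\,d\mu\simle\int_{B_{cR}}g_u^p\,d\mu$. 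Combining, $1\simle S^{1/p'}(\int g_u^p\,d\mu)^{1/p}$, i.e. $\int g_u^p\,d\mu\simge S^{-p/p'}=S^{1-p}$, and taking the infimum over admissible $u$ finishes it.

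The only genuinely delicate points are (a) handling the innermost and outermost annuli and the truncation so that the chaining starts from a definite gap $|u_{B_r}-u_{B_R}|\simge 1$ — reverse-doubling up to $R_0$ is exactly what guarantees $\mu(B_r)/\mu(B_R)$ is small enough when $R\ge 2r$, or more carefully one chains only down to a comparable scale and uses that $u\ge1$ on $B_r$ pointwise; and (b) controlling the overlap and the scales of the dilated balls $\lambda B_{r_{j+1}}$, which is why the hypotheses ask for doubling and Poincaré up to $\max\{1,\tfrac12\xi\}R_0$ and $\max\{2,\xi\}R_0$ rather than just up to $R_0$. Both are bookkeeping once the main Hölder-against-$S$ computation is in place. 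I would also remark that this is essentially the argument behind the capacity estimates of \cite{BBLeh1}, now localized to balls centred at $x_0$, so one could alternatively cite those estimates with the pointwise exponent sets and then convert the exponent-set bounds into the integral form via the dyadic comparison above; but the direct proof sketched here is cleaner under the present minimal hypotheses.
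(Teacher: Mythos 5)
Your upper bound is fine and matches the paper's (Proposition~\ref{prop-est-cap-int-upper}: the optimal convex combination of dyadic annulus test functions is exactly the estimate \eqref{eq-est-cap-HKM} from Heinonen--Kilpel\"ainen--Martio, combined with $\cp(B^{k-1},B^k)\simle\mu(B^k)/r_k^p$). The telescoping setup for the lower bound is also the right starting point. But there is a genuine gap in the final summation step of your lower bound. The $p_0$-Poincar\'e inequality produces averages of $g_u^{p_0}$ over the \emph{full balls} $\la B^k$, not over annuli; these balls are nested, all containing $B_r$. Consequently the claimed estimate
\[
\sum_{k=1}^{k_0}\int_{\la B^{k}}g_u^{p}\,d\mu\simle\int_{B_{cR}}g_u^{p}\,d\mu
\]
is false: the overlap of the domains of integration is $k_0\simeq\log_2(R/r)$, not bounded, so after your H\"older step against the weights producing $S$ you only obtain $1\simle S^{1/p'}\,k_0^{1/p}\bigl(\int g_u^p\,d\mu\bigr)^{1/p}$, i.e.\ the lower bound $\cp(B_r,B_R)\simge S^{1-p}/\log(R/r)$, which is strictly weaker than \eqref{eq-lower-est-f} when $R/r$ is large. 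There is no ``inner core'' to subtract, because the Poincar\'e inequality cannot be localized to the annulus $\la B^{k}\setm\la B^{k-1}$.

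This is precisely the difficulty that the paper's Lemma~\ref{lem-meta-meta} is designed to overcome, and it is the genuinely new ingredient of the proof. There, each ball integral $\int_{B^k}g^{p_0}\,d\mu$ is first split over the disjoint annuli $A_j=B^j\setm B^{j-1}$ with $j\le k$; the reverse-doubling hypothesis is then used to insert a geometric decay factor $(r_j/r_k)^{2\be}$ into the resulting double sum, so that after two further applications of H\"older (with exponents $p/p_0$, $p/(p-p_0)$ and then $p$, $p/(p-1)$) and an interchange of the order of summation, each annulus $A_j$ is counted only $O(1)$ times and one lands on $\bigl(\sum_k(r_k^p/\mu(B^k))^{1/(p-1)}\bigr)^{1-1/p}\bigl(\int_{B^{k_0}}g^p\,d\mu\bigr)^{1/p}$ with no logarithmic loss. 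Your proposal would be correct if you replaced the ``bounded overlap'' assertion by this annulus-decomposition-plus-reverse-doubling argument; as written, the step fails.
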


As above, when $R_0=\infty$, we still require $R$ to be finite.

\begin{remark}  \label{rmk-metaestimate}
The proofs in this section reveal that 
the assumptions
in Theorem~\ref{thm-metaestimate-int-only}, 
Lemmas~\ref{lem:chain-estimate}
and~\ref{lem-meta-meta} and Corollary~\ref{cor-interpolate-cap}
about the (reverse) doubling and the Poincar\'e inequality 
can be further restricted to radii $\ge r$.
\end{remark}

Theorem~\ref{thm-metaestimate-int-only} 
gives an estimate of $\cp(B_r,B_R)$ for a large class of measures.
This generalizes many of the estimates in~\cite{BBLeh1},
which in turn were generalizations and improvements
of earlier results in Adamowicz--Shan\-mu\-ga\-lin\-gam~\cite{adsh}
and Garofalo--Marola~\cite{GaMa}.
On the other hand, the assumption of $p_0$-Poincar\'e inequality at $x_0$
for some $1\le p_0<p$ is stronger than in~\cite[Section~6]{BBLeh1}.

If $X$ is complete and $\mu$ is globally doubling and supports a 
global \p-Poincar\'e inequality, then 
by Keith--Zhong~\cite[Theorem~1.0.1]{KeZh} there is $1\le p_0<p$ such that
$X$ supports a global $p_0$-Poincar\'e inequality.
Under these assumptions $\mu$ is also reverse-doubling
with uniform constants $C>1$ and $\xi=2$,
and hence the capacity estimate in Theorem~\ref{thm-metaestimate-int-only}
holds with uniform constants for all $x_0\in X$ with 
$R_0=\frac14\diam X$.

Theorem~\ref{thm-metaestimate-int-only} can be used, for instance, 
to characterize when singletons have zero capacity
and when $X$ is \p-parabolic, see
Propositions~\ref{prop-meta-zero-cap} and~\ref{thm-p-parab}.
Some one-sided
estimates for capacities in terms of the 
volume growth, as in \eqref{eq-lower-est-f},
were given in
Coulhon--Holo\-pai\-nen--Saloff-Coste~\cite[pp.~1151 and~1162]{CoHoSC}, 
Holo\-pai\-nen~\cite[p.~329]{HoDuke} and
Holo\-pai\-nen--Koskela~\cite{HoKo}
mainly in the setting of Riemannian manifolds.

Other useful applications of Theorem~\ref{thm-metaestimate-int-only}
are the pointwise estimates for Green and singular functions
in Section~\ref{sect-ptwise-est}, as well as
the (non)integrability results for these functions and their
minimal \p-weak upper gradients in
Sections~\ref{sect-int-Green} and~\ref{sect-int-gu}.

\begin{example}
If $w(x)=w(|x|)$ is a radial weight on $\R^n$, $n\ge2$, such that the
measure $d\mu=w\,dx$ 
supports a \p-Poincar\'e inequality 
at $x_0=0$, then Proposition~10.8 in~\cite{BBLeh1} shows that
for all $0<r<R$,
\[
\cpmu(B_r,B_R) = \biggl( \int_r^R f'(\rho)^{1/(1-p)} \,d\rho \biggr)^{1-p},
\]
where $f(\rho)=\mu(B_\rho)$.
Thus, \eqref{eq-lower-est-f} 
can be seen as a generalization of this formula for annuli that are
not too thin.
Note that $f'(\rho)\simeq f(\rho)/\rho$ in many cases.
\end{example}

The following example shows that the assumption~\ref{a-iii}
of a Poincar\'e inequality cannot be dropped from
Theorem~\ref{thm-metaestimate-int-only}.

\begin{example}
Let $X=\itoverline{B(0,1)} \cup \{(x_1,x_2) : x_1 \ge 2\} \subset \R^2$
equipped with the Lebesgue measure $\mu$.
Then $X$ is complete and $\mu$ is globally doubling and globally reverse-doubling.
However, if $x_0=0$, $0 < r<2$ and $R >1$,
then $\cp(B_r,B_R)=0$.

A similar connected example is the bow-tie
\[
X=\{(x_1,x_2): |x_2| \le |x_1| \text{ and } x_1 \ge -1\} \subset \R^2
\]
equipped with the Lebesgue measure $\mu$.
Again $X$ is complete and $\mu$ is globally doubling and globally reverse-doubling.
However, if $x_0=(-1,0)$, $0 < r\le 1 < R $ and $1 < p \le 2$,
then $\cp(B_r,B_R)=0$. 
\end{example}

Lemma~2.6 in Heinonen--Kilpel\"ainen--Martio~\cite{HeKiMa}
(or Lemma~2.1 in Holo\-pai\-nen--Koskela~\cite{HoKo})
implies that for $R=2^{k_0}r$,
\begin{equation}   \label{eq-est-cap-HKM}
\cp(B_r,B_R) \le \biggl( 
\sum_{k=1}^{k_0} \cp(B^{k-1},B^{k})^{1/(1-p)} \biggr)^{1-p},
\end{equation}
where $B^k=2^k B_r$, $k=0,1,\ldots,k_0$. 
In fact, in~\cite{HeKiMa} and~\cite{HoKo}, \eqref{eq-est-cap-HKM}
is formulated for more general condensers, but
we are only interested in dyadic sequences of concentric balls.
The proof therein
only uses suitable convex combinations of test functions
and does not require any assumptions about the measure $\mu$.

Reformulating \eqref{eq-est-cap-HKM} 
as follows gives us the upper bound of 
Theorem~\ref{thm-metaestimate-int-only}.
The first inequality, with no doubling assumption, will be useful
when deducing Lemma~\ref{lem-suff-cap-zero},
while the second inequality is convenient when $\mu$ is doubling.

\begin{prop}  \label{prop-est-cap-int-upper}
For all $0<r\le\tfrac12R$, 
\begin{equation} \label{eq-upper-meta-a}
\cp(B_r,B_R)  \simle 
  \biggl( \int_{2r}^R \biggl( \frac{\rho}{\mu(B_\rho)} 
   \biggr)^{1/(p-1)} \,d\rho \biggr)^{1-p}
\end{equation}
and
\begin{equation}    \label{eq-upper-meta}
\cp(B_r,B_R)  \simle \frac{\mu(B_{2r})}{\mu(B_{r})}
\biggl( \int_r^R \biggl( \frac{\rho}{\mu(B_\rho)} 
   \biggr)^{1/(p-1)} \,d\rho \biggr)^{1-p}.
\end{equation}
\end{prop}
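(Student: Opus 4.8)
The plan is to read off both inequalities directly from \eqref{eq-est-cap-HKM}, the only extra ingredient being the elementary estimate for the capacity of a ball inside the concentric ball of twice the radius. Testing $\cp(B_s,B_{2s})$ with the Lipschitz function $u(x)=\min\{1,\max\{0,2-d(x,x_0)/s\}\}$, which equals $1$ on $B_s$, vanishes outside $B_{2s}$ and has $g_u\le s^{-1}\chi_{B_{2s}\setm B_s}$, gives
\[
\cp(B_s,B_{2s})\le \frac{\mu(B_{2s})}{s^p}\qquad\text{for every }s>0;
\]
no assumption on $\mu$ enters here, nor anywhere below.

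For the dyadic case $R=2^{k_0}r$ with $B^{k}=2^{k}B_r$, substituting this estimate with $s=2^{k-1}r$ into \eqref{eq-est-cap-HKM} yields
\[
\cp(B_r,B_R)^{1/(1-p)}\ge\sum_{k=1}^{k_0}\Bigl(\frac{\mu(B_{2^{k}r})}{(2^{k-1}r)^p}\Bigr)^{1/(1-p)}=\sum_{k=1}^{k_0}(2^{k-1}r)^{p/(p-1)}\mu(B_{2^{k}r})^{-1/(p-1)}.
\]
The main step is then to compare this dyadic sum with $\int_{2r}^R(\rho/\mu(B_\rho))^{1/(p-1)}\,d\rho$: on the block $\rho\in[2^{k}r,2^{k+1}r]$ one has $\rho\le 2^{k+1}r$ and $\mu(B_\rho)\ge\mu(B_{2^{k}r})$, so its contribution to the integral is at most $2^{k}r\,(2^{k+1}r/\mu(B_{2^{k}r}))^{1/(p-1)}$, a $p$-dependent multiple of the $k$-th term of the sum. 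Summing over $k$ and using $R\le 2^{k_0+1}r$ gives $\int_{2r}^R(\rho/\mu(B_\rho))^{1/(p-1)}\,d\rho\simle\cp(B_r,B_R)^{1/(1-p)}$, and raising to the negative power $1-p$ reverses this and yields \eqref{eq-upper-meta-a}. For general $R$ with $r\le\tfrac12R$ one passes to $R'=2^{k_0}r$ with $2^{k_0}r\le R<2^{k_0+1}r$, uses $\cp(B_r,B_R)\le\cp(B_r,B_{R'})$ (monotonicity of $\cp$ in the open set), checks that $\int_{2r}^{R'}$ and $\int_{2r}^{R}$ of the integrand are comparable since the extra piece $\int_{R'}^{R}$ is dominated by the block $\int_{R'/2}^{R'}\subset\int_{2r}^{R'}$, and treats the short range $2r\le R<4r$ separately via $\cp(B_r,B_R)\le\cp(B_r,B_{2r})$ and the ball estimate.

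For \eqref{eq-upper-meta} I would keep the piece $\int_r^{2r}$ rather than discard it: write $\int_r^R=\int_r^{2r}+\int_{2r}^R$, bound the tail by \eqref{eq-upper-meta-a}, and control $\int_r^{2r}(\rho/\mu(B_\rho))^{1/(p-1)}\,d\rho\simle r^{p/(p-1)}\mu(B_r)^{-1/(p-1)}$ using $\cp(B_r,B_R)\le\cp(B_r,B_{2r})\le\mu(B_{2r})/r^p$. Multiplying everything through by $(\mu(B_r)/\mu(B_{2r}))^{1/(p-1)}\le1$, the factors $\mu(B_r)$ cancel and both pieces become $p$-dependent multiples of $\cp(B_r,B_R)^{1/(1-p)}$; raising to the power $1-p$ turns that same factor into the $\mu(B_{2r})/\mu(B_r)$ appearing in front of $\bigl(\int_r^R(\rho/\mu(B_\rho))^{1/(p-1)}\,d\rho\bigr)^{1-p}$.

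The step requiring most care is the bookkeeping itself: since $p>1$ the exponents $1/(1-p)$ and $1-p$ are negative, so each passage between $\cp$ and its powers reverses the inequality, and one must track which of $\mu(B_{2^{k-1}r})$, $\mu(B_{2^{k}r})$ (respectively $\mu(B_r)$, $\mu(B_{2r})$) occurs in each estimate. This mismatch is exactly what forces the shifted lower limit $2r$ in \eqref{eq-upper-meta-a} and the compensating factor $\mu(B_{2r})/\mu(B_r)$ in \eqref{eq-upper-meta}; beyond that, the argument is a routine reformulation of \eqref{eq-est-cap-HKM} and uses no properties of $\mu$.
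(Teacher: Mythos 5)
Your proposal is correct and follows essentially the same route as the paper: both plug the elementary single-annulus bound $\cp(B_s,B_{2s})\le\mu(B_{2s})/s^p$ (which the paper imports from \cite[Proposition~5.1]{BBLeh1} and you verify directly with the Lipschitz test function) into \eqref{eq-est-cap-HKM}, compare the resulting dyadic sum block by block with the integral starting at $2r$, and recover \eqref{eq-upper-meta} by absorbing the $\int_r^{2r}$ piece at the cost of the factor $\mu(B_{2r})/\mu(B_r)$. The only difference is presentational (your explicit treatment of the non-dyadic endpoint and of the range $2r\le R<4r$, which the paper leaves implicit).
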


\begin{proof}
Write $r_k=2^k r$ and $B^k=B_{r_k}$, $k=0,1,\ldots$\,, 
and find an integer $k_0$ such that $r_{k_0} \le R < r_{k_0+1}$.
Proposition~5.1 in~\cite{BBLeh1} shows that 
$\cp(B^{k-1},B^{k}) \simle \mu(B^{k})/r_{k}^{p}$.
(Note that the proof of this part of Proposition~5.1 in~\cite{BBLeh1}
does not use any doubling property.)
Inserting this estimate into~\eqref{eq-est-cap-HKM} 
yields
\begin{align}
\cp(B_r,B_R) & \le \cp(B^0,B^{k_0}) 
   \label{eq-upper-meta-1}
\\
&\simle
\biggl( \sum_{k=1}^{k_0} \biggl(\frac{r_k^p}{\mu(B^k)} \biggr)^{1/(p-1)} 
\biggr)^{1-p}  
\simle\biggl( \int_{2r}^R \biggl( \frac{\rho}{\mu(B_\rho)} 
   \biggr)^{1/(p-1)} \,d\rho \biggr)^{1-p}. 
\nonumber
\end{align}
Finally,
\[
 \int_{r}^{2r} \biggl( \frac{\rho}{\mu(B_\rho)} 
   \biggr)^{1/(p-1)} \,d\rho 
   \le \biggl(\frac{2r}{\mu(B_r)} \biggr)^{1/(p-1)} r
   =\frac{1}{2} 
   \biggl( \frac{\mu(B_{2r})}{\mu(B_{r})}
           \frac{r_1^p}{\mu(B^1)} \biggr)^{1/(p-1)},
\]
which together with the last inequality in
\eqref{eq-upper-meta-1} gives
\eqref{eq-upper-meta}. 
\end{proof}

For the proof of the lower bound
in Theorem~\ref{thm-metaestimate-int-only}, 
we recall the following version of the well-known
``telescoping argument''.  

\begin{lem}\label{lem:chain-estimate}
\textup{(\cite[Lemma~4.9]{BBLeh1})}
Let $R_0 \in (0,\infty]$.
Assume that $1 \le p_0 < \infty$ and that
\begin{enumerate}
\renewcommand{\theenumi}{\textup{(\roman{enumi})}}%
\item \label{l-i}
$\mu$ is reverse-doubling at $x_0$ for radii up to $R_0$,
with constant $\xi$,
\item \label{l-ii}
 $\mu$ is doubling at $x_0$ for radii up to $\max\bigl\{1,\tfrac12 \xi\bigr\}R_0$,
and
\item \label{l-iii}
$\mu$ supports a $p_0$-Poincar\'e inequality at $x_0$ for radii up to 
$\max\{2,\xi\}R_0$.
\end{enumerate}
For $0 < 2r \le R_0$,
write $r_k=2^k r$ and $B^k=B_{r_k}$, $k=0,1,\ldots$\,, 
and let $k_0\ge 1$ 
 be such that $r_{k_0}\le R_0$.
Then we have for every $u\in \Npoo(B^{k_0})$ 
that
\[
|u_{B_r}| \simle \sum_{k=1}^{k_0} r_k 
     \biggl(\vint_{\lambda B^k} g_u^{p_0} \,d\mu\biggr)^{1/{p_0}},
\]
where $\la$ is the dilation constant in the $p_0$-Poincar\'e inequality at 
$x_0$.
\end{lem}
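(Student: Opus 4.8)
The statement is (a restatement of) \cite[Lemma~4.9]{BBLeh1}, and the proof is the classical telescoping (iterated Poincar\'e) argument; I indicate the steps. The plan is to split
\[
|u_{B_r}|=|u_{B^0}|\le|u_{B^0}-u_{B^{k_0}}|+|u_{B^{k_0}}|
\]
and bound the two terms separately, the first by telescoping along the dyadic chain and the second by exploiting that $u$ vanishes off $B^{k_0}$.

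For the first term, telescope along $B^0\subset B^1\subset\dots\subset B^{k_0}$ (all centred at $x_0$): $|u_{B^0}-u_{B^{k_0}}|\le\sum_{k=1}^{k_0}|u_{B^{k-1}}-u_{B^k}|$, and for each $k$, since $B^{k-1}\subset B^k$ and $\mu$ is doubling at $x_0$,
\[
|u_{B^{k-1}}-u_{B^k}|\le\vint_{B^{k-1}}|u-u_{B^k}|\,d\mu\le\frac{\mu(B^k)}{\mu(B^{k-1})}\vint_{B^k}|u-u_{B^k}|\,d\mu\simle\vint_{B^k}|u-u_{B^k}|\,d\mu.
\]
The $p_0$-Poincar\'e inequality at $x_0$ (applicable since $B^k=B_{r_k}$ is centred at $x_0$) bounds the last average by $\simle r_k\bigl(\vint_{\la B^k}g_u^{p_0}\,d\mu\bigr)^{1/p_0}$, and summing in $k$ gives the desired estimate for $|u_{B^0}-u_{B^{k_0}}|$.

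For the second term I would use the vanishing of $u$. Put $\widehat B=B_{\xi r_{k_0}}=\xi B^{k_0}$ and $E=\widehat B\setminus B^{k_0}$. Reverse-doubling at $x_0$ gives $\mu(\widehat B)\ge C\mu(B^{k_0})$ with $C>1$, hence $\mu(E)\ge(1-1/C)\mu(\widehat B)>0$, while $u\equiv0$ on $E$. Since $u=0$ on $E$ we have $u_{\widehat B}=\mu(B^{k_0})\mu(\widehat B)^{-1}u_{B^{k_0}}$, so, using that $\mu(\widehat B)/\mu(B^{k_0})$ and $\mu(\widehat B)/\mu(E)$ are bounded by finitely many applications of doubling at $x_0$,
\[
|u_{B^{k_0}}|\simle|u_{\widehat B}|=\Bigl|\vint_E(u_{\widehat B}-u)\,d\mu\Bigr|\le\frac{\mu(\widehat B)}{\mu(E)}\vint_{\widehat B}|u-u_{\widehat B}|\,d\mu\simle\vint_{\widehat B}|u-u_{\widehat B}|\,d\mu.
\]
The $p_0$-Poincar\'e inequality at $x_0$ on $\widehat B$ bounds this by $\simle r_{k_0}\bigl(\vint_{\la\widehat B}g_u^{p_0}\,d\mu\bigr)^{1/p_0}$; and since $u=0$ a.e.\ on $X\setminus B^{k_0}$, the locality of minimal weak upper gradients recalled in Section~\ref{sect-prelim} gives $g_u=0$ a.e.\ there, whence $\int_{\la\widehat B}g_u^{p_0}\,d\mu=\int_{B^{k_0}}g_u^{p_0}\,d\mu\le\int_{\la B^{k_0}}g_u^{p_0}\,d\mu$ and $\mu(\la\widehat B)\simeq\mu(\la B^{k_0})$. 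Thus $|u_{B^{k_0}}|\simle r_{k_0}\bigl(\vint_{\la B^{k_0}}g_u^{p_0}\,d\mu\bigr)^{1/p_0}$, which is the $k=k_0$ term of the claimed sum; adding the two contributions completes the proof.

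The only genuine obstacle is the radial bookkeeping: every invocation of doubling, reverse-doubling or the Poincar\'e inequality above takes place at a radius in the range $r$ to $\xi r_{k_0}$, and the Poincar\'e dilation pushes this up to $\la\xi r_{k_0}$; one must verify that all of these lie within the windows $\max\{1,\tfrac12\xi\}R_0$ and $\max\{2,\xi\}R_0$ allotted by hypotheses~\ref{l-i}--\ref{l-iii}. This is precisely why those factors appear in the statement, and once they are accounted for everything else is routine.
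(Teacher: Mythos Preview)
Your proof is correct and is precisely the classical telescoping argument that the paper defers to: the paper does not give its own proof but simply cites \cite[Lemma~4.9]{BBLeh1} and remarks that a careful check of that proof shows assumptions \ref{l-i}--\ref{l-iii} suffice. Your handling of the two terms, the use of reverse-doubling on $\widehat B=\xi B^{k_0}$ to control $|u_{B^{k_0}}|$, and your closing acknowledgment of the radial bookkeeping all match what is done (and needed) there.
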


The assumptions in Lemma~\ref{lem:chain-estimate}
are slightly weaker
than in~\cite[Lemma~4.9]{BBLeh1}.
However, a careful check of the proof therein shows that 
only assumptions \ref{l-i}--\ref{l-iii}
are needed.
In particular, \ref{l-i} and \ref{l-ii}
are enough to guarantee the 
comparability of the measures in the second displayed formula
in the proof in~\cite{BBLeh1}, while
the Poincar\'e inequality is only used for the radii assumed
in \ref{l-iii}.

To make use of the above lemma 
we shall exploit the 
following general estimate that may be of independent interest since 
the assumption is very mild and
the first factor on the right-hand side of \eqref{eq-meta-meta} is strongly related
to the right-hand side of~\eqref{eq-est-cap-HKM}, cf.\ the proof of
Proposition~\ref{prop-est-cap-int-upper}.

\begin{lem}          \label{lem-meta-meta}
Let
$0 < R_0 \le R_0' \le \infty$
and assume that 
$\mu$ is reverse-doubling at $x_0$ for radii up to $R_0$.
For $0 < 2r \le R_0'$, 
write $r_k=2^k r$ and $B^k=B_{r_k}$, $k=0,1,\ldots$ and let $k_0\ge 1$
be such that $r_{k_0}\le R_0'$. 
Also let $1 \le p_0 < p$.
Then we have for every $g\in L^p(B^{k_0})$,
\begin{equation}   \label{eq-meta-meta}
\sum _{k=1}^{k_0} r_k \biggl( \vint_{B^k} g^{p_0} \,d\mu \biggr)^{1/p_0}
\simle  \biggl( \sum_{k=1}^{k_0} \biggl(\frac{r_k^p}{\mu(B^k)} \biggr)^{1/(p-1)} 
\biggr)^{1-1/p} \biggl( \int_{B^{k_0}} g^{p} \,d\mu \biggr)^{1/p},
\end{equation}
where the implicit comparison constant depends on $p$, $p_0$,
the reverse-doubling constants and $R_0'/R_0$.
\end{lem}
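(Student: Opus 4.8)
The plan is to estimate each term $r_k ( \vint_{B^k} g^{p_0}\,d\mu )^{1/p_0}$ by first passing from the $p_0$-average to a $p$-average via Jensen's (or H\"older's) inequality, and then summing the resulting series using H\"older's inequality on the sum, with the exponent pairing $p/(p-1)$ and $p$ that naturally produces the two factors on the right-hand side of \eqref{eq-meta-meta}. Concretely, since $p_0 < p$, Jensen's inequality gives
\[
\biggl( \vint_{B^k} g^{p_0}\,d\mu \biggr)^{1/p_0}
\le \biggl( \vint_{B^k} g^{p}\,d\mu \biggr)^{1/p}
= \frac{1}{\mu(B^k)^{1/p}} \biggl( \int_{B^k} g^{p}\,d\mu \biggr)^{1/p}.
\]
Writing $a_k := (\int_{B^k} g^p\,d\mu)^{1/p}$ and $b_k := r_k \mu(B^k)^{-1/p}$, the left-hand side of \eqref{eq-meta-meta} is then bounded by $\sum_{k=1}^{k_0} a_k b_k$. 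Applying H\"older's inequality with exponents $p$ and $p/(p-1)$ yields
\[
\sum_{k=1}^{k_0} a_k b_k
\le \biggl( \sum_{k=1}^{k_0} a_k^{p} \biggr)^{1/p}
    \biggl( \sum_{k=1}^{k_0} b_k^{p/(p-1)} \biggr)^{(p-1)/p},
\]
and $b_k^{p/(p-1)} = (r_k^{p}/\mu(B^k))^{1/(p-1)}$, which is exactly the first factor on the right of \eqref{eq-meta-meta} (raised to the power $1-1/p$). The remaining factor is $(\sum_k a_k^p)^{1/p} = (\sum_k \int_{B^k} g^p\,d\mu)^{1/p}$, so the only gap to close is that the balls $B^k$ overlap, and I need $\sum_k \int_{B^k} g^p\,d\mu \simle \int_{B^{k_0}} g^p\,d\mu$.

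This bounded-overlap step is where the reverse-doubling hypothesis enters, and it is the one genuinely nontrivial point. The balls $B^k = B_{2^k r}$ are nested and increasing, so a fixed point of $B^{k_0}$ can lie in all of them; a naive sum would pick up a factor $k_0$, which is not allowed. The fix is to exploit reverse-doubling to see that $\mu(B^k)$ grows geometrically: iterating $\mu(B(x_0,\xi\rho)) \ge C\mu(B(x_0,\rho))$ (with $C>1$) along the dyadic scale shows $\mu(B^{k_0}) \ge c\, \gamma^{k_0-k} \mu(B^k)$ for some $\gamma>1$ and $c>0$ depending only on $\xi$, $C$ and $p$ — here I also use that for radii $r_k$ between $R_0$ and $R_0'$ the ratio is controlled by $R_0'/R_0$, contributing that dependence to the implicit constant. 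Rather than summing $\int_{B^k}g^p\,d\mu$ directly, I instead redo the H\"older step with a weight: insert factors $\gamma^{\pm(k_0-k)}$, i.e. write $a_k b_k = (a_k \gamma^{-(k_0-k)/p})(b_k \gamma^{(k_0-k)/p})$ and apply H\"older so that the first sum becomes $\sum_k \gamma^{-(k_0-k)} \int_{B^k} g^p\,d\mu$. Since $B^k \subset B^{k_0}$ this is at most $(\sum_{j\ge 0} \gamma^{-j}) \int_{B^{k_0}} g^p\,d\mu \simle \int_{B^{k_0}} g^p\,d\mu$, absorbing the overlap. The extra geometric factor $\gamma^{(k_0-k)/p}$ on the $b_k$ side is harmless: using $\mu(B^k) \simle \gamma^{-(k_0-k)} \mu(B^{k_0})$ again, one checks that $\sum_k (b_k\gamma^{(k_0-k)/p})^{p/(p-1)} \simeq \sum_k (r_k^p/\mu(B^k))^{1/(p-1)}$ up to a constant depending only on $p$ and $\gamma$, so the first factor on the right of \eqref{eq-meta-meta} is recovered.

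Assembling these pieces gives \eqref{eq-meta-meta} with an implicit constant depending only on $p$, $p_0$, the reverse-doubling constants $\xi$ and $C$, and the ratio $R_0'/R_0$, as claimed; note that $p_0$ enters only through the Jensen step and in fact the constant can be taken independent of $p_0$ as long as $p_0 < p$. The main obstacle, to repeat, is purely the geometric-series bookkeeping needed to defeat the overlap of the nested balls — the rest is two applications of H\"older/Jensen. One small point to be careful about: the statement requires $g \in L^p(B^{k_0})$ merely so that the right-hand side is finite and the manipulations are legitimate; if it is infinite there is nothing to prove.
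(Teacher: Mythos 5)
Your opening move --- Jensen's inequality to replace the $p_0$-average by the $p$-average --- is already fatal: it discards precisely the strict inequality $p_0<p$ on which the lemma depends, and after this step the inequality you are left to prove is false. Concretely, take unweighted $\R^n$ with $p=n$, $x_0=0$ and $g=\chi_{B_r}=\chi_{B^0}$. Then $a_kb_k=r_k\,\mu(B^k)^{-1/p}\bigl(\int_{B^k}g^p\,d\mu\bigr)^{1/p}\simeq r_k\cdot r_k^{-1}\cdot r=r$ for every $k$, so $\sum_{k=1}^{k_0}a_kb_k\simeq k_0\,r$, whereas the right-hand side of \eqref{eq-meta-meta} is $\simeq k_0^{(p-1)/p}\,r$ (since $r_k^p/\mu(B^k)\simeq 1$ here). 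As $k_0\,r\not\simle k_0^{(p-1)/p}\,r$ uniformly in $k_0$, your intermediate quantity already exceeds the target, and no subsequent correct manipulation can close the argument. (The genuine left-hand side $\sum_k r_k(\vint_{B^k}g^{p_0}\,d\mu)^{1/p_0}$ is $\simeq r$ for $p_0<n$, so the lemma itself is consistent.) The later step is also broken: the ``harmless'' claim that $\sum_k(r_k^p/\mu(B^k))^{1/(p-1)}\gamma^{(k_0-k)/(p-1)}\simeq\sum_k(r_k^p/\mu(B^k))^{1/(p-1)}$ fails, e.g.\ in an Ahlfors $s$-regular space with $p<s$ the unweighted sum is bounded independently of $k_0$ while the $k=1$ term of the weighted sum alone is $\simeq\gamma^{k_0/(p-1)}r^{(p-s)/(p-1)}\to\infty$ as $k_0\to\infty$. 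The weight you insert to tame the overlap on the $a_k$ side simply transfers the divergence to the $b_k$ side.

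The paper's proof keeps the $p_0$-average intact and instead decomposes $B^k$ into the annuli $A_j=B^j\setm B^{j-1}$, applying H\"older with exponents $p/p_0$ and $p_1=p/(p-p_0)$ on each annulus. The resulting factor $\mu(A_j)^{1/p_1}$ --- with $1/p_1>0$ precisely because $p_0<p$ --- is then estimated via reverse-doubling by $\mu(B^k)^{1/p_1}(r_j/r_k)^{2\be}$ for some $\be>0$; this geometric decay in $j$ is what defeats the overlap of the nested balls, after two further applications of H\"older and an interchange of the order of summation. Any repair of your argument must postpone the passage from the $p_0$-average to the $p$-average until after an annular localization of this kind has been carried out.
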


Before proving Lemma~\ref{lem-meta-meta} we
show how it leads to the lower bound in 
Theorem~\ref{thm-metaestimate-int-only}.

\begin{proof}[Proof of Theorem~\ref{thm-metaestimate-int-only}]
The upper bound follows from Proposition~\ref{prop-est-cap-int-upper}.

Conversely, let $0\le u\in\Np(X)$ be admissible for
$\cp(B_r,B_R)$.
Write $r_k=2^k r$ and $B^k=B_{r_k}$, $k=0,1,\ldots$\,, 
and find an integer $k_0$ such that $r_{k_0-1} < R \le r_{k_0}$.
The telescoping Lemma~\ref{lem:chain-estimate}, 
followed by
Lemma~\ref{lem-meta-meta} applied to the balls $\la B^k$ in place of $B^k$
and $R_0'=\la R_0$,
gives
\[ 
1 \simle 
\sum _{k=1}^{k_0} r_k \biggl( \vint_{\la B^k} g_u^{p_0} \,d\mu \biggr)^{1/p_0}
\simle  
  \biggl( \sum_{k=1}^{k_0} \biggl(\frac{r_k^p}{\mu(B^k)} \biggr)^{1/(p-1)} 
    \biggr)^{1-1/p} \biggl( \int_{B_R} g_u^{p} \,d\mu \biggr)^{1/p}. 
\] 
Taking infimum over all $u$ admissible for
$\cp(B_r,B_R)$
and replacing the sum on
the right-hand side by the corresponding
integral yields the lower bound in~\eqref{eq-lower-est-f}.
\end{proof}

\begin{remark}   \label{rem-lower-cap}
Under the assumptions in Theorem~\ref{thm-metaestimate-int-only}, by
Propositions~5.1 and~6.2 
in~\cite{BBLeh1} we have
\[
\cp(B^{k-1},B^{k}) \simeq \frac{\mu(B^{k})}{r_{k}^p}
\]
and hence the proof 
of Theorem~\ref{thm-metaestimate-int-only} shows that the lower bound
in~\eqref{eq-lower-est-f}
can also be written as
\[ 
\cp(B_r,B_R) \simge
\biggl( \sum _{k=1}^{k_0} \cp(B^{k-1},B^k)^{1/(1-p)} \biggr)^{1-p},
\] 
where $R=2^{k_0}r$. 
Thus, \eqref{eq-est-cap-HKM} is essentially sharp.
\end{remark}

\begin{proof}[Proof of Lemma~\ref{lem-meta-meta}]
Write the left-hand side $S$ in \eqref{eq-meta-meta} as
\begin{equation}   \label{eq-telescopic-estp0}
S = \sum _{k=1}^{k_0}\frac{r_k}{\mu(B^k)^{1/p_0}} 
          \biggl( \int_{B^k} g^{p_0} \,d\mu \biggr)^{1/p_0}.
\end{equation}
We split the integral over $B^k$ into integrals over $A_0:=B^0$
and the annuli 
$A_j=B^j\setm B^{j-1}$, $j=1,2,\ldots,k$, apply H\"older's 
inequality (with $p/p_0$ and $p_1:=p/(p-p_0)$) to each of these 
integrals, and obtain 
\begin{equation}    \label{eq-Holder-int-Aj}
\int_{B^k} g^{p_0} \,d\mu 
\le \sum_{j=0}^k \biggl( \int_{A_j} g^{p} \,d\mu \biggr)^{p_0/p} 
           \mu(A_j)^{1/p_1}.
\end{equation}
The reverse-doubling property at $x_0$ implies that for some $\be>0$,
we can estimate $\mu(A_j)^{1/p_1}$ as
\[
\mu(A_j)^{1/p_1} \le \mu(B^j)^{1/p_1} \simle \mu(B^k)^{1/p_1} 
        \Bigl( \frac{r_j}{r_k} \Bigr)^{2\be},
\]
where the comparison constant depends on $R_0'/R_0$.
Since $1/p_1=1-p_0/p$, inserting this first into~\eqref{eq-Holder-int-Aj}
and then into~\eqref{eq-telescopic-estp0}  gives
\begin{equation}   \label{eq-using-q-in-uQ}
S \simle \sum_{k=1}^{k_0} \frac{r_k}{\mu(B^k)^{1/p_0}} 
     \frac{\mu(B^k)^{1/p_0-1/p}} {r_k^{2\be/p_0}} 
   \biggl( \sum_{j=0}^k r_j^{2\be} 
         \biggl( \int_{A_j} g^{p} \,d\mu \biggr)^{p_0/p} \biggr)^{1/p_0}.
\end{equation}
The last sum in~\eqref{eq-using-q-in-uQ} is now estimated using
H\"older's inequality for sums 
(with $p_1=p/(p-p_0)$ and $p/p_0$ again) 
as follows
\begin{align*}
\sum_{j=0}^k r_j^{\be} r_j^\be
         \biggl( \int_{A_j} g^{p} \,d\mu \biggr)^{p_0/p}
&\le \biggl( \sum_{j=0}^k r_j^{\be p_1} \biggr)^{1/p_1}
     \biggl( \sum_{j=0}^k r_j^{\be p/p_0} 
          \int_{A_j} g^{p} \,d\mu \biggr)^{p_0/p} \\
&\simeq r_k^{\be} \biggl( \sum_{j=0}^k r_j^{\be p/p_0} 
          \int_{A_j} g^{p} \,d\mu \biggr)^{p_0/p}.
\end{align*}
Inserting this into~\eqref{eq-using-q-in-uQ} yields
\[
S \simle \sum _{k=1}^{k_0} \biggl(\frac{r_k^{p}}{\mu(B^k)} \biggr)^{1/p} 
  r_k^{-\be/p_0} \biggl( \sum_{j=0}^k r_j^{\be p/p_0} 
          \int_{A_j} g^{p} \,d\mu \biggr)^{1/p}.
\]
Another use of H\"older's inequality for sums 
(with $p/(p-1)$ and $p$)
implies
\[
S \simle \biggl( \sum _{k=1}^{k_0} \biggl(\frac{r_k^{p}}{\mu(B^k)} 
              \biggr)^{1/(p-1)} \biggr)^{1-1/p}
  \biggl( \sum _{k=1}^{k_0} r_k^{-\be p/p_0} \sum_{j=0}^k r_j^{\be p/p_0} 
          \int_{A_j} g^{p} \,d\mu \biggr)^{1/p}.
\]
The last factor is estimated by changing the order of summation as
\[
\biggl( \sum_{j=0}^{k_0} r_j^{\be p/p_0} \int_{A_j} g^{p}  \,d\mu 
           \sum_{k=\max\{1,j\}}^{k_0} r_k^{-\be p/p_0} \biggr)^{1/p}
\simeq \biggl( \sum_{j=0}^{k_0} \int_{A_j} g^{p} \,d\mu \biggr)^{1/p},
\]
since the geometric sum is comparable to $r_j^{-\be p/p_0}$.
We can thus conclude that
\[ 
S \simle 
\biggl( \sum_{k=1}^{k_0} \biggl(\frac{r_k^p}{\mu(B^k)} \biggr)^{1/(p-1)} 
\biggr)^{1-1/p} \biggl( \sum _{j=0}^{k_0} \int_{A_j} g^{p} \,d\mu \biggr)^{1/p}, 
\] 
and the claim follows.
\end{proof}

As a consequence of Theorem~\ref{thm-metaestimate-int-only} we
obtain the following estimates
for different capacities, which will be important
when deducing Theorem~\ref{thm-nonintegrability-gu}.

\begin{cor}  \label{cor-interpolate-cap}
Let $1<q<t<p$, $\al=(p-t)/(p-q)$
and $R_0 \in (0,\infty]$.
Assume that 
\begin{enumerate}
\renewcommand{\theenumi}{\textup{(\roman{enumi})}}%
\item \label{b-i}
$\mu$ is reverse-doubling at $x_0$ for radii up to $R_0$,
with constant $\xi$,
\item
 $\mu$ is doubling at $x_0$ for radii up to $\max\bigl\{1,\tfrac12 \xi\bigr\}R_0$,
and
\item \label{b-iii}
$\mu$ supports a $t_0$-Poincar\'e inequality at $x_0$ for radii up to 
$\max\{2,\xi\}R_0$ for some\/ $1 \le t_0 <t$.
\end{enumerate}
Let $0<2r\le R\le R_0$.
Then the following are true, 
with the implicit comparison constants depending on 
$p$, $q$, $t$, 
the {\rm(}reverse\/{\rm)} doubling and Poincar\'e constants 
from \ref{b-i}--\ref{b-iii},
and in \ref{int-b} and \ref{int-c} also on $R_0$.

\begin{enumerate}
\item \label{int-a}
In general,
\begin{align*}   
  \capp_t(B_r,B_R) &\simge  \capp_p(B_r,B_R)^{1-\al}
  \biggl( \int_r^R \biggl( \frac{\rho}{\mu(B_\rho)} \biggr)^{1/(q-1)} 
           \,d\rho \biggr)^{\alp(1-q)} \\
 &\simge \capp_p(B_r,B_R)^{1-\al}\capp_q(B_r,B_R)^\al.
\end{align*}
\item \label{int-b}
If $q < \lqo$ and $R_0 <\infty$, 
then
\[ 
  \capp_t(B_r,B_R) \simge  \capp_p(B_r,B_R)^{1-\al}
  \biggl( \frac{\mu(B_r)}{r^q}\biggr)^{\alp}.
\] 

\item \label{int-c}
If $q = \lqo \in \lQo$ and $R_0 <\infty$, 
then
\[ 
  \capp_t(B_r,B_R) \simge  \capp_p(B_r,B_R)^{1-\al}
  \biggl( \frac{\mu(B_r)}{r^q}\biggr)^{\alp}
  \biggl( \log \frac{R}{r}\biggr)^{\alp(1-q)}.
\] 
\end{enumerate}
\medskip
\end{cor}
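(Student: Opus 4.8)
The plan is to reduce both sides to the scalar quantity $I_s:=\int_r^R (\rho/\mu(B_\rho))^{1/(s-1)}\,d\rho$, for $s\in\{q,t,p\}$, and then interpolate. First I would set up the capacity--integral dictionary. Since $1\le t_0<t<p$, hypotheses \ref{b-i}--\ref{b-iii} are precisely those of Theorem~\ref{thm-metaestimate-int-only} both with $(p,p_0)$ replaced by $(t,t_0)$ and with $(p,p_0)$ replaced by $(p,t_0)$; that theorem therefore gives $\capp_t(B_r,B_R)\simeq I_t^{1-t}$ and $\capp_p(B_r,B_R)\simeq I_p^{1-p}$, with constants independent of $R_0$. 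For the $q$-capacity Theorem~\ref{thm-metaestimate-int-only} is \emph{not} available (no $q_0$-Poincar\'e inequality with $q_0<q$ is in the hypotheses), but I only need an upper bound, and Proposition~\ref{prop-est-cap-int-upper} --- whose proof is exponent-agnostic and uses no Poincar\'e inequality --- combined with the doubling of $\mu$ at $x_0$ yields $\capp_q(B_r,B_R)\simle (\mu(B_{2r})/\mu(B_r))\,I_q^{1-q}\simle I_q^{1-q}$.

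The core of \ref{int-a} is then one application of H\"older's inequality. Since $\alpha=(p-t)/(p-q)$ one has $t-1=\alpha(q-1)+(1-\alpha)(p-1)$, so setting $\theta:=\alpha(q-1)/(t-1)\in(0,1)$ gives $\tfrac1{t-1}=\tfrac{\theta}{q-1}+\tfrac{1-\theta}{p-1}$ and $(1-\theta)(t-1)=(1-\alpha)(p-1)$. Writing $(\rho/\mu(B_\rho))^{1/(t-1)}$ as $\bigl((\rho/\mu(B_\rho))^{1/(q-1)}\bigr)^{\theta}\bigl((\rho/\mu(B_\rho))^{1/(p-1)}\bigr)^{1-\theta}$ and applying H\"older with exponents $1/\theta$, $1/(1-\theta)$ gives $I_t\le I_q^{\theta}I_p^{1-\theta}$, hence $I_t^{1-t}\ge I_q^{\alpha(1-q)}I_p^{(1-\alpha)(1-p)}$. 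Feeding in the dictionary, $\capp_t(B_r,B_R)\simge I_t^{1-t}\ge (I_p^{1-p})^{1-\alpha}I_q^{\alpha(1-q)}\simge \capp_p(B_r,B_R)^{1-\alpha}I_q^{\alpha(1-q)}$, which is the first inequality of \ref{int-a}; the second follows by writing $I_q^{\alpha(1-q)}=(I_q^{1-q})^{\alpha}$ and using $I_q^{1-q}\simge\capp_q(B_r,B_R)$ from the dictionary (and $\alpha>0$).

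For \ref{int-b} and \ref{int-c} the only additional ingredient is an upper bound for $I_q$ itself. If $q<\lqo$ choose $q<q'<\lqo$ with $q'\in\lQo$; if $q=\lqo\in\lQo$ take $q'=q$. Using $q'\in\lQo$ --- after enlarging the range in its definition from radii $\le1$ to radii $\le R_0$, which is where $R_0<\infty$ and the $R_0$-dependence of the constants enter --- one gets $\mu(B_\sigma)\simge(\sigma/r)^{q'}\mu(B_r)$ for $r\le\sigma\le R$, so $(\sigma/\mu(B_\sigma))^{1/(q-1)}\simle (r^{q'}/\mu(B_r))^{1/(q-1)}\sigma^{(1-q')/(q-1)}$. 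If $q'>q$, the exponent $(1-q')/(q-1)$ is $<-1$, so integrating over $(r,R)\subset(r,\infty)$ and simplifying the powers of $r$ gives $I_q\simle (r^q/\mu(B_r))^{1/(q-1)}$, i.e.\ $I_q^{\alpha(1-q)}\simge(\mu(B_r)/r^q)^{\alpha}$; plugging this into \ref{int-a} proves \ref{int-b}. If $q'=q$, the exponent equals $-1$, the integral contributes an extra factor $\log(R/r)$ (which is $\ge\log2$ since $R\ge2r$), and the same substitution gives $I_q^{\alpha(1-q)}\simge(\mu(B_r)/r^q)^{\alpha}(\log(R/r))^{\alpha(1-q)}$, which with \ref{int-a} proves \ref{int-c}.

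The difficulty here is bookkeeping rather than ideas: one must keep straight that the sharp two-sided estimate of Theorem~\ref{thm-metaestimate-int-only} is at hand for the exponents $t$ and $p$ but only the exponent-free one-sided bound of Proposition~\ref{prop-est-cap-int-upper} (plus doubling) is at hand for $q$, and one must verify the exact identities relating $\alpha$, $\theta$ and $q-1,t-1,p-1$ so that the H\"older exponents reproduce the claimed powers. The one genuinely delicate point is legitimizing the passage from radii $\le1$ to radii $\le R_0$ in the definition of $\lQo$ in the proof of \ref{int-b}--\ref{int-c}; this is permitted by the reduction recorded just after the definition of the pointwise exponent sets and is exactly what makes those constants depend on $R_0$.
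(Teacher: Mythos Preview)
Your proof is correct and follows essentially the same route as the paper's: H\"older interpolation between the integrals $I_q$, $I_t$, $I_p$, followed by the capacity--integral dictionary (Theorem~\ref{thm-metaestimate-int-only} for $\capp_t$, Proposition~\ref{prop-est-cap-int-upper} with doubling for $\capp_q$), and then the $\lQo$ bound on $I_q$ for parts \ref{int-b} and \ref{int-c}. You are slightly more explicit than the paper in stressing that only the one-sided upper bound is available for $\capp_q$; the paper simply cites \eqref{eq-upper-meta} for both $\capp_p$ and $\capp_q$, which is all that is actually needed in either case.
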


Note that since $p>t$, \eqref{eq-lower-est-f} holds for $\capp_p$,
which can therefore be replaced  in 
Corollary~\ref{cor-interpolate-cap} by a corresponding integral.
Also observe that \ref{int-b} and \ref{int-c} do not
follow from \ref{int-a} together with the estimates
in \cite{BBLeh1} since we here assume a weaker
Poincar\'e inequality.

\begin{proof}
\ref{int-a}
Let $\be = 1-\al = (t-q)/(p-q)$ and note that
\[
\frac{\al(q-1)}{t-1} + \frac{\be(p-1)}{t-1} =1
\quad \text{and} \quad
\al q + \be p = t.
\]
H\"older's inequality then implies that
\begin{align*}
\int_r^R \biggl( \frac{\rho}{\mu(B_\rho)} \biggr)^{1/(t-1)} \,d\rho 
&= \int_r^R \biggl( \frac{\rho}{\mu(B_\rho)} \biggr)^{\al/(t-1)+\be/(t-1)} \,d\rho \\
& \le \biggl( \int_r^R \biggl( \frac{\rho}{\mu(B_\rho)} \biggr)^{1/(q-1)} \,d\rho 
                 \biggr)^{\al(q-1)/(t-1)} \nonumber \\
& \quad  \times
       \biggl( \int_r^R \biggl( \frac{\rho}{\mu(B_\rho)} \biggr)^{1/(p-1)} \,d\rho 
                 \biggr)^{\be(p-1)/(t-1)}.  \nonumber
\end{align*}
Raising both sides to the power $1-t$ and applying 
Theorem~\ref{thm-metaestimate-int-only} to
$\capp_t$, and \eqref{eq-upper-meta} to $\capp_p$ and $\capp_q$,
now yields~\ref{int-a}.

\ref{int-b}
Let $q<q'<\lqo$ and $\ga=(q'-1)/(q-1)>1$.
Then
\[
    \frac{\rho^{q'}}{\mu(B_\rho)}
    \simle  \frac{r^{q'}}{\mu(B_r)}
    \quad \text{for } r < \rho \le R_0,
\]
and so
\begin{align*}
  \int_r^R \biggl( \frac{\rho}{\mu(B_\rho)} \biggr)^{1/(q-1)} \,d\rho
   & = 
  \int_r^R \biggl( \frac{\rho^{q'}}{\mu(B_\rho)} \biggr)^{1/(q-1)}\,
   \frac{d\rho}{\rho^\ga} \\ 
   & \simle 
  \int_r^R \biggl( \frac{r^{q'}}{\mu(B_r)} \biggr)^{1/(q-1)}\,
   \frac{d\rho}{\rho^\ga}  
    \simle 
  \biggl( \frac{r^{q'}}{\mu(B_r)} \biggr)^{1/(q-1)} r^{1-\ga}.
\end{align*}
Hence,  
\[
\biggl(\int_r^R \biggl( \frac{\rho}{\mu(B_\rho)} \biggr)^{1/(q-1)} \,d\rho\biggr)^{1-q}
\simge \frac{\mu(B_r)}{r^{q'}} r^{(\ga-1)(q-1)} 
=  \frac{\mu(B_r)}{r^{q}},
\]
and inserting this into \ref{int-a} gives \ref{int-b}.

\ref{int-c}
Proceeding as in \ref{int-b}, with $q'=q$, we see that 
\[ 
  \int_r^R \biggl( \frac{\rho}{\mu(B_\rho)} \biggr)^{1/(q-1)} \,d\rho
    \simle \biggl( \frac{r^{q}}{\mu(B_r)} \biggr)^{1/(q-1)}
   \int_r^R \frac{d\rho}{\rho}
   = \biggl( \frac{r^{q}}{\mu(B_r)} \biggr)^{1/(q-1)} \log \frac{R}{r}.
\] 
Inserting this into \ref{int-a} gives \ref{int-c}.
\end{proof}

The dependence of the implicit constants in \ref{int-b} 
and~\ref{int-c} on $R_0$ 
is through the constant $C_q$ appearing
in the definition of $\lQo$ for $0<r<R\le R_0$.
It therefore also depends on the particular choice of $q'>q$ in the
proof of \ref{int-b}.
If $R_0=\infty$, 
then $\diam X = \infty$ and
\ref{int-b} and \ref{int-c} hold for 
$q < \lqq :=\sup \lQ$ 
and $q = \lqq \in \lQ$, respectively, where
\[
\lQ  =\biggl\{q>0 : 
        \frac{\mu(B_r)}{\mu(B_R)}  \simle
        \Bigl(\frac{r}{R}\Bigr)^q 
        \text{ for all } 0 < r < R< \infty 
\biggr\}.
\]

\begin{remark}
Choosing $q<\min\{t,\lqo\}$ and $p>\max\{t,\uqo\}$ 
in Corollary~\ref{cor-interpolate-cap}\,\ref{int-b},
together with Proposition~6.1\,(b) 
in~\cite{BBLeh1} and a direct calculation,
yields for $0<2r<R\le \diam X/2\xi$ that
\begin{equation} \label{eq-approx-cap}
\capp_t(B_r,B_R) \simge \biggl( \frac{\mu(B_r)}{r^t} \biggr)^{(p-t)/(p-q)}
    \biggl( \frac{\mu(B_R)}{R^t} \biggr)^{(t-q)/(p-q)}
\Bigl( \frac{r}{R} \Bigr)^{(p-t)(t-q)/(p-q)}.
\end{equation}
Since
\[
\frac{\mu(B_r)}{r^t} \simge \capp_t(B_r,B_R) \quad \text{and} \quad
\frac{\mu(B_R)}{R^t} \simge \capp_t(B_r,B_R),
\]
this implies and improves the estimates in~\cite[Proposition~6.2]{BBLeh1},
which use only one of the balls $B_r$ and $B_R$.
The borderline cases $q=\lqo \in \lQo$ and $p=\uqo \in \uQo$
which are allowed in \cite[Proposition~6.2]{BBLeh1}, are however not
included in \eqref{eq-approx-cap},
and the Poincar\'e assumption is slightly stronger here.

Note that the product of the estimates in (a) and (b) 
of~\cite[Proposition~6.2]{BBLeh1} gives an estimate similar to 
\eqref{eq-approx-cap}, but with twice as large exponent at $r/R$.
Moreover, \cite[Proposition~5.1]{BBLeh1} implies that
\[
\capp_t(B_r,B_R) \simle \biggl( \frac{\mu(B_r)}{r^t} \biggr)^{(p-t)/(p-q)}
    \biggl( \frac{\mu(B_R)}{R^t} \biggr)^{(t-q)/(p-q)}.
\]
If $q=\lqo \in \lQo$ and $p=\uqo \in \uQo$,
one can combine Corollary~\ref{cor-interpolate-cap}
with~\cite[ Proposition~7.1]{BBLeh1}
to obtain more explicit lower
bounds for $\capp_t$, also containing $\log(R/r)$.
\end{remark}

\section{Capacity of singletons and \texorpdfstring{\p}{p}-parabolicity}
\label{sect-cap-zero}

By letting $r\to0$ or $R\to\infty$ in
Theorem~\ref{thm-metaestimate-int-only},
we will in this section characterize points of zero capacity and
\p-parabolic metric spaces in terms of integrals of the type
\eqref{eq-lower-est-f}.
This gives more precise descriptions than some earlier conditions
based on dimensions and exponent sets, as in
the following result from~\cite{BBLeh1}.

\begin{prop} \label{prop-cp-x0}
\textup{(Proposition~8.2 in~\cite{BBLeh1})}
\begin{enumerate}
\item \label{e-a}
If $p<\uso$ or $p=\uso\notin \uSo\setm\lSo$, then $\Cp(\{x_0\})=0$.
\item \label{e-c}
If $p>\uso$, $\mu$ is doubling and reverse-doubling at $x_0$ and
supports a \p-Poincar\'e inequality at $x_0$, all three properties
holding for small radii,
and
$x_0$ has a locally compact neighbourhood,
then $\Cp(\{x_0\})>0$.
\end{enumerate}
\end{prop}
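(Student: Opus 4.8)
We sketch the argument we would use; both parts will be reduced to the behaviour, as $\delta\to0$, of the integral $I(\delta):=\int_0^\delta(\rho/\mu(B_\rho))^{1/(p-1)}\,d\rho$, via the universal upper bound of Proposition~\ref{prop-est-cap-int-upper} and the telescoping Lemma~\ref{lem:chain-estimate}.

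\emph{Part \ref{e-a}.} The first step is to show that the hypotheses force $I(\delta)=\infty$ for all $\delta\in(0,1]$. If $p\notin\uSo$ — which covers $p<\uso$ as well as the case $p=\uso\notin\uSo$ — then $\inf_{0<\rho\le1}\mu(B_\rho)/\rho^p=0$, and since $\rho\mapsto\mu(B_\rho)$ is nondecreasing this infimum is realized only along a sequence $\rho_j\downarrow0$; choosing the $\rho_j$ so that also $\rho_{j+1}\le\tfrac12\rho_j$ and using $\mu(B_\rho)\le\mu(B_{\rho_j})$ on $[\rho_{j+1},\rho_j]$, a direct computation gives
\[
\int_{\rho_{j+1}}^{\rho_j}\Bigl(\frac{\rho}{\mu(B_\rho)}\Bigr)^{1/(p-1)}\,d\rho
   \simge\Bigl(\frac{\mu(B_{\rho_j})}{\rho_j^p}\Bigr)^{-1/(p-1)}\longrightarrow\infty,
\]
so the series over $j$ diverges. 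In the only remaining case $p=\uso\in\lSo$ one has $\mu(B_\rho)\simle\rho^p$ for $\rho\le1$, so the integrand of $I$ is $\simge\rho^{-1}$ and $I(\delta)=\infty$ at once. Granted $I(\delta)=\infty$, Proposition~\ref{prop-est-cap-int-upper} yields $\cp(B_r,B_\delta)\to0$ as $r\to0$ for each fixed $\delta\le1$; moreover $\uso\ge p>0$ forces $\mu(\{x_0\})=0$ and hence $\mu(B_\delta)\to0$. Thus, given $\eta>0$, we pick $\delta$ with $\mu(B_\delta)<\eta/2$ and then a truncated admissible function $0\le u\le1$ for $\cp(B_r,B_\delta)$ with $\int_X g_u^p\,d\mu<\eta/2$; then $u(x_0)=1$ and $\|u\|_{\Np(X)}^p\le\mu(B_\delta)+\eta/2<\eta$, which proves $\Cp(\{x_0\})=0$.

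\emph{Part \ref{e-c}.} Pick $p'$ with $\uso<p'<p$, so that $\mu(B_\rho)\simge\rho^{p'}$ for small $\rho$ (this holds also when $\uso=0$, i.e.\ when $\mu(\{x_0\})>0$, in which case $\Cp(\{x_0\})\ge\mu(\{x_0\})>0$ anyway). Fix $R\le1$ so small that the (reverse-)doubling, the \p-Poincar\'e inequality and this volume bound all hold at $x_0$ for radii up to a suitable multiple of $R$. The heart of the matter is the \emph{uniform} lower bound
\[
\cp(B_r,B_R)\simge R^{p'-p}>0\qquad\text{for all }0<2r\le R.
\]
To get it, take $0\le u\le1$ admissible for $\cp(B_r,B_R)$, write $B^k=B_{2^kr}$ and choose $k_0$ with $2^{k_0}r\in[R,2R)$, so that $u\in\Np_0(B^{k_0})$; Lemma~\ref{lem:chain-estimate} with $p_0=p$ gives $1=|u_{B_r}|\simle\sum_{k=1}^{k_0}2^kr\,(\vint_{\la B^k}g_u^p\,d\mu)^{1/p}$. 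Bounding each average crudely by $(\mu(\la B^k))^{-1}\int_{B_R}g_u^p\,d\mu$ and using $\mu(\la B^k)\simge(2^kr)^{p'}$ turns the right-hand side into $(\int_{B_R}g_u^p\,d\mu)^{1/p}\sum_{k=1}^{k_0}(2^kr)^{1-p'/p}$, and since $1-p'/p>0$ this geometric sum is comparable to its largest term $\simeq R^{1-p'/p}$; hence $\int_{B_R}g_u^p\,d\mu\simge R^{p'-p}$, and an infimum over $u$ finishes the claim. We then conclude via the continuity of the variational capacity along decreasing sequences of compact sets — applicable since $\itoverline{B_r}$ is compact by the local compactness at $x_0$ (see \cite{BBbook}): $\cp(\{x_0\},B_R)=\lim_{r\to0}\cp(\itoverline{B_r},B_R)\simge R^{p'-p}>0$. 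A standard cutoff argument ($v=w\phi$ with $\phi$ a Lipschitz bump at $x_0$) shows that $\Cp(\{x_0\})=0$ would force $\cp(\{x_0\},B_R)=0$, a contradiction, so $\Cp(\{x_0\})>0$.

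We expect the main obstacle, besides the exponent-set bookkeeping in \ref{e-a}, to be the uniform-in-$r$ capacity lower bound in \ref{e-c}. Two points need care: it must be carried out with only a \p-Poincar\'e inequality, which is why the crude estimate of the averages $\vint_{\la B^k}g_u^p\,d\mu$ is used in place of the $L^{p_0}\!\to\!L^p$ interpolation behind Lemma~\ref{lem-meta-meta}; and one must avoid trying to bound $u(x_0)$ directly by telescoped averages of $u$, since the value of a Newtonian function at $x_0$ carries no information exactly when $\Cp(\{x_0\})=0$ — the passage from annuli to the singleton is instead routed through $\cp(\itoverline{B_r},B_R)$ and the decreasing continuity of capacity.
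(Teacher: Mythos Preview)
The paper does not give its own proof of this proposition: it is quoted from \cite[Proposition~8.2]{BBLeh1}, with only the remark following it that part~\ref{e-a} needs no doubling assumption. So there is nothing in the paper to compare your argument against directly.

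On its own merits your sketch is essentially sound. Part~\ref{e-a} is correct: the dichotomy $p\notin\uSo$ versus $p=\uso\in\lSo$ exhausts the hypothesis, and in each case you show $I(\delta)=\infty$; the conclusion then follows by the argument of Lemma~\ref{lem-suff-cap-zero}, which is precisely what you reproduce. Part~\ref{e-c} has the right core estimate, namely the uniform lower bound $\cp(B_r,B_R)\simge R^{p'-p}$ obtained from Lemma~\ref{lem:chain-estimate} with $p_0=p$ and the crude average bound; this is exactly the right substitute for Lemma~\ref{lem-meta-meta} when only a \p-Poincar\'e inequality is available, and your self-diagnosis in the final paragraph is accurate.

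The one place to tighten is the final passage in~\ref{e-c} from the uniform lower bound on $\cp(B_r,B_R)$ to $\Cp(\{x_0\})>0$. You invoke decreasing continuity of $\cp(\,\cdot\,,B_R)$ on compact sets and cite \cite{BBbook}, but the relevant results there (outer regularity of $\cp$, Theorem~6.19) are stated under the global standing assumptions, whereas here only pointwise hypotheses at $x_0$ are in force. The fix is cheap: replace this step by the contrapositive of \cite[Proposition~4.7]{BBLeh1} (which the present paper itself uses in the proof of Proposition~\ref{prop-meta-zero-cap}), namely that under local compactness at $x_0$, $\Cp(\{x_0\})=0$ forces $\cp(B_r,B_R)\to0$ as $r\to0$. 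Your uniform lower bound then gives the contradiction directly, and the separate cutoff argument becomes unnecessary.
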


A careful check of the proofs in \cite{BBLeh1} 
shows that no doubling assumption is needed for 
Proposition~\ref{prop-cp-x0}\,\ref{e-a}.
Already Holo\-pai\-nen--Shanmugalingam~\cite{HoSha},
in the comment following the proof of Lemma~3.6 therein,
pointed out
that if $p \in \lSo$ and $X$ is locally compact, then
$\cp(\{x_0\},\Om)=0$ whenever $\Om \ni x_0$ is open,
from which it easily follows that $\Cp(\{x_0\})=0$
(cf.\ \eqref{eq-cp=>Cp} below).

If $p=\uso \in \uSo \setm \lSo$, then
the exponent sets are not fine enough to 
capture when $x_0$ has zero capacity, see Example~9.4 in \cite{BBLeh1}. 
The following results, 
which are based on the
general capacity estimates from Section~\ref{sect-meta},
are therefore of interest.

\begin{lem}\label{lem-suff-cap-zero}
Let $\Om\subset X$ be a bounded open set with $x_0\in\Om$.
If 
 \begin{equation}   \label{eq-cap-0-int-infty}
 \int_0^{\delta} \biggl( \frac{\rho}{\mu(B_\rho)} \biggr)^{1/(p-1)} \,d\rho = \infty
 \quad\text{for some $\de>0$,}
 \end{equation}
or equivalently
 \begin{equation}   \label{eq-cap-0-int-infty-sum}
 \sum_{k=k_0}^\infty \biggl( \frac{2^{-kp}}{\mu(B_{2^{-k}})} \biggr)^{1/(p-1)}  = \infty
 \quad\text{for some integer $k_0\ge 0$,}
 \end{equation}
then $\cp(\{x_0\},\Omega)=\Cp(\{x_0\})=0$.
\end{lem}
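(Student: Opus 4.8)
The plan is to exhibit, for each small $\eps>0$ and \emph{with no doubling or Poincar\'e assumption}, a single test function that is admissible for both $\cp(\{x_0\},\Om)$ and $\Cp(\{x_0\})$ and whose full Newtonian norm tends to $0$ as $\eps\to0$. The only analytic input is the upper bound \eqref{eq-upper-meta-a} of Proposition~\ref{prop-est-cap-int-upper}, which holds for an arbitrary measure.

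First I would record some elementary facts about $\varphi(\rho):=(\rho/\mu(B_\rho))^{1/(p-1)}$. Since $\rho\mapsto\mu(B_\rho)$ is nondecreasing and strictly positive, $\varphi$ is bounded on each interval $[a,b]\subset(0,\infty)$, so $\int_a^b\varphi\,d\rho<\infty$ whenever $0<a<b<\infty$; hence \eqref{eq-cap-0-int-infty} for one $\delta>0$ is equivalent to $\int_0^a\varphi\,d\rho=\infty$ for \emph{every} $a>0$. Splitting such an integral over the dyadic annuli $[2^{-k-1},2^{-k}]$ and estimating $\mu(B_\rho)$ there between $\mu(B_{2^{-k-1}})$ and $\mu(B_{2^{-k}})$ shows that $\int_{2^{-k-1}}^{2^{-k}}\varphi\,d\rho\simeq(2^{-kp}/\mu(B_{2^{-k}}))^{1/(p-1)}$, which gives the equivalence of \eqref{eq-cap-0-int-infty} and \eqref{eq-cap-0-int-infty-sum}. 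Finally, if $\mu(\{x_0\})>0$ then $\varphi(\rho)\le(\rho/\mu(\{x_0\}))^{1/(p-1)}$, which is integrable near $0$; hence \eqref{eq-cap-0-int-infty} forces $\mu(\{x_0\})=0$, and therefore $\mu(B_\eps)\downarrow\mu(\{x_0\})=0$ as $\eps\to0^+$.

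The core step is the construction. Fix $\eps\in(0,1)$ so small that $B_\eps\subset\Om$. Since $\int_0^\eps\varphi\,d\rho=\infty$, by monotone convergence $\int_{2r}^\eps\varphi\,d\rho\to\infty$ as $r\to0^+$, so I may pick $r_\eps\in(0,\eps/2)$ with $\bigl(\int_{2r_\eps}^\eps\varphi\,d\rho\bigr)^{1-p}<\eps$; by \eqref{eq-upper-meta-a} this gives $\cp(B_{r_\eps},B_\eps)\simle\eps$. Choose $u_\eps\in\Np_0(B_\eps)$ with $u_\eps\ge1$ on $B_{r_\eps}$ and $\int_{B_\eps}g_{u_\eps}^p\,d\mu\le\cp(B_{r_\eps},B_\eps)+\eps$, and truncate to $v_\eps:=\min\{1,\max\{0,u_\eps\}\}$. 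Since truncation does not increase the minimal \p-weak upper gradient, $v_\eps\in\Np_0(B_\eps)$ with $0\le v_\eps\le1$, $v_\eps=1$ on $B_{r_\eps}\ni x_0$, $v_\eps=0$ on $X\setm B_\eps$, and $\int_X g_{v_\eps}^p\,d\mu=\int_{B_\eps}g_{v_\eps}^p\,d\mu\simle\eps$.

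It remains to let $\eps\to0$. Because $v_\eps=0$ on $X\setm\Om$ and $v_\eps\ge1$ on $\{x_0\}$, it is admissible for $\cp(\{x_0\},\Om)$, whence $\cp(\{x_0\},\Om)\le\int_\Om g_{v_\eps}^p\,d\mu\simle\eps\to0$. For the Sobolev capacity, $v_\eps\in\Np(X)$ and $v_\eps\ge1$ on $\{x_0\}$, so
\[
\Cp(\{x_0\})\le\|v_\eps\|_{\Np(X)}^p=\int_X|v_\eps|^p\,d\mu+\int_X g_{v_\eps}^p\,d\mu\le\mu(B_\eps)+\int_X g_{v_\eps}^p\,d\mu\simle\mu(B_\eps)+\eps\to0,
\]
using $0\le v_\eps\le1$, that $v_\eps$ vanishes outside $B_\eps$, and $\mu(B_\eps)\to0$. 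The single point that has to be handled with care — and the reason no Sobolev or Poincar\'e inequality is needed — is that the \emph{outer} ball $B_\eps$ must be shrunk together with $r_\eps$: this is what forces the $L^p$-mass of the test function down to $\mu(\{x_0\})=0$. With a fixed outer domain one would only recover $\cp(\{x_0\},\Om)=0$, not $\Cp(\{x_0\})=0$.
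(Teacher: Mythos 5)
Your argument is correct and follows essentially the same route as the paper's proof: both rest on the upper bound \eqref{eq-upper-meta-a} to make $\cp(B_r,B_\eps)$ small, and both handle the Sobolev capacity by shrinking the outer ball so that the $L^p$-term is controlled by $\mu(B_\eps)\to\mu(\{x_0\})=0$. The paper simply phrases this more compactly via monotonicity of the capacities instead of constructing the truncated test functions explicitly.
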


Note that the conditions in~\eqref{eq-cap-0-int-infty} 
and \eqref{eq-cap-0-int-infty-sum} can be equivalently
required
for all $\de>0$ and all integers $k_0\ge 0$, respectively.

\begin{proof}
We may assume that $B(x_0,\de)\subset\Om$.
Hence it follows from 
\eqref{eq-upper-meta-a}
and \eqref{eq-cap-0-int-infty} that
\[
\cp(\{x_0\},\Omega)\le \lim_{r \to 0} \cp(B_r,B_\de) \simle 
\lim_{r \to 0} \biggl( \int_{2r}^\de \biggl( \frac{\rho}{\mu(B_\rho)} \biggr)^{1/(p-1)} 
           \,d\rho \biggr)^{1-p} = 0.
\]
For the second part,
we have $\mu(\{x_0\})=0$ since the integral in~\eqref{eq-cap-0-int-infty}
diverges, and thus
\begin{equation} \label{eq-cp=>Cp}
    \Cp(\{x_0\}) 
   \le \lim_{\de \to 0} \bigl(\mu(B(x_0,\de))+   \cp(\{x_0\},B(x_0,\de))\bigr)
   = \lim_{\de \to 0} \mu(B(x_0,\de))
   =0,
\end{equation}
by the regularity of the Borel regular measure $\mu$.
\end{proof}

To obtain also the converse direction, we need stronger (pointwise)
assumptions as follows.

\begin{prop}\label{prop-meta-zero-cap}
Assume that 
\begin{enumerate}
\renewcommand{\theenumi}{\textup{(\roman{enumi})}}%
\item 
$\mu$ is reverse-doubling at $x_0$ for small radii,
\item
 $\mu$ is doubling at $x_0$ for small radii,
and
\item \label{kk-iii}
$\mu$ supports a $p_0$-Poincar\'e inequality at $x_0$ for small radii 
and some $1\le p_0<p$.
\end{enumerate}
Let $\Om\subset X$ be a bounded open set with $x_0 \in \Om$,
and assume that $x_0$ has a 
locally
compact neighbourhood.
\begin{enumerate}
\item \label{prop-meta-zero-cap-a}
Then $\Cp(\{x_0\})=0$ if and only if~\eqref{eq-cap-0-int-infty} holds. 
\item \label{prop-meta-zero-cap-c}
If 
$\mu$ supports a \p-Poincar\'e inequality at $x_0$,
then
$\cp(\{x_0\},\Omega)=0$ if and only if either~\eqref{eq-cap-0-int-infty} holds
or $\Cp(X \setm \Om)=0$.
\end{enumerate}
\end{prop}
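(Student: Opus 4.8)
The plan is to combine Lemma~\ref{lem-suff-cap-zero} (for the ``if'' directions) with the capacity estimate of Theorem~\ref{thm-metaestimate-int-only} and standard facts about the capacities $\Cp$ and $\cp(\,\cdot\,,\,\cdot\,)$; for part~\ref{prop-meta-zero-cap-c} we additionally need a compactness argument based on the \p-Poincar\'e inequality at $x_0$. In part~\ref{prop-meta-zero-cap-a}, the implication ``\eqref{eq-cap-0-int-infty} $\Rightarrow \Cp(\{x_0\})=0$'' is precisely Lemma~\ref{lem-suff-cap-zero}. For the converse I would argue the contrapositive: assuming that the integral in \eqref{eq-cap-0-int-infty} is finite, show that $\Cp(\{x_0\})>0$. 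Note first that reverse-doubling at $x_0$ for small radii forces $\mu(B_r)\to 0$ as $r\to0$, so $\mu(\{x_0\})=0$. After shrinking $R_0$, one may assume that assumptions (i)--(iii) hold on the range of radii needed to apply Theorem~\ref{thm-metaestimate-int-only} to the pairs $(B_r,B_{2R_0})$, that $\overline{B_{2R_0}}$ is compact, and that $I:=\int_0^{2R_0}(\rho/\mu(B_\rho))^{1/(p-1)}\,d\rho<\infty$.

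Theorem~\ref{thm-metaestimate-int-only} then gives, for all $0<2r\le 2R_0$, that $\cp(B_r,B_{2R_0})\simge I^{1-p}$, so $\cp(B_r,B_{2R_0})\ge c_0>0$ with $c_0$ independent of $r$. Since $x_0$ has a locally compact neighbourhood, the variational capacity is outer regular at the compact set $\{x_0\}$ inside $B_{2R_0}$, so
\[
  \cp(\{x_0\},B_{2R_0})=\lim_{r\to0}\cp(B_r,B_{2R_0})=\inf_{0<r\le R_0}\cp(B_r,B_{2R_0})\ge c_0>0.
\]
Finally, multiplying a function that is almost optimal for $\Cp(\{x_0\})$ by a Lipschitz cut-off which equals $1$ on $B_{R_0}$ and vanishes outside $B_{2R_0}$ shows that $\cp(\{x_0\},B_{2R_0})\simle\Cp(\{x_0\})$, with a constant depending on $R_0$ and $\mu(B_{2R_0})$; hence $\Cp(\{x_0\})>0$. (Alternatively, one may run this last step through an estimate of the type~\eqref{eq-cp=>Cp}.)

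For part~\ref{prop-meta-zero-cap-c} we additionally assume the \p-Poincar\'e inequality at $x_0$ for all radii. If \eqref{eq-cap-0-int-infty} holds, then $\cp(\{x_0\},\Om)=0$ by Lemma~\ref{lem-suff-cap-zero}; if $\Cp(X\setm\Om)=0$, then, as $\Om$ is bounded while $\Cp(X\setm\Om)=0$ forces $X$ to be bounded (otherwise $X\setm\overline{B_M}$, with $\Om\subset B_M$, would be a nonempty open set of positive capacity contained in $X\setm\Om$), the function $\chi_\Om$ is equivalent to the constant $1\in\Np(X)$, giving a zero-energy element of $\Np_0(\Om)$ that is admissible for $\cp(\{x_0\},\Om)$, whence $\cp(\{x_0\},\Om)=0$. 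Conversely, assume $\cp(\{x_0\},\Om)=0$. If $\Cp(\{x_0\})=0$, then \eqref{eq-cap-0-int-infty} holds by part~\ref{prop-meta-zero-cap-a}, and we are done; so suppose $\Cp(\{x_0\})>0$ and let me show $\Cp(X\setm\Om)=0$. If not, pick $N$ so large that $\Om\subset B_N$ and $F:=(X\setm\Om)\cap B_N$ has positive Sobolev capacity (possible since $\Cp$ is continuous along increasing sequences), and choose $u_j\in\Np_0(\Om)$, with $0\le u_j\le1$ and $u_j(x_0)\ge1$ (replacing $u_j$ by $\min\{\max\{u_j,0\},1\}$ if needed), such that $\int_\Om g_{u_j}^p\,d\mu\to0$. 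Applying the \p-Poincar\'e inequality at $x_0$ on $B_{2N}$ and using the uniform bound $0\le u_j\le1$ to upgrade the resulting $L^1$-average estimate to an $L^p(B_{2N})$-estimate, we obtain, along a subsequence, $u_j\to a$ in $\Np(B_{2N})$ for some constant $a\in[0,1]$, and hence, along a further subsequence, $u_j\to a$ q.e.\ in $B_{2N}$. Since $\Cp(\{x_0\})>0$, evaluation at $x_0$ forces $a\ge1$, i.e.\ $a=1$; but since $\Cp(F)>0$ there is $y\in F$ with $u_j(y)\to a=1$, contradicting $u_j(y)=0$. Thus $\Cp(X\setm\Om)=0$.

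I expect the main obstacle to be, in part~\ref{prop-meta-zero-cap-a}, the passage from the annular estimate for $\cp(B_r,B_{2R_0})$ to a positive lower bound for $\cp(\{x_0\},B_{2R_0})$: this is exactly where the local compactness hypothesis enters, through the outer regularity (Choquet property) of the variational capacity near $x_0$. In part~\ref{prop-meta-zero-cap-c} the delicate point is the implication $\Cp(\{x_0\})>0\Rightarrow\Cp(X\setm\Om)=0$, where one must combine the boundedness of $\Om$ with the \p-Poincar\'e inequality at $x_0$ for large radii; turning the averaged Poincar\'e estimate into genuine quasieverywhere convergence (via the $L^\infty$-bound and the completeness of $\Np$) is the technical heart of that step.
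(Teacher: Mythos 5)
Your proposal is correct in substance and follows the same skeleton as the paper (Lemma~\ref{lem-suff-cap-zero} for the ``if'' directions, Theorem~\ref{thm-metaestimate-int-only} for the quantitative capacity bounds), but where the paper simply cites Propositions~4.6 and~4.7 of \cite{BBLeh1} for the two nontrivial implications, you re-derive them. In \ref{prop-meta-zero-cap-a} the paper argues directly: if $\Cp(\{x_0\})=0$, then \cite[Proposition~4.7]{BBLeh1} produces arbitrarily small values of $\cp(B_r,B_\de)$, and Theorem~\ref{thm-metaestimate-int-only} then forces the integral to diverge; your contrapositive version is logically the same, and your uniform lower bound $\cp(B_r,B_{2R_0})\simge I^{1-p}$ together with the cut-off comparison $\cp(\{x_0\},B_{2R_0})\simle \Cp(\{x_0\})$ is fine. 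In \ref{prop-meta-zero-cap-c} your Poincar\'e-plus-quasieverywhere-convergence argument (truncate the minimizing sequence, upgrade the $L^1$-average bound to $L^p$ via the uniform bound, extract a q.e.-convergent subsequence, evaluate at $x_0$ and at a point of $X\setm\Om$ of positive capacity) is a genuine, self-contained proof of the contrapositive of \cite[Proposition~4.6]{BBLeh1}; the detour through the boundedness of $X$ in the easy direction is unnecessary (the paper just observes that $\chi_\Om=1$ q.e.\ is admissible) but harmless. What your route buys is independence from \cite{BBLeh1} for part \ref{prop-meta-zero-cap-c}; what it costs is that the argument is longer and relies on routine but unstated transfers of capacity between $\Np(B_{2N})$ and $\Np(X)$.

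The one step you should not treat as free is the assertion in \ref{prop-meta-zero-cap-a} that ``the variational capacity is outer regular at the compact set $\{x_0\}$'', i.e.\ $\cp(\{x_0\},B_{2R_0})=\lim_{r\to0}\cp(B_r,B_{2R_0})$. This Choquet-type continuity on shrinking compacta is exactly the content of \cite[Proposition~4.7]{BBLeh1} (in contrapositive form), and it is not a formal consequence of monotonicity: it requires a quasicontinuity-type input to pass from a function that is $\ge1$ only at the single point $x_0$ to one that is large on a whole ball. The outer-regularity statement the paper itself quotes elsewhere (\cite[Theorem~6.19\,(vii)]{BBbook}) is proved under the global standing assumptions of completeness, doubling and a Poincar\'e inequality, whereas here only pointwise hypotheses at $x_0$ are in force; the local compactness of a neighbourhood of $x_0$ is precisely what rescues the argument, but this needs to be either proved or attributed to \cite[Proposition~4.7]{BBLeh1} rather than asserted as standard.
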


Instead of~\eqref{eq-cap-0-int-infty} one can  
equivalently require 
that~\eqref{eq-cap-0-int-infty-sum} holds.
For $n \ge 2$, consider the following union of concentric layers
\[
   X=\{ x \in \R^{n} : |x| \notin E\}, 
\quad \text{where } E=\bigcup_{j=1}^\infty (2^{-2j},2^{1-2j}) \subset \R,
\]
equipped with the Lebesgue measure.
In this case $\Cp(\{0\})=0$ for all $1<p<\infty$, but \eqref{eq-cap-0-int-infty}
fails if $p>n$ and $x_0=0$.
Thus the assumption~\ref{kk-iii} cannot be dropped, even if $\mu$
is assumed to be globally doubling and globally reverse-doubling.
That the extra Poincar\'e assumption in \ref{prop-meta-zero-cap-c} cannot be 
dropped 
is easily seen by considering e.g.\
$X=\itoverline{B(0,1)} \cup \itoverline{B(3,1)}$ in $\R^n$, with $x_0=0$,
$\Om = \itoverline{B(0,1)}$ and $p>n$.

\begin{proof}
\ref{prop-meta-zero-cap-a}
Assume first that $\Cp(\{x_0\})=0$, and let $\eps,\de>0$.
By Proposition~4.7 in~\cite{BBLeh1},
there is $0<r<\de$ such that $\cp(B_r,B_\de)<\eps$. 
Since $\eps>0$ was arbitrary, 
we obtain from Theorem~\ref{thm-metaestimate-int-only}
that~\eqref{eq-cap-0-int-infty} holds.
The converse implication follows directly from Lemma~\ref{lem-suff-cap-zero}.

\ref{prop-meta-zero-cap-c}
If $\cp(\{x_0\},\Omega)=0$ and $\Cp(X \setm \Om)>0$,
then 
\cite[Proposition~4.6]{BBLeh1} shows that
$\Cp(\{x_0\})=0$, and thus~\eqref{eq-cap-0-int-infty} holds by 
part~\ref{prop-meta-zero-cap-a}.
The converse implication follows from 
Lemma~\ref{lem-suff-cap-zero} and the fact that if
$\Cp(X \setm \Om)=0$ then $u \equiv 1$ is admissible in  
the definition of $\cp(\{x_0\},\Omega)$, which is thus zero.
\end{proof}

\begin{deff} \label{def-p-par}
An unbounded space $X$ is \emph{\p-parabolic} if 
$\cp(B,X)=0$ for all balls $B\subset X$,
otherwise it is \emph{\p-hyperbolic}.
\end{deff}

On (sub)Riemannian manifolds, \p-parabolicity is often defined 
as the \emph{nonexistence of global \p-harmonic Green functions}, 
which in those situations is known to be equivalent to the above
requirement that $\cp(B,X)=0$ for all balls $B\subset X$.
In the generality of this section, there is no  available
theory for \p-harmonic functions.
Even in the standard setting of complete metric spaces 
with a doubling measure supporting a Poincar\'e inequality, 
global \p-harmonic Green functions are little studied.

In Holo\-pai\-nen~\cite[p.\ 322]{HoDuke}, 
Holo\-pai\-nen--Koskela~\cite[p.\ 3428]{HoKo} and 
Holo\-pai\-nen--Shanmugalingam~\cite[Definition~3.13]{HoSha},
\p-parabolicity was defined by requiring that $\cp(K,X)=0$
for all compact sets $K$.
This is equivalent to Definition~\ref{def-p-par} provided that $X$ is proper,
but in nonproper spaces our definition seems to be more relevant.

Sufficient and/or necessary conditions for \p-parabolicity 
using the integral \eqref{eq-int-parab=infty} below
have been obtained under various assumptions in a number of papers, 
see e.g.\ \cite{CoHoSC}, \cite{Ho}--\cite{HoSha}
and the end of the introduction for more details.
In \cite[Proposition~8.6 and Remark~8.7]{BBLeh1} we gave simple
conditions for $\cp(B,X)=0$ (and thus 
\p-parabolicity) in terms of exponent sets defined for large radii
similarly to $\lSo$ and $\uSo$.
As in Proposition~\ref{prop-cp-x0}, 
also here there are cases in which
these exponent sets
are not fine enough to
capture when \p-parabolicity holds,
but using the estimate in Theorem~\ref{thm-metaestimate-int-only}
we are now able to give a precise characterization,
under mild assumptions.

\begin{thm}   \label{thm-p-parab}
Let $X$ be unbounded and 
$r_0>0$.
Then $X$ is \p-parabolic 
if 
\begin{equation}   \label{eq-int-parab=infty}
\int_{r_0}^\infty \biggl( \frac{\rho}{\mu(B_\rho)} \biggr)^{1/(p-1)} \,d\rho=\infty.
\end{equation}

If moreover  $\mu$ is doubling and reverse-doubling at $x_0$ and supports
a $p_0$-Poincar\'e inequality at $x_0$ for some $1\le p_0<p$,
all three properties holding for large radii,
then $X$ is \p-parabolic 
if and only if \eqref{eq-int-parab=infty} holds.
\end{thm}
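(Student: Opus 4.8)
The plan is to prove the two implications separately: the first needs no regularity of $\mu$ and follows from the upper bound in Proposition~\ref{prop-est-cap-int-upper}, while the second rests on the sharp two-sided estimate in Theorem~\ref{thm-metaestimate-int-only}. For the first claim, note first that $\rho\mapsto(\rho/\mu(B_\rho))^{1/(p-1)}$ is bounded above and below by positive constants on every compact subinterval of $(0,\infty)$, since $\mu(B_\rho)$ is nondecreasing with $0<\mu(B_\rho)<\infty$; hence the divergence in~\eqref{eq-int-parab=infty} does not depend on $r_0$, and $\int_a^\infty(\rho/\mu(B_\rho))^{1/(p-1)}\,d\rho=\infty$ for every $a>0$. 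Given an arbitrary ball $B=B(x,s)$, put $\sigma=s+d(x,x_0)$, so that $B\subset B_\sigma$. For every $R>2\sigma$, monotonicity of the variational capacity in the set and in the domain together with~\eqref{eq-upper-meta-a} yields
\[
\cp(B,X)\le\cp(B_\sigma,X)\le\cp(B_\sigma,B_R)\simle\biggl(\int_{2\sigma}^{R}\biggl(\frac{\rho}{\mu(B_\rho)}\biggr)^{1/(p-1)}\,d\rho\biggr)^{1-p}.
\]
Letting $R\to\infty$, the right-hand side tends to $0$, so $\cp(B,X)=0$; as $B$ was arbitrary, $X$ is \p-parabolic by Definition~\ref{def-p-par}.

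For the converse under the extra hypotheses, by the first part it suffices to prove the contrapositive: if $I:=\int_{r_0}^\infty(\rho/\mu(B_\rho))^{1/(p-1)}\,d\rho<\infty$, then some ball has positive capacity. First observe that $\liminf_{R\to\infty}\mu(B_R)/R^p=0$, since $\mu(B_R)\simge R^p$ for all large $R$ would force $I=\infty$. Choose $R_1>0$, without loss of generality $R_1\ge r_0$, so large that for all radii $\ge R_1$ the measure $\mu$ is doubling and reverse-doubling at $x_0$, with reverse-doubling constant $\xi$, and supports the $p_0$-Poincar\'e inequality at $x_0$; then fix any $r\ge R_1$. Since all three properties now hold for all radii $\ge r$, Theorem~\ref{thm-metaestimate-int-only} with $R_0=\infty$, combined with Remark~\ref{rmk-metaestimate}, applies at this $r$ and gives, with constants independent of $R$,
\[
\cp(B_r,B_R)\simeq\biggl(\int_r^{R}\biggl(\frac{\rho}{\mu(B_\rho)}\biggr)^{1/(p-1)}\,d\rho\biggr)^{1-p}\qquad\text{for all }R\ge 2r.
\]
As $0<\int_r^\infty(\rho/\mu(B_\rho))^{1/(p-1)}\,d\rho\le I<\infty$, there is a constant $c_0>0$ with $\cp(B_r,B_R)\ge c_0$ for all $R\ge 2r$.

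It remains to bound $\cp(B_r,X)$ from below. Let $u\in\Np(X)$ be admissible for $\cp(B_r,X)$; replacing $u$ by $\min\{1,\max\{u,0\}\}$ we may assume $0\le u\le 1$, so $u\in L^p(X)$ and $\int_X g_u^p\,d\mu<\infty$. Pick a sequence $R_j\to\infty$ with $R_j\ge 2r$ and $\mu(B_{R_j})/R_j^p\to 0$, and set $\eta_j(x)=\min\{1,\max\{0,2-2d(x,x_0)/R_j\}\}$, a Lipschitz function that equals $1$ on $B_{R_j/2}\supset B_r$, vanishes on $X\setm B_{R_j}$, and has $g_{\eta_j}\le 2/R_j$. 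Then $u\eta_j\in\Np_0(B_{R_j})$ with $u\eta_j\ge 1$ on $B_r$ and $g_{u\eta_j}\le g_u+g_{\eta_j}$ a.e., whence
\[
c_0\le\cp(B_r,B_{R_j})\le\int_X(g_u+g_{\eta_j})^p\,d\mu\le 2^{p-1}\biggl(\int_X g_u^p\,d\mu+\frac{2^p\mu(B_{R_j})}{R_j^p}\biggr).
\]
Letting $j\to\infty$ gives $\int_X g_u^p\,d\mu\ge 2^{1-p}c_0$, and taking the infimum over admissible $u$ yields $\cp(B_r,X)\ge 2^{1-p}c_0>0$. Thus $X$ is \p-hyperbolic, which together with the first part gives the claimed equivalence.

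The main obstacle is that $X$ need not be proper, so $\cp(B_r,X)$ cannot simply be identified with $\lim_{R\to\infty}\cp(B_r,B_R)$ through an exhaustion by compact sets; the cutoff estimate above controls $\cp(B_r,B_R)$ only up to the error $2^{p-1}\cdot2^p\mu(B_R)/R^p$, which in general does not vanish. The point that makes the argument work is that the assumed \emph{convergence} of the integral forces $\liminf_{R\to\infty}\mu(B_R)/R^p=0$, so the error term can be removed along a suitable subsequence.
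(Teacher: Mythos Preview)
Your first implication is correct and matches the paper's argument.

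For the converse, there is a genuine error in the step ``$\liminf_{R\to\infty}\mu(B_R)/R^p=0$, since $\mu(B_R)\simge R^p$ for all large $R$ would force $I=\infty$.'' The implication goes the wrong way: a lower bound $\mu(B_R)\ge cR^p$ yields only the \emph{upper} bound $(\rho/\mu(B_\rho))^{1/(p-1)}\le C/\rho$ on the integrand, which says nothing about divergence of $I$. Concretely, for Lebesgue measure on $\R^n$ with $1<p<n$ one has $\mu(B_R)\simeq R^n$, so $\mu(B_R)/R^p\to\infty$, while $I\simeq\int_{r_0}^\infty\rho^{(1-n)/(p-1)}\,d\rho<\infty$. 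Thus your error term $2^p\mu(B_{R_j})/R_j^p$ cannot be made small along any subsequence, and the argument fails precisely in the prototypical \p-hyperbolic situation.

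The paper avoids this by choosing the cutoff differently: instead of scaling with $R$, it takes $\eta(x)=\min\{(R-d(x,x_0))_{\limplus},1\}$, which has Lipschitz constant $1$ and whose gradient is supported in the thin annulus $B_R\setm B_{R-1}$. The Leibniz rule then gives an error term $\int_{B_R\setm B_{R-1}}|u|^p\,d\mu$, and this tends to $0$ as $R\to\infty$ simply because $u\in L^p(X)$ (being in $\Np(X)$). Apart from this choice of cutoff, the remaining ingredients---the two-sided estimate from Theorem~\ref{thm-metaestimate-int-only} via Remark~\ref{rmk-metaestimate} for radii $\ge r$---are exactly as in your proposal.
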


Condition~\eqref{eq-int-parab=infty} is clearly
independent of the choices of $r_0$ and $x_0$.
The first part recovers
Proposition~2.3 in Holo\-pai\-nen--Koskela~\cite{HoKo}.

\begin{proof}
For the first part, let
$B\subset X$ be a ball. Then $B\subset B_r$ for some $r\ge r_0$,
and for $R\ge 2r$ we have by 
Proposition~\ref{prop-est-cap-int-upper}
that
\[
\cp(B,X) \le \cp(B_r,B_R) 
\simle \biggl( \int_{2r}^R \biggl( \frac{\rho}{\mu(B_\rho)} \biggr)^{1/(p-1)} 
          \,d\rho \biggr)^{1-p} \to 0, \quad \text{as }R\to\infty,
\]
whenever \eqref{eq-int-parab=infty} holds.

For the converse implication in the second part we may assume that
the doubling and reverse-doubling conditions and
the Poincar\'e 
inequality hold for balls $B_r=B(x_0,r)$ with $r \ge r_0$.
If $\cp(B_r,X)=0$ for some $r\ge r_0$, then let $\eps>0$
be arbitrary and find $u\in \Np(X)$ so that  $u\ge1$ on $B_r$ and
\(
\int_X g_u^p \,d\mu <\eps.
\)
For $R\ge 2r+1$, let $\eta(x)=\min\{(R-d(x,x_0))_\limplus,1\}$
be a cut-off function.
Then $u\eta$ is admissible for $\cp(B_r,B_R)$ and hence,
by Theorem~\ref{thm-metaestimate-int-only} and Remark~\ref{rmk-metaestimate},
\begin{align*}
&\biggl( \int_{r}^R \biggl( \frac{\rho}{\mu(B_\rho)} \biggr)^{1/(p-1)} 
          \,d\rho \biggr)^{1-p}
\simeq \cp(B_r,B_R) \le \int_X g_{u\eta}^p \,d\mu \\
&\quad \quad \quad \le 2^p \biggl( \int_X g_{u}^p \,d\mu 
    + \int_{B_R\setm B_{R-1}} |u|^p \,d\mu \biggr) 
     \to 2^p \int_X g_{u}^p \,d\mu  < 2^p\eps,
\end{align*}
as $R\to\infty$, since $u\in L^p(X)$.
As $\eps$ was arbitrary, we conclude that \eqref{eq-int-parab=infty}
holds ($r_0$ therein can clearly be replaced by $r$).
\end{proof}

The following example shows that the assumption
of a Poincar\'e inequality cannot be dropped from
the second part of
Theorem~\ref{thm-p-parab}.

\begin{example}
For $n \ge 2$, consider the following 
union of concentric layers
\[
   X=\{ x \in \R^{n} : |x| \notin E\}, 
\quad \text{where } E=\bigcup_{j=1}^\infty (2^{2j-1},2^{2j}) \subset \R,
\]
equipped with the Lebesgue measure.
In this case $X$ is \p-parabolic for all $1<p<\infty$, 
but \eqref{eq-int-parab=infty}
fails if $1<p <n$. 
Thus the Poincar\'e assumption cannot be dropped, even if $\mu$
is assumed to be globally doubling and globally reverse-doubling.

Similar connected examples are given by ``the infinite chessboard''
\[
    X=\R^{2} \setm \bigcup_{j,k=-\infty}^\infty \bigl((2j,2j+1) \times (2k,2k+1)\bigr)
    \cup \bigl((2j-1,2j) \times (2k-1,2k)\bigr)
\]
and its generalizations to $\R^n$, $n \ge 3$.
\end{example}

\section{\texorpdfstring{\p}{p}-harmonic,
singular and Green functions}
\label{sect-harm}

\emph{From now on, we assume that $X$ is complete, that
$\mu$ is doubling and supports a \p-Poincar\'e inequality,
and that $\Om \subset X$ 
is a nonempty open set with $x_0 \in \Om$.
As always in this paper, $1<p<\infty$.}

\medskip

In this section we first recall the definitions of
\p-harmonic and superharmonic functions 
and present some of their important properties that
will be needed later.
After that we recall 
results from Bj\"orn--Bj\"orn--Lehrb\"ack~\cite{BBLehGreen}
on the existence and properties of
singular and Green functions.

It follows from the assumptions that
$X$ is proper and quasiconvex 
and thus also connected and locally connected.
Moreover, 
$\mu$ is reverse-doubling with $\xi=2$.
These facts will be important to keep in mind.
Recall also that by Keith--Zhong~\cite[Theorem~1.0.1]{KeZh}, 
$X$ supports a $p_0$-Poincar\'e inequality
for some $1 \le p_0<p$. 
This is assumed explicitly in some of the papers we refer to below.

The results in the rest of the paper
also hold if $X$ is a proper metric space equipped
with a locally doubling measure $\mu$ that
supports a local \p-Poincar\'e inequality,
as defined in Bj\"orn--Bj\"orn~\cite{BBsemilocal},
see \cite[Section~11]{BBLehGreen}.
The dependence on the constants will then
be affected in a natural way.

\begin{deff} \label{def-quasimin}
A function $u \in \Nploc(\Om)$ is a
\emph{\textup{(}super\/\textup{)}minimizer} in $\Om$
if 
\[ 
      \int_{\phi \ne 0} g^p_u \, d\mu
           \le \int_{\phi \ne 0} g_{u+\phi}^p \, d\mu
           \quad \text{for all (nonnegative) } \phi \in \Np_0(\Om).
\] 
A \emph{\p-harmonic function} is a continuous minimizer
(by which we mean real-valued continuous in this paper).
\end{deff}

For various characterizations of minimizers and superminimizers  
see Bj\"orn~\cite{ABkellogg}.
It was shown in Kinnunen--Shanmugalingam~\cite{KiSh01} that 
under our standing
assumptions,
a minimizer can be modified on a set of zero (Sobolev) capacity to obtain
a \p-harmonic function. For a superminimizer $u$,  it was shown by
Kinnunen--Martio~\cite{KiMa02} that its \emph{lsc-regularization}
\[ 
 u^*(x):=\essliminf_{y\to x} u(y)= \lim_{r \to 0} \essinf_{B(x,r)} u
\] 
is also a superminimizer  and $u^*= u$ q.e.

\begin{deff} \label{deff-superharm}
A function $u \colon \Om \to (-\infty,\infty]$ is 
\emph{superharmonic}
in $\Om$ if
\begin{enumerate}
\renewcommand{\theenumi}{\textup{(\roman{enumi})}}%
\item \label{cond-a} $u$ is lower semicontinuous;
\item \label{cond-b} 
 $u$ is not identically $\infty$ in any component of $\Om$;
\item \label{cond-c}
for every nonempty open set $G \Subset \Om$ with $\Cp(X \setm G)>0$,
and all 
functions $v \in C(\itoverline{G})$ such that
$v$ is \p-harmonic in $G$
we have $v \le u$ in $G$
whenever $v \le u$ on $\bdy G$.
\end{enumerate}
\end{deff}

As usual, by $G \Subset \Om$ we mean that $\itoverline{G}$
is a compact subset of $\Om$.
By Theorem~6.1 in Bj\"orn~\cite{ABsuper}
(or \cite[Theorem~14.10]{BBbook}),
this definition of superharmonicity is equivalent to  the 
definition usually used
in metric spaces, e.g.\ in \cite{BBbook} and \cite{BBLehGreen}.
It also coincides with the definitions used
in $\R^n$ and on Riemannian manifolds.
Superharmonic functions 
are always lsc-regularized (i.e.\ $u^*=u$).
Any lsc-regularized superminimizer is superharmonic, and
conversely any \emph{bounded} superharmonic function is an lsc-regularized superminimizer
and thus belongs to $\Nploc(\Om)$.

The following definition of singular and Green functions
in metric spaces was given in~\cite[Definition~1.1]{BBLehGreen}.
Recall that a \emph{domain} is a nonempty open connected set.

\begin{deff} \label{deff-sing}
Let $\Om \subset X$ be a bounded domain. 
A positive
function $u\colon\Om \to (0,\infty] $ is a \emph{singular function} in $\Om$
with singularity at $x_0 \in \Om$ if it satisfies the following
properties\/\textup{:} 
\begin{enumerate}
\renewcommand{\theenumi}{\textup{(S\arabic{enumi})}}%
\item \label{dd-s}
  $u$ is superharmonic
  in $\Om$\textup{;}
\item \label{dd-h}
$u$ is \p-harmonic in $\Om \setm \{x_0\}$\textup{;}
\item \label{dd-sup}
$u(x_0)=\sup_\Om u$\textup{;}
\item \label{dd-inf}
$\inf_\Om u = 0$\textup{;}
\item \label{dd-Np0}
$\ut \in \Nploc(X \setm \{x_0\})$, where
\[
   \ut = \begin{cases}
     u & \text{in } \Om, \\
     0 & \text{on } X \setm \Om.
     \end{cases}
\]
\end{enumerate}
\medskip

A \emph{Green function} 
is a singular function 
which satisfies 
\begin{equation} \label{eq-normalized-Green-intro-deff}
\cp(\Om^b,\Om) = b^{1-p},
\quad \text{when }
   0  <b < u(x_0),
\end{equation}
where $\Om^b=\{x\in \Om:u(x)\ge b\}$.
\end{deff}

An earlier definition of singular functions along similar lines is due to 
Holo\-pai\-nen--Shan\-mu\-ga\-lin\-gam~\cite{HoSha}.
Under our assumptions and with natural interpretation for the
values at $x_0$ and in $X\setminus\Om$,
a Green function as in Definition~\ref{deff-sing} is
a singular function in the sense of~\cite{HoSha}, while
singular functions in~\cite{HoSha} are special cases of
our singular functions.
See \cite[Section~12]{BBLehGreen} for
a more precise comparison of these definitions.

The existence of singular and Green functions
in bounded domains
was studied in detail in~\cite{BBLehGreen}, and the following 
is one of the main results therein.
Note that the condition $\Cp(X\setm \Om)>0$ below 
is always true if $X$ is unbounded.
In fact, under the assumption $\Cp(X\setm \Om)>0$
conditions~\ref{dd-sup} and~\ref{dd-inf} are superfluous in Definition~\ref{deff-sing}
by~\cite[Theorem~1.6]{BBLehGreen}.

\begin{thm} \label{thm-main-intro}
\textup{(Theorem~1.3 in~\cite{BBLehGreen})}
Let $\Om \subset X$ be a bounded domain and let $x_0 \in \Om$.
Then there exists a Green function
in $\Om$ with singularity at $x_0$ if and only if $\Cp(X\setm \Om)>0$.
Moreover, if $u$ is a singular function in $\Om$ with singularity at $x_0$,
then there is a unique $\alp>0$ such that $\alp u$ is a Green function.
\end{thm}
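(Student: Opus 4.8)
The plan is to prove the two implications separately and to read off the uniqueness of the scaling factor from an elementary scaling identity for \p-capacitary potentials; the genuinely hard part is producing a singular function when $\Cp(X\setm\Om)>0$.

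First I would dispose of the easy implication. If $\Cp(X\setm\Om)=0$, then the constant $1$, extended by $0$ outside $\Om$, lies in $\Np_0(\Om)$ with $\int_\Om g_1^p\,d\mu=0$, so $\cp(E,\Om)=0$ for every $E\subset\Om$. In particular $\cp(\Om^b,\Om)=0\ne b^{1-p}$ for $0<b<u(x_0)$, so \eqref{eq-normalized-Green-intro-deff} cannot hold and no Green function exists. (The same computation, applied to a superlevel set, also shows no singular function can exist in this case.)

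Next I would settle the uniqueness statement, \emph{assuming} a singular function $u$ in $\Om$ with singularity at $x_0$ is given. Fix $0<a<u(x_0)$. By lower semicontinuity and $u(x_0)=\sup_\Om u>a$, the set $\{u>a\}$ is an open neighbourhood of $x_0$, so $v_a:=\min\{u,a\}/a$ equals the constant $1$ near $x_0$, equals $1$ on all of $\Om^a=\{u\ge a\}$, vanishes outside $\Om$ (by~\ref{dd-Np0}), and on $\{u<a\}=\Om\setm\Om^a$ (which omits $x_0$) coincides with $u/a$ and is therefore \p-harmonic. Hence $v_a$ is the \p-capacitary potential of $\Om^a$ in $\Om$, and the standard scaling relation for such potentials gives $\cp(\{v_a\ge s\},\Om)=s^{1-p}\cp(\Om^a,\Om)$ for $0<s\le1$. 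Since $\{v_a\ge s\}=\Om^{sa}$, putting $t=sa$ shows that $\phi:=t^{p-1}\cp(\Om^t,\Om)$ does not depend on $t\in(0,u(x_0))$. Now $\alpha u$ satisfies \eqref{eq-normalized-Green-intro-deff} exactly when $\alpha^{1-p}=\phi$, which pins down $\alpha=\phi^{1/(1-p)}>0$ uniquely; since a positive multiple of a singular function is again singular, this $\alpha u$ is a Green function, and it is the only multiple of $u$ that is one.

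It then remains to construct a singular function when $\Cp(X\setm\Om)>0$; by the previous paragraph one such function suffices. The plan is to take a limit of normalised \p-capacitary potentials: fix $r_0>0$ with $B_{2r_0}\Subset\Om$, put $r_j=2^{-j}r_0$, let $w_j$ be the \p-capacitary potential of $\itoverline{B_{r_j}}$ in $\Om$ (extended by $1$ on $\itoverline{B_{r_j}}$), and set $u_j=\cp(B_{r_j},\Om)^{1/(1-p)}w_j$. The scaling relation gives $\cp(\{u_j\ge c\},\Om)=c^{1-p}$ for every $0<c\le\cp(B_{r_j},\Om)^{1/(1-p)}$, so for fixed $c$ and all large $j$ the function $\min\{u_j,c\}/c$ is the \p-capacitary potential of $\{u_j\ge c\}$ and $\int_{\{u_j<c\}}g_{u_j}^p\,d\mu=c^p\cp(\{u_j\ge c\},\Om)=c$. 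These uniform local energy bounds on $\Om\setm\{x_0\}$ should let one (via a Mazur-type argument together with the convergence theory for \p-harmonic functions) extract a subsequential limit $u$ that is \p-harmonic in $\Om\setm\{x_0\}$; after lsc-regularisation $u$ is superharmonic in $\Om$, and the identities $\cp(\{u\ge c\},\Om)=c^{1-p}$ survive the limit by semicontinuity of capacity. The hard part — where I expect the main work — will be verifying \ref{dd-s}--\ref{dd-Np0} for this limit, in particular that $\sup_\Om u$ is attained \emph{at $x_0$} (with value $\infty$ precisely when $\Cp(\{x_0\})=0$, i.e.\ when the integral in \eqref{eq-div-int} diverges), that $\inf_\Om u=0$, and that $\ut\in\Nploc(X\setm\{x_0\})$; once these are in place, the appropriate rescaling of $u$ is a Green function. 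Throughout, controlling the approximants $u_j$ near $x_0$ and ruling out degeneration of the limit is the delicate point, whereas the nonexistence direction and the uniqueness of $\alpha$ are comparatively soft.
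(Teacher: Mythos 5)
First, a point of reference: the paper does not prove this statement itself --- it is quoted verbatim from \cite[Theorem~1.3]{BBLehGreen}, so the benchmark is the proof given there. Your overall strategy (normalized capacitary potentials $u_j=\cp(B_{r_j},\Om)^{1/(1-p)}w_j$ of shrinking balls for existence; the level-set scaling identity for capacitary potentials to show that $b\mapsto b^{p-1}\cp(\Om^b,\Om)$ is constant, hence uniqueness of $\alpha$) is essentially the strategy of that paper. The nonexistence direction and the derivation of the unique $\alpha=\phi^{1/(1-p)}$ are sound, modulo two facts you invoke without proof but which are available in the literature: that $\min\{u,a\}/a$ is \emph{the} capacitary potential of $\Om^a$ (this needs $\min\{u,a\}\in\Np_0(\Om)$, i.e.\ finite global energy of the truncation, plus uniqueness for the Dirichlet problem on $\{u<a\}$), and the identity $\cp(\{v\ge s\},\Om)=s^{1-p}\cp(E,\Om)$ for capacitary potentials.

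The genuine gap is in the existence direction, and it is exactly the point you flag but do not close: non-degeneration of the limit. Uniform local energy bounds on $\Om\setm\{x_0\}$ do not by themselves prevent $u_j\to0$ or $u_j\to\infty$ locally uniformly away from $x_0$; what is needed is a two-sided pointwise comparison $w_j(x)\simeq\bigl(\cp(B_r,\Om)/\cp(B_{r_j},\Om)\bigr)^{1/(1-p)}$ for $r=d(x,x_0)$ fixed and $j$ large, with constants independent of $j$. This is \cite[Theorem~1.5]{BBLehGreen} (appearing here as \eqref{eq-Green-cp} and \eqref{eq-uappr-0}); it rests on Harnack chains and the annular capacity estimates and is the real substance of the construction --- it cannot be replaced by a soft compactness or Mazur-lemma argument. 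Two smaller but real gaps: (i) your parenthetical that ``the same computation'' rules out singular functions when $\Cp(X\setm\Om)=0$ is unjustified --- the definition of a singular function involves no capacity normalization, so $\cp(\Om^b,\Om)=0$ contradicts nothing in \ref{dd-s}--\ref{dd-Np0}; yet the ``Moreover'' clause forces you to prove this nonexistence, since otherwise a singular function with $\phi=0$ would admit no Green multiple and the theorem would fail. (ii) When $\Cp(\{x_0\})>0$ the limit is bounded with $u(x_0)=\cp(\{x_0\},\Om)^{1/(1-p)}<\infty$, and condition \ref{dd-sup} then requires a separate argument rather than following from a blow-up at $x_0$.
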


\begin{remark} \label{rmk-gu}
Let $u$ be a singular function in a bounded domain $\Om$.
If $\Cp(\{x_0\})>0$, then $g_u\in L^p(\Om)$ by~\cite[Theorem~8.6]{BBLehGreen}.
Assume instead
that $\Cp(\{x_0\})=0$.
As $u$ is \p-harmonic in $\Om \setm \{x_0\}$
it belongs to $\Nploc(\Om \setm \{x_0\})$
and thus has a minimal \p-weak upper gradient $g_u \in L^p\loc(\Om \setm \{x_0\})$
in $\Om \setm \{x_0\}$.
Since $\Cp(\{x_0\})=0$,
Proposition~1.48 in \cite{BBbook} shows that
$g_u$ is also a \p-weak upper gradient of $u$ within $\Om$,
even though
$g_u\notin L^p\loc(\Om)$, because of
Theorem~\ref{thm-Green-cp-x0} below together with \ref{dd-Np0}.
As $\min\{u,k\}$ is a superminimizer in $\Om$
(and thus belongs to $\Np\loc(\Om)$), the function
\begin{equation}   \label{eq-def-Gu}
G_u:=\lim_{k \to \infty} g_{\min\{u,k\}}
\end{equation}
is a \p-weak upper gradient of $u$ which is minimal
in a certain sense, see  Kinnunen--Martio~\cite[Section~5]{KiMa} 
(or~\cite[Section~2.8]{BBbook}) for further details.
As
$u\in\Np\loc(\Om\setm\{x_0\})$, we have $G_u=g_u$ a.e.\
in $\Om\setm\{x_0\}$ and thus a.e.\ in $\Om$.
For singular functions $u$, we will therefore denote the minimal
\p-weak upper gradient by $g_u$ even within~$\Om$.
\end{remark}

\begin{thm} \label {thm-Green-cp-x0}
Let $u$ be a Green function in a bounded domain
$\Om$ with singularity at $x_0$.
Then the following are equivalent\/\textup{:}
\begin{enumerate}
\item \label{a-1}
$u(x_0)=\infty$\textup{;}
\item
$u$ is unbounded\textup{;}
\item
$u \notin \Np(\Om)$\textup{;}
\item \label{a-4}
$g_u \notin L^p(\Om)$\textup{;}
\item \label{a-5}
$\Cp(\{x_0\})=0$\textup{;}
\item \label{a-6} 
$\displaystyle\int_0^\de \biggl( \frac{\rho}{\mu(B_\rho)} \biggr)^{1/(p-1)} \,d\rho = 
\infty
\text{ for some \textup{(}or equivalently all\/\textup{)} $\de>0$.}$
\end{enumerate}
In particular, all of these statements
are true if $p< \uso$ and false if $p>\uso$.
\end{thm}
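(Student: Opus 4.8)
The plan is to prove the six statements equivalent by running a cycle through three natural groups: the boundedness conditions~\ref{a-1} and ``$u$ is unbounded''; the Newtonian/gradient conditions ``$u\notin\Np(\Om)$'' and~\ref{a-4}; and the capacity conditions~\ref{a-5} and~\ref{a-6}. Throughout I use that, under the standing assumptions, $X$ is proper, $\mu$ is reverse-doubling at $x_0$ with $\xi=2$ and supports a $p_0$-Poincar\'e inequality for some $1\le p_0<p$ (Keith--Zhong~\cite{KeZh}), so that Lemma~\ref{lem-suff-cap-zero} and Proposition~\ref{prop-meta-zero-cap} are applicable at $x_0$; and that $\Cp(X\setm\Om)>0$, since the Green function $u$ exists (Theorem~\ref{thm-main-intro}). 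Putting together Proposition~\ref{prop-meta-zero-cap}\,\ref{prop-meta-zero-cap-a} and~\ref{prop-meta-zero-cap-c} (the latter with $\Cp(X\setm\Om)>0$) and Lemma~\ref{lem-suff-cap-zero}, the three properties $\Cp(\{x_0\})=0$, $\cp(\{x_0\},\Om)=0$ and~\eqref{eq-cap-0-int-infty} are pairwise equivalent; in particular~\ref{a-5}$\Leftrightarrow$\ref{a-6}. Also~\ref{a-1} is equivalent to ``$u$ is unbounded'' simply because $u(x_0)=\sup_\Om u$ by~\ref{dd-sup}.

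For the Newtonian block, the equivalence of ``$u\notin\Np(\Om)$'' with~\ref{a-4} goes as follows. If $g_u\in L^p(\Om)$, then the Sobolev inequality $\|v\|_{L^p(\Om)}\simle\|g_v\|_{L^p(\Om)}$ for $v\in\Np_0(\Om)$, valid on the bounded set $\Om$, applied to the truncations $v=\min\{\ut,k\}\in\Np_0(\Om)$, whose minimal $p$-weak upper gradient is $g_u\chi_{\{u<k\}}$, yields $\|\min\{u,k\}\|_{L^p(\Om)}\simle\|g_u\|_{L^p(\Om)}$, so $u\in L^p(\Om)$ by monotone convergence; since $g_u$ is a $p$-weak upper gradient of $u$ on all of $\Om$ (Remark~\ref{rmk-gu}), we get $u\in\Np(\Om)$, and the converse is trivial. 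To tie this to~\ref{a-1}: if $u(x_0)<\infty$ then $u$ is bounded, so $\ut=\min\{\ut,k\}\in\Np_0(\Om)$ for $k\ge u(x_0)$ and hence $u\in\Np(\Om)$; conversely, if $u\in\Np(\Om)$ then $\ut\in\Np_0(\Om)$ (the truncations are bounded in $\Np_0(\Om)$ and converge to $\ut$ in $L^p$), so for $0<b<u(x_0)$ the function $\ut/b$ is admissible for $\cp(\Om^b,\Om)$, and~\eqref{eq-normalized-Green-intro-deff} gives $b^{1-p}=\cp(\Om^b,\Om)\le b^{-p}\int_\Om g_u^p\,d\mu$, whence $b\le\int_\Om g_u^p\,d\mu$ for all such $b$, i.e.\ $u(x_0)\le\int_\Om g_u^p\,d\mu<\infty$. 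Thus~\ref{a-1}, ``$u\notin\Np(\Om)$'' and~\ref{a-4} are equivalent. (The implication $\Cp(\{x_0\})>0\Rightarrow g_u\in L^p(\Om)$ is also recorded in Remark~\ref{rmk-gu}, via~\cite[Theorem~8.6]{BBLehGreen}.)

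The remaining link, between these conditions and the capacity ones, is the only genuinely substantial point; it uses the defining property~\eqref{eq-normalized-Green-intro-deff} together with the annular capacity estimate of Theorem~\ref{thm-metaestimate-int-only}. The direction~\ref{a-1}$\Rightarrow$\ref{a-5} is immediate: if $u(x_0)=\infty$, then $x_0\in\Om^b$ for every $b>0$, so $\cp(\{x_0\},\Om)\le\cp(\Om^b,\Om)=b^{1-p}\to0$ as $b\to\infty$, hence $\cp(\{x_0\},\Om)=0$ and thus $\Cp(\{x_0\})=0$ by the first paragraph. For the converse~\ref{a-5}$\Rightarrow$\ref{a-1}, suppose $\Cp(\{x_0\})=0$ but $u(x_0)<\infty$. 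Then $u$ is bounded, so by the second paragraph $\ut\in\Np_0(\Om)$; moreover $u$ is $p$-harmonic in $\Om\setm\{x_0\}$, and since $\Cp(\{x_0\})=0$ the singleton $\{x_0\}$ is removable for bounded $p$-harmonic functions, so the lsc-regularized $u$ is $p$-harmonic, hence a minimizer, in all of $\Om$ (cf.~\cite{BBbook}). Testing the minimizing property against $\phi=-\ut\in\Np_0(\Om)$ then gives $g_u=0$ a.e.\ in $\Om$, so $u$ is constant on the domain $\Om$, and the constant is $0$ since $\Cp(X\setm\Om)>0$; this contradicts $u>0$. I expect this implication --- that a zero-capacity singularity forces the Green function to blow up --- to be the main obstacle, as it is the only step that does not reduce to formal manipulations or a directly quotable earlier result.

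Collecting the implications: \ref{a-1}$\Leftrightarrow$``$u$ unbounded''$\Leftrightarrow$``$u\notin\Np(\Om)$''$\Leftrightarrow$\ref{a-4} and~\ref{a-1}$\Leftrightarrow$\ref{a-5}$\Leftrightarrow$\ref{a-6}, so all six statements are equivalent. The final assertion then follows from Proposition~\ref{prop-cp-x0}: if $p<\uso$ then~\ref{a-5} holds by part~\ref{e-a}, while if $p>\uso$ then~\ref{a-5} fails by part~\ref{e-c}, all of whose hypotheses (doubling, reverse-doubling and a $p$-Poincar\'e inequality at $x_0$ for small radii, and a locally compact neighbourhood of $x_0$) hold under the standing assumptions since $X$ is proper.
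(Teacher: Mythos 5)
Your proof is correct, but it takes a genuinely different route from the paper's for the main block of equivalences. The paper disposes of \ref{a-1}--\ref{a-5} in one line by citing \cite[Theorem~8.6]{BBLehGreen}, handles \ref{a-5}$\Leftrightarrow$\ref{a-6} via Proposition~\ref{prop-meta-zero-cap}\,\ref{prop-meta-zero-cap-a} together with the Keith--Zhong self-improvement, and gets the last part from Proposition~\ref{prop-cp-x0} --- exactly as you do for those two pieces. What you do differently is re-prove the cited equivalence of \ref{a-1}--\ref{a-5} from the definition of a Green function: the normalization \eqref{eq-normalized-Green-intro-deff} is used twice, once to get $u(x_0)\le\int_\Om g_u^p\,d\mu$ from admissibility of $\ut/b$ and once to get $\cp(\{x_0\},\Om)\le b^{1-p}\to 0$; the Friedrichs/Sobolev inequality for $\Np_0(\Om)$ (which indeed needs $\Cp(X\setm\Om)>0$, supplied by Theorem~\ref{thm-main-intro}) links $g_u\in L^p$ to $u\in\Np(\Om)$; and removability of the zero-capacity singleton for bounded \p-harmonic functions, combined with testing the minimization against $\phi=-\ut$ and \ref{dd-inf}, rules out a bounded Green function with $\Cp(\{x_0\})=0$. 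This is essentially a reconstruction of the argument in \cite{BBLehGreen}; the paper's approach buys brevity, yours buys self-containedness. Two steps are compressed but harmless: that $\min\{\ut,k\}\in\Np_0(\Om)$ requires patching \ref{dd-Np0} with the fact that $\min\{u,k\}$ is a superminimizer (hence in $\Nploc(\Om)$) near $x_0$, and that closedness of $\Np_0(\Om)$ under bounded sequences converging in $L^p$ is being invoked to pass from the truncations to $\ut$.
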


\begin{proof}
The statements~\ref{a-1}--\ref{a-5} were shown to
be equivalent in~\cite[Theorem~8.6]{BBLehGreen}. 
The equivalence~\ref{a-5} $\eqv$ \ref{a-6} follows from 
Proposition~\ref{prop-meta-zero-cap}\,\ref{prop-meta-zero-cap-a}
and the self-im\-prove\-ment of the \p-Poincar\'e inequality.
The last part follows from Proposition~\ref{prop-cp-x0}.
\end{proof}

\section{Pointwise estimates for Green functions}

\label{sect-ptwise-est}

\emph{Recall the general assumptions from
the beginning of Section~\ref{sect-harm}.}

\medskip

From Theorem~\ref{thm-metaestimate-int-only} 
and \cite[Theorem~1.5]{BBLehGreen}
we obtain the following pointwise estimates for Green functions.
Recall that by Theorem~\ref{thm-main-intro}, 
all estimates for Green functions in this section
also hold for singular functions $u$, but with 
comparison constants also depending on $u$.
For global Green functions on Riemannian manifolds,
estimates similar to~\eqref{eq-uappr-2}
were under various assumptions obtained in Holo\-pai\-nen~\cite[Section~5]{HoDuke}.

\begin{thm}  \label{thm-comp-u-R1-R2}
Let $0<R_1\le R_2 < \frac14 \diam X$ be fixed.
Assume that $u$ is a Green function with singularity at $x_0$ in
a domain $\Om$ such that 
\[
B_{R_1}\subset\Om\subset B_{R_2}.
\]
If  $r:=d(x,x_0) < R_1/50\la$, where $\la$ is the dilation constant 
in the Poincar\'e inequality, then
\begin{equation}
u(x)  
\simeq \cp(B_r,\Om)^{1/(1-p)} 
\simge \biggl(\frac{r^p}{\mu(B_{r})} \biggr)^{1/(p-1)} 
\label{eq-uappr-0}
\end{equation}
and
\begin{align}
u(x)&\simeq \cp(B_r,B_{R_1})^{1/(1-p)}  
     \simeq \cp(B_r,B_{R_2})^{1/(1-p)}   \label{eq-uappr-1}\\
&    \simeq 
    \int_{r}^{R_1} \biggl( \frac{\rho}{\mu(B_\rho)} \biggr)^{1/(p-1)} \,d\rho
   \simeq 
   \int_{r}^{R_2} \biggl( \frac{\rho}{\mu(B_\rho)} \biggr)^{1/(p-1)} \,d\rho,
\label{eq-uappr-2} 
\end{align}
with comparison constants depending only on $p$, 
the doubling constant of $\mu$, the constants in the Poincar\'e inequality,
and in \eqref{eq-uappr-1}--\eqref{eq-uappr-2} also on 
the quotient $R_2/R_1$.
\end{thm}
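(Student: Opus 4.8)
\textbf{Proof plan for Theorem~\ref{thm-comp-u-R1-R2}.}
The plan is to combine the pointwise comparison of Green functions with capacities from~\cite[Theorem~1.5]{BBLehGreen} with the capacity estimate~\eqref{eq-lower-est-f} of Theorem~\ref{thm-metaestimate-int-only}. First I would recall that~\cite[Theorem~1.5]{BBLehGreen} gives, for a Green function $u$ with singularity at $x_0$ in a bounded domain $\Om$ with $\Cp(X\setm\Om)>0$, the two-sided estimate
\[
u(x) \simeq \cp(B_r,\Om)^{1/(1-p)}, \quad \text{when } r=d(x,x_0) \text{ is small enough,}
\]
with the smallness threshold and comparison constants depending only on the standard parameters (and on the geometry of $\Om$ only through the inclusions $B_{R_1}\subset\Om\subset B_{R_2}$, once one monitors the proof there). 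This immediately yields the first comparison in~\eqref{eq-uappr-0}. The lower bound $\cp(B_r,\Om)^{1/(1-p)}\simge(r^p/\mu(B_r))^{1/(p-1)}$ then follows from the elementary upper capacity estimate $\cp(B_r,\Om)\le\cp(B_r,B_{R_2})\simle\mu(B_{2r})/r^p\simeq\mu(B_r)/r^p$, using Proposition~5.1 of~\cite{BBLeh1} (as already invoked in the proof of Proposition~\ref{prop-est-cap-int-upper}) together with doubling; raising to the power $1/(1-p)<0$ flips the inequality.

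Next I would establish~\eqref{eq-uappr-1}. By monotonicity of the variational capacity in the second set, $\cp(B_r,B_{R_1})\ge\cp(B_r,\Om)\ge\cp(B_r,B_{R_2})$, so it suffices to show $\cp(B_r,B_{R_1})\simle\cp(B_r,B_{R_2})$ with a constant depending on $R_2/R_1$. This is exactly where Theorem~\ref{thm-metaestimate-int-only} enters: under the standing assumptions of Section~\ref{sect-harm} (completeness, global doubling, global \p-Poincar\'e inequality, hence by Keith--Zhong a global $p_0$-Poincar\'e inequality for some $1\le p_0<p$, and reverse-doubling with $\xi=2$), we may apply~\eqref{eq-lower-est-f} with $R_0=\tfrac14\diam X$ to both $\cp(B_r,B_{R_1})$ and $\cp(B_r,B_{R_2})$, obtaining
\[
\cp(B_r,B_{R_i}) \simeq \biggl( \int_r^{R_i} \biggl( \frac{\rho}{\mu(B_\rho)} \biggr)^{1/(p-1)} \,d\rho \biggr)^{1-p}, \quad i=1,2.
\]
The two integrals are comparable: the difference is $\int_{R_1}^{R_2}(\rho/\mu(B_\rho))^{1/(p-1)}\,d\rho$, which by doubling and reverse-doubling over the fixed finite range $[R_1,R_2]$ (so $\mu(B_\rho)\simeq\mu(B_{R_1})$ there, with constants depending on $R_2/R_1$) is bounded by $C(R_2/R_1)(R_1^p/\mu(B_{R_1}))^{1/(p-1)}$, while the integral over $[r,R_1]$ already contains the term $\int_{R_1/2}^{R_1}(\rho/\mu(B_\rho))^{1/(p-1)}\,d\rho\simeq(R_1^p/\mu(B_{R_1}))^{1/(p-1)}$; thus $\int_r^{R_2}\simeq\int_r^{R_1}$. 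Raising to the power $1-p$ gives both the capacity comparisons in~\eqref{eq-uappr-1} and the integral comparisons in~\eqref{eq-uappr-2}, and chaining with~\cite[Theorem~1.5]{BBLehGreen} completes the proof. Finally, the extension to singular functions follows from Theorem~\ref{thm-main-intro}: any singular function is a positive constant multiple of a Green function, so all the comparisons persist with the constant absorbed into the (now $u$-dependent) implicit constants.

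The main obstacle I anticipate is \textbf{bookkeeping of the dependence of constants}, not any deep analytic difficulty. Specifically, one must check that the comparison constant in~\cite[Theorem~1.5]{BBLehGreen} and the threshold radius ($r<R_1/50\la$ here) can be taken to depend only on $p$, the doubling constant, and the Poincar\'e constants — in particular \emph{not} on $\diam\Om$ beyond what is encoded by $B_{R_1}\subset\Om\subset B_{R_2}$ — and that the comparison between the two capacities and the two integrals introduces only a dependence on the ratio $R_2/R_1$. This requires revisiting the proof in~\cite{BBLehGreen} to confirm the uniformity, and carefully tracking where $R_2/R_1$ (as opposed to $R_1$ or $R_2$ individually, which are not scale-invariant) is the only new parameter. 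The capacity estimate~\eqref{eq-lower-est-f} itself is already stated with the right uniformity (``not on $R_0$''), so once the threshold and constants in~\cite{BBLehGreen} are pinned down, the remaining steps are routine.
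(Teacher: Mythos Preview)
Your plan is essentially the paper's own proof, with two small differences worth flagging.

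First, a subtle gap: \cite[Theorem~1.5]{BBLehGreen} gives the comparison $u(x)\simeq\cp(B_r,\Om)^{1/(1-p)}$ only for $x\in\bdy B_r$, the topological boundary of the open ball. In a general metric space this can be strictly smaller than the sphere $S_r=\{x:d(x,x_0)=r\}$, and the theorem is stated for all $x$ with $d(x,x_0)=r$. The paper closes this gap by invoking \cite[Proposition~4.4]{BBLehGreen} (a Harnack inequality on the annulus $\itoverline{B}_\rho\setm\tfrac12 B_\rho$ with $r<\rho<\min\{2r,R_1/50\la\}$) to pass from $\bdy B_r$ to all of $S_r$. You should add this step; it is short but not automatic.

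Second, a difference of route rather than a gap: for the comparison $\cp(B_r,B_{R_1})\simle\cp(B_r,B_{R_2})$ the paper simply cites Lemma~11.22 in~\cite{BBbook}, which gives exactly this with a constant depending on $R_2/R_1$ and the doubling constant, and then deduces the integral comparability in~\eqref{eq-uappr-2} from Theorem~\ref{thm-metaestimate-int-only}. You instead go through the integrals first (bounding $\int_{R_1}^{R_2}$ by the piece $\int_{R_1/2}^{R_1}$ via doubling) and then recover the capacity comparison. Both arguments are correct; the paper's is slightly shorter since it avoids the explicit integral bookkeeping, while yours has the advantage of making the $R_2/R_1$-dependence completely transparent without appealing to an additional lemma.
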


From now on we let $S_r=\{x:d(x,x_0)=r\}$.

\begin{proof}
The first comparison in~\eqref{eq-uappr-0} follows 
from \cite[Theorem~1.5]{BBLehGreen} when $x\in \bdy B_r$. 
To extend it to all $x \in S_r$,
we use 
\cite[Proposition~4.4]{BBLehGreen} with $r < \rho<\min\{2r,R_1/50\la\}$
and $K=\itoverline{B}_\rho\setm \tfrac12B_\rho$ as follows:
\[
\max_{\bdy B_r}u \le \max_{S_r}u \le \max_K u \le 
A\min_K u \le A \min_{S_r}u \le A\min_{\bdy B_r}u \le A\max_{\bdy B_r}u,
\]
where $A$ depends only on $p$, 
the doubling constant of $\mu$ and the constants in the Poincar\'e inequality.
The inequality in~\eqref{eq-uappr-0} then follows 
from~\cite[Proposition~5.1]{BBLeh1}.
Next, Lemma~11.22 in \cite{BBbook} yields
\[
\cp(B_r,B_{R_2})
\le \cp(B_r,\Om)
\le \cp(B_r,B_{R_1})
\simle \cp(B_r,B_{R_2}),
\]
and hence \eqref{eq-uappr-1} holds,
while \eqref{eq-uappr-2} follows directly
from Theorem~\ref{thm-metaestimate-int-only}.
\end{proof}

If $\mu$ is Ahlfors $Q$-regular around $x_0$
as in Theorem~\ref{thm-intro-Q-reg}, then, by \eqref{eq-uappr-2},
$u(x) \simeq r^{(p-Q)/(p-1)}$ if $p<Q$
and $u(x) \simeq \log (R_1/r)$ if $p=Q$,
while $u(x)$ is bounded if $p>Q$.
The estimates~\eqref{eq-uappr-1} 
can also be combined with the capacity estimates in~\cite{BBLeh1}  
to describe the pointwise behaviour of Green functions
as follows.

\begin{cor} \label{cor-BBL-conseq}
Assume that the assumptions in Theorem~\ref{thm-comp-u-R1-R2}
are satisfied, and in particular that $r:=d(x,x_0) < R_1/50\la$.
Then the following are true, with comparison constants 
independent of $u$, $x$, $r$ and $\Om$\textup{:}
\begin{enumerate}
\item 
If $p < \lqo$, then 
\begin{equation}\label{eq-compare-for-lQ}
 u(x) \simeq \biggl(\frac{r^p}{\mu(B_{r})}\biggr)^{1/(p-1)}.
\end{equation}
\item \label{kk-2}
If $p=\lqo \in \lQo$, then
\begin{align}               \label{eq-compare-for-maxlQ}
&\max\biggl\{\biggl(\frac{R_1^p}{\mu(B_{R_1})}\biggr)^{1/(p-1)}\log\frac{R_1}{r},
     \biggl(\frac{r^p}{\mu(B_{r})}\biggr)^{1/(p-1)}\biggr\} \nonumber \\
& \kern 10em \simle u(x) 
\simle \biggl(\frac{r^p}{\mu(B_{r})}\biggr)^{1/(p-1)}\log\frac{R_1}{r}.
\end{align}
\item 
If $p>q \in \lQo$, then 
\[ 
\biggl(\frac{r^p}{\mu(B_{r})}\biggr)^{1/(p-1)}
\simle 
 u(x) \simle \biggl(\frac{r^q}{\mu(B_{r})}\biggr)^{1/(p-1)}.
\] 
\item \label{kk-4}
If $p < s\in\uSo$, then 
\[
1 \simle u(x) \simle r^{(p-s)/(p-1)}.
\] 
\end{enumerate}
\end{cor}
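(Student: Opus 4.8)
The plan is to read off all four estimates from the pointwise formula already proved in Theorem~\ref{thm-comp-u-R1-R2}. By \eqref{eq-uappr-1}--\eqref{eq-uappr-2}, for $r=d(x,x_0)<R_1/50\la$ we have
\[
u(x)\simeq \cp(B_r,B_{R_1})^{1/(1-p)}\simeq \int_r^{R_1}\biggl(\frac{\rho}{\mu(B_\rho)}\biggr)^{1/(p-1)}\,d\rho,
\]
with comparison constants depending only on $p$, the doubling and Poincar\'e data, and on $R_2/R_1$. So each part becomes a matter of estimating this single integral under the relevant membership of an exponent in $\lQo$ or $\uSo$. I would first recall, via \cite[Lemmas~2.4 and~2.5]{BBLeh1}, that the inequalities defining $\lQo$ and $\uSo$ remain valid for radii up to $R_1$ (indeed up to $\tfrac14\diam X$) after enlarging the relevant constants; since the statement only asks for independence of $u$, $x$, $r$ and $\Om$, this costs nothing.

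\emph{Upper bounds.} For part~(a), choose $q$ with $p<q<\lqo$, so $q\in\lQo$ and $\rho^q/\mu(B_\rho)\simle r^q/\mu(B_r)$ for $\rho\ge r$; inserting $\rho/\mu(B_\rho)=(\rho^q/\mu(B_\rho))\rho^{1-q}$ into the integral and using that $(p-q)/(p-1)<0$ (so the $\rho$-power integrates to $\simeq r^{(p-q)/(p-1)}$ over $[r,R_1]$) gives $u(x)\simle(r^p/\mu(B_r))^{1/(p-1)}$, which is exactly the computation in the proof of Corollary~\ref{cor-interpolate-cap}\,\ref{int-b}. For part~(b), the same computation with $q=p=\lqo\in\lQo$ leaves the factor $\int_r^{R_1}d\rho/\rho=\log(R_1/r)$, yielding the upper bound in \eqref{eq-compare-for-maxlQ}, as in Corollary~\ref{cor-interpolate-cap}\,\ref{int-c}. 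For part~(c), with the given $q\in\lQo$, $q<p$, the same manipulation applies but now the exponent $(1-q)/(p-1)>-1$, so $\int_r^{R_1}\rho^{(1-q)/(p-1)}\,d\rho$ is bounded by a constant depending on $R_1$, leaving $u(x)\simle(r^q/\mu(B_r))^{1/(p-1)}$. For part~(d), $s\in\uSo$ gives $\mu(B_\rho)\simge\rho^s$ and hence $\rho/\mu(B_\rho)\simle\rho^{1-s}$; since $s>p$ we have $(1-s)/(p-1)<-1$, so $\int_r^{R_1}\rho^{(1-s)/(p-1)}\,d\rho\simeq r^{(p-s)/(p-1)}$ and $u(x)\simle r^{(p-s)/(p-1)}$.

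\emph{Lower bounds.} The bound $u(x)\simge(r^p/\mu(B_r))^{1/(p-1)}$ is already contained in \eqref{eq-uappr-0}; this gives the lower estimate in~(a) and the first terms in the maxima/lower bounds of~(b) and~(c). For the remaining term in~(b), I would use $\mu(B_\rho)/\mu(B_{R_1})\simle(\rho/R_1)^p$ for $r\le\rho\le R_1$, which gives $\rho/\mu(B_\rho)\simge R_1^p/(\rho^{p-1}\mu(B_{R_1}))$ and hence $\int_r^{R_1}(\rho/\mu(B_\rho))^{1/(p-1)}\,d\rho\simge (R_1^p/\mu(B_{R_1}))^{1/(p-1)}\log(R_1/r)$. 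For~(d), the bound $u(x)\simge1$ follows since $\int_r^{R_1}(\rho/\mu(B_\rho))^{1/(p-1)}\,d\rho\ge\int_{R_1/2}^{R_1}(\rho/\mu(B_\rho))^{1/(p-1)}\,d\rho$, a positive constant depending only on $R_1$ and $\mu$; equivalently $\cp(B_r,B_{R_1})\le\cp(B_{R_1/2},B_{R_1})$ by monotonicity of the variational capacity.

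\emph{Main obstacle.} There is no deep difficulty here once Theorem~\ref{thm-comp-u-R1-R2} is available; the effort is entirely bookkeeping. The delicate points are: (i) legitimately applying the exponent-set inequalities, defined a priori only for radii $\le1$, on the interval $[r,R_1]$ (handled by \cite[Lemmas~2.4 and~2.5]{BBLeh1}, at the price of constants depending on $R_1$); (ii) tracking which constants are allowed to depend on $R_1,R_2$ and which must be uniform, since only independence of $u$, $x$, $r$ and $\Om$ is claimed; and (iii) getting the borderline powers right -- notably that $(1-s)/(p-1)<-1$ exactly when $s>p$ in~(d), and that $(p-q)/(p-1)$ is negative in~(a) but positive in~(c), so that the $\rho$-integral over $[r,R_1]$ is dominated by its lower endpoint $r$ in~(a) and~(d), equals $\log(R_1/r)$ in~(b), and is a bounded quantity controlled by the fixed $R_1$ in~(c).
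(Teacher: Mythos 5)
Your proof is correct. The paper disposes of this corollary in one line: it combines Theorem~\ref{thm-comp-u-R1-R2} with the capacity estimates of \cite[Theorems~1.1 and~1.2 and Propositions~6.2, 8.1 and~8.3]{BBLeh1}, i.e.\ it reads the four cases off the comparison $u(x)\simeq\cp(B_r,B_{R_1})^{1/(1-p)}$ in \eqref{eq-uappr-1} using previously established bounds for $\cp(B_r,B_{R_1})$ in terms of the exponent sets. You instead start from the integral form \eqref{eq-uappr-2} and re-derive those bounds by elementary calculus; this is a genuinely self-contained alternative, and your computations are the same ones that appear elsewhere in the paper (the upper bounds in your parts (a)--(b) are literally the estimates in the proof of Corollary~\ref{cor-interpolate-cap}\,\ref{int-b}--\ref{int-c}, and the lower bound in (b) is the $p\in\lQo$ substitution evaluated at the upper endpoint $R_1$). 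Your bookkeeping is right at all the delicate spots: the threshold $(1-q)/(p-1)\lessgtr-1$ exactly at $q=p$ governs whether the $\rho$-integral concentrates at $r$, gives a logarithm, or is bounded by a constant depending on $R_1$; the extension of the exponent-set inequalities from $R\le1$ to $R\le R_1$ via \cite[Lemmas~2.4 and~2.5]{BBLeh1} is exactly what the paper licenses in Section~\ref{sect-exponents}; and constants depending on $R_1$, $R_2$ are permitted since only independence of $u$, $x$, $r$ and $\Om$ is claimed. What the paper's route buys is brevity and uniformity with~\cite{BBLeh1}; what yours buys is that the reader need not unpack five external statements.
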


\begin{proof}
This follows directly from Theorem~\ref{thm-comp-u-R1-R2}
together with 
\cite[Theorems~1.1 and~1.2 and  Propositions~6.2, 8.1 and~8.3]{BBLeh1}.
\end{proof}

Unweighted $\R^n$ with $p=n$ shows
sharpness of both the lower and upper bounds in \ref{kk-2},
and with $p<n=s$ of the upper bound in \ref{kk-4}.

Earlier, using the definition of singular functions in
Holo\-pai\-nen--Shan\-mu\-ga\-lin\-gam~\cite{HoSha},
and especially the a priori superlevel set property therein,
Danielli--Garofalo--Marola~\cite[Theorem~5.2]{DaGaMa} 
established \eqref{eq-compare-for-lQ}
for such functions.
In the borderline case $p=\lqo \in \lQo$,
they also gave an estimate which
is essentially equivalent to~\eqref{eq-compare-for-maxlQ}, since
the constant in~\cite[Theorem~5.2]{DaGaMa} in this case 
depends on $r^p/\mu(B_{r})$; cf.\
\cite[Theorem~3.1]{DaGaMa}.

Another consequence of~\eqref{eq-uappr-2} in 
Theorem~\ref{thm-comp-u-R1-R2} is that
\emph{all} Green functions with respect to
comparable open sets and comparable measures are comparable near the 
singularity in the following sense.

\begin{thm}  \label{thm-comp-u1-u2}
Let $0<R_1\le R_2 < \frac14 \diam X$ be fixed.
Assume that $\Om_1$ and $\Om_2$ are domains
such that 
\[
B_{R_1}\subset\Om_j\subset B_{R_2}, \quad j=1,2.
\]
Let $\mu_1$ and $\mu_2$ be doubling measures supporting 
\p-Poincar\'e inequalities on $X$.
Assume in addition that 
\begin{equation} \label{eq-comp-mu1-mu2}
\mu_1(B_\rho)\simle\mu_2(B_\rho) 
\quad \text{for all\/ } 0<\rho\le R_2.
\end{equation}
Also let $u_j$ be a Green function, with respect to $\mu_j$, in $\Om_j$ 
with singularity at $x_0$, $j=1,2$.
Then 
\begin{equation} \label{eq-comp-u1-u2}
u_1(x)\simge u_2(x) 
\quad \text{for all } x\in B_{R_1/50\la},
\end{equation}
with comparison constant depending only on $p$, $R_1$, $R_2$,
the doubling and Poincar\'e constants, and the
comparison constant in \eqref{eq-comp-mu1-mu2}.

Moreover, if $K\subset \Om_1\cap\Om_2$ is compact then 
\[
u_1(x)\simge u_2(x) 
\quad \text{for all } x\in K,
\]
with comparison constant also depending on $K$.
\end{thm}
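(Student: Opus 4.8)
The plan is to read everything off the pointwise integral representation \eqref{eq-uappr-2} of Theorem~\ref{thm-comp-u-R1-R2}. That theorem applies to each $u_j$ in turn, since the complete space $X$ carries two doubling measures $\mu_1,\mu_2$, each supporting a \p-Poincar\'e inequality, and the containments $B_{R_1}\subset\Om_j\subset B_{R_2}$ hold by hypothesis; here $\la$ is chosen at least as large as the dilation constants of both Poincar\'e inequalities. Thus for $x\in B_{R_1/50\la}$ with $r:=d(x,x_0)$ it gives
\[
u_j(x)\simeq \int_r^{R_1}\biggl(\frac{\rho}{\mu_j(B_\rho)}\biggr)^{1/(p-1)}\,d\rho,\qquad j=1,2,
\]
with comparison constants depending only on $p$, the quotient $R_2/R_1$, and the doubling and Poincar\'e constants of $\mu_j$. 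First I would observe that, since $p>1$, the pointwise comparison $\mu_1(B_\rho)\le C\mu_2(B_\rho)$ from \eqref{eq-comp-mu1-mu2} implies
\[
\biggl(\frac{\rho}{\mu_1(B_\rho)}\biggr)^{1/(p-1)}\ge C^{1/(1-p)}\biggl(\frac{\rho}{\mu_2(B_\rho)}\biggr)^{1/(p-1)},\qquad 0<\rho\le R_1,
\]
so that the integral for $j=1$ dominates the one for $j=2$ up to the fixed constant $C^{1/(1-p)}$. Chaining the two displayed comparabilities with this inequality yields $u_1(x)\simge u_2(x)$ for $x\neq x_0$. Letting $r\to0$ in the first displayed comparability, and using that $u_j$ is lower semicontinuous with $u_j(x_0)=\sup_{\Om_j}u_j$, shows that $u_j(x_0)\simeq\int_0^{R_1}(\rho/\mu_j(B_\rho))^{1/(p-1)}\,d\rho\in(0,\infty]$, and the same integrand inequality gives $u_1(x_0)\simge u_2(x_0)$ too.

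For the ``moreover'' part I would split the compact set $K\subset\Om_1\cap\Om_2$ into $K_1=\{x\in K:d(x,x_0)\le R_1/100\la\}$ and $K_2=\{x\in K:d(x,x_0)\ge R_1/100\la\}$, both closed and bounded, hence compact. Since $K_1\subset B_{R_1/50\la}$, the first part applies on $K_1$. On $K_2$, which stays a positive distance from $x_0$, both $u_1$ and $u_2$ are \p-harmonic and hence real-valued and continuous, while $u_1>0$ on $\Om_1$ by Definition~\ref{deff-sing}; compactness then gives $m:=\min_{K_2}u_1>0$ and $M:=\max_{K_2}u_2<\infty$ (both trivially when $K_2=\emptyset$), whence $u_1\ge m=(m/M)M\ge (m/M)u_2$ on $K_2$. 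Putting the two pieces together gives $u_1\simge u_2$ on all of $K$, with the additional dependence on $K$ entering only through $m$ and $M$.

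I do not expect a genuinely difficult step: the analytic content is already contained in Theorem~\ref{thm-comp-u-R1-R2}, which in turn rests on the capacity estimate of Theorem~\ref{thm-metaestimate-int-only} and on \cite[Theorem~1.5]{BBLehGreen}. The only points that need some care are the bookkeeping of comparison constants near the singularity in the first part, and, in the second part, the observation that $K_2$ is separated from $x_0$ so that $u_1,u_2$ are finite and continuous there and attain their extrema on $K_2$. If one wished the constant in the ``moreover'' part to depend only on $K$ and the structural data rather than on $u_1$, one could bound $\min_{K_2}u_1$ from below by a Harnack-chain argument anchored at the estimate \eqref{eq-uappr-0} on a small sphere about $x_0$, but this refinement is not needed for the statement as given.
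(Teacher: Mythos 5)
Your first part coincides with the paper's: both read \eqref{eq-comp-u1-u2} off the integral representation \eqref{eq-uappr-2} of Theorem~\ref{thm-comp-u-R1-R2} applied to each $u_j$, and then use that \eqref{eq-comp-mu1-mu2} reverses under the power $1/(p-1)$; your extra care at $x=x_0$ is harmless and correct.

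For the ``moreover'' part you take a genuinely different route. The paper reduces to the case $\bdy B_R\subset K\subset\Om\setm B_R$, observes that only finitely many components $G_1,\dots,G_m$ of $\Om_1\setm\{x_0\}$ meet $K$ (each must contain a point of the compact set $\bdy B_R$), applies Harnack's inequality \cite[Corollary~8.19]{BBbook} on each $K\cap G_j$, and glues the components together with \cite[Proposition~4.4]{BBLehGreen} to get $\sup_K u_j\simle\inf_K u_j$ with a constant uniform over all positive \p-harmonic functions; combining with \eqref{eq-comp-u1-u2} finishes the proof. Your compactness argument ($m:=\min_{K_2}u_1>0$, $M:=\max_{K_2}u_2<\infty$) is simpler and avoids the component bookkeeping entirely, but the constant $m/M$ depends on the particular functions $u_1,u_2$ rather than only on $K$ and the structural data, which is what the statement's phrase ``comparison constant also depending on $K$'' is meant to deliver (the paper's Harnack constants depend on $K$ and the components, but not on which positive \p-harmonic function is inserted). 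By the almost-uniqueness of Green functions this is only a mild weakening, and your closing remark correctly identifies the Harnack-chain upgrade — which is precisely the paper's argument — as the way to remove the dependence on $u_1,u_2$; I would say that upgrade is in fact needed to match the stated dependence of the constant, not merely optional.
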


\begin{proof}
By Theorem~\ref{thm-comp-u-R1-R2},
\[
      u_j(x) 
   \simeq 
    \int_{d(x,x_0)}^{R_1} \biggl( \frac{\rho}{\mu_j(B_\rho)} \biggr)^{1/(p-1)} \,d\rho,
    \ j=1,2,
\quad \text{for all } x\in B_{R_1/50\la}.
\]
Thus \eqref{eq-comp-u1-u2} follows from \eqref{eq-comp-mu1-mu2}.

For the last part, let 
$\Om=\Om_1 \cap \Om_2 \supset B_{R_1}$ and 
$0 <R < R_1/50\la$.
Since we have already shown \eqref{eq-comp-u1-u2} we may 
replace $K$ by $(K\setm B_R)\cup \bdy B_R$ and assume that
$\bdy B_R \subset K \subset \Om \setm B_{R}$.
As $\Om_1$ is connected, every component 
of $\Om_1 \setm\{x_0\}$ that intersects $K$
must contain a point in $\bdy B_{R}$, and since  $\bdy B_R$ 
is  compact there are only finitely many such components
$G_1,\ldots,G_m$ of $\Om_1 \setm \{x_0\}$.
By Harnack's inequality, as in Corollary~8.19 in~\cite{BBbook}, there
are constants $C_j$ such that
\[
     \sup_{K \cap G_j} v \le C_j \inf_{K \cap G_j} v
     \quad \text{for every positive \p-harmonic function $v$ in } G_j.
\]
Together with \cite[Proposition~4.4]{BBLehGreen},
this shows that $\sup_K u_1 \simle \inf_K u_1$.
Similarly $\sup_K u_2 \simle \inf_K u_2$,
which together with \eqref{eq-comp-u1-u2}
shows that $u_1 \simge u_2$ on $K$.
\end{proof}

Theorem~\ref{thm-comp-u1-u2} can be seen as a generalization of the well-known comparisons
for the classical Green functions in Euclidean domains for various linear
and nonlinear elliptic equations, cf.\ 
Littman--Stampacchia--Weinberger~\cite[Theorem~7.1]{LiStWe} and 
Serrin~\cite[Theorem~12]{Ser}.
Those estimates state that near the singularity, Green functions 
have certain predetermined growth and are thus comparable
to the fundamental solution for the Laplace or \p-Laplace equation,
see also Section~\ref{sect-elliptic-eq} below.
Such estimates have proved to be of great importance for both the 
interior and boundary regularity of such equations.
Theorem~\ref{thm-comp-u1-u2} and the other results in this section
provide us with similar comparisons
for Green functions associated
with the energy functionals $\int|\grad u|^p\,d\mu$ for large classes of
comparable measures.

On the contrary, the so-called quasiminimizers, 
introduced by
Giaquinta and Giusti~\cite{GG1},~\cite{GG2} as a natural unification
of differential equations with various ellipticities,
can (even in unweighted $\R^n$) have singularities
of arbitrary order, depending on the
quasiminimizing constant, see Bj\"orn--Bj\"orn~\cite{BBpower}.
In particular, quasiminimizers are not always solutions to partial
differential equations of \p-Laplacian type, since all such
solutions have comparable behaviour near their singularity, by
Serrin~\cite[Theorem~12]{Ser},
and thus also the same integrability.

\begin{remark} \label{rmk-assumptions}
Assume that $\Om$ is a bounded domain and
$B_{R}\subset\Om$.
Let $u$ be a Green function in $\Om$
with singularity at $x_0$.
If $\Cp(\{x_0\})>0$, then
$u$ is bounded
by Theorem~\ref{thm-Green-cp-x0}.
On the other hand, if $\Cp(\{x_0\})=0$ then
\eqref{eq-uappr-0}
implies that for all $0<r < R/50\la$ and
$x\in S_r$, 
\[ 
C_1 \cp(B_r,\Om)^{1/(1-p)} \le u(x) \le C_2 \cp(B_r,\Om)^{1/(1-p)},
\] 
where $C_1, C_2>0$ depend only on $p$, 
the doubling constant of $\mu$ and the constants in the Poincar\'e inequality,
but not on $u$, $x$, $r$ or $\Om$.
In particular, letting $r\nearrow R_2:=R/50\la$, 
we see that
\[
k:=\max_{\bdy B_{R_2}} u \le C_2 \cp(B_{R_2},\Om)^{1/(1-p)}
\]
and $u-k$ is a Green function in 
\[
\Om_k=\{x\in\Om: u(x)>k\}\subset B_{R_2}.
\]
As $\cp(\{x_0\},\Om)=0$ and $\cp$ is an outer capacity (by 
\cite[Theorem~6.19\,(vii)]{BBbook}),
we can now find $R_1>0$ such that
\[
\cp(B_{R_1},\Om)^{1/(1-p)} > \frac{C_2}{C_1} \cp(B_{R_2},\Om)^{1/(1-p)} \ge \frac{k}{C_1}.
\]
It follows that $\min_{\bdy B_{R_1}} u \ge k$ and hence 
$B_{R_1}\subset \Om_k \subset B_{R_2}$.
Note that $R_1$ depends only on $R_2$, $C_1$, $C_2$ and $\Om$, but not on $u$.
Since $B_{50\la R_2}=B_{R}\subset\Om\neq X$ by Theorem~\ref{thm-main-intro},
we also have that 
$50\la R_2 \le \diam X$
and hence $R_2<\tfrac14\diam X$.

Applying Theorem~\ref{thm-comp-u-R1-R2} to $u-k$, $\Om_k\subset B_{R_2}$
and $r<R_1/50\la$,
we thus see that all the estimates in this section 
hold near the singularity for Green functions in arbitrary
bounded domains $\Om$, even if $\Om$ is not contained in some
$B_{R_2}$ with $R_2<\tfrac14\diam X$.
Note that $\Cp(X \setm \Om)>0$, by Theorem~\ref{thm-main-intro},
since the Green function is presumed to exist.
\end{remark}

\section{Integrability of superharmonic functions}
\label{sect-int-superh}

\emph{Recall the general assumptions from
the beginning of Section~\ref{sect-harm}.}

\medskip

Green functions are particular examples of superharmonic functions.
Before studying special integrability properties of Green functions
and their minimal \p-weak upper gradients we
therefore recall general integrability results for superharmonic functions,
due to Kinnunen--Martio~\cite[Theorems~5.1 and~5.6]{KiMa}.
(On weighted $\R^n$, with a \p-admissible weight,
these results were earlier obtained by 
Heinonen--Kilpel\"ainen--Martio~\cite[Theorem~7.46]{HeKiMa}.)

We will then deduce a general result on integrability of the
minimal \p-weak upper gradients of superharmonic functions,
which will play a crucial role in Section~\ref{sect-int-gu}.
If $u$ is superharmonic, we define $G_u$ as in \eqref{eq-def-Gu}. 
That a superharmonic function fails to belong to $\Nploc(\Om)$
only if it is too large is a consequence of 
Proposition~7.4 in Bj\"orn--Bj\"orn--Parviainen~\cite{BBP}
(or~\cite[Corollary~9.6]{BBbook}).
Recall from Section~\ref{sect-exponents} that
\[
\utho= \inf \biggl\{\uth>0 :\, 
  \frac{\mu(B(x,r))}{\mu(B(x,R))} \simge \Bigl(\frac{r}{R}\Bigr)^{\uth} \\
\text{for all } x\in X \text{ and } 0<r\le R < 2 \diam X
        \biggr\}
\]
and that $\utho\ge1$, by Proposition~\ref{prop-uth-1}.

\begin{thm} \label{thm-superh-u-int+grad}
\textup{(\cite[Theorems~5.1 and 5.6]{KiMa})} 
Let $u$ be a superharmonic function in~$\Om$.
\begin{enumerate}
\item  \label{kj-ab}
If $p\le\utho$, then
$u\in L^\tau\loc(\Omega)$ 
and $G_u\in L^{t}\loc(\Omega)$
whenever
\[
0<\tau< \begin{cases}
  \dfrac{\utho(p-1)}{\utho-p}, & \text{if } p < \utho, \\
   \infty,  &\text{if } p =\utho,
\end{cases}
\qquad \text{and} \qquad
0< t <    \dfrac{\utho(p-1)}{\utho-1},
\]
respectively.
\item \label{kj-cg}
If $p>\utho$, then $u$ is continuous and thus locally bounded. 
Moreover, $G_u \in L^p\loc(\Omega)$ and $G_u=g_u$.
\end{enumerate}
In particular, it is always true that $u,G_u \in L^{p-1}\loc(\Om)$.
\end{thm}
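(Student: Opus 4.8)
The plan is to follow Kinnunen--Martio~\cite[Sections~4--5]{KiMa} and deduce both parts from a single Caccioppoli-type estimate for powers of the truncations of $u$, combined with the Sobolev and weak Harnack inequalities available under our standing assumptions, taking care that the sharp exponents come out governed by $\utho=\inf\uTh$ rather than by any a priori dimension. Since all conclusions are local and adding a constant to $u$ affects neither its superharmonicity nor $G_u$ (nor $g_{\min\{u,k\}}$ where the truncation is active), I would work on a ball $B$ with a larger concentric ball $B'\Subset\Om$, use lower semicontinuity of $u$ to see that $u$ is bounded below on the compact set $\itoverline{B'}$, and hence assume $u\ge1$ on $B'$. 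For $k\ge1$ the truncation $v_k:=\min\{u,k\}$ is a bounded superharmonic function, hence an lsc-regularized superminimizer in $\Nploc(\Om)$, with $1\le v_k\le k$ on $B'$ and $g_{v_k}=g_u\chi_{\{u<k\}}\nearrow G_u$ a.e.\ as $k\to\infty$; every estimate below is made uniformly in $k$, and the conclusions then follow by monotone convergence.

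The key step is to show that for every $\gamma<(p-1)/p$ the power $v_k^{\gamma}$ belongs to $\Np$ on a ball slightly smaller than $B'$, with the Caccioppoli bound
\[
   \int \eta^p\, g_{v_k^\gamma}^p \,d\mu
     = \gamma^p \int \eta^p\, v_k^{\gamma p-p}\, g_{v_k}^p \,d\mu
     \simle C(\gamma)\int g_\eta^p\, v_k^{\gamma p}\,d\mu
\]
for cut-off functions $\eta\in\Lipc(B')$ with $0\le\eta\le1$; this is obtained by testing the superminimizing condition for $v_k$ with a perturbation proportional to $\eta^p v_k^{\gamma p-p+1}$ --- an admissible (nonnegative, bounded) test function precisely because $v_k$ is bounded above and bounded away from $0$ on the support of $\eta$ --- and absorbing a cross term by Young's inequality. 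Feeding this into the Sobolev inequality with exponent $\utho p/(\utho-p)$ when $p<\utho$ (its exponential/BMO form when $p=\utho$), valid under doubling and the \p-Poincar\'e inequality by \cite[Section~4.4]{BBbook} and Haj\l asz--Koskela~\cite[Theorem~5.1]{HaKo}, upgrades $L^q$-integrability of $v_k$ to $L^{q\utho/(\utho-p)}$-integrability; starting from the (possibly small) exponent furnished by the weak Harnack inequality for superminimizers --- which bounds $\|v_k\|_{L^{q_0}(B')}$ by $\essinf_{B'}v_k<\infty$, uniformly in $k$ --- and iterating finitely many times raises the exponent past $p-1$, after which letting $\gamma\uparrow(p-1)/p$ gives $v_k\in L^\tau(B)$ for all $\tau<\utho(p-1)/(\utho-p)$. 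For the gradient, I would write $g_{v_k}=\gamma^{-1}v_k^{1-\gamma}g_{v_k^\gamma}$, apply H\"older's inequality with exponents $p/t$ and $p/(p-t)$ to $\int g_{v_k}^t\,d\mu$, and combine $g_{v_k^\gamma}\in L^p$ with the just-obtained integrability of $v_k$; again taking $\gamma\uparrow(p-1)/p$ this yields $g_{v_k}\in L^t(B)$ for all $t<\utho(p-1)/(\utho-1)$. These are exactly the exponents for which the exponent bookkeeping closes, so monotone convergence gives~\ref{kj-ab}, and the ``in particular'' claim follows since $p-1$ lies in both ranges.

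Part~\ref{kj-cg} is \cite[Theorem~5.6]{KiMa}: when $p>\utho$, sets of zero \p-capacity are too small to occur (small balls have \p-capacity tending to $\infty$, by Theorem~\ref{thm-metaestimate-int-only} together with the definition of $\uSo$), so the quasicontinuous superharmonic function $u$ is forced to be real-valued and continuous, hence locally bounded. Being a locally bounded superharmonic function, $u\in\Nploc(\Om)$ (as recalled in Section~\ref{sect-harm}), so $v_k=u$ locally for large $k$ and $G_u=g_u\in\Lploc(\Om)$.

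The hard part is the metric-space Caccioppoli/Moser machinery underlying the key step: verifying that the power perturbations $\eta^p v_k^{\gamma p-p+1}$ are legitimate test functions for the superminimizing inequality, and tracking the constants $C(\gamma)$ through the finite iteration carefully enough to land on the exponents $\utho(p-1)/(\utho-p)$ and $\utho(p-1)/(\utho-1)$ rather than on strictly smaller ones. This delicate bookkeeping is exactly what is carried out in~\cite[Sections~4--5]{KiMa}, so for the present paper it suffices to invoke those results, observing only that the relevant exponent can be taken to be $\utho$.
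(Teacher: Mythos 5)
Your proposal is correct and follows essentially the same route as the paper: part \ref{kj-ab} is reduced to Kinnunen--Martio's theorems once the Sobolev/$(q,p)$-Poincar\'e inequality with exponent governed by $\utho$ is supplied by Haj\l asz--Koskela, and part \ref{kj-cg} is obtained from positivity of the capacity of singletons forcing continuity of the truncations (hence of $u$), after which local boundedness gives $u\in\Nploc(\Om)$ and $G_u=g_u\in L^p\loc$. One small slip: when $p>\utho$ the capacities $\cp(B_r,B_R)$ of small balls do not tend to $\infty$; rather they stay bounded away from $0$ as $r\to0$ because the integral in \eqref{eq-lower-est-f} converges, which is what gives $\Cp(\{x\})>0$.
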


\begin{proof}
In case~\ref{kj-ab}, it follows from Theorem~5.1 in
Haj\l asz--Koskela~\cite{HaKo} (or \cite[Theorem~4.21]{BBbook})
that so-called $(q,p)$-Poincar\'e
inequalities hold for every $1 \le q<\utho p/(\utho-p)$ 
(every $1 \le q<\infty$ if $p=\utho$).
Thus this part
follows directly from 
Theorems~5.1 and~5.6 in~\cite{KiMa}
(or \cite[Theorems~9.53 and 9.54]{BBbook}), upon letting $\uth \to \utho$.

In case \ref{kj-cg},
$u_k:=\min\{u,k\}$ is a superminimizer and thus belongs to $\Nploc(\Om)$.
It then follows from 
Corollary~5.39 in~\cite{BBbook}
that $u_k$ is continuous and that all points have positive capacity. 
In particular, considering $u_k$ with 
\[
\liminf_{x\to x_0} u(x) < k < \limsup_{x\to x_0} u(x)
\]
leads to a contradiction and hence $u$ is 
an $(-\infty,\infty]$-valued  continuous function.

On the other hand, by Proposition~2.2 in
Kinnunen--Shanmugalingam~\cite{KiSh06} (or Corollary~9.51 in \cite{BBbook}),
the set  $\{x\in\Om:u(x)=\infty\}$ has zero \p-capacity, and thus must be empty.
Hence $u$ is real-valued continuous, and therefore locally bounded.
Corollary~7.8 in Kinnunen--Martio~\cite{KiMa02} 
(or \cite[Corollary~9.6]{BBbook}) then shows that $u\in \Nploc(\Om)$, 
and in particular, $g_u=G_u \in L^p\loc(\Om)$.
\end{proof}

Using the ideas from the proof of Kinnunen--Martio~\cite[Theorem~5.6]{KiMa}, we obtain the following 
generalization of Theorem~\ref{thm-superh-u-int+grad}, 
which will be important when studying Green functions.

\begin{thm} \label{thm-int-Gu-superh-new}
Let $u$ be a superharmonic function in $\Om$.
\begin{enumerate}
\item  \label{kmm-a}
If $u \in L^\tau\loc(\Om)$ for all $0<\tau < \tau_0$, then 
$G_u\in L^t\loc(\Omega)$ whenever 
\[
  0<t< \begin{cases}
  \dfrac{p\tau_0}{\tau_0+1}, & \text{if } \tau_0<\infty, \\
  p, & \text{if } \tau_0=\infty.
  \end{cases}
\]
\item \label{kmm-b}
If $u$ is locally bounded, then
$G_u \in L^p\loc(\Omega)$.
\end{enumerate}
\end{thm}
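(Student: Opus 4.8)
The plan is to adapt the proof of Kinnunen--Martio \cite[Theorem~5.6]{KiMa}, decoupling its two halves: we take the $L^\tau\loc$-integrability of $u$ as a hypothesis and derive from it the $L^t\loc$-integrability of $G_u$ through a weighted Caccioppoli inequality for the truncations $u_k:=\min\{u,k\}$, combined with H\"older's inequality. Two reductions simplify~\ref{kmm-a}. First, since $u,G_u\in L^{p-1}\loc(\Om)$ always (Theorem~\ref{thm-superh-u-int+grad}), we may assume $\tau_0\ge p-1$: replacing $\tau_0$ by $\max\{\tau_0,p-1\}$ does not weaken the hypothesis, while for $\tau_0<p-1$ one has $p\tau_0/(\tau_0+1)<p-1$ and the conclusion is already contained in $G_u\in L^{p-1}\loc(\Om)$. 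Second, recall from~\eqref{eq-def-Gu} that $G_u=\lim_{k\to\infty}g_{u_k}$ and that this limit is increasing a.e.\ (for $j\ge k$ we have $u_k=\min\{u_j,k\}$ with $u_j\in\Nploc(\Om)$, hence $g_{u_k}=g_{u_j}\chi_{\{u_j<k\}}\le g_{u_j}$ a.e.); thus, by monotone convergence, it suffices to bound $\int_B g_{u_k}^t\,d\mu$ uniformly in $k$ for every ball $B$ with $2B\Subset\Om$.

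So fix such a ball $B$ of radius $r$. As $u$ is lower semicontinuous and $\overline{2B}$ is compact, $m:=\inf_{\overline{2B}}u>-\infty$; passing to $u-m+1$ and adjusting $k$ accordingly (which affects neither $G_u$ nor the hypotheses) we may assume $u\ge1$ on $2B$, so that $v:=u_k$ satisfies $1\le v\le k$ there and is a superminimizer in $\Om$ belonging to $\Nploc(\Om)$. For a parameter $\sigma<p-1$, testing the superminimizing property of $v$ with the nonnegative admissible function $\phi=\eta^p v^{1-p+\sigma}\in\Np_0(2B)$ --- where $\eta$ is a Lipschitz cut-off, $\eta\equiv1$ on $B$, $\operatorname{supp}\eta\subset 2B$, $g_\eta\le C/r$ --- leads, as in the weighted Caccioppoli computation in the proof of \cite[Theorem~5.6]{KiMa} (the test function being admissible in the metric setting precisely because $v\ge1$ makes $v^{1-p+\sigma}$ a bounded Lipschitz function of $v$), to
\[
\int_B v^{\sigma-p}g_v^p\,d\mu\simle\frac1{r^p}\int_{2B}v^\sigma\,d\mu ,
\]
with comparison constant depending on $p$, $\sigma$ and the structural constants; the condition $\sigma<p-1$ is what gives the gradient term the correct sign. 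Since $v\le u$, we have $v^\sigma\le u^\sigma$ when $\sigma>0$ (and $v^\sigma\le1$ when $\sigma\le0$), so the right-hand side is bounded uniformly in $k$ whenever $\sigma\le0$ or $0<\sigma<\tau_0$; as $\tau_0\ge p-1$, every $\sigma<p-1$ qualifies.

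Next, writing $g_v^t=\bigl(v^{\sigma-p}g_v^p\bigr)^{t/p}v^{(p-\sigma)t/p}$ and applying H\"older's inequality with exponents $p/t$ and $p/(p-t)$ (valid since $t<p$) gives
\[
\int_B g_v^t\,d\mu\le\biggl(\int_B v^{\sigma-p}g_v^p\,d\mu\biggr)^{t/p}\biggl(\int_B v^{(p-\sigma)t/(p-t)}\,d\mu\biggr)^{(p-t)/p}.
\]
The first factor is bounded uniformly in $k$ by the Caccioppoli estimate, and the second is bounded uniformly in $k$ as soon as $(p-\sigma)t/(p-t)<\tau_0$ when $\tau_0<\infty$ (and automatically when $\tau_0=\infty$, the exponent being finite and $u\in L^\tau\loc(\Om)$ for all finite $\tau$). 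Given $t<p\tau_0/(\tau_0+1)$ with $\tau_0<\infty$, one has $t/(p-t)<\tau_0$, so by continuity a choice of $\sigma<p-1$ sufficiently close to $p-1$ makes $(p-\sigma)t/(p-t)<\tau_0$ as well; when $\tau_0=\infty$ one simply takes $\sigma=0$ and any $t<p$. Either way $\sup_k\int_B g_{u_k}^t\,d\mu<\infty$, so $G_u\in L^t(B)$ by monotone convergence, and as $B$ was arbitrary, $G_u\in L^t\loc(\Om)$; this proves~\ref{kmm-a}.

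For~\ref{kmm-b}, if $u$ is locally bounded then on each ball $B$ with $2B\Subset\Om$ on which $u$ is bounded, $u$ is a bounded superharmonic function, hence an lsc-regularized superminimizer, so $u\in\Nploc(\Om)$ and $g_u\in L^p\loc(\Om)$; moreover $u_k=u$ on $B$ for $k>\sup_B u$, whence $g_{u_k}=g_u$ there and $G_u=g_u$ on $B$, giving $G_u=g_u\in L^p\loc(\Om)$. The step I expect to demand the most care is the weighted Caccioppoli inequality itself: in the metric setting it cannot be obtained by a bare integration by parts but must be extracted from the superminimizing inequality via the substitute machinery of \cite{KiMa} (and \cite{BBbook}), keeping track both of the sign of the gradient coefficient --- which forces $\sigma<p-1$ --- and of the uniform-in-$k$ control of its right-hand side --- which forces the normalization $v\ge1$ and the requirement $\sigma<\tau_0$. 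The only other delicate point is the optimization over $\sigma\in(-\infty,p-1)$, which is exactly what produces the sharp exponent $p\tau_0/(\tau_0+1)$.
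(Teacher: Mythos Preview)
Your proof is correct and follows essentially the same approach as the paper's: the weighted Caccioppoli inequality for the truncations combined with H\"older's inequality, with your parameter $\sigma$ corresponding to the paper's $p-1-\eps$. The minor differences --- your explicit reduction to $\tau_0\ge p-1$ and your direct treatment of $\tau_0=\infty$ (the paper obtains it by letting $\tau_0\to\infty$) --- are cosmetic.
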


It follows from Example~\ref{ex-log-2} below that even 
if $u \in L^{\tau_0}\loc(\Om)$ then it can happen that 
$G_u\notin L^{p\tau_0/(\tau_0+1)}\loc(\Omega)$,
which in particular shows that \ref{kmm-a} is sharp if $\tau_0 < \infty$.
That 
it is sharp also
when $\tau_0=\infty$
follows 
from unweighted $\R^n$ with $p=n$. 

\begin{proof}
\ref{kmm-b}
If $u$ is locally bounded then $u \in \Nploc(\Om)$,
by Corollary~7.8 in Kinnunen--Martio~\cite{KiMa02} (or \cite[Corollary~9.6]{BBbook}).
In particular, $g_u=G_u \in L^p\loc(\Omega)$.

\ref{kmm-a}
It suffices to consider $\tau_0<\infty$, since the infinite case is obtained by letting $\tau_0\to\infty$.
Let $B \subset 2B \Subset \Om$ be a ball and $0 < \eps < \tau_0(p-t)/t -1$.
Let 
\begin{equation}   \label{eq-def-trunc}
m=\min_{\overline{2B}} u > -\infty \quad \text{and} \quad
u_k=\min\{u-m,k\}+1, \quad k=1,2,\ldots,
\end{equation}
 which is a positive superminimizer in $2B$.
Then, using the Caccioppoli inequality for superminimizers
(Lemma~3.1 in Kinnunen--Martio~\cite{KiMa} or  \cite[Proposition~8.8]{BBbook}) 
with a suitable cut-off function $\eta\in\Lipc(2B)$, we obtain
\begin{align*}
   \int_B G_u^t \, d\mu
     &   = \lim_{k \to \infty}   
            \int_B g_{u_k}^t u_k^{-(1+\eps)t/p} u_k^{(1+\eps)t/p}\, d\mu \\
     & \le \lim_{k \to \infty}
           \biggl(    \int_B g_{u_k}^p u_k^{-(1+\eps)}\, d\mu \biggr)^{t/p}
          \biggl(\int_B  u_k^{(1+\eps)t/(p-t)}\, d\mu \biggr)^{1-t/p}  \\
     & \le C\biggl(    \int_{2B} (u-m+1)^{p-(1+\eps)}\, d\mu \biggr)^{t/p}
          \biggl(\int_B  (u-m+1)^{(1+\eps)t/(p-t)}\, d\mu \biggr)^{1-t/p} \\
     & < \infty, 
\end{align*}
where the first integral on the right-hand side
is finite by the last part of Theorem~\ref{thm-superh-u-int+grad},
while the last integral is finite by assumption.
\end{proof}

\section{Integrability of Green functions}
\label{sect-int-Green}

\emph{In addition to the general assumptions from
the beginning of Section~\ref{sect-harm},
we assume in this section that $\Om\subset X$ is a bounded domain.}

\medskip

In this section we study $L^\tau$-integrability 
(and nonintegrability) of Green functions.
In this case, we can improve upon the general
integrability results for superharmonic functions 
in Theorem~\ref{thm-superh-u-int+grad}.
We state the results here and  in the next
Section~\ref{sect-int-gu} for Green functions, but by Theorem~\ref{thm-main-intro}
the same conclusions hold for singular functions as well.
See also Section~\ref{sect-poles} for similar results for general
\p-harmonic functions with poles.

Note that, due to Theorem~\ref{thm-comp-u1-u2} and Remark~\ref{rmk-assumptions},
all Green functions  with the same
singularity $x_0$ belong to the same $L^\tau$ spaces.
If $p> \uso$, then
by Proposition~\ref{prop-cp-x0}\,\ref{e-c} and
Theorem~\ref{thm-Green-cp-x0} every Green function is bounded.
We therefore omit
this case in the rest of this section.
For $p\le\uso$ we have
the \emph{critical exponent}
\begin{equation}    \label{eq-def-tau-p}
\tau_p= \begin{cases}
     \displaystyle \frac{\uso(p-1)}{\uso-p}, & \text{if } p < \uso, \\
     \infty, & \text{if } p = \uso.
     \end{cases}
\end{equation}

\begin{thm} \label{thm-integrability-S}
Let $u$ be a Green function in $\Om$ with singularity at $x_0$.
If $p\le\uso$ then $u \in L^\tau(\Om)$ for all $0<\tau<\tau_p$.
\end{thm}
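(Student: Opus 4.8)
\emph{Plan.} I would first reduce matters to a small ball around $x_0$, then run a layer--cake estimate built on the sharp pointwise bounds of Theorem~\ref{thm-comp-u-R1-R2}. For the reduction: by Theorem~\ref{thm-main-intro} we have $\Cp(X\setm\Om)>0$, and if $p=\uso$ with $\Cp(\{x_0\})>0$ then $u$ is bounded (Theorem~\ref{thm-Green-cp-x0}) and there is nothing to prove, so I may assume $\Cp(\{x_0\})=0$ (automatic when $p<\uso$ by Proposition~\ref{prop-cp-x0}\,\ref{e-a}), whence $u$ is unbounded. Fix $0<\tau<\tau_p$. Using Remark~\ref{rmk-assumptions} I would pass to a level set $\Om_k=\{u>k\}$ with $B_{R_1}\subset\Om_k\subset B_{R_2}$, $R_2<\tfrac14\diam X$, on which $u-k$ is again a Green function; since $0\le u\le k$ on $\Om\setm\Om_k$ and $\mu(\Om)<\infty$, the part $\Om\setm\Om_k$ contributes a finite $L^\tau$--mass, so it suffices to treat $u-k$ on $\Om_k$. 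Renaming, I may thus assume $B_{R_1}\subset\Om\subset B_{R_2}$ with $R_2<\tfrac14\diam X$. Theorem~\ref{thm-comp-u-R1-R2} then gives, for some $\de\in(0,R_1/50\la)$ with $B_\de\subset\Om$,
\[
u(x)\simeq F(d(x,x_0)):=\int_{d(x,x_0)}^{R_1}\Bigl(\tfrac{\rho}{\mu(B_\rho)}\Bigr)^{1/(p-1)}\,d\rho,\qquad x\in B_\de,
\]
with two--sided constants, while on the compact set $\itoverline{\Om}\setm B_\de$ the function $u$ is bounded (it is \p-harmonic and continuous there, with zero Sobolev boundary values; cf.\ Corollary~\ref{cor-BBL-conseq} and the Harnack argument of Theorem~\ref{thm-comp-u1-u2}). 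Hence everything reduces to estimating $\int_{B_\de}F(d(x,x_0))^\tau\,d\mu$.

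\emph{Step 2 (layer--cake / Abel summation).} Writing $\phi(\rho)=(\rho/\mu(B_\rho))^{1/(p-1)}$, one checks from doubling of $\mu$ that $F(\rho/2)\simle F(\rho)$, so on each dyadic annulus $A_j=B_{2^{-j}\de}\setm B_{2^{-j-1}\de}$ one has $F(d(\cdot,x_0))\simeq F(2^{-j}\de)$. Decomposing $B_\de=\bigcup_j A_j$, summing by parts in $j$, using $F(2^{-j}\de)-F(2^{-j+1}\de)=\int_{2^{-j}\de}^{2^{-j+1}\de}\phi\simeq 2^{-j}\de\,\phi(2^{-j}\de)$ and the mean value theorem, I get (a finite term plus)
\[
\int_{B_\de}F(d(x,x_0))^\tau\,d\mu\ \simeq\ \sum_{j\ge 1}\mu(B_{2^{-j}\de})\,F(2^{-j}\de)^{\tau-1}\,2^{-j}\de\,\phi(2^{-j}\de).
\]
Since $\mu(B_\rho)\,\phi(\rho)=\rho^{1/(p-1)}\mu(B_\rho)^{(p-2)/(p-1)}$, the $j$-th term equals $F(\rho)^{\tau-1}\,\rho^{p/(p-1)}\mu(B_\rho)^{(p-2)/(p-1)}$ with $\rho=2^{-j}\de$. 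The range $0<\tau\le 1$ (note $\tau_p\ge p-1$) I would dispose of separately, either via $L^{\tau_0}(\Om)\subset L^\tau(\Om)$ for $\tau<\tau_0<\tau_p$ using $\mu(\Om)<\infty$, or by the general bound $u\in L^{p-1}\loc(\Om)$ from Theorem~\ref{thm-superh-u-int+grad}; so assume $\tau\ge 1$.

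\emph{Step 3 (extracting the sharp exponent).} Given $\tau<\tau_p$, choose $s\in\uSo$ with $s>p$ so near $\uso$ that still $\tau<\tau_s:=s(p-1)/(s-p)$ (possible since $\tau_s\nearrow\tau_p$ as $s\searrow\uso$; if $p=\uso$, instead $\mu(B_\rho)\simge\rho^{p}$ or $\simge\rho^{p+\eps}$ forces $F(r)\simle\log(R_1/r)$ or $F(r)\simle r^{-\eps}$, and $\int_{B_\de}F(d)^\tau\,d\mu<\infty$ follows for every $\tau$). From $\mu(B_\rho)\simge\rho^{s}$ I get $\phi(\rho)\simle\rho^{(1-s)/(p-1)}$ and hence $F(r)\simle r^{(p-s)/(p-1)}$; substituting these into the series above (and, when $p\le 2$, using $\mu(B_\rho)\simge\rho^{s}$ once more to bound the factor $\mu(B_\rho)^{(p-2)/(p-1)}$, whose exponent is then $\le 0$) makes the $j$-th term $\simle(2^{-j}\de)^{\gamma}$ with $\gamma=\bigl[(p-s)(\tau-1)+p+s(p-2)\bigr]/(p-1)$, and the identity $(s-p)(\tau_s-1)=p+s(p-2)$ shows $\gamma>0\iff\tau<\tau_s$. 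Thus the geometric series converges and $u\in L^\tau(\Om)$.

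\emph{Main obstacle.} The whole difficulty is sharpness: the general superharmonic integrability (Theorem~\ref{thm-superh-u-int+grad}) and a Sobolev--inequality argument only yield the weaker exponent $\utho(p-1)/(\utho-p)$, so one genuinely needs the pointwise estimate (which localizes the volume growth to $x_0$) together with tight bookkeeping of powers, and the available one--sided information is only $\mu(B_\rho)\simge\rho^{s}$ for $s\in\uSo$ --- never an upper bound on $\mu(B_\rho)$. When $p>2$ the term $\mu(B_\rho)^{(p-2)/(p-1)}$ appears with a \emph{positive} power, so the naive substitution fails and one must rearrange --- e.g.\ keep this factor inside $\phi$ via $\mu(B_\rho)\phi(\rho)^{p-1}=\rho$ and telescope against $F$ rather than against powers of $\rho$ --- which, together with the borderline and small-$\tau$ cases, is where the real work lies.
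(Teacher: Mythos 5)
Your overall architecture (reduce to $B_{R_1}\subset\Om\subset B_{R_2}$ via Remark~\ref{rmk-assumptions}, apply the pointwise estimate of Theorem~\ref{thm-comp-u-R1-R2}, and sum over dyadic annuli) is exactly the paper's, and your Step~3 computation is correct as far as it goes: the identity $(s-p)(\tau_s-1)=p+s(p-2)$ does give convergence precisely for $\tau<\tau_s$. The problem is the one you yourself flag: for $p>2$ the factor $\mu(B_\rho)^{(p-2)/(p-1)}$ carries a \emph{positive} exponent, the only usable information is the lower bound $\mu(B_\rho)\simge\rho^{s}$ (an upper bound is only available with the exponent $\ls\in\lSo$, which may be far smaller than $s$ and yields a strictly smaller range of $\tau$), and your proposed repair --- ``keep this factor inside $\phi$ and telescope against $F$'' --- is not carried out and does not obviously close the gap: writing $m_jF_j^{\tau-1}(F_j-F_{j-1})\simeq\rho_j^{p}F_j^{\tau-1}(F_j-F_{j-1})^{2-p}$ requires, for $p>2$, a lower bound on $F_j-F_{j-1}$ in terms of $F_j$, which fails in general. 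So as written the proof is complete only for $1<p\le 2$ (plus the separately handled case $p=\uso$). A smaller quibble: the reduction to $\tau\ge1$ is not quite justified, since $\tau_p$ can be $<1$ (e.g.\ $p$ close to $1$ and $\uso$ large) while still $\tau_p>p-1$; fortunately concavity of $x\mapsto x^\tau$ still gives the one-sided Abel bound you need, so this is repairable.

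The paper closes the $p>2$ issue by rearranging \emph{inside the pointwise estimate} rather than after integration. Fixing $\us\in\uSo$ with $\us>\uso$ and $\tau<\be:=\us(p-1)/(\us-p)$, one writes
\[
u(x)\simeq\int_{r}^{R_1}\biggl(\frac{\rho^{p}}{\mu(B_\rho)^{p/\us}}\biggr)^{1/(p-1)}
\mu(B_\rho)^{-(1-p/\us)/(p-1)}\,\frac{d\rho}{\rho}
\simle\frac{\log(R_1/r)}{\mu(B_r)^{1/\be}},
\]
using $\mu(B_\rho)^{p/\us}\simge\rho^{p}$ for the first factor and the monotonicity $\mu(B_\rho)\ge\mu(B_r)$ for the second (note $1-p/\us>0$). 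Integrating over the annulus $B^{k}\setm B^{k+1}$ then produces $k^\tau\mu(B^{k})^{1-\tau/\be}$, where the measure now enters only with the \emph{positive} exponent $1-\tau/\be$, so the crude upper bound $\mu(B^{k})\simle 2^{-k\ls}$ with any $\ls\in\lSo$ suffices for geometric convergence. No case distinction on $p$ versus $2$ is needed. If you want to salvage your Abel-summation route for $p>2$, you would need some such redistribution of powers of $\mu(B_\rho)$ between the two factors before estimating; the paper's splitting $\mu(B_\rho)=\mu(B_\rho)^{p/\us}\mu(B_\rho)^{1-p/\us}$ is precisely the ``tight bookkeeping'' your last paragraph asks for.
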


Since $\uso\le\utho$, we have for all $p\le\uso$ that 
$\tau_p\ge \utho(p-1)/(\utho-p)$, where the 
right-hand side is the borderline exponent for $\tau$ 
in Theorem~\ref{thm-superh-u-int+grad}
and the inequality is strict when $\uso<\utho$.
Hence Green functions have higher integrability than
what is known for general
superharmonic functions when
$\uso<\utho$.

\begin{remark} \label{rmk-DMG}
Danielli--Garofalo--Marola~\cite[Corollary~5.4]{DaGaMa} showed 
for singular functions $u$, as defined in
Holo\-pai\-nen--Shan\-mu\-ga\-lin\-gam~\cite{HoSha}, that
$u \in L^\tau(\Om)$ whenever
\[
p < \lqq:= \sup \lQ 
\quad\text{and}\quad
\tau < \frac{\lqq(p-1)}{\uthtilde-p},
\quad\text{where }
\uthtilde=\log_2 C_\mu \in \uTh,
\]
$C_\mu$ is the doubling constant of $\mu$ and 
\begin{equation} \label{eq-qt}
  \lQ=\biggl\{q>0 : \text{there is $C$ so that } 
        \frac{\mu(B_r)}{\mu(B_R)}  \le C \Bigl(\frac{r}{R}\Bigr)^q 
        \text{ for } 0 < r < R <\infty
        \biggr\}.
\end{equation}
Note that $\lqq \le \lqo$, where the inequality can be strict
as the range in \eqref{eq-qt} is $0<r<R<\infty$.
They however also implicitly assumed that $\lqq \in \lQ$,
see \cite[eq.\ (2.2)]{DaGaMa}, and that $X$ is 
linearly locally connected (LLC),
through their use (at the bottom of p.~354) of Lemma~5.3 in 
Bj\"orn--MacManus--Shanmugalingam~\cite{BMS}.
\end{remark}

Note that $\lqq$ in~\cite{DaGaMa} can be much smaller than our
$\uso$, which in turn can be much smaller than $\uthtilde$.
Thus, both the numerator and the denominator are in general worse
in~\cite{DaGaMa} than in the critical exponent~\eqref{eq-def-tau-p},
and the range of possible exponents $p$
is smaller than here. 
Thus Theorem~\ref{thm-integrability-S} 
is a substantial improvement upon the results in~\cite{DaGaMa}. 
Moreover, Theorem~\ref{thm-integrability-S} 
is sharp (up to certain borderline cases), 
by Theorem~\ref{thm-nonintegrability-S}.

\begin{proof}[Proof of Theorem~\ref{thm-integrability-S}]
We may assume that $B_{R_1} \subset \Om \subset B_{R_2}$, 
where  $0<R_1\le R_2 < \frac14 \diam X$, see Remark~\ref{rmk-assumptions}.
Let $r_k=2^{-k}R_1/50\la$ and $B^k=B_{r_k}$, $k=1,2,\ldots$\,.
By the assumptions on $\tau$, 
we find $\us > \uso$ such that $\tau<\us(p-1)/(\us-p)=:\be$.
In particular, $\us \in \uSo$.

Consider $x$ such that $r_{k+1} \le r:=d(x,x_0) < r_k$.
Then, by Theorem~\ref{thm-comp-u-R1-R2} and since $\us \in \uSo$,
\begin{align*}
    u(x) 
   & \simeq 
\int_{r}^{R_1} \biggl( \frac{\rho^p}{\mu(B_\rho)^{1-p/\us} \mu(B_\rho)^{p/\us}}
              \biggr)^{1/(p-1)} \,\frac{d\rho}{\rho} \\
& \simle \biggl( \frac{1}{\mu(B_r)^{1-p/\us}} \biggr)^{1/(p-1)}
      \int_{r}^{R_1} \frac{d\rho}{\rho} = \frac{\log (R_1/r)}{\mu(B_r)^{1/\be}}
\simle\frac{k}{\mu(B^{k})^{1/\be}}.
\end{align*}
As $\tau /\be <1$, we have for
any $\ls \in \lSo$ that 
\[
\int_{B^{1}} u^\tau \, d\mu
\simle \sum_{k=1}^\infty k^\tau \mu(B^k)^{1-\tau /\be}
\simle \sum_{k=1}^\infty k^\tau 2^{-k\ls (1-\tau /\be)}
<\infty,
\]
where we recall that $\lSo\neq\emptyset$ under our assumptions.
Hence $u \in L^\tau(\Om)$.
\end{proof}

Next we consider nonintegrability of $u$.

\begin{thm} \label{thm-nonintegrability-S}
Let $u$ be a Green function in $\Om$ with singularity at $x_0$
and assume that $p \le \uso$.
\begin{enumerate}
\item \label{hh-a}
If $\tau > \tau_p$ then $u \notin L^\tau(\Om)$.
\item \label{hh-b}
If $\uso \notin \uSo \setm \lSo$,
then $u \notin L^{\tau_p}(\Om)$.
\end{enumerate}
\end{thm}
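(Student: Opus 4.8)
The plan is to combine the sharp pointwise lower bound for $u$ near $x_0$ with the observation that nonintegrability only needs \emph{divergence} of a dyadic series, which here is governed by the exponent $\uso$ regarded as $\inf\uSo$. By Remark~\ref{rmk-assumptions} I may assume $B_{R_1}\subset\Om\subset B_{R_2}$ with $0<R_1\le R_2<\tfrac14\diam X$, so that \eqref{eq-uappr-0} applies and yields $u(x)\simge(r^p/\mu(B_r))^{1/(p-1)}$ for $r:=d(x,x_0)$ small. For small $\rho$ put $A_\rho:=B_\rho\setm B_{\rho/2}$; on $A_\rho$ doubling gives $r\simeq\rho$ and $\mu(B_r)\simeq\mu(B_\rho)$, while reverse-doubling (valid with $\xi=2$ under the standing assumptions) gives $\mu(A_\rho)\simeq\mu(B_\rho)$, whence
\[
\int_{A_\rho}u^\tau\,d\mu\ \simge\ \rho^{p\tau/(p-1)}\,\mu(B_\rho)^{1-\tau/(p-1)}.
\]
When $p<\uso$ we have $\tau\ge\tau_p>p-1$, so $1-\tau/(p-1)<0$; hence if $\mu(B_\rho)\le\rho^{s}$ then the right-hand side is $\simge\rho^{E(s)}$, where $E(s):=\bigl(p\tau+s(p-1-\tau)\bigr)/(p-1)$. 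The elementary facts to keep in mind are that, for $\tau>p-1$, one has $E(s)\le0$ iff $s\ge p\tau/(\tau-p+1)$, and that $p\tau/(\tau-p+1)<\uso$ iff $\tau>\tau_p$, with $p\tau_p/(\tau_p-p+1)=\uso$ when $p<\uso$. Also, $p=\uso$ gives $\tau_p=\infty$, so \ref{hh-a} is vacuous then and \ref{hh-b} only asserts that $u$ is unbounded.

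For \ref{hh-a}, assume $\tau>\tau_p$; then $p\tau/(\tau-p+1)<\uso$, so I fix $s$ with $p\tau/(\tau-p+1)<s<\uso$. Since $s<\uso=\inf\uSo$, we have $s\notin\uSo$, so there is a sequence $\rho_j\to0$ with $\mu(B_{\rho_j})<\rho_j^{s}$; passing to a subsequence, $\rho_{j+1}<\rho_j/2$, so the annuli $A_{\rho_j}$ are pairwise disjoint. Then $\int_{A_{\rho_j}}u^\tau\,d\mu\simge\rho_j^{E(s)}$ with $E(s)<0$ (as $s>p\tau/(\tau-p+1)$), hence $\to\infty$; summing over $j$ gives $\int_\Om u^\tau\,d\mu=\infty$.

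For \ref{hh-b}, first let $p=\uso$: then $\uso\notin\uSo\setm\lSo$ forces $\Cp(\{x_0\})=0$ by Proposition~\ref{prop-cp-x0}\,\ref{e-a}, hence $u$ is unbounded by Theorem~\ref{thm-Green-cp-x0}, which is the assertion. Now let $p<\uso$; the critical scale is $s=\uso$, where $E(\uso)=0$ (with $\tau=\tau_p$), so divergence reduces to preventing $\mu(B_\rho)/\rho^{\uso}$ from blowing up along some sequence of radii, and $\uso\notin\uSo\setm\lSo$ supplies exactly that, in two cases. If $\uso\notin\uSo$ there is $\rho_j\to0$ with $\mu(B_{\rho_j})\le\eps_j\rho_j^{\uso}$ and $\eps_j\to0$, so $\int_{A_{\rho_j}}u^{\tau_p}\,d\mu\simge\eps_j^{1-\tau_p/(p-1)}\to\infty$. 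If instead $\uso\in\lSo$ then $\mu(B_\rho)\simle\rho^{\uso}$ for all small $\rho$, and taking $\rho_j=2^{-j}\rho_0$ (so the $A_{\rho_j}$ are disjoint) gives $\int_{A_{\rho_j}}u^{\tau_p}\,d\mu\simge1$. In either case $\sum_j\int_{A_{\rho_j}}u^{\tau_p}\,d\mu=\infty$, so $u\notin L^{\tau_p}(\Om)$. (Minor care: choose the subsequence so that the annuli are disjoint, and take $\rho_0<1$ so that $\rho_j^{E(s)}\ge1$ whenever $E(s)\le0$.)

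The only real subtlety is that the single-scale bound $u\simge(r^p/\mu(B_r))^{1/(p-1)}$ looks too lossy to produce the sharp exponent, yet it suffices because we merely need a series to diverge: $\uso=\inf\uSo$ is a ``$\limsup$-type'' quantity furnishing, for every $s<\uso$, infinitely many radii on which $\mu(B_\rho)$ is as small as $\rho^{s}$, and that is all the argument consumes; the borderline $\tau=\tau_p$ degenerates precisely on the excluded set $\uSo\setm\lSo$, where no such sequence of radii need exist and $u$ can in fact be $\tau_p$-integrable (cf.\ Example~9.4 in \cite{BBLeh1}).
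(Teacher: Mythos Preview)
Your proof is correct and follows essentially the same approach as the paper: reduce via Remark~\ref{rmk-assumptions}, use the pointwise lower bound \eqref{eq-uappr-0} on dyadic annuli together with reverse-doubling, and exploit that $s<\uso$ (respectively $s=\uso$ with $\uso\notin\uSo\setm\lSo$) furnishes a sequence of radii along which $\mu(B_\rho)\simle\rho^s$, making the exponent $E(s)$ nonpositive. The only cosmetic differences are that the paper works directly with the fixed dyadic scale $r_k=2^{-k}R_1/50\la$ and treats the two subcases of \ref{hh-b} in one stroke (``in both cases there is a sequence $k_j\nearrow\infty$ such that $\mu(B^{k_j})\simle r_{k_j}^s$''), whereas you split $\uso\notin\uSo$ and $\uso\in\lSo$ explicitly.
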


\begin{proof}
By Remark~\ref{rmk-assumptions},
we may assume that $B_{R_1} \subset \Om \subset B_{R_2}$, 
where $0<R_1\le R_2 < \frac14 \diam X$.
Let $r_k=2^{-k}R_1/50\la$ and $B^k=B_{r_k}$, $k=0,1,\ldots$\,.

If $p=\uso$, then $\tau_p=\infty$ and there is nothing to
prove in~\ref{hh-a}, 
while Proposition~\ref{prop-cp-x0} and Theorem~\ref{thm-Green-cp-x0}
show that $u \notin L^\infty(\Om)$ in~\ref{hh-b}.

Assume now that $p<\uso$.
In case~\ref{hh-a}, let $s>p$ be such that $\tau\ge s(p-1)/(s-p)$
and $s\notin \uSo$.
In case~\ref{hh-b}, we instead take $s=\uso$ and $\tau=\tau_p$.
In both cases, there is a sequence $k_j \nearrow \infty$
such that $\mu(B^{k_j}) \simle r_{k_j}^s$.
As $\tau \ge \tau_p > p-1$,
we therefore obtain from \eqref{eq-uappr-0} and 
the reverse-doubling property of $\mu$ that
\[
\int_{B^0} u^\tau \, d\mu 
   \simge \sum_{k=0}^\infty \biggl( \frac{r_k^p}{\mu(B^{k})} 
            \biggr)^{\tau/(p-1)}  \mu(B^{k}) 
   \simge \sum_{j=0}^\infty r_{k_j}^{p\tau/(p-1)+s(1-\tau/(p-1))}  = \infty,
\]
since the last exponent is nonpositive.
\end{proof}

Thus, when $p < \uso$
we know exactly which $L^\tau$-integrability $u$ has, 
apart from the borderline case $\tau=\tau_p$.
When $p=\uso$
we also lack a complete characterization of when $u$ is bounded.
This is however natural, as
knowing $\lSo$ and $\uSo$
is not enough to determine integrability and boundedness in
these cases,
see the last part with $p=s$ in
Example~\ref{ex-log} below
and the comment 
before Lemma~\ref{lem-suff-cap-zero}.
At the same time, under the assumption 
$\uso \notin \uSo \setm \lSo$
we have a complete characterization, as follows.

\begin{cor} \label{cor-integrability-S-extra}
Let $u$ be a Green function in $\Om$ with singularity at $x_0$ 
and assume that
$p \le \uso \notin \uSo \setm \lSo$.
Then $u$ is unbounded, and $u \in L^\tau(\Om)$ 
if and only if $\tau < \tau_p$.
\end{cor}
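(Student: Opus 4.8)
The plan is to assemble the statement directly from the two preceding results, since Corollary~\ref{cor-integrability-S-extra} is essentially a packaging of Theorems~\ref{thm-integrability-S} and~\ref{thm-nonintegrability-S} together with the boundedness dichotomy from Section~\ref{sect-harm}. The only point requiring a moment's care is to check that the single hypothesis $p\le\uso\notin\uSo\setm\lSo$ simultaneously feeds all the inputs we need.

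First I would establish unboundedness. The hypothesis means that either $p<\uso$, or $p=\uso\notin\uSo\setm\lSo$; in both cases Proposition~\ref{prop-cp-x0}\,\ref{e-a} applies and yields $\Cp(\{x_0\})=0$. By Theorem~\ref{thm-Green-cp-x0} (the equivalence of \ref{a-5} with ``$u$ is unbounded'', equivalently with \ref{a-1}), this forces $u(x_0)=\infty$, so $u$ is unbounded. Alternatively, when $p<\uso$ one could skip the capacity argument and note that a bounded function on the bounded domain $\Om$ lies in every $L^\tau(\Om)$, so the nonintegrability obtained below would already give unboundedness; but the case $p=\uso$ needs Proposition~\ref{prop-cp-x0}\,\ref{e-a} in any event, so it is cleanest to argue uniformly via the capacity.

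Next I would record the positive direction: since $p\le\uso$, Theorem~\ref{thm-integrability-S} gives $u\in L^\tau(\Om)$ for every $0<\tau<\tau_p$. For the converse, I would handle $\tau>\tau_p$ by Theorem~\ref{thm-nonintegrability-S}\,\ref{hh-a}, which gives $u\notin L^\tau(\Om)$; and the borderline exponent $\tau=\tau_p$ (finite exactly when $p<\uso$ --- when $p=\uso$ we have $\tau_p=\infty$ and there is nothing to check) by Theorem~\ref{thm-nonintegrability-S}\,\ref{hh-b}, whose hypothesis $\uso\notin\uSo\setm\lSo$ is precisely the extra assumption we are granting. Thus $u\notin L^\tau(\Om)$ for every $\tau\ge\tau_p$, and combining with the previous step yields the equivalence $u\in L^\tau(\Om)\iff\tau<\tau_p$.

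I do not anticipate any genuine obstacle here: all the real work has already been done in Theorems~\ref{thm-integrability-S} and~\ref{thm-nonintegrability-S}. The ``hard part'', if any, is purely bookkeeping --- confirming that $p\le\uso\notin\uSo\setm\lSo$ triggers Proposition~\ref{prop-cp-x0}\,\ref{e-a} (for unboundedness) and clause~\ref{hh-b} of Theorem~\ref{thm-nonintegrability-S} (for the borderline nonintegrability) at once, together with the observation that the case $p=\uso$ makes $\tau_p=\infty$ so that no borderline statement is claimed there.
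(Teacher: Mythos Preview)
Your proposal is correct and follows essentially the same approach as the paper: invoke Proposition~\ref{prop-cp-x0}\,\ref{e-a} together with Theorem~\ref{thm-Green-cp-x0} for unboundedness, and then Theorems~\ref{thm-integrability-S} and~\ref{thm-nonintegrability-S} for the two directions of the integrability equivalence. The paper's proof is simply a terser version of what you wrote, without the explicit case split at $p=\uso$.
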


\begin{proof}
By Proposition~\ref{prop-cp-x0},
$\Cp(\{x_0\})=0$
and hence $u$ is unbounded
by Theorem~\ref{thm-Green-cp-x0}.
The rest of the conclusion follows directly from
Theorems~\ref{thm-integrability-S} and~\ref{thm-nonintegrability-S}.
\end{proof}

The following more general characterizations can be used also when 
the critical case is not captured by the $S$-sets,
and hence they complement Corollary~\ref{cor-integrability-S-extra}
when $\uso \in \uSo \setm \lSo$.

\begin{thm} \label{thm-u-Q}
Let $u$ be a Green function in $\Om$ with singularity at $x_0$.
\begin{enumerate}
\item \label{a-b}
If  $p <\uso$, then $u \in  L^{\tau_p}(\Om)$ if and only if
\begin{equation} \label{eq-prop-u-Q}
  \sum_{k=1}^\infty \biggl(\frac{2^{-k\uso}}{\mu(B_{2^{-k}})}\biggr)^{p/(\uso-p)}
  < \infty,
\end{equation}
or equivalently
\[
   \int_0^1 \biggl(\frac{\rho^{\uso}}{\mu(B_{\rho})}\biggr)^{p/(\uso-p)} \frac{d\rho}{\rho}
   <\infty.
\]
\item \label{b-b}
If $p=\uso$, then $u$ is bounded 
{\rm(}i.e.\ $u\in L^{\tau_p}(\Om)$\/{\rm)} if and only if
\begin{equation} \label{eq-prop-u-1}
 \sum_{k=1}^\infty \biggl( \frac{2^{-k\uso}}{\mu(B_{2^{-k}})} 
 \biggr)^{1/(\uso-1)}  < \infty,
\end{equation}
or equivalently 
\[
\int_0^1 \biggl(\frac{\rho^{\uso}}{\mu(B_{\rho})}\biggr)^{1/(\uso-1)} 
\frac{d\rho}{\rho}
=    \int_0^1 \biggl(\frac{\rho}{\mu(B_{\rho})}\biggr)^{1/(\uso-1)} \, d\rho
   <\infty.
\]
\end{enumerate}
\end{thm}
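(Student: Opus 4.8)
The plan is to derive Theorem~\ref{thm-u-Q} from the pointwise estimate~\eqref{eq-uappr-2} in Theorem~\ref{thm-comp-u-R1-R2}, discretized into a dyadic sum, exactly as in the proofs of Theorems~\ref{thm-integrability-S} and~\ref{thm-nonintegrability-S}, but now keeping track of the quantities precisely rather than bounding them by powers of $r$. First I would invoke Remark~\ref{rmk-assumptions} to reduce to the case $B_{R_1}\subset\Om\subset B_{R_2}$ with $0<R_1\le R_2<\tfrac14\diam X$, set $r_k=2^{-k}R_1/50\la$ and $B^k=B_{r_k}$, and recall from~\eqref{eq-uappr-2} that for $x$ with $r_{k+1}\le d(x,x_0)<r_k$,
\[
u(x)\simeq \int_{d(x,x_0)}^{R_1}\Bigl(\frac{\rho}{\mu(B_\rho)}\Bigr)^{1/(p-1)}\,d\rho
=:I(d(x,x_0)).
\]
Since $\mu$ is doubling and reverse-doubling at $x_0$, $I(r)$ is comparable for $r\in[r_{k+1},r_k)$ to the partial tail sum $T_k:=\sum_{j=1}^{k}(r_j^p/\mu(B^j))^{1/(p-1)}$, and $\mu(B^k)\simeq\mu(\{x:r_{k+1}\le d(x,x_0)<r_k\})$ by reverse-doubling. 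Hence
\[
\int_{B^0}u^\tau\,d\mu\simeq\sum_{k=1}^\infty T_k^{\,\tau}\,\mu(B^k),
\]
and the whole theorem reduces to deciding, for the critical exponent $\tau=\tau_p$, when this series converges.

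Next I would analyze the series $\sum_k T_k^{\tau_p}\mu(B^k)$ using the definition of $\uso$. Fix any $\us>\uso$ and any $\ls<\lso$ with $\us,\ls$ close to $\uso$; then $\mu(B^k)\simge r_k^{\us}$ for all $k$ and $\mu(B^k)\simle r_k^{\ls}$ along a subsequence, and more importantly $\mu(B^k)/r_k^{\us}$ is essentially nonincreasing while $\mu(B^k)/r_k^{\ls}$ is essentially nondecreasing (up to constants), by the doubling/reverse-doubling exponent bounds. This monotonicity lets me compare the tail sum $T_k$ with its last few terms: in the case $p<\uso$ the terms $(r_j^p/\mu(B^j))^{1/(p-1)}$ decay at least geometrically (since $\mu(B^j)\simge r_j^{\us}$ with $\us>p$), so $T_k\simeq (r_k^p/\mu(B^k))^{1/(p-1)}$, and then
\[
\sum_k T_k^{\tau_p}\mu(B^k)\simeq\sum_k\Bigl(\frac{r_k^p}{\mu(B^k)}\Bigr)^{\tau_p/(p-1)}\mu(B^k)
=\sum_k\Bigl(\frac{r_k^{\uso}}{\mu(B^k)}\Bigr)^{p/(\uso-p)},
\]
using $\tau_p/(p-1)=\uso/(\uso-p)$ and $p\cdot\tau_p/(p-1)-(\tau_p/(p-1)-1)\cdot(\text{exponent})$ bookkeeping; the arithmetic identity to verify is that the exponent of $r_k$ equals $\uso p/(\uso-p)=\tau_p\cdot p/(p-1)\cdot\ldots$, which is the content of~\eqref{eq-prop-u-Q} after absorbing $R_1/50\la$ into the comparison constant and passing from the sequence $r_k$ to $2^{-k}$ (which changes nothing since they are comparable up to a bounded factor, and the sum over dyadic scales is insensitive to this). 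In the case $p=\uso$, $\tau_p=\infty$, so "$u\in L^{\tau_p}$" means $u\in L^\infty(\Om)$, i.e.\ $u$ is bounded, i.e.\ $\sup_k T_k<\infty$, which is exactly the convergence of $\sum_k(r_k^p/\mu(B^k))^{1/(p-1)}=\sum_k(r_k^{\uso}/\mu(B^k))^{1/(\uso-1)}$ since $p=\uso$; this gives~\eqref{eq-prop-u-1}. The equivalence with the integral versions follows since the integrand $(\rho^{\uso}/\mu(B_\rho))^{p/(\uso-p)}\rho^{-1}$ (resp.\ with exponent $1/(\uso-1)$) is, by doubling/reverse-doubling of $\mu(B_\rho)$, comparable on each dyadic annulus $[r_{k+1},r_k)$ to $2^{-k}$ times its value at $\rho=r_k$, turning the integral into the series up to constants.

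The main obstacle I anticipate is the convergence direction in part~\ref{a-b} — showing that $u\in L^{\tau_p}(\Om)$ actually \emph{implies}~\eqref{eq-prop-u-Q} — because this requires the \emph{lower} bound $u(x)\simge I(d(x,x_0))$ to be sharp enough that the tail sum $T_k$ is genuinely comparable to its last term (not merely dominated by it). This is where I need the full strength of~\eqref{eq-uappr-2}, which already contains both bounds, together with the geometric-decay argument above; the delicate point is that the geometric decay of the series terms needs $p<\uso$ strictly, and near the borderline one must choose $\us>\uso$ carefully so that the ratio $r_{j+1}^p\mu(B^j)/(r_j^p\mu(B^{j+1}))\le 2^{-p}\cdot 2^{\us}=2^{\us-p}$... wait, that exceeds $1$ — the correct bound is $r_{j+1}^p\mu(B^j)/(r_j^p\mu(B^{j+1}))\le 2^{-p}C2^{\us}$ is $>1$, so geometric decay of $(r_j^p/\mu(B^j))^{1/(p-1)}$ actually needs the opposite: $\mu(B^{j+1})/\mu(B^j)\simge 2^{-\us}$ means the ratio of successive terms is $\simle (2^{-p}\cdot 2^{\us})^{1/(p-1)}=2^{(\us-p)/(p-1)}>1$, which is the wrong direction. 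The resolution is that $T_k$ is comparable to its \emph{last} term because the terms \emph{grow} geometrically toward $k$ (the sum is dominated by its largest, final term), which uses $\mu(B^j)\simle C r_j^{\ls}$-type reverse bounds giving term ratio $\simge 2^{(\ls-p)/(p-1)}$ — still need $\ls<p$; but $\lso\le\uso$ and $p<\uso$ does not force $\lso<p$. So the honest argument must instead compare $T_k^{\tau_p}$ directly: one splits $T_k$ and uses that $\sum_k(\sum_{j\le k}a_j)^{\gamma}b_k\simeq\sum_k a_k^\gamma b_k$ under suitable summability, which is a discrete Hardy-type inequality whose hypotheses I would verify using only $p<\uso$ (ensuring $\tau_p/(p-1)=\uso/(\uso-p)$ is finite and the relevant exponent makes the weighted Hardy inequality applicable). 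Handling this Hardy-type step cleanly — in both directions and uniformly in the borderline — is the crux; everything else is the routine dyadic bookkeeping already rehearsed in the preceding proofs.
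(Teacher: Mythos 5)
Your skeleton is the paper's: reduce to $B_{R_1}\subset\Om\subset B_{R_2}$ via Remark~\ref{rmk-assumptions}, discretize the pointwise estimates of Theorem~\ref{thm-comp-u-R1-R2} over dyadic annuli, obtain the ``only if'' direction of \ref{a-b} from the single-term lower bound \eqref{eq-uappr-0} together with reverse-doubling ($\mu(B^k\setm B^{k+1})\simeq\mu(B^k)$), and recognize that the ``if'' direction hinges on replacing the partial sum $T_k^{\tau_p}$ by its last term. You also rightly reject the naive ``$T_k$ is comparable to its last term'' shortcut, which would require $p<\lqo$ rather than $p<\uso$. Part~\ref{b-b} and the series/integral equivalences are unproblematic.

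The gap is in the decisive step: the inequality $\sum_k T_k^{\tau_p}\mu(B^k)\simle\sum_k a_k^{\tau_p}\mu(B^k)$, with $a_k=(r_k^p/\mu(B^k))^{1/(p-1)}$, is only named (``a discrete Hardy-type inequality'') and not carried out, and the hypothesis you propose to verify it with --- $p<\uso$ alone --- is not what makes it work. A weighted Hardy inequality of this form fails for general weights $b_k=\mu(B^k)$; what saves it here is the geometric decay $\mu(B^k)\simle 2^{(j-k)q}\mu(B^j)$ for $k\ge j$ and some $q\in\lQo$ with $q>0$, available because $\mu$ is reverse-doubling at $x_0$ (so $\lqo>0$), not because $p<\uso$ (which only enters through $\tau_p/(p-1)=\uso/(\uso-p)<\infty$). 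The paper implements exactly this: for $\tau_p>1$ it inserts the weight $\bigl(r_j^\eps\mu(B^k)/r_k^\eps\mu(B^j)\bigr)^{1/\tau_p}\simle 2^{(j-k)(q-\eps)/\tau_p}$ into the inner sum as in \eqref{eq-2-sum-with-meta}, applies H\"older with exponents $\tau_p$ and $\tau_p/(\tau_p-1)$ so that the geometric factor sums to a constant independent of $k$, and then swaps the order of summation (the auxiliary factor $r_k^\eps$ gives $\sum_{k\ge j}r_k^\eps\simeq r_j^\eps$); the case $\tau_p\le1$ is dispatched separately by subadditivity of $x\mapsto x^{\tau_p}$ and $\sum_{k\ge j}\mu(B^k\setm B^{k+1})=\mu(B^j)$. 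Until you state and prove the Hardy-type step with the correct weight condition (or reproduce this H\"older argument), the ``if'' direction of \ref{a-b} is not established.
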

\medskip

Perhaps surprisingly, condition~\eqref{eq-prop-u-1} is not the limit 
of~\eqref{eq-prop-u-Q}, 
as $p\to\uso$, but coincides with 
\eqref{eq-prop-u-Q} for $p=1$ 
(which however is not allowed in \ref{a-b}).

\begin{proof}
The equivalences between the sums and integrals are obvious.
Part \ref{b-b} follows from 
Proposition~\ref{prop-meta-zero-cap} and Theorem~\ref{thm-Green-cp-x0},
so we turn to \ref{a-b}.

By Remark~\ref{rmk-assumptions},
we may assume that $B_{R_1} \subset \Om \subset B_{R_2}$, 
where $0<R_1\le R_2 < \frac14 \diam X$.
Let $r_k=2^{-k}R_1/50\la$ and $B^k=B_{r_k}$, $k=0,1,\ldots$\,.
Consider $x$ such that $r_{k+1} \le r:=d(x,x_0) < r_k$.
By 
\eqref{eq-uappr-0},
\[
u(x) \simge  \biggl(\frac{r_k^{p}}{\mu(B^k)}\biggr)^{1/(p-1)}.
\]
Note from \eqref{eq-def-tau-p} that
\begin{equation} \label{eq-taup/p-1}
\frac{\tau_p}{p-1} = \frac{\uso}{\uso-p}
\quad \text{and} \quad
  \frac{\tau_p}{p-1}-1 = \frac{p}{\uso-p}.
\end{equation}
Hence, using that $\mu$ is reverse-doubling with $\xi=2$,
\begin{equation}\label{eq-lower-for-t_p-int}
\int_{B^1} u^{\tau_p} \, d\mu
\simge 
\sum_{k=1}^\infty \biggl(\frac{r_k^p}{\mu(B^{k})}\biggr)^{\tau_p/(p-1)} 
       \mu(B^{k}\setm B^{k+1})
\simeq \sum_{k=1}^\infty \biggl(\frac{r_k^{\uso}}{\mu(B^{k})}\biggr)^{p/(\uso-p)},
\end{equation}
which diverges if and only if the sum in \eqref{eq-prop-u-Q} diverges. 

Conversely, we apply~\eqref{eq-uappr-2} to $u(x)$ and obtain
\begin{equation}   \label{eq-2-sum-with-meta-first}
\int_{B^1} u^{\tau_p} \, d\mu
\simle \sum_{k=1}^\infty \biggl( \sum_{j=1}^k 
     \biggl(\frac{r_j^p}{\mu(B^{j})} \biggr)^{1/(p-1)} \biggr)^{\tau_p} 
\mu(B^k\setm B^{k+1}).
\end{equation}
We distinguish two cases. 
If $\tau_p\le1$, then \eqref{eq-2-sum-with-meta-first} gives
\begin{align*} 
\int_{B^1} u^{\tau_p} \, d\mu
&\simle \sum_{k=1}^\infty \sum_{j=1}^k 
     \biggl(\frac{r_j^p}{\mu(B^{j})} \biggr)^{\tau_p/(p-1)} \mu(B^k\setm B^{k+1})\\
&= \sum_{j=1}^\infty
     \biggl(\frac{r_j^p}{\mu(B^{j})} \biggr)^{\tau_p/(p-1)} 
     \sum_{k=j}^\infty \mu(B^k\setm B^{k+1}) 
= \sum_{j=1}^\infty
     \biggl(\frac{r_j^p}{\mu(B^{j})} \biggr)^{\tau_p/(p-1)} \mu(B^j),
\end{align*}
which, by \eqref{eq-taup/p-1}, is
the same as
the last sum 
in \eqref{eq-lower-for-t_p-int}.

If $\tau_p>1$, we rewrite \eqref{eq-2-sum-with-meta-first} as
\begin{equation}   \label{eq-2-sum-with-meta}
\int_{B^1} u^{\tau_p} \, d\mu
\simle \sum_{k=1}^\infty r_k^\eps \biggl( \sum_{j=1}^k 
     \frac{r_j^{p/(p-1)-\eps/\tau_p}}{\mu(B^{j})^{1/(p-1)-1/\tau_p}} 
          \biggl(\frac{r_j^\eps \mu(B^{k})}{r_k^\eps \mu(B^{j})} 
                              \biggr)^{1/\tau_p} \biggr)^{\tau_p},
\end{equation}
where  $\eps>0$.
Now, for any $0<q\in\lQo$,
we have
\[
\frac{r_j^\eps \mu(B^{k})}{r_k^\eps \mu(B^{j})} 
    \simle  \Bigl(\frac{r_k}{r_j}\Bigr)^{q-\eps}
    =2^{(j-k)(q-\eps)}.
\]
The inner sum on the right-hand side of \eqref{eq-2-sum-with-meta} 
is then estimated using \eqref{eq-taup/p-1} 
and H\"older's inequality with exponents $\tau_p$
and $\tau_p/(\tau_p-1)$, 
as follows
\[
\sum_{j=1}^k \simle \biggl( \sum_{j=1}^k 
        \frac{r_j^{\uso p/(\uso-p)-\eps}}{\mu(B^{j})^{p/(\uso-p)}} \biggr)^{1/\tau_p}
\biggl( \sum_{j=1}^k 2^{(j-k)(q-\eps)/(\tau_p-1)} \biggr)^{1-1/\tau_p}.
\]
For $q>\eps$, the last sum 
is bounded from above by a constant, independent of $k$.
Hence, by inserting the previous 
estimate into~\eqref{eq-2-sum-with-meta}
and changing the order of summation, we obtain 
\begin{align*}
\int_{B^1} u^{\tau_p} \, d\mu 
    &\simle \sum_{k=1}^\infty r_k^\eps \sum_{j=1}^k 
         \biggl(\frac{r_j^{\uso}}{\mu(B^{j})} \biggr)^{p/(\uso-p)}  r_j^{-\eps} \\
    &= \sum_{j=1}^\infty r_j^{-\eps} \biggl(\frac{r_j^{\uso}}{\mu(B^{j})} 
                \biggr)^{p/(\uso-p)}   \sum_{k=j}^\infty r_k^\eps 
   \simeq \sum_{j=1}^\infty \biggl(\frac{r_j^{\uso}}{\mu(B^{j})} \biggr)^{p/(\uso-p)}.
   \qedhere
\end{align*}
\end{proof}

Next we compare Green functions for different parameters $p$
under a natural assumption on the validity of Poincar\'e inequalities.

\begin{cor}  \label{cor-different-p}
Assume that $\mu$ supports a $p_0$-Poincar\'e inequality
for some $1\le p_0<\uso$.
For each $p_0<p\le \uso$, let $u_p$ be a Green function in 
$\Om$ with singularity at $x_0$ with
respect to $p$.
Then the following are true\/\textup{:}
\begin{enumerate}
\item \label{l-a}
 If $p_0<p_1 < p_2 < \uso$ and
 $u_{p_1} \in  L^{\tau_{p_1}}(\Om)$, then $u_{p_2} \in  L^{\tau_{p_2}}(\Om)$.
\item \label{l-b}
If $u_{\uso}$ is bounded, then $u_p\in L^{\tau_p}(\Om)$
for all $p_0<p<\uso$.
\end{enumerate}
\end{cor}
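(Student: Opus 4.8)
The plan is to reduce both statements to the series characterizations of $L^{\tau_p}$-membership in Theorem~\ref{thm-u-Q} and then exploit the monotonicity of the exponents appearing there. First I would record that, by Hölder's inequality, a $p_0$-Poincaré inequality implies a $p$-Poincaré inequality for every $p \ge p_0$; thus for each $p \in (p_0, \uso]$ the standing assumptions of this section hold with $p$ in place of the fixed exponent, and in particular Theorem~\ref{thm-u-Q} applies to $u_p$. From now on write $a_k = 2^{-k\uso}/\mu(B_{2^{-k}})$ for $k = 1, 2, \ldots$; since $p_0 \ge 1$ the hypothesis forces $\uso > 1$, so all the fractions below are well defined and positive.

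The one elementary observation I need is that $p \mapsto p/(\uso - p)$ is strictly increasing on $(0,\uso)$ — its derivative equals $\uso/(\uso - p)^2 > 0$ — and that, after clearing denominators, $p/(\uso - p) \ge 1/(\uso - 1)$ holds precisely when $p \ge 1$, strictly when $p > 1$. For part~\ref{l-a}: if $u_{p_1} \in L^{\tau_{p_1}}(\Om)$, then Theorem~\ref{thm-u-Q}\,\ref{a-b} gives $\sum_k a_k^{p_1/(\uso - p_1)} < \infty$, whence $a_k \to 0$ and so $a_k \le 1$ for all large $k$; since $p_2/(\uso - p_2) \ge p_1/(\uso - p_1)$, the tail of $\sum_k a_k^{p_2/(\uso - p_2)}$ is then dominated term by term by a convergent series, so $\sum_k a_k^{p_2/(\uso - p_2)} < \infty$, and Theorem~\ref{thm-u-Q}\,\ref{a-b} applied in the reverse direction yields $u_{p_2} \in L^{\tau_{p_2}}(\Om)$. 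For part~\ref{l-b}: if $u_{\uso}$ is bounded, then Theorem~\ref{thm-u-Q}\,\ref{b-b} gives $\sum_k a_k^{1/(\uso - 1)} < \infty$, so again $a_k \le 1$ for large $k$; for $p_0 < p < \uso$ we have $p > 1$ and hence $p/(\uso - p) \ge 1/(\uso - 1)$, so $\sum_k a_k^{p/(\uso - p)} < \infty$ and Theorem~\ref{thm-u-Q}\,\ref{a-b} gives $u_p \in L^{\tau_p}(\Om)$.

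I do not expect a genuine obstacle: once Theorem~\ref{thm-u-Q} is on the table the corollary is soft, the only real content being the monotonicity of $p \mapsto p/(\uso-p)$ and the triviality that the terms of a convergent positive series eventually lie in $[0,1]$, where raising to a larger exponent can only decrease them. The one point needing a little care is the bookkeeping that legitimizes applying Theorem~\ref{thm-u-Q} for every $p$ in the stated range — this is exactly what the hypothesis that $\mu$ supports a $p_0$-Poincaré inequality for some $1 \le p_0 < \uso$ is there to provide. It is worth noting that part~\ref{l-b} is morally the endpoint $p_2 = \uso$ of part~\ref{l-a}, but because $\tau_{\uso} = \infty$ the shape of the governing series changes, so it must be obtained through Theorem~\ref{thm-u-Q}\,\ref{b-b} separately rather than by a direct appeal to part~\ref{l-a}.
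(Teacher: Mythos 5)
Your proof is correct and follows essentially the same route as the paper: both reduce parts \ref{l-a} and \ref{l-b} to the series characterizations of Theorem~\ref{thm-u-Q}, observe that condition~\eqref{eq-prop-u-1} is \eqref{eq-prop-u-Q} with $p=1$, and exploit the monotonicity of $p \mapsto p/(\uso-p)$. The only cosmetic difference is the final comparison of series: the paper invokes $\sum_k a_k^{\be_2} \le \bigl(\sum_k a_k^{\be_1}\bigr)^{\be_2/\be_1}$ for $\be_2 \ge \be_1 > 0$, whereas you use the equally valid observation that the terms of a convergent positive series eventually lie in $[0,1]$.
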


Note that 
$\tau_{p_1}< \tau_{p_2} <\infty$ in \ref{l-a}.
Example~\ref{ex-log} below shows that the converse implications
can fail,
and that it is possible to have $u_p \in L^{\tau_p}(\Om)$
for all $1 <p<\uso$ but not for $p=\uso$.

\begin{proof}
\ref{l-a}
Let $a_k=2^{-k\uso}/\mu(B_{2^{-k}})$.
Then by Theorem~\ref{thm-u-Q},
$u_{p_j} \in L^{\tau_{p_j}}(\Om)$ if and only if
\[
\sum_{k=1}^\infty a_k^{p_j/(\uso-p_j)}<\infty.
\]
Hence, 
if $u_{p_1} \in  L^{\tau_{p_1}}(\Om)$, then 
\[
\sum_{k=1}^\infty a_k^{p_2/(\uso-p_2)}
\le \biggl( \sum_{k=1}^\infty a_k^{p_1/(\uso-p_1)} \biggr)^{\be_2/\be_1}
< \infty,
\]
where $\be_2:=p_2/(\uso-p_2) > p_1/(\uso-p_1)=:\be_1$.
The proof of \ref{l-b} is similar upon noting that
condition~\eqref{eq-prop-u-1} is condition~\eqref{eq-prop-u-Q}
with $p=1$ in Theorem~\ref{thm-u-Q}.
\end{proof}

\begin{example} \label{ex-log}
Let $s>1$, $\be >0$ and $n \ge 2$.
Consider 
$\R^n$ equipped with the measure
$d\mu=w\,dx$, where
(by abuse of notation)
$w(x)=w(|x|)$ 
and
\[
      w(\rho)=\begin{cases}
          \rho^{s-n} |{\log \rho}|^\be, & \text{if } 0<\rho\le1/e, \\ 
          \rho^{s-n}, & \text{otherwise}.
        \end{cases}
\]
Let $u$ be a Green function with singularity at $x_0=0$
in a bounded domain $\Om \ni x_0$.
By \cite[Proposition~10.5 and Remark~10.6]{BBLeh1},
$\mu$ is doubling and supports a $1$-Poincar\'e inequality,
i.e.\ $w$
is $1$-admissible on $\R^n$.
Moreover, by Example~3.1 in \cite{BBLeh1},
\[
\lSo=\lQo=(0,s)
\quad \text{and} \quad
\uSo=\uQo=[s,\infty).
  \]
In particular, $\lso=\lqo=\uso=\uqo=s$ and $\uso\in \uSo\setm \lSo$,
so the assumption on $\uso$ in 
Theorem~\ref{thm-nonintegrability-S}\,\ref{hh-b} and
Corollary~\ref{cor-integrability-S-extra}
fails.

It was also observed in Example~3.1 in \cite{BBLeh1}
that
$\mu(B_r)\simeq r^s |{\log r}|^\be$ for $r \le 1/e$.
Thus 
\begin{equation}   \label{eq-est-mu-B-k}
\frac{2^{-ks}}{\mu(B_{2^{-k}})} \simeq \frac{1}{k^\be}.
\end{equation}
Hence, by Theorem~\ref{thm-u-Q}\,\ref{a-b}, for $p<s$,
\[
u \in L^{\tau_p}(\Om,w)
\quad \text{if and only if} \quad
\frac{\be p}{s-p}>1, \text{ i.e. }
\frac{s}{1+\be} < p < s,
\]
showing that the sets $\uSo$ and $\lSo$ themselves are not fine enough
to determine the borderline $L^{\tau_p}$-integrability of Green functions.
In particular, if $\be \ge s-1$, then
$u \in L^{\tau_p}(\Om,w)$ for all $1<p<s$.

For $p=s$, Theorem~\ref{thm-u-Q}\,\ref{b-b} and \eqref{eq-est-mu-B-k}
show that $u$ is bounded if and only if $\be > s-1$.
In particular, if $\be =s-1$, then $u \in L^{\tau_p}(\Om,w)$ for
all $1<p< s$, but $u \notin L^{\tau_p}(\Om,w)$ when $p=s$.
\end{example}

\section{Integrability of gradients of Green functions}
\label{sect-int-gu}

\emph{In addition to the general assumptions from
the beginning of Section~\ref{sect-harm},
we assume in this section that $\Om\subset X$
 is a bounded domain.}

\medskip
  
In this section we turn to 
the $L^t$-integrability of the minimal \p-weak upper
gradient $g_u$ 
of a Green function $u$.
See Remark~\ref{rmk-gu} for how to interpret $g_u$.
If $p>\uso$ then $g_u \in L^p(\Om)$,
by Proposition~\ref{prop-cp-x0}\,\ref{e-c} and
Theorem~\ref{thm-Green-cp-x0}, and we therefore omit
this case in the rest of this section. 
Thus, in particular, we assume that  $\uso>1$.
The exponent 
\[
t_p=\frac{\uso(p-1)}{\uso-1}
\]
will be critical.
Note that
\begin{equation} \label{eq-tau0-taup}
      t_p=\frac{p \tau_p}{\tau_p+1}< \tau_p
      \quad \text{if } p < \uso
\end{equation}
and that $t_p \ge 1$ if and only if $p \ge 2-1/\uso$.

\begin{thm}\label{thm-integrability-gu}
Let $u$ be a Green function in $\Omega$ with singularity at $x_0$
and assume that $p \le \uso$.
Then 
$g_u\in L^t(\Omega)$ 
whenever $0<t<t_p$.

Moreover, $u \in N^{1,t}(\Om)$ whenever $1 \le t<t_p$.
\end{thm}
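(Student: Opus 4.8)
The plan is to derive the gradient integrability from two ingredients already in hand: the sharp integrability of the Green function $u$ itself (Theorem~\ref{thm-integrability-S}), and the gradient interpolation for superharmonic functions (Theorem~\ref{thm-int-Gu-superh-new}\,\ref{kmm-a}). These only give \emph{local} integrability, so the remaining work is to promote $L^t\loc(\Om)$ to $L^t(\Om)$ on the bounded domain $\Om$ by treating the singularity $x_0$ and the boundary $\bdy\Om$ separately. First I would dispose of the case $\Cp(\{x_0\})>0$: then $u$ is bounded and $g_u\in L^p(\Om)$ by Theorem~\ref{thm-Green-cp-x0}, so since $\mu(\Om)<\infty$ and (as noted below) $t<p$, H\"older's inequality gives $g_u\in L^t(\Om)$, and $u\in\Np(\Om)\subset N^{1,t}(\Om)$. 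So assume $\Cp(\{x_0\})=0$; then $g_u=G_u$ a.e.\ in $\Om$ by Remark~\ref{rmk-gu}, and it suffices to bound $\int_\Om G_u^t\,d\mu$. I would also record the elementary facts that $p\le\uso$ forces $t_p\le p$ (with equality exactly when $p=\uso$) and $t_p\le\tau_p$ --- the latter being precisely \eqref{eq-tau0-taup} when $p<\uso$ --- so in particular $t<p$ for every $t<t_p$.

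For integrability near $x_0$, Theorem~\ref{thm-integrability-S} gives $u\in L^\tau(\Om)\subset L^\tau\loc(\Om)$ for all $0<\tau<\tau_p$, so Theorem~\ref{thm-int-Gu-superh-new}\,\ref{kmm-a} with $\tau_0=\tau_p$ yields $G_u\in L^t\loc(\Om)$ whenever $0<t<p\tau_p/(\tau_p+1)$ if $p<\uso$, and whenever $0<t<p$ if $p=\uso$. By \eqref{eq-tau0-taup} the first threshold equals $t_p$, and when $p=\uso$ one has $t_p=p$, so in both cases $G_u\in L^t\loc(\Om)$ for all $0<t<t_p$. Fixing a ball $B^0=B(x_0,\rho_0)$ with $\overline{B^0}$ a compact subset of $\Om$ (possible since $X$ is proper) then gives $\int_{B^0}G_u^t\,d\mu<\infty$.

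For integrability away from $x_0$, I would observe that $\overline{\Om\setm B^0}$ is a closed, hence compact, subset of $\overline\Om$ that does not contain $x_0$, so it is a compact subset of the open set $X\setm\{x_0\}$. Since $\ut\in\Nploc(X\setm\{x_0\})$ and $\ut=u$ on $\Om$, locality of minimal $\p$-weak upper gradients together with Remark~\ref{rmk-gu} gives $G_u=g_{\ut}$ a.e.\ in $\Om\setm\{x_0\}$, and therefore $\int_{\Om\setm B^0}G_u^p\,d\mu\le\int_{\overline{\Om\setm B^0}}g_{\ut}^p\,d\mu<\infty$. As $t<p$ and $\mu(\Om)<\infty$, H\"older's inequality upgrades this to $\int_{\Om\setm B^0}G_u^t\,d\mu<\infty$, and adding the two estimates proves $g_u\in L^t(\Om)$ for all $0<t<t_p$. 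For the final claim, $u\in L^t(\Om)$ follows from Theorem~\ref{thm-integrability-S} since $t<t_p\le\tau_p$, and the $\p$-weak upper gradient $g_u\in L^t(\Om)$ of $u$ within $\Om$ (Remark~\ref{rmk-gu}) is also a $t$-weak upper gradient, because $\mu(\Om)<\infty$ and $t\le p$ force every curve family in $\Om$ of zero $\p$-modulus to have zero $t$-modulus; hence $u\in N^{1,t}(\Om)$.

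I expect no genuinely hard step here, since the analytic heart --- the gradient interpolation --- is already contained in Theorem~\ref{thm-int-Gu-superh-new}\,\ref{kmm-a}. The two points requiring care are bookkeeping ones: matching the interpolation threshold $p\tau_p/(\tau_p+1)$ with $t_p$ through \eqref{eq-tau0-taup} uniformly in the cases $p<\uso$ and $p=\uso$, and the passage from $L^t\loc(\Om)$ to $L^t(\Om)$, which is what forces the separate boundary argument via $\ut\in\Nploc(X\setm\{x_0\})$ and the inequality $t\le p$.
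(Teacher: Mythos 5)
Your proposal is correct and takes essentially the same route as the paper's proof, which likewise combines Theorem~\ref{thm-integrability-S}, Theorem~\ref{thm-int-Gu-superh-new}, the identity \eqref{eq-tau0-taup} and property \ref{dd-Np0} of Definition~\ref{deff-sing}. You have merely made explicit the local-to-global passage near $\bdy\Om$ and the (harmless) case $\Cp(\{x_0\})>0$, which the paper's one-paragraph proof leaves implicit.
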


Since $\uso\le\utho$, we have for all $p\le\uso$ that 
$t_p\ge \utho(p-1)/(\utho-1)$, where the 
right-hand side is the borderline exponent $t$ 
in Theorem~\ref{thm-superh-u-int+grad}
and the inequality is strict when $\uso<\utho$.
Hence the minimal \p-weak upper
gradients of Green functions have higher integrability than 
what is known for
the minimal \p-weak upper gradients of general
superharmonic functions when
$\uso<\utho$.

It follows from 
Danielli--Garofalo--Marola~\cite[Corollary~5.4]{DaGaMa}
that $g_u \in L^t(\Om)$ if
 $p <\lqq$ and $t <\lqq(p-1)/(\uthtilde-1)$,
where $\lqq$,
$\uthtilde$ 
and $X$ are as in 
Remark~\ref{rmk-DMG}. 
Thus Theorem~\ref{thm-integrability-gu} 
is a substantial improvement upon the results in~\cite{DaGaMa}. 
Moreover, Theorem~\ref{thm-integrability-gu} 
is sharp by Theorem~\ref{thm-nonintegrability-gu},
at least when $t_p \ge 1$ and $X$ supports a  
$t_p$-Poincar\'e inequality at $x_0$ for small radii.

\begin{proof}[Proof of Theorem~\ref{thm-integrability-gu}]
The first part follows directly from
Theorem~\ref{thm-integrability-S}
together with Theorem~\ref{thm-int-Gu-superh-new},
\eqref{eq-tau0-taup} and \ref{dd-Np0} in Definition~\ref{deff-sing}.
If $1 \le t < t_p$, then $g_u$ is also
a $t$-weak upper gradient of $u$ (although not necessarily minimal).
Moreover,  $u \in L^t(\Om)$ by Theorem~\ref{thm-integrability-S}
and thus $u \in N^{1,t}(\Om)$.
\end{proof}

For $p=\uso$ we get the following consequences,
when combining Theorem~\ref{thm-integrability-gu} with
Theorem~\ref{thm-Green-cp-x0}.

\begin{cor} \label{cor-nonint-gu-t=p}
Assume that $p=\uso$.
\begin{enumerate}
    \item \label{a-aa}
If $\Cp(\{x_0\})=0$, then $g_u \in L^t(\Om)$ if and only if $0<t<p$.
  \item \label{a-bb}
If $\Cp(\{x_0\})>0$, then $g_u \in L^p(\Om)$.
\end{enumerate}
\end{cor}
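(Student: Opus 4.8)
The plan is to combine the integrability and nonintegrability results already obtained for $g_u$ with the elementary observation that in the borderline case $p=\uso$ the critical exponent collapses to $t_p=p$. Indeed, substituting $\uso=p$ into the definition $t_p=\uso(p-1)/(\uso-1)$ gives $t_p=p$, so the range $0<t<t_p$ supplied by Theorem~\ref{thm-integrability-gu} becomes precisely $0<t<p$.

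For part~\ref{a-aa}, I would first apply Theorem~\ref{thm-integrability-gu} (with $p=\uso$, so that $t_p=p$) to conclude $g_u\in L^t(\Om)$ for every $0<t<p$. For the converse inclusion, note that $\Cp(\{x_0\})=0$ forces $g_u\notin L^p(\Om)$ by Theorem~\ref{thm-Green-cp-x0}; since $\Om$ is bounded and hence $\mu(\Om)<\infty$, H\"older's inequality yields $L^t(\Om)\subset L^p(\Om)$ for all $t\ge p$, so $g_u\notin L^t(\Om)$ whenever $t\ge p$. Combining the two directions gives the asserted equivalence.

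Part~\ref{a-bb} is immediate: when $\Cp(\{x_0\})>0$, Theorem~\ref{thm-Green-cp-x0} (equivalently, the first assertion of Remark~\ref{rmk-gu}, which invokes \cite[Theorem~8.6]{BBLehGreen}) already records that $g_u\in L^p(\Om)$.

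There is no substantial obstacle in this argument; the only points needing a moment's care are the verification that $t_p=p$ holds exactly when $p=\uso$ --- which is precisely what makes the threshold in Theorem~\ref{thm-integrability-gu} sharp in this case --- and keeping track of the correct direction of the $L^t$-inclusion on the bounded set $\Om$.
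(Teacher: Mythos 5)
Your proposal is correct and follows essentially the same route as the paper, which obtains the corollary by combining Theorem~\ref{thm-integrability-gu} (giving $g_u\in L^t(\Om)$ for $0<t<t_p=p$) with the equivalence $\Cp(\{x_0\})=0\Leftrightarrow g_u\notin L^p(\Om)$ from Theorem~\ref{thm-Green-cp-x0}. Your explicit use of H\"older's inequality on the bounded set $\Om$ to rule out $t\ge p$ is exactly the implicit step the paper leaves to the reader.
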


If $\mu$ supports a suitable global Poincar\'e inequality, then
nonintegrability results for $g_u$ can
be obtained by combining Theorem~\ref{thm-nonintegrability-S} and
the Sobolev inequality (see~\cite[Corollary~4.23]{BBbook})
determined by the global exponent set $\uTh$.
For instance, in the case when $p<Q$ and $\mu$ is globally Ahlfors $Q$-regular,
so that $\uso=Q\in\uTh$,
and $\mu$
supports a global $t$-Poincar\'e inequality for $t\ge \max\{1,t_p\}$,
this already leads to the 
nonintegrability results 
in Theorem~\ref{thm-intro-Q-reg}\,\ref{i-c},
since otherwise 
the Sobolev inequality 
together with $g_u\in L^t(\Om)$
would imply that $u$ is bounded
(when $t\ge Q$) or that 
\[
u\in L^{Qt/(Q-t)}(\Om)\subset L^{\tau_p}(\Om) \quad \text{(when $t_p\le t<Q$).}
\]
When $t=t_p$, this is sharp by 
Theorem~\ref{thm-intro-Q-reg}\,\ref{i-b}.
Using the Sobolev inequality from
\cite[Theorem~5.1]{BBsemilocal} makes
it possible to instead use a local $t$-Poincar\'e inequality
and a local exponent set 
(defined similarly 
to the global exponent set $\uTh$).
However, considering the $1$-admissible power weight $w(x)=|x|^{-\alp}$ 
on $\R^n$, as in Example~\ref{ex-power}, with $0 < \alp < n-1$ and $x_0=0$,
shows that such a direct application of Sobolev inequalities does not
lead to the results in 
Theorems~\ref{thm-intro}\,\ref{intro-f}
and \ref{thm-intro-Q-reg}\,\ref{i-c}, which use 
the pointwise exponent set $\uSo$
(and only assume a pointwise $t$-Poincar\'e inequality). Indeed,
in this case $\uso=n-\alp<n$ while the infimum
of the local exponent set is $n$.
(If $n-1 \le \alp <n$, then $\uso \le 1 <p$ and the Green
function $u$ is bounded and $g_u \in L^p(\Om)$.)

More generally, under our weaker assumptions, for
the nonintegrability of $g_u$ it is convenient 
to use one more exponent,
\[
\qhat=\lqo+1-\frac{\lqo}{\uso}.
\]
The reason for introducing $\qhat$ is that $t_p<\lqo$ if and only if
$p < \qhat$, which will be important below.
Note that $\qhat\le\uso$, with equality if and only if $\lqo=\uso$,
since $\uso >1$.

\begin{thm}\label{thm-nonintegrability-gu}
Let $u$ be a Green function in $\Omega$, with singularity at $x_0$.
\begin{enumerate}
\item \label{m-a}
If $p < \uso$, then $g_u\notin L^t(\Omega)$ 
whenever 
$\mu$ supports a $t$-Poincar\'e inequality at $x_0$
for small radii, $t>t_p$ and $t \ge 1$.
\item \label{m-c}
If $p< \qhat$, $t_p \ge 1$ and $\uso \notin \uSo \setm \lSo$, then
$g_u\notin L^{t_p}(\Omega)$ whenever 
$\mu$ supports a $t_p$-Poincar\'e inequality at $x_0$ 
for small radii.
\item \label{m-d}
If $\qhat \le p< \uso\notin \uSo \setm \lSo$ and 
$\lqo>1$,
then 
$g_u\notin L^{t_p}(\Omega)$ whenever 
$\mu$ supports a $t_0$-Poincar\'e inequality at $x_0$ for 
small radii and some $1\le t_0<t_p$.
\end{enumerate}
\end{thm}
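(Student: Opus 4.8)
The plan is to argue by contradiction: assuming $g_u\in L^t(\Om)$, I will bound a series of $t$-capacities of concentric annuli from above, and then contradict this via the lower capacity estimate of Corollary~\ref{cor-interpolate-cap}. By Remark~\ref{rmk-assumptions} we may assume $B_{R_1}\subset\Om\subset B_{R_2}$ with $0<R_1\le R_2<\tfrac14\diam X$. Since $p<\uso$ in all three parts, $\Cp(\{x_0\})=0$ and $u$ is unbounded by Theorem~\ref{thm-Green-cp-x0}. In part~\ref{m-a}, if $t\ge p$ then $g_u\in L^t(\Om)\subset L^p(\Om)$ (as $\Om$ is bounded), contradicting Theorem~\ref{thm-Green-cp-x0}\,\ref{a-4}; so in all cases it suffices to treat $1<t<p$, with $t=t_p$ in \ref{m-c}--\ref{m-d}. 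To apply Corollary~\ref{cor-interpolate-cap} with target exponent $t$ on balls centred at $x_0$ of small radius one needs a Poincar\'e inequality of exponent strictly below $t$; in \ref{m-a} and \ref{m-c} this is obtained by self-improving the assumed $t$-, respectively $t_p$-, Poincar\'e inequality at $x_0$ (cf.\ Keith--Zhong~\cite{KeZh}), while in \ref{m-d} such an inequality is assumed outright.

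Suppose $g_u\in L^t(\Om)$. Put $r_m=2^{-m}R_1/50\la$, $B^m=B_{r_m}$, $b_m=\cp(B^m,\Om)^{1/(1-p)}$; then $b_m\nearrow\infty$, and by \eqref{eq-uappr-0} there is $C_*\ge1$ with $C_*^{-1}b_m\le u\le C_*b_m$ on $S_{r_m}$, hence $\{u>C_*b_m\}\subset B^m\subset\{u\ge C_*^{-1}b_m\}$. Mimicking the proof of Theorem~\ref{thm-nonintegrability-S}, pick integers $m_k\nearrow\infty$ with $\mu(B^{m_k})\simle r_{m_k}^{\,s}$, using $\uso\notin\uSo\setm\lSo$ and taking $s=\uso$ in \ref{m-c}--\ref{m-d}, and $s<\uso$ with $s\notin\uSo$ close to $\uso$ in \ref{m-a}. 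Since $b_m\to\infty$, thin this further to indices $m_{k_i}\nearrow\infty$ with $b_{m_{k_{i+1}}}\ge 4CC_*^2b_{m_{k_i}}$, $C$ being the comparison constant of \eqref{eq-uappr-1}--\eqref{eq-uappr-2}; write $\rho_i=r_{m_{k_i}}$ and $a_i=C_*^{-1}b_{m_{k_i}}$, so $2\rho_{i+1}\le\rho_i$. For each $i$, the truncation
\[
v_i=\frac{\bigl(\min\{u,a_{i+1}\}-C_*b_{m_{k_i}}\bigr)_{\limplus}}{a_{i+1}-C_*b_{m_{k_i}}}
\]
lies in $\Np(X)$ (it equals $1$ near $x_0$, $0$ outside $B_{\rho_i}$, and its gradient is supported in the annulus $B_{\rho_i}\setm B_{\rho_{i+1}}$, where $u$ is \p-harmonic), and it is admissible for $\capp_t(B_{\rho_{i+1}},B_{\rho_i})$ since $B_{\rho_{i+1}}\subset\{u\ge a_{i+1}\}$ and $X\setm B_{\rho_i}\subset\{u\le C_*b_{m_{k_i}}\}$. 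Using $a_{i+1}-C_*b_{m_{k_i}}\ge\tfrac12a_{i+1}$,
\[
\capp_t(B_{\rho_{i+1}},B_{\rho_i})\le\int g_{v_i}^t\,d\mu\le 2^t a_{i+1}^{-t}\int_{\{C_*b_{m_{k_i}}<u<a_{i+1}\}}g_u^t\,d\mu .
\]
As $a_{i+1}\le C_*b_{m_{k_{i+1}}}$, the sets $\{C_*b_{m_{k_i}}<u<a_{i+1}\}$ are pairwise disjoint, so summing yields $\sum_i a_{i+1}^t\capp_t(B_{\rho_{i+1}},B_{\rho_i})\simle\int_\Om g_u^t\,d\mu<\infty$.

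To contradict this, note $2\rho_{i+1}\le\rho_i\le R_1$, so Corollary~\ref{cor-interpolate-cap} applies with a suitable $q\in(1,t)$ (taken in $(1,\min\{t_p,\lqo\})$ in \ref{m-c} and in $(1,\lqo)$ in \ref{m-d}, via part~\ref{int-b}; via part~\ref{int-a} in \ref{m-a}) and $\alpha=(p-t)/(p-q)$, giving
\[
\capp_t(B_{\rho_{i+1}},B_{\rho_i})\simge\capp_p(B_{\rho_{i+1}},B_{\rho_i})^{1-\alpha}\Bigl(\frac{\mu(B_{\rho_{i+1}})}{\rho_{i+1}^q}\Bigr)^{\!\alpha}.
\]
The large gaps between the levels give $\capp_p(B_{\rho_{i+1}},B_{\rho_i})\simeq a_{i+1}^{1-p}$ (via Theorems~\ref{thm-comp-u-R1-R2} and~\ref{thm-metaestimate-int-only}), while $a_{i+1}\simge(\rho_{i+1}^p/\mu(B_{\rho_{i+1}}))^{1/(p-1)}$ by \eqref{eq-uappr-0}. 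Feeding these in, together with $\mu(B_{\rho_{i+1}})\simle\rho_{i+1}^{\,s}$ and simplifying using $t_p=\uso(p-1)/(\uso-1)$, one obtains $a_{i+1}^t\capp_t(B_{\rho_{i+1}},B_{\rho_i})\simge\rho_{i+1}^{\kappa}$ with $\kappa\le0$: indeed $\kappa=0$ when $t=t_p$ and $s=\uso$, and $\kappa<0$ when $t>t_p$ and $s$ is chosen close enough to $\uso$ (the exponent being affine in $t$, vanishing at $t_p$, with negative $t$-derivative). Hence each summand is bounded below by a positive constant, the series diverges, and we reach a contradiction.

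The main obstacle is twofold. First, Corollary~\ref{cor-interpolate-cap} genuinely requires a Poincar\'e inequality of exponent \emph{strictly} below $t$, which the hypotheses of \ref{m-a} and \ref{m-c} supply only on the borderline, so the self-improvement of the Poincar\'e inequality localized at $x_0$ must be justified (the excluded case $t=1$, which can occur in \ref{m-a} only when $t_p<1$, needs a separate argument based on $1$-capacity estimates). Second, the constant chasing and the arithmetic giving $\kappa\le0$ must be done carefully: one must verify that the superlevel sets $\{u\ge b\}$ are genuinely trapped between comparable concentric balls, that the gap condition forces $\capp_p(B_{\rho_{i+1}},B_{\rho_i})\simeq a_{i+1}^{1-p}$, and that the divergence of the resulting series is precisely governed by $\uso\notin\uSo\setm\lSo$ in the borderline parts \ref{m-c}--\ref{m-d} (exactly as in Theorem~\ref{thm-nonintegrability-S}\,\ref{hh-b}), the dichotomy $p<\qhat$ versus $\qhat\le p$ deciding whether $q$ may be taken below $t_p$ or must instead be pushed up towards $\lqo$ — which is why $\lqo>1$ is assumed in \ref{m-d}.
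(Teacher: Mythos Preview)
Your overall architecture---build admissible test functions from truncations of $u$, bound $t$-capacities of annuli, and sum---is close to the paper's argument for parts~\ref{m-c} and~\ref{m-d}. But there is a genuine gap, and it is precisely the first ``main obstacle'' you flag yourself.

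You route every case through Corollary~\ref{cor-interpolate-cap}, which needs a Poincar\'e inequality of exponent \emph{strictly} below the target $t$. In~\ref{m-a} and~\ref{m-c} only a $t$- (resp.\ $t_p$-) Poincar\'e inequality \emph{at $x_0$} is assumed, and you propose to self-improve it. Keith--Zhong~\cite{KeZh}, however, requires a complete space with a globally doubling measure supporting the Poincar\'e inequality \emph{for all balls}, not just those centred at one point; no self-improvement result for ``Poincar\'e at $x_0$'' is available. The global standing $p$-Poincar\'e inequality self-improves to some $p_0<p$, but since $t$ can be much smaller than $p_0$ this does not help. The case $t=1$ in~\ref{m-a}, which you also leave open, is a symptom of the same problem.

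The paper avoids this entirely by using different, sharper lower bounds for capacity that need only the borderline Poincar\'e exponent. In~\ref{m-a} it forgoes annuli altogether: the single function $\min\{1,u/m_r\}$ is admissible for $\capp_t(B_r,\Om)$, and \cite[Proposition~8.3]{BBLeh1} (which uses only the $t$-Poincar\'e inequality at $x_0$) gives $\capp_t(B_r,B_{R_2})\simge r^{q-t}$ for any $q>\uso$; combined with $m_r\simge(r^p/\mu(B_r))^{1/(p-1)}$ and a sequence $r_j$ with $\mu(B_{r_j})\simle r_j^s$ for some $s<\uso$, the integral $\int_{\Om\setm B_{r_j}}g_u^t\,d\mu$ diverges directly. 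In~\ref{m-c} the paper uses \cite[Proposition~6.1]{BBLeh1} for $\capp_{t_p}(B^k,B^{k-1})$, which applies because $t_p<\lqo$ and needs only the $t_p$-Poincar\'e inequality. Only in~\ref{m-d}, where $t_p\ge\lqo$ and the sharper \cite[Proposition~6.1]{BBLeh1} is unavailable, does the paper invoke Corollary~\ref{cor-interpolate-cap}\,\ref{int-b}; there the hypothesis $t_0<t_p$ is assumed outright, so no self-improvement is needed. Your attempt to unify all three parts through Corollary~\ref{cor-interpolate-cap} is what creates the gap.
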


Note that $\lqo > t_p \ge 1$ in~\ref{m-c},
while $\lqo >  1$  needs to be assumed explicitly in~\ref{m-d}.

\begin{proof}
By Remark~\ref{rmk-assumptions},
we may assume that $B_{R_1} \subset \Om \subset B_{R_2}$, 
where $0<R_1\le R_2 < \frac14 \diam X$.
The strong minimum principle for superharmonic functions
(Theorem~9.13 in \cite{BBbook}) and \eqref{eq-uappr-0}
imply that for $0<r < R_1/50\la$,
\begin{equation}   \label{eq-lower-for-m_r}
   m_r := \inf_{B_r} u = \min_{\bdy B_r} u
\simge \biggl(\frac {\mu(B_r)}{r^p} \biggr)^{1/(1-p)}.
\end{equation}

\ref{m-a}
Note that $t_p< p < \uso$.
Let $q>\max\{t,\uso\}$.
Since  $\min\{1,u/m_r\}$ is admissible for
$\capp_t(B_r,\Om)$, 
we obtain by \cite[Proposition~8.3]{BBLeh1} that
\begin{equation}   \label{eq-est-from-prop-8.3}
\int_{\Om\setm B_r} g_u^t \,d\mu 
\ge m_r^t \capp_t(B_r,\Om)
\ge m_r^t \capp_t(B_r,B_{R_2})
\simge \biggl(\frac {\mu(B_r)}{r^p} \biggr)^{t/(1-p)} r^{q-t}.
\end{equation}
If $s < \uso$, 
then there is a sequence 
$r_j \searrow 0$ such that $\mu(B_{r_j}) \simle r_j^s$. 
For this sequence, \eqref{eq-est-from-prop-8.3} becomes
\[
\int_{\Om\setm B_{r_j}} g_u^t \,d\mu \simge r_j^{(s-p)t/(1-p)+q-t} \to\infty,
\quad \text{as }j\to 0,
\]
provided that the exponent $(s-p)t/(1-p)+q-t<0$,
which is equivalent to
\[
q<\frac{t(s-1)}{p-1}.  
\]
Clearly, for every $t>t_p$, this is satisfied for some 
$q>\max\{t,\uso\}$ and $s < \uso$,
and \ref{m-a} follows.

\ref{m-c}
Since $\uso \in \lSo $ or $\uso \notin \uSo$,
there is a sequence $r_k \searrow 0$ such that
$\mu(B_{r_k}) \simle r_k^{\uso}$, $k=0,1,\ldots$\,.
Let $B^k=B_{r_k}$ and write $a_k = m_{r_k}$.
Because $\lim_{r\to0}m_r=\infty$, we can also assume that
\[
\tfrac12 a_{k+1} \ge A_k := \max_{\bdy B^k} u,
\]
$r_{k+1} < \frac12 r_k$ and that 
$r_0< R_1/50\la$ is so small that 
the $t_p$-Poincar\'e inequality at $x_0$ holds for radii up to
$2r_0$.
This will be important below when we use the 
capacity estimates from \cite{BBLeh1}.
Then for all $k=1,2,\ldots$\,,
\[
    v_k=\frac{(\min\{u,a_{k}\}-A_{k-1})_\limplus}{a_{k}-A_{k-1}}
\]
is admissible for $\ctp(B^{k},B^{k-1})$.
Moreover, $a_k-A_{k-1} \ge \tfrac12 a_k$.
As $g_u$ is also
a $t_p$-weak upper gradient of $u$ (although not necessarily minimal), 
we have
\begin{equation} \label{eq-ak}
  \int_{B^{k-1} \setm B^{k}} g_{u}^{t_p}\,d\mu
\ge
(a_{k}-A_{k-1})^{t_p} \int_{B^{k-1} \setm B^{k}} g_{v_k}^{t_p}\,d\mu
\simge     a_k^{t_p} \ctp(B^{k},B^{k-1}).
\end{equation}

Since $p < \qhat$, we see that $t_p < \lqo$.
As $\mu$ supports a $t_p$-Poincar\'e inequality at $x_0$
for radii up to $2r_0$,
we get by~\cite[Proposition~6.1]{BBLeh1},
using also 
\eqref{eq-lower-for-m_r} and $t_p \ge p-1$, that
\begin{align*}
\int_{B^1} g_u^{t_p} \, d\mu 
& \simge \sum_{k=1}^{\infty}  a_{k}^{t_p} \ctp(B^{k},B^{k-1})
  \simge \sum_{k=1}^{\infty}  \biggl(\frac {r_k^p}{\mu(B^k)}\biggr)^{t_p/(p-1)}
    \frac{\mu(B^{k})}{r_{k}^{t_p}} \\
& = \sum_{k=1}^{\infty}  r_k^{t_p/(p-1)}\mu(B^k)^{1-{t_p}/(p-1)}
\simge \sum_{k=1}^{\infty}  r_k^{\be},
\end{align*}
where
\[
  \be
  =\frac{t_p}{p-1}+ \uso\biggl(1-\frac{t_p}{p-1}\biggr)
  = \uso-(\uso-1)\frac{t_p}{p-1}
  = \uso-\uso=0.
\]
Thus the series $\sum_{k=1}^{\infty}  r_k^{\be}$ diverges and hence
$g_u \notin L^{t_p}(\Om)$.

\ref{m-d}
We proceed as in \ref{m-c} up to~\eqref{eq-ak}, 
but this time assuming a
$t_0$-Poincar\'e inequality at $x_0$
for radii up to $2r_0$.
As $p \ge \qhat$, we see that $\lqo \le t_p < p$.
Let
\[
1 < q < \lqo
\quad \text{and} \quad
   \al=\frac{p-t_p}{p-q} = \frac{\uso-p}{(\uso-1)(p-q)}.
\]
Theorem~\ref{thm-comp-u-R1-R2} shows that
\[
a_k \simeq \capp_p(B^k,\Om)^{1/(1-p)} \ge \capp_p(B^k,B^{k-1})^{1/(1-p)},
\]
and 
Corollary~\ref{cor-interpolate-cap}\,\ref{int-b} with $t=t_q$ then implies that
\begin{align}\label{eq-ak-cap-lower}
a_k^{t_p} \ctp(B^{k},B^{k-1})
&\simge \cp(B^{k},B^{k-1})^{t_p/(1-p)} \ctp(B^{k},B^{k-1}) \nonumber \\
&\simge  \capp_p(B^{k},B^{k-1})^{t_p/(1-p)+1-\al}
  \biggl( \frac{\mu(B^k)}{r_k^q} \biggr)^{\alp}.
\end{align}
Note that
\[
\frac{t_p}{1-p} + 1 = \frac{\uso}{1-\uso} + 1 = \frac{1}{1-\uso}<0.
\]
Hence also $t_p/(1-p) + 1-\al<0$
and~\cite[Proposition~5.1]{BBLeh1} gives
\begin{equation}\label{eq-est-capp-simple-plus}
\capp_p(B^{k},B^{k-1})^{t_p/(1-p) + 1-\al} 
\simge \biggl(\frac{\mu(B^k)}{r_k^p}\biggr)^{t_p/(1-p) + 1-\al}
= \biggl(\frac{\mu(B^k)}{r_k^p}\biggr)^{1/(1-\uso)-\al}.
\end{equation}
Since $\al(p-q)(1-\uso)=p-\uso$,
we obtain from \eqref{eq-ak-cap-lower} and \eqref{eq-est-capp-simple-plus} that
\begin{align*}
a_k^{t_p} \ctp(B^{k},B^{k-1}) &\simge 
\biggl(\frac{\mu(B^k)}{r_k^p}\biggr)^{1/(1-\uso)-\al}\biggl( \frac{\mu(B^k)}{r_k^q} \biggr)^{\alp}\\
& = \biggl( \frac{\mu(B^k)}{r_k^{p-\al(p-q)(1-\uso)}} \biggr)^{1/(1-\uso)}
  = \biggl( \frac{\mu(B^k)}{r_k^{\uso}} \biggr)^{1/(1-\uso)} \simge 1, 
\end{align*}
by the choice of $r_k$.
Hence, by applying \eqref{eq-ak}
and summing up the estimates, we conclude that
$g_u \notin L^{t_p}(\Om)$.
\end{proof}

In the following case we get a complete characterization.

\begin{cor}\label{cor-green-full-char}
Let $u$ be a Green function in $\Omega$, with singularity at $x_0$.
Assume that $1<2- 1/\uso \le p  \le \uso \notin \uSo \setm \lSo$
and that one of the following conditions holds\/{\rm:} 
\begin{enumerate}
\item \label{g-a}
$\mu$ supports a $t_0$-Poincar\'e inequality at $x_0$ 
for small radii and some $t_0<t_p$\textup{;}
\item \label{g-b}
$p< \qhat$ and $\mu$ supports a $t_p$-Poincar\'e inequality at 
$x_0$ for small radii\textup{;} 
\item \label{g-c}
$p=\uso$.
\end{enumerate}
Then $g_u \in L^t(\Om)$ if and only if $0 < t < t_p$.
\end{cor}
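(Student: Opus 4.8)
The plan is to reduce everything to results already proved in this section, handling the two implications separately. The positive direction is immediate: since $p\le\uso$, Theorem~\ref{thm-integrability-gu} already gives $g_u\in L^t(\Om)$ for every $0<t<t_p$, using none of the extra hypotheses of the corollary. So the whole content is the converse, namely that $g_u\notin L^t(\Om)$ as soon as $t\ge t_p$, and this is where conditions \ref{g-a}--\ref{g-c} enter. I would prove it by dispatching to the appropriate part of Corollary~\ref{cor-nonint-gu-t=p} or Theorem~\ref{thm-nonintegrability-gu}, according to which of \ref{g-a}--\ref{g-c} holds and according to the position of $p$ relative to $\qhat$.

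First I would settle the case $p=\uso$ once and for all. Then $t_p=\uso(p-1)/(\uso-1)=p$, and Proposition~\ref{prop-cp-x0}\,\ref{e-a}, applied with $p=\uso\notin\uSo\setm\lSo$, gives $\Cp(\{x_0\})=0$; Corollary~\ref{cor-nonint-gu-t=p}\,\ref{a-aa} then states exactly that $g_u\in L^t(\Om)$ if and only if $0<t<p=t_p$. This covers condition \ref{g-c} completely, and also every instance of \ref{g-a} with $p=\uso$. From now on I may therefore assume $p<\uso$; note that \ref{g-b} forces this, since there $p<\qhat\le\uso$.

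It remains, for $p<\uso$, to show $g_u\notin L^t(\Om)$ for $t\ge t_p$, and I would split on $t$. If $t>t_p$ (so automatically $t\ge 1$), then Theorem~\ref{thm-nonintegrability-gu}\,\ref{m-a} applies provided $\mu$ supports a $t$-Poincar\'e inequality at $x_0$ for small radii; this follows by H\"older's inequality (a larger exponent yields a weaker Poincar\'e inequality) from the $t_0$-Poincar\'e inequality of \ref{g-a}, where $t_0<t_p<t$, or from the $t_p$-Poincar\'e inequality of \ref{g-b}, where $t_p<t$. For the borderline $t=t_p$: under \ref{g-b} one has $p<\qhat$, hence $t_p<\lqo$, together with the $t_p$-Poincar\'e inequality at $x_0$ for small radii, $t_p\ge1$ (equivalent to $p\ge 2-1/\uso$) and $\uso\notin\uSo\setm\lSo$, so Theorem~\ref{thm-nonintegrability-gu}\,\ref{m-c} applies. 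Under \ref{g-a}, if moreover $p<\qhat$ then again \ref{m-c} applies, the $t_0$-Poincar\'e inequality upgrading to a $t_p$-Poincar\'e inequality; if instead $\qhat\le p<\uso$, then I would invoke Theorem~\ref{thm-nonintegrability-gu}\,\ref{m-d} with the given $1\le t_0<t_p$, after checking that $\lqo>1$ holds throughout this range under the standing hypotheses of the corollary.

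The approach is thus essentially a matter of citing the right result and checking its hypotheses; the only genuinely non-mechanical point — the expected main obstacle — is verifying, in the subcase $\qhat\le p<\uso$ of condition \ref{g-a}, that the extra assumption $\lqo>1$ required by Theorem~\ref{thm-nonintegrability-gu}\,\ref{m-d} is automatically met here (one uses $\qhat\le p$, the hypothesis $p\ge 2-1/\uso$, and $\uso\notin\uSo\setm\lSo$ together with $\uso>1$). Everything else is bookkeeping: matching, in each of the three conditions and each subrange of $t$, the Poincar\'e inequality at $x_0$ for small radii, the inequality $t_p\ge 1$, and the exclusion $\uso\notin\uSo\setm\lSo$ demanded by the lemma being invoked.
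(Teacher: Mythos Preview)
Your approach matches the paper's: integrability from Theorem~\ref{thm-integrability-gu}, and nonintegrability by dispatching to Corollary~\ref{cor-nonint-gu-t=p}\,\ref{a-aa} when $p=\uso$ and to Theorem~\ref{thm-nonintegrability-gu}\,\ref{m-c}--\ref{m-d} when $p<\uso$. One economy you could borrow from the paper: it does not separately invoke \ref{m-a} for $t>t_p$, since once $g_u\notin L^{t_p}(\Om)$ is established and $\mu(\Om)<\infty$, H\"older's inequality immediately gives $g_u\notin L^t(\Om)$ for all $t\ge t_p$.

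There is, however, a gap in what you flag as the ``main obstacle''. The hints you give do not yield $\lqo>1$: from $\qhat\le p$ one gets only $\lqo\le t_p$, and from $p\ge 2-1/\uso$ one gets $t_p\ge1$; together these say $\lqo\le t_p$ with $t_p\ge1$, which does not force $\lqo>1$. The remaining hypothesis $\uso\notin\uSo\setm\lSo$ constrains only the $S$-sets, and one can have $\lqo<\lso$. For a configuration consistent with all the stated hypotheses, take $\lqo=\tfrac12$, $\uso=2\in\lSo$, $p=\tfrac32$: then $\qhat=\tfrac54<p=2-1/\uso$ and $p<\uso$, yet $\lqo<1$, so \ref{m-d} does not apply. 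The paper's own proof glosses over this point entirely --- it simply cites \ref{m-c}--\ref{m-d} without checking $\lqo>1$ --- so your argument is no looser than the paper's here, but you should not present the verification as routine when the sketched justification does not go through.
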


In particular,
Corollary~\ref{cor-green-full-char} applies if
$1 < 2-1/Q  \le p\le Q$
and $\mu$ is locally Ahlfors $Q$-regular at $x_0$,
as in Theorem~\ref{thm-intro-Q-reg},
and supports 
a $t_p$-Poincar\'e inequality at $x_0$ for small radii,
since in this case $\qhat=\lqo = \uso = Q \in \lSo$.
Under the assumptions in Corollary~\ref{cor-green-full-char},
it follows from
Corollary~\ref{cor-integrability-S-extra} 
that $u \in L^\tau(\Om)$ 
if and only if $\tau < \tau_p$,
i.e.\ we have a full characterization
for the integrability of both $u$ and $g_u$.

\begin{proof}
Since $2- 1/\uso \le p  \le \uso$,
we have $1 \le  t_p\le p$. 
Parts~\ref{g-a} and~\ref{g-b} thus follow from 
Theorems~\ref{thm-integrability-gu} 
and~\ref{thm-nonintegrability-gu}\,\ref{m-c}--\ref{m-d},
while part~\ref{g-c} follows from 
Proposition~\ref{prop-cp-x0}\,\ref{e-a}
and Corollary~\ref{cor-nonint-gu-t=p}\,\ref{a-aa}.
\end{proof}

We have now also completed the proofs
of Theorems~\ref{thm-intro} and~\ref{thm-intro-Q-reg}. 
More precisely, they are deduced as follows.

\begin{proof}[Proof of Theorem~\ref{thm-intro}]
Part~\ref{intro-a} follows 
from 
Proposition~\ref{prop-cp-x0}\,\ref{e-c} and Theorem~\ref{thm-Green-cp-x0},
while parts \ref{intro-b}--\ref{intro-f} follow directly
from
Theorems~\ref{thm-integrability-S}, \ref{thm-nonintegrability-S}\,\ref{hh-a},
\ref{thm-integrability-gu},
Corollary~\ref{cor-nonint-gu-t=p}\,\ref{a-aa}
and Theorem~\ref{thm-nonintegrability-gu}\,\ref{m-a},
respectively.
\end{proof}

\begin{proof}[Proof of Theorem~\ref{thm-intro-Q-reg}]
Formula \eqref{eq-formula-Qreg} follows from 
\eqref{eq-uappr-2},
while \ref{i-a}--\ref{i-c} follow from Theorem~\ref{thm-intro},
together with
Theorem~\ref{thm-nonintegrability-S}\,\ref{hh-b} for \ref{i-a}
and Theorem~\ref{thm-nonintegrability-gu}\,\ref{m-c} for \ref{i-c}.
Note that in this case, $\qhat=\lqo=\uso$.
\end{proof}

\section{Radial weights}
\label{sect-radial-weights}

Radial weights are  
useful for creating examples with various properties 
related to the exponent sets $\lQo$, $\lSo$, $\uSo$
and $\uQo$, see 
Example~\ref{ex-log},
\cite[Section~3]{BBLeh1}, H.~Svensson~\cite{SvenssonH} 
and S.~Svensson~\cite{SvenssonS}.

In this section we take a more detailed look
at the integrability of Green functions for radial weights.
Throughout this section we consider 
$\R^n$, $n \ge 2$, equipped with a radial \p-admissible 
measure $d\mu=w(|x|)\,dx$.

We also let $x_0=0$ 
and $\Om=B_1$, and define for $x\in B_1$,
\begin{equation}\label{eq-radial-green}
   u(x)=\int_{|x|}^1 \bigl(w(\rho) \rho^{n-1}\bigr)^{1/(1-p)} d\rho.
\end{equation}
Then $u$ is \p-harmonic in $B_1 \setm \{0\}$
and \eqref{eq-normalized-Green-intro-deff} holds,
see~\cite[Proposition~10.8 and its proof]{BBLeh1}.
Thus, $u$ is a Green function by \cite[Theorem~8.5]{BBLehGreen}.
The proof of Proposition~10.8 in \cite{BBLeh1} also shows that
\[
   g_u(x)=\bigl(w(|x|)|x|^{n-1}\bigr)^{1/(1-p)}
\]
and thus
\begin{align*} 
   \int_{B_1} g_u^{t}\,d\mu
   &= \omega_{n-1}\int_0^1 \bigl(w(\rho)\rho^{n-1}\bigr)^{-t/(p-1)} w(\rho)\rho^{n-1}\,d\rho \\
   &= \omega_{n-1}\int_0^1 \bigl(w(\rho)\rho^{n-1}\bigr)^{1-t/(p-1)} \,d\rho,
\end{align*}
where $\om_{n-1}$ is the surface area of the $(n-1)$-dimensional sphere
$\Sphere^{n-1}$.

In particular,
\begin{equation} \label{eq-int-w}
   \int_{B_1} g_u^{t_p}\,d\mu
   = \om_{n-1}\int_0^1 \bigl(w(\rho)\rho^{n-1}\bigr)^{1/(1-\uso)} \,d\rho
\end{equation}
which is independent of $p$.
(We do not know if in other
situations the integrability
in the borderline case $t_p$ can
depend on $p$.)
Note that $g_u=|\grad u|$ a.e.\ in $\Om$, by \cite[Proposition~A.13]{BBbook},
where $\grad u$ is the gradient of $u$ in the weighted Sobolev space 
$H^{1,p}\loc(\Om \setm \{x_0\},w)$ as in
Heinonen--Kilpel\"ainen--Martio~\cite[Section~2]{HeKiMa}.
As a consequence we get the following improvement of 
Corollary~\ref{cor-green-full-char} for radial weights, covering also
the case $1<p<2-1/\uso$, i.e.\ when $t_p<1$ 
and thus $g_u\notin L^1(\Om,w)$.

\begin{prop} \label{prop-radial}
Assume that $p\le\uso \notin \uSo\setm \lSo$,
$d\mu=w(|x|)\,dx$ on $\R^n$
and that $\mu$ supports a
$q$-Poincar\'e inequality at $x_0$
for small radii and some $q<\lqo$.
Let $u$ be the radially symmetric Green function 
as in~\eqref{eq-radial-green}. 
Then $g_u \in L^t(\Om,w)$
if and only if
$0 < t < t_p$.
\end{prop}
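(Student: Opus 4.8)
The plan is to handle the two directions separately: the positive direction comes straight from the general theory, and the borderline nonintegrability is reduced to the already-settled case $p=\uso$. For the \emph{sufficiency} part, since the radial function $u$ of~\eqref{eq-radial-green} is a Green function in $\Om=B_1$ with singularity at $x_0=0$, and $p\le\uso$ (which together with $p>1$ also forces $\uso>1$, so that $t_p$ makes sense), Theorem~\ref{thm-integrability-gu} gives $g_u\in L^t(\Om,w)$ for every $0<t<t_p$ at once.

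For the \emph{necessity}, I would first use that $\mu(\Om)=\mu(B_1)<\infty$, so H\"older's inequality yields $L^t(\Om,w)\subset L^{t_p}(\Om,w)$ whenever $t\ge t_p$; hence it suffices to show $g_u\notin L^{t_p}(\Om,w)$, i.e.\ that the integral on the right-hand side of~\eqref{eq-int-w} diverges. The crucial observation is that this integral does not involve $p$, so I may evaluate it at the value $p=\uso$. To justify passing to that value, note that $w$, being \p-admissible with $p\le\uso$, is also $\uso$-admissible, because a global \p-Poincar\'e inequality implies a global $\uso$-Poincar\'e inequality by H\"older's inequality while doubling is unchanged; hence the radial function $v$ obtained from~\eqref{eq-radial-green} by replacing $p$ with $\uso$ is, by the same references invoked for $u$, a Green function in $B_1$ with singularity at $x_0$ for the exponent $\uso$. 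Since $t_{\uso}=\uso$, applying~\eqref{eq-int-w} with the exponent $\uso$ gives
\[
\int_{B_1} g_u^{t_p}\,d\mu=\om_{n-1}\int_0^1\bigl(w(\rho)\rho^{n-1}\bigr)^{1/(1-\uso)}\,d\rho=\int_{B_1}g_v^{\uso}\,d\mu.
\]
Now $\uso\notin\uSo\setm\lSo$ forces $\Cp(\{x_0\})=0$ by Proposition~\ref{prop-cp-x0}\,\ref{e-a}, so Corollary~\ref{cor-nonint-gu-t=p}\,\ref{a-aa} (applied with the exponent $\uso$) gives $g_v\notin L^{\uso}(\Om,w)$, i.e.\ the last integral is infinite. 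Thus $g_u\notin L^{t_p}(\Om,w)$, and therefore $g_u\notin L^t(\Om,w)$ for all $t\ge t_p$, which completes the argument.

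I do not expect a genuine obstacle: the explicit formulas for $g_u$ and for $\int_{B_1}g_u^t\,d\mu$ already recorded in this section carry the computation, and the inclusion $L^t\subset L^{t_p}$ for $t\ge t_p$ on the finite measure space $(B_1,\mu)$ is routine. The only point that needs a moment's care is the legitimacy of the substitution $p=\uso$ --- specifically, checking that the radial function~\eqref{eq-radial-green} with exponent $\uso$ is genuinely a Green function, so that Corollary~\ref{cor-nonint-gu-t=p} applies to it --- and this reduces to the $\uso$-admissibility of $w$ noted above.
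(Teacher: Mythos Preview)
Your argument is correct. Both you and the paper exploit the same key observation that the right-hand side of~\eqref{eq-int-w} is independent of $p$, so one may switch to a convenient auxiliary exponent; the difference lies in which exponent is chosen. The paper picks $p_0$ with $t_{p_0}=q$ (so that the assumed $q$-Poincar\'e inequality at $x_0$ becomes precisely the $t_{p_0}$-Poincar\'e inequality needed in Theorem~\ref{thm-nonintegrability-gu}\,\ref{m-c}) and then invokes that theorem. You instead pass \emph{upward} to $p'=\uso$ and use Corollary~\ref{cor-nonint-gu-t=p}\,\ref{a-aa}. Your route is in a sense cleaner: since $\uso\ge p$, the $\uso$-admissibility of $w$ follows immediately from $p$-admissibility, so the Green function $v$ and all standing assumptions at the new exponent are automatic; and you never use the $q$-Poincar\'e hypothesis, which shows that this hypothesis is in fact superfluous for the conclusion. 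The paper's choice of $p_0$ makes explicit use of that hypothesis (it is what places $p_0<\qhat$ and supplies the local $t_{p_0}$-Poincar\'e inequality), but it also requires the standing assumptions for the exponent $p_0$, which needs a word of justification when $p_0<p$.
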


Note that $u$ of course depends on $p$. 
Moreover, the assumption that $w$ is \p-admissible 
may implicitly further limit the range of $p$
in Proposition~\ref{prop-radial}.

\begin{proof}
By continuity we
find $1<p_0<\uso$ so that $t_{p_0}=q$.
Since $q < \lqo$, it follows that $p_0 < \qhat$.
Thus Theorem~\ref{thm-nonintegrability-gu}\,\ref{m-c} shows
that $g_u \notin L^{t_{p_0}}(\Om,w)$ for $p=p_0$.
By \eqref{eq-int-w}, we get that
$g_u \notin L^{t_{p}}(\Om,w)$ is true for all $1< p \le \uso$.
The integrability for $t<t_p$ follows from 
Theorem~\ref{thm-integrability-gu}.
\end{proof}

\begin{example} \label{ex-log-2}
Consider the weight $w$ as in Example~\ref{ex-log},
i.e.\
\[
      w(\rho)=\begin{cases}
          \rho^{s-n}  |{\log \rho}|^\be, & \text{if } 0<\rho\le1/e, \\
          \rho^{s-n}, & \text{otherwise},
        \end{cases}
\]
where  $s>1$ and  $\be >0$.
As observed in Example~\ref{ex-log}, $w$ 
is $1$-admissible
on $\R^n$,
\[
\lso=\lqo=\uso=\uqo=s, \quad
\lSo=\lQo=(0,s)
\quad \text{and} \quad
\uSo=\uQo=[s,\infty).
\]
Then 
the case $t=t_p$ 
is not covered by Theorem~\ref{thm-nonintegrability-gu},
but all other exponents are covered
by either Theorem~\ref{thm-integrability-gu} 
or~\ref{thm-nonintegrability-gu}\,\ref{m-a}, when $p<s$.
For $t=t_p$, \eqref{eq-int-w} simplifies to
\[
   \int_{B_1} g_u^{t_p}\,d\mu
   \simeq \int_0^{1/e} \frac{d\rho}{\rho |{\log \rho}|^{\be/(s-1)}}, 
\]
which converges if and only if $\be > s-1$.
So if $0 < \be \le s-1$, then $g_u \notin L^{t_p}(\Om,w)$,
while $u \in L^{\tau_p}(\Om,w)$ 
if and only if $s/(1+\be) <p< s$, by Example~\ref{ex-log}.
Since $t_p=p\tau_p/(\tau_p+1)$, this shows that the strict inequalities in
Theorem~\ref{thm-int-Gu-superh-new} cannot be replaced by nonstrict ones,
when $\tau_p<\infty$.

On the other hand, for $p=s$, Theorem~\ref{thm-Green-cp-x0} 
shows that $u \in L^{\tau_p}(\Om,w)$ (i.e.\ $u$ is bounded)
if and only if $g_u \in L^p(\Om,w)$.
According to Example~\ref{ex-log}, this is equivalent to $\be > s-1$.
\end{example}

\section{General \texorpdfstring{\p}{p}-harmonic functions
with poles}
\label{sect-poles}

\emph{Recall the general assumptions from
the beginning of Section~\ref{sect-harm}.}

\medskip

In~\cite[Theorem~10.1]{BBLehGreen} it was shown that
any \p-harmonic function in $\Om \setm \{x_0\}$ with a pole at $x_0$
has similar growth properties near the pole as Green functions
with singularity at $x_0$. Hence the results in the previous sections
can be extended to such functions, in a local sense.

\begin{thm} \label{thm-poles-int}
Assume that $u$ is a \p-harmonic function in $\Om \setm \{x_0\}$
such that $u(x_0):=\lim_{x \to x_0} u(x)=\infty$.
Then the following are true\/\textup{:}
\begin{enumerate}
\item \label{r-a}
  $\Cp(\{x_0\})=0$\textup{;}
\item \label{r-b}
  $p \le \uso$\textup{;}
\item \label{r-c}
  $u \in L^\tau\loc(\Om)$ for all $0 < \tau < \tau_p$\textup{;}
\item \label{r-d}
  $u \notin L^\tau\loc(\Om)$ if $\tau > \tau_p$\textup{;}
\item \label{r-e}
if $\uso \notin \uSo\setm \lSo$, 
then $u \notin L^{\tau_p}\loc(\Om)$\textup{;}
\item \label{r-f}
$u \in L^{\tau_p}\loc(\Om)$ if and only if 
$p < \uso$ and \eqref{eq-prop-u-Q} holds,
or $p = \uso$ and \eqref{eq-prop-u-1} holds\textup{;}
\item \label{r-g}
$g_u\in L^t\loc(\Om)$ for all $0<t<t_p$\textup{;}
\item \label{r-h}
if $p = \uso$, then 
$g_u \in L^t\loc(\Om)$ if and only if $0<t<p$\textup{;}
\item \label{r-i}
if $p <\uso$, then $g_u\notin L^t\loc(\Om)$ 
whenever $\mu$ supports a $t$-Poincar\'e inequality at $x_0$ 
for small radii, $t>t_p$ and $t \ge 1$\textup{;}
\item \label{r-j}
if $p< \qhat$, $t_p \ge 1$ and $\uso \notin \uSo\setm \lSo$, 
then 
$g_u\notin L^{t_p}\loc(\Om)$ whenever 
$\mu$ supports a $t_p$-Poincar\'e inequality at $x_0$
for small radii\textup{;}
\item \label{r-k}
if $\qhat \le p< \uso\notin \uSo\setm \lSo$ and $\lqo>1$, then
$g_u\notin L^{t_p}\loc(\Om)$ whenever 
$\mu$ supports a $t_0$-Poincar\'e inequality at $x_0$ 
for small radii and some $1 \le t_0<t_p$.
\end{enumerate}
\end{thm}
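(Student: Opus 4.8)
The plan is to reduce everything to the results for singular functions already proved in Sections~\ref{sect-int-Green} and~\ref{sect-int-gu}, by showing that near $x_0$ a suitable superlevel set of $u$ actually \emph{is} a singular function with singularity at $x_0$, and then transferring the (non)integrability statements back to $u$ locally.

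First I would prove \ref{r-a} directly. Fix a ball $B_R$ with $\itoverline{B_R}\subset\Om$ and extend $u$ by $u(x_0):=\infty$; the resulting function is superharmonic in $B_R$, since lower semicontinuity and the ``not identically $\infty$'' condition are clear, and the comparison condition \ref{cond-c} in Definition~\ref{deff-superharm} follows from the comparison principle for \p-harmonic functions applied in $G\setm\itoverline{B(x_0,\de)}$ and letting $\de\to0$, using $u(x)\to\infty$ as $x\to x_0$. Since a superharmonic function can equal $+\infty$ only on a set of zero capacity (see \cite[Corollary~9.51]{BBbook}, as in the proof of Theorem~\ref{thm-superh-u-int+grad}\,\ref{kj-cg}), we get $\Cp(\{x_0\})=0$. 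Then \ref{r-b} follows from the contrapositive of Proposition~\ref{prop-cp-x0}\,\ref{e-c}, whose hypotheses ($\mu$ doubling, reverse-doubling with $\xi=2$ and \p-Poincar\'e at $x_0$, and $x_0$ having a locally compact neighbourhood) are all met under the standing assumptions.

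Next comes the main reduction. Choosing $R>0$ with $\itoverline{B_R}\subset\Om$, $R<\tfrac14\diam X$ and $\Cp(X\setm B_R)>0$, I would invoke \cite[Theorem~10.1]{BBLehGreen}: for all large enough $k$ the component $\Om_k$ of $\{x\in B_R:u(x)>k\}$ containing $x_0$ is a bounded domain with $x_0\in\Om_k\Subset B_R$, and $w:=(u-k)|_{\Om_k}$, extended by $0$ outside $\Om_k$, is a singular function in $\Om_k$ with singularity at $x_0$. Here \ref{dd-s} holds because $u-k$ is superharmonic in $B_R$ (by the first paragraph), \ref{dd-h} is immediate, \ref{dd-sup}--\ref{dd-inf} hold since $w(x_0)=\infty=\sup_{\Om_k}w$ and $w\to0$ along $\partial\Om_k\cap B_R\subset\{u=k\}$, and for \ref{dd-Np0} one observes that near any point of $\itoverline{B_R}\setm\{x_0\}$ the zero-extension coincides with $\max\{u-k,0\}$, which is Newtonian because $u$ is \p-harmonic there, while it vanishes identically in a neighbourhood of any point outside $\itoverline{\Om_k}$. (When $\Cp(X\setm B_R)>0$, conditions \ref{dd-sup}--\ref{dd-inf} are anyway superfluous, by \cite[Theorem~1.6]{BBLehGreen}.) Moreover $g_w=g_u$ a.e.\ in $\Om_k$, since $w-u$ is constant on $\Om_k\setm\{x_0\}$ and $\Cp(\{x_0\})=0$ (cf.\ Remark~\ref{rmk-gu}).

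Finally I would transfer. Membership of $u$ in $L^\tau\loc(\Om)$ (resp.\ of $g_u$ in $L^t\loc(\Om)$) is equivalent to membership of $w$ in $L^\tau(\Om_k)$ (resp.\ of $g_u=g_w$ in $L^t(\Om_k)$): on $\Om\setm\{x_0\}$ the function $u$ is continuous and $g_u\in L^p\loc$, so local integrability for the relevant exponents is automatic there, while near $x_0$ the functions $u$ and $w$ differ by a constant; for the nonintegrability statements one uses in addition that in the proofs of Theorems~\ref{thm-nonintegrability-S} and~\ref{thm-nonintegrability-gu} the divergence is a tail sum over balls $B_{r_k}\searrow\{x_0\}$, hence occurs in every neighbourhood of $x_0$. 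Since by Theorem~\ref{thm-main-intro} the singular function $w$ shares all integrability properties of a Green function with singularity $x_0$, applying the results already established gives \ref{r-c}--\ref{r-k}: Theorem~\ref{thm-integrability-S} yields \ref{r-c}; Theorem~\ref{thm-nonintegrability-S}\,\ref{hh-a}--\ref{hh-b} yield \ref{r-d}--\ref{r-e}; Theorem~\ref{thm-u-Q} yields \ref{r-f}; Theorem~\ref{thm-integrability-gu} yields \ref{r-g}; Corollary~\ref{cor-nonint-gu-t=p}\,\ref{a-aa} yields \ref{r-h}; and Theorem~\ref{thm-nonintegrability-gu}\,\ref{m-a},\,\ref{m-c},\,\ref{m-d} yield \ref{r-i},\,\ref{r-j},\,\ref{r-k}. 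The main obstacle I anticipate is the bookkeeping in the reduction step: verifying carefully that $w$ satisfies \ref{dd-Np0} and that $R$ and $k$ can be chosen so that $\Om_k$ is a domain with $X\setm\Om_k$ of positive capacity, together with checking that ``$L\loc$ over $\Om$'' genuinely localizes to $\Om_k$ even for the exponents $t>p$ occurring in the negative statements.
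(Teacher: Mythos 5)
Your proposal is correct and follows essentially the same route as the paper: the paper likewise reduces to the already-established results for Green/singular functions by citing \cite[Theorems~1.3 and~10.1]{BBLehGreen} to realize an affine transform of $u$ as a Green function on a superlevel-set domain $U\ni x_0$, and then localizes using that $u$ is continuous and $g_u\in L^p\loc$ on $\Om\setm\{x_0\}$. The only (cosmetic) differences are that you re-derive part \ref{r-a} via the superharmonic extension and polar-set argument instead of quoting \cite[Theorem~10.1\,(a)]{BBLehGreen}, and that you spell out the verification of \ref{dd-s}--\ref{dd-Np0} which the paper subsumes in the citation.
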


\begin{proof}
By Theorems~1.3\,(b) and~10.1\,(b)
in \cite{BBLehGreen}, there 
are $a>0$, $b\in\R$ 
and a bounded domain $U \subset \Om$ such that $x_0 \in U$ and 
$v:=au+b$ is a Green function in $U$ with singularity at $x_0$.

As $u$ is \p-harmonic in $\Om \setm \{x_0\}$
it is locally bounded therein
and also $g_u \in L^p\loc(\Om \setm \{x_0\})$.
It is therefore enough to consider the integrability
and nonintegrability conditions on $U$ 
(since we only consider $L^t\loc$-integrability of $g_u$ for $t \le p$).
Note that $g_v=ag_u$.

\ref{r-a} This follows from Theorem~10.1\,(a) in \cite{BBLehGreen}.

\ref{r-b} This follows from \ref{r-a} and 
Proposition~\ref{prop-cp-x0}\,\ref{e-c}.

\ref{r-c} This follows from Theorem~\ref{thm-integrability-S}.

\ref{r-d} and \ref{r-e}
These statements follow from Theorem~\ref{thm-nonintegrability-S}.

\ref{r-f} This follows from Theorem~\ref{thm-u-Q}.

\ref{r-g} This follows from Theorem~\ref{thm-integrability-gu}.

\ref{r-h} This follows from  Corollary~\ref{cor-nonint-gu-t=p}.

\ref{r-i}--\ref{r-k} 
These statements follow from Theorem~\ref{thm-nonintegrability-gu}.
\end{proof}

\begin{thm}   \label{thm-est-pole}
Assume that $u\ge 0$ is a \p-harmonic function in $B_{50\la R} \setm \{x_0\}$
such that $u(x_0):=\lim_{x \to x_0} u(x)=\infty$.
Then for all $0<r<R/50\la$ and $x\in S_r$,
\begin{equation}   \label{eq-pot-est-pole}
u(x) \simeq A \biggl( \inf_{B_{R}} u 
    + \int_r^{R} \biggl( \frac{\rho}{\mu(B_\rho)} \biggr)^{1/(p-1)} \,d\rho \biggr),
\end{equation}
where the implicit comparison constants
are independent of $u$, while $A$ depends on
$u$ only as follows,
\begin{equation}   \label{eq-def-al}
A = \biggl( \int_{a<u<a+1} g_u^p\,d\mu \biggr)^{1/(p-1)}
\quad \text{for any } a\ge \max_{\bdy B_{R}} u.
\end{equation}
\end{thm}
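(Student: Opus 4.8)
The plan is to reduce to the case of a Green function on a bounded domain, which is fully understood via Theorem~\ref{thm-comp-u-R1-R2}. As in the proof of Theorem~\ref{thm-poles-int}, $\Cp(\{x_0\})=0$ and, by \cite[Theorems~1.3\,(b) and~10.1\,(b)]{BBLehGreen}, the superlevel components of $u$ near $x_0$ are singular functions. Concretely, fix $a\ge M:=\max_{\bdy B_R}u$ and let $V=V_a$ be the connected component of $\{x\in B_R:u(x)>a\}$ containing $x_0$. Since $u\le M\le a$ on $\bdy B_R$, continuity and the maximum principle give $\overline{V}\subset\overline{B_R}\subset B_{50\la R}$ and $u=a$ on $\bdy V$; a further maximum-principle argument (any other component of $\{u>a\}\cap B_R$ would be a bounded \p-harmonic function equal to $a$ on its boundary, hence $\le a$, a contradiction) shows $\{u>a\}\cap B_R=V$ is a domain, and $v:=u-a$ is a singular function in $V$. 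By Theorem~\ref{thm-main-intro} there is a unique $\alpha=\alpha_a>0$ with $\tilde v:=\alpha v$ a Green function in $V$.

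Next I would identify $A$ in terms of $\alpha$. For the normalized Green function $\tilde v$ the truncations are its capacitary potentials, so $\min\{\tilde v,b\}/b$ is optimal for $\cp(V^b,V)=b^{1-p}$, giving the level‑set energy identity $\int_{\{c<\tilde v<d\}}g_{\tilde v}^p\,d\mu=d-c$ for $0\le c<d<\infty$. Since $g_{\tilde v}=\alpha g_v=\alpha g_u$ on $V$ and $\{0<v<1\}=\{a<u<a+1\}\cap V$, this yields $\int_{\{a<u<a+1\}\cap V}g_u^p\,d\mu=\alpha^{-p}\int_{\{0<\tilde v<\alpha\}}g_{\tilde v}^p\,d\mu=\alpha^{1-p}$. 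For $a$ large enough that $\{u>a\}\Subset B_R$ (which holds once $a$ exceeds a fixed multiple of $\sup_{B_R\setm B_{R'}}u$ for some $R'<R$, by the approximate monotonicity of $u$ established below) one has $\{a<u<a+1\}\subset V$, so $A=1/\alpha$ exactly; for general $a\ge M$ the contribution of $\{a<u<a+1\}\setm V$ to the energy is controlled by a Caccioppoli estimate on a fixed annulus and is still $\simeq\alpha^{1-p}$, so $A\simeq1/\alpha$ and $u=a+A\tilde v$ on $V$ up to comparison constants.

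I would then apply Theorem~\ref{thm-comp-u-R1-R2} together with Remark~\ref{rmk-assumptions} to the Green function $\tilde v$ in the bounded domain $V$ (note $\Cp(X\setm V)>0$): there is $R_V>0$, comparable to the size of $V_a$, with $B_{R_V}\subset V$ and, for $r:=d(x,x_0)<R_V/50\la$,
\[
\tilde v(x)\simeq\cp(B_r,B_{R_V})^{1/(1-p)}\simeq\int_r^{R_V}\Bigl(\frac{\rho}{\mu(B_\rho)}\Bigr)^{1/(p-1)}\,d\rho,
\qquad\text{hence}\qquad
u(x)\simeq a+A\int_r^{R_V}\Bigl(\frac{\rho}{\mu(B_\rho)}\Bigr)^{1/(p-1)}\,d\rho .
\]
Separately, since $u\ge0$ is \p-harmonic in $B_{50\la R}\setm\{x_0\}$, Harnack along chains of balls in annuli (as in the proof of Theorem~\ref{thm-comp-u1-u2} and \cite[Corollary~8.19]{BBbook}) gives $u(x)\simeq u(\rho):=\inf_{S_\rho}u\simeq\sup_{S_\rho}u$ for $x\in S_\rho$, $0<\rho<R$, with $u(\rho)$ roughly nonincreasing; in particular $u(R/50\la)\simeq\inf_{B_R}u$.

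Finally I would push the estimate from the scale $R_V$ of $V_a$ out to the fixed scale $R$. Running the first three steps not once but across the $O(\log(R/R_V))$ dyadic scales $\rho_k$ between $R_V$ and $R$ — using at each level $c\gtrsim\inf_{B_R}u$ the level‑set identity to get the same flux constant $\int_{\{c<u<c+1\}}g_u^p\,d\mu\simeq A^{p-1}$, together with $\cp(B_{\rho_{k+1}},B_{\rho_{k-1}})\simeq\mu(B_{\rho_k})/\rho_k^p$ from Theorem~\ref{thm-metaestimate-int-only} and matching Caccioppoli/test‑function energy bounds — one finds that the oscillation of $u$ across the $k$th dyadic annulus is $\simeq A(\rho_k^p/\mu(B_{\rho_k}))^{1/(p-1)}$. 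Telescoping and converting the sum to an integral gives $u(r)-u(R/50\la)\simeq A\int_r^{R/50\la}(\rho/\mu(B_\rho))^{1/(p-1)}\,d\rho$, which combined with $u(R/50\la)\simeq\inf_{B_R}u$ and $A\int_{R/50\la}^R(\ldots)\simle\inf_{B_R}u$ yields \eqref{eq-pot-est-pole} for all $0<r<R/50\la$. The dependence of the implicit constants only on $R$ (and not on $u$) comes from the ratio bounds in Theorem~\ref{thm-comp-u-R1-R2} and the Harnack constants, while $A$ absorbs the mismatch between $R$ and the natural scale of $u$. The main obstacle is exactly this last step: proving that the flux $A=\bigl(\int_{\{a<u<a+1\}}g_u^p\,d\mu\bigr)^{1/(p-1)}$ is essentially independent of the admissible level $a$ and of the scale, so that the per‑scale Green‑function estimates patch together with uniform constants; the level‑set energy identity for Green functions, applied at each scale, is the mechanism that makes this work.
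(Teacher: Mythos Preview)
Your reduction to a singular function on the superlevel set $V=\{u>a\}\cap B_R$ and the identification of $A$ via the level-set energy identity are correct and match the paper's approach; the paper simply cites \cite[Theorem~9.3]{BBLehGreen} to get both facts at once (that $(u-k_0)/A$ is a Green function in $\Om_0:=\{u>k_0\}$, with $A$ given exactly by \eqref{eq-def-al} for \emph{any} admissible $a$), so your argument here is a more hands-on version of the same step. Choosing $a=k_0:=\max_{\bdy B_R}u$ from the start, as the paper does, also avoids your detour through ``general $a$'' and the Caccioppoli patch-up.

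Where the two proofs diverge is the extension from small $r$ to all $r<R/50\la$. The paper does \emph{not} telescope over dyadic annuli. Instead it introduces an auxiliary Green function $v$ in the fixed ball $B_R$, for which \eqref{eq-uappr-2} gives $v(x)\simeq\int_r^R(\rho/\mu(B_\rho))^{1/(p-1)}\,d\rho$ directly on the whole range $r<R/50\la$. From the already established estimate at the small radius $r_1$ one reads off $Av\simeq u-k_0$ on $\bdy B_{r_1}$, while on $\bdy B_R$ one has $u-k_0\le0=Av\le u-m$ with $m=\min_{\bdy B_R}u\simeq k_0$. A single application of the comparison principle for \p-harmonic functions with Sobolev boundary values then gives $u-k_0\simle Av\simle u-m$ throughout $B_R\setm B_{r_1}$, which finishes the proof.

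Your telescoping route is not wrong in spirit, but the ``main obstacle'' you flag is real and your sketch does not close it. You need, for each dyadic scale $\rho_k\in[R_V,R/50\la]$, a two-sided bound $\osc_{B_{\rho_{k-1}}\setm B_{\rho_k}}u\simeq A(\rho_k^p/\mu(B_{\rho_k}))^{1/(p-1)}$. The upper bound requires producing a function admissible for $\cp(B_{\rho_k},B_{\rho_{k-1}})$ out of $u$, which is delicate because $u$ need not be monotone in $r$ on these intermediate annuli (Harnack gives comparability on each sphere, not monotonicity between spheres). The flux constancy you invoke only applies to levels $c\ge k_0$, i.e.\ inside $\Om_0$, so it does not directly control the oscillation across annuli that are not contained in $\Om_0$. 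Finally, your concluding inequality $A\int_{R/50\la}^R(\rho/\mu(B_\rho))^{1/(p-1)}\,d\rho\simle\inf_{B_R}u$ is a nontrivial relation between $A$ and $\inf_{B_R}u$ that you have not established; the paper never needs such an inequality because the comparison with $v$ handles the intermediate range in one stroke.
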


Note that 
$A=\cp(G_{a+1},G_a)^{1/(p-1)}$, 
where $G_a=\{y\in B_R : u(y)> a\}$.
The proof below shows that the integral
in~\eqref{eq-pot-est-pole} can be replaced by $\cp(B_r,\Om_0)^{1/(1-p)}$,
where
\[
\Om_0=\{x\in B_R:u(x)>\max_{\bdy B_{R}} u\}.
\]
This integral
can thus be estimated by $\cp(B_r,B_R)^{1/(1-p)}$ and $\cp(B_r,B_{R_1})^{1/(1-p)}$
from above and below, respectively, whenever $B_{R_1} \subset \Om_0$.
In particular, Theorem~\ref{thm-est-pole} generalizes the estimates obtained
for \p-harmonic Green functions in
Danielli--Garofalo--Marola~\cite[Lemma~5.1]{DaGaMa}.

\begin{proof}[Proof of Theorem~\ref{thm-est-pole}]
Proposition~4.4 in~\cite{BBLehGreen} implies that 
\begin{equation} \label{eq-k0-on-BR}
k_0 := \max_{\bdy B_{R}} u\simeq \min_{\bdy B_R} u = \inf_{B_R} u,
\end{equation}
with the implicit comparison constants depending only on  $p$,
the doubling constant of $\mu$ and the constants in the Poincar\'e inequality.
By \cite[Theorem~1.6]{BBLehGreen}, $u-k_0$ is a singular function in
$\Om_0=\{x\in B_R:u(x)>k_0\}$,
and thus $\cp(\{x_0\},\Om)=\Cp(\{x_0\})=0$, by
Theorem~\ref{thm-Green-cp-x0}.
Hence, \cite[Theorem~9.3]{BBLehGreen} implies that
$\ut=(u-k_0)/A$ is a Green function in $\Om_0$,
where $A>0$ is given 
by~\eqref{eq-def-al}.
Let $B_{R_1}\subset\Om_0$.
Applying \eqref{eq-uappr-0} to $\ut$ and $\Om_0$, instead of $u$ and $\Om$, 
shows that for all $0<r<R_1/50\la$ and $x\in S_r$,
\begin{equation}   \label{eq-est-v=u-k0}
k_0 + C_1 A \cp(B_r,\Om_0)^{1/(1-p)} \le u(x) 
\le k_0 + C_2 A \cp(B_r,\Om_0)^{1/(1-p)},
\end{equation}
where $C_1, C_2>0$ depend only on $p$, 
the doubling constant of $\mu$ and the constants in the Poincar\'e inequality,
but not on $R_1$, $u$, $x$ or $\Om_0$.

By Theorem~6.3 in Bj\"orn~\cite{ABremove} (or \cite[Lemma~4.3]{BBLehGreen}),
$u$ is superharmonic in $B_{50\la R}$.
As $u$ is nonconstant, \cite[Corollary~9.14]{BBbook} shows
that $X \ne B_{50\la R}$,
and thus  $50 \la R \le \diam X$.
Since $B_{R_1}\subset\Om_0\subset B_R$, we therefore conclude from 
Theorem~\ref{thm-metaestimate-int-only} that
\begin{align}  \label{eq-cap-Om0-int}
\cp(B_r,\Om_0)^{1/(1-p)} &\le \cp(B_r,B_R)^{1/(1-p)} 
\simeq \int_r^{R} \psi(\rho) \,d\rho, \nonumber \\
\cp(B_r,\Om_0)^{1/(1-p)} &\ge \cp(B_r,B_{R_1})^{1/(1-p)} 
\simeq \int_r^{R_1} \psi(\rho) \,d\rho,
\end{align}
where
\[
\psi(\rho) = \biggl( \frac{\rho}{\mu(B_\rho)} \biggr)^{1/(p-1)}.
\]
Moreover, as $\cp(\{x_0\},\Om_0)=0$, 
Theorem~\ref{thm-metaestimate-int-only} also implies that
$\int_0^{R_1} \psi(\rho) \, d\rho=\infty$.
Thus, there exists $r_1< R_1/50\la$ such that
for all $0<r\le r_1$, the integral in~\eqref{eq-cap-Om0-int}
satisfies
\[
\int_{r}^{R_1} \psi(\rho) \,d\rho
= \int_{r}^{R} \psi(\rho) \,d\rho - \int_{R_1}^{R} \psi(\rho) \,d\rho
\ge \frac12 \int_{r}^{R} \psi(\rho) \,d\rho.
\]
Inserting this into \eqref{eq-cap-Om0-int} and \eqref{eq-est-v=u-k0},
together with  \eqref{eq-k0-on-BR},
proves \eqref{eq-pot-est-pole} for $r\le r_1$.

In order to obtain \eqref{eq-pot-est-pole} for $r < R/50\la$, 
let $v$ be a Green function in $B_R$ with singularity at $x_0$,
and let $v=0$ outside $B_R$.
In particular, $v \in \Nploc(X \setm \{x_0\})$.
Then by \eqref{eq-uappr-2}, applied to $v$ and $\Om=B_R$, 
we have 
\begin{equation}  \label{eq-est-green-v-in-BR}
v(x) \simeq \int_{r}^{R} \psi(\rho) \,d\rho,
\quad \text{where }  r=d(x,x_0)<R/50\la,
\end{equation}
with the implicit constants depending only on $p$, 
the doubling constant of $\mu$ and the constants in the Poincar\'e inequality.
In particular, together with the already proved \eqref{eq-pot-est-pole} 
for $r\le r_1$, 
we have 
\[
A v \simeq u-k_0 \le u-m \quad \text{on $\bdy B_{r_1}, \quad$
where  $m:=\min_{\bdy B_R}u \le k_0$}.
\]
Since $u-k_0 \le 0=A v \le u-m$ on $\bdy B_R$, the comparison principle 
for \p-harmonic functions
with Sobolev boundary values (\cite[Lemma~8.32]{BBbook})
implies that
$u-k_0 \simle A v \simle u-m$ in $B_R \setm B_{r_1}$.
Thus, \eqref{eq-est-green-v-in-BR} and \eqref{eq-k0-on-BR}
show that \eqref{eq-pot-est-pole} holds for all $r < R/50\la$.
\end{proof}

\begin{remark} \label{rmk-Wolff}
The estimate in Theorem~\ref{thm-est-pole} is related to the so-called Wolff 
potential, which is a nonlinear analogue of the Riesz potential.
More precisely, Theorem~1.6 in Kilpel\"ainen--Mal\'y~\cite{KilMa-Acta}
shows that if $u\ge0$ is a superharmonic
function in  $B(x,3R)\subset\R^n$ (unweighted)
and $\nu=-\Delta_p u$ is its Riesz measure, then
\begin{equation} \label{eq-Wolff}
W^\nu_{1,p} (x,R) \simle
u(x) \simle
\inf_{B(x,R)}u + W^\nu_{1,p} (x,2R),
\end{equation}
where 
\[
W^\nu_{1,p} (x,R) 
= \int_0^R \biggl( \frac{\nu(B(x,\rho))}{\rho^{n-p}} \biggr)^{1/(p-1)} \,\frac{d\rho}{\rho}
\]
is the Wolff potential, see also 
Maz\cprime ya--Havin~\cite{MazHa70}, \cite[Theorem~6.1]{MazHa72}.

In the case of Green functions, $\nu=\de_{x_0}$ is the Dirac measure at $x_0$
and hence
\[
W^{\de_{x_0}}_{1,p} (x,R) 
= \int_{r}^R \rho^{(p-n)/(p-1)-1} \,d\rho \quad \text{when }
r=|x-x_0|<R,
\]                
which is comparable to the integral in \eqref{eq-pot-est-pole}, since 
the Lebesgue measure of $B(x,\rho)$ is a multiple of $\rho^n$.
Thus, our Theorem~\ref{thm-est-pole} generalizes the Wolff potential estimate
for the Dirac measure to \p-harmonic functions with poles in metric spaces, 
even in the case when there is no \p-harmonic equation.
\end{remark}

\begin{proof}[Proof of Theorem~\ref{thm-intro-gen-pharm-new}]
This follows directly from Theorems~\ref{thm-poles-int},
\ref{thm-est-pole} and \ref{thm-comp-u-R1-R2}.
\end{proof}

\section{Elliptic equations in divergence form}
\label{sect-elliptic-eq}

Estimates similar to \eqref{eq-Wolff}
also hold for elliptic differential
equations in divergence form
of the type
\begin{equation}   \label{eq-div-form-Du}
\Div A(x,\grad u) = 0,
\end{equation}
including weighted and vectorial ones, see
Kilpel\"ainen--Mal\'y~\cite{KilMa-Pisa}, \cite{KilMa-Acta},
Mikkonen~\cite{Mikkonen}, Hara~\cite{Hara-Rn},
the monograph Heinonen--Kilpel\"ainen--Martio~\cite[Theorem~21.21]{HeKiMa} and the 
expository papers Kuusi--Mingione~\cite{KuuMin14}, \cite{KuuMin18}.
On metric spaces, such estimates
have been obtained for Cheeger \p-harmonic functions
in Bj\"orn--MacManus--Shan\-mu\-ga\-lin\-gam~\cite{BMS}
and Hara~\cite{Hara}.

In this section 
we
explain how to extend our (non)integrability
results from energy minimizers to  Green  functions 
for \eqref{eq-div-form-Du},
i.e.\ functions $u$ satisfying
\[
\Div A(x,\grad u) = -\delta_{x_0}
\]
in $\Om$ 
with zero boundary values on $\bdy\Om$ (in Sobolev sense).
Here, $\grad u$ stands for one of the following gradients:
\begin{itemize}
\item Usual (distributional) gradient in unweighted $\R^n$
or the gradient defined for the weighted Sobolev space
$H^{1,p}\loc(\Om,w)$ as in
  Heinonen--Kilpel\"ainen--Martio~\cite[Section~1.9]{HeKiMa}.
\item Natural gradient $\grad u$ on Riemannian manifolds with nonnegative
  Ricci curvature as in Holopainen~\cite{Ho}, \cite{Holo-pos} and
\cite{HoDuke}. 
\end{itemize} 

The vector-valued function $A(x,\xi)$ is for a.e.\ $x$ and all $\xi$
assumed to satisfy the usual ellipticity conditions as in 
\cite[(3.3)--(3.7)]{HeKiMa}, Fabes--Jerison--Kenig~\cite{FaJeKe}
or Holopainen~\cite[(2.9)--(2.12)]{Holo-pos}.
Subelliptic equations associated with left-invariant  
vector fields 
\[
Xu=(X_1u,\ldots, X_ku)
\]
in Heisenberg or Carnot groups, 
as in  Haj\l asz--Koskela~\cite[Sections~11.3 and 11.4]{HaKo}
or Capogna--Danielli--Garofalo~\cite{CaDaGa2},
could also be considered with obvious interpretations,
whenever the main 
ingredients, specified below, are satisfied.

For the integrability of the Green functions
for $\Div A(x,\grad u) = 0$, 
it is sufficient to
invoke the capacitary estimate 
\begin{equation}  \label{eq-est-u-cap}
u(x) \simeq \cpmu(B_r,\Om)^{1/(1-p)} \quad \text{for sufficiently small } 
  r:=d(x,x_0)>0,
\end{equation}
which has been proved
in weighted $\R^n$ in \cite[Lemma~3.1]{FaJeKe} (for $p=2$)
and in \cite[Theorem~7.41]{HeKiMa} (for balls and $1<p<\infty$),
while on manifolds it follows from the proof of Theorem~3.19 in \cite{Ho}.

In view of \eqref{eq-uappr-0}, estimate~\eqref{eq-est-u-cap} 
implies that $u$ is in a
neighbourhood of $x_0$ comparable to the Green function considered in
this paper and thus has the same (non)integrability properties.
 
We can also obtain (non)integrability results
for the gradient $\grad u$.
In addition to \eqref{eq-est-u-cap} and our results 
in the previous sections, the only required tools are the minimum principle 
and the Caccioppoli inequality 
\begin{equation}   \label{eq-Caccioppoli}
\int_B |\grad v|^p v^{-(1+\eps)}\,d\mu \le C \int_{2B} v^{p-(1+\eps)}\,d\mu
\end{equation}
for positive supersolutions $v$ of \eqref{eq-div-form-Du} in $2B$
and for $\eps>0$,
with $C$ independent of $v$; this will be applied to truncations of $u$.
Such inequalities are well known in the cases considered
above, see e.g.\ \cite[Lemma~3.57]{HeKiMa} and
\cite[Lemma~3.1]{Holo-pos}. 
(In \cite[Lemma~3.1]{Holo-pos}, this is proved for solutions 
of \eqref{eq-div-form-Du}, but the proof goes
through verbatim also for supersolutions when $q=p-(1+\eps)$.)

For simplicity, we formulate and prove the next result only 
in weighted $\R^n$ with a \p-admissible weight as in \cite{HeKiMa}.
The case of Riemannian manifolds with nonnegative Ricci curvature 
(so that the doubling property and the \p-Poincar\'e inequality hold,
see Haj\l asz--Koskela~\cite[Chapter~10.1]{HaKo})  
is similar with obvious modifications.
For the definition and existence of singular and Green functions 
in these settings we refer to 
\cite[Section~7.38]{HeKiMa} and \cite[Definition~3.9 and Theorem~3.19]{Ho}.
Since the capacitary estimate \eqref{eq-est-u-cap} is in
\cite[Theorem~7.41]{HeKiMa} proved only when $\Om$ is a ball, we
restrict ourselves to this case.
(In \cite[Theorem~7.41]{HeKiMa} the assumption that $h$ is
nonnegative should be added.)

\begin{thm}  \label{thm-sum-A-superh}
Let $w$ be a \p-admissible weight on $\R^n$, $d\mu=w\,dx$
and assume that $A\colon \R^n\times\R^n\to \R^n$ satisfies the ellipticity 
conditions {\rm(3.3)--(3.7)} in~{\rm\cite{HeKiMa}}.
Assume that the capacity $\Cpmu(\{x_0\})=0$ 
and 
let $u\in H^{1,p}\loc(B_R \setm \{x_0\},w)$ 
be a continuous weak solution of \eqref{eq-div-form-Du} in
$B_R\setm\{x_0\}$ 
such that 
\begin{equation}   \label{eq-lim=infty}
u(x_0)=\lim_{x \to x_0} u(x)=\infty.
\end{equation}
Assume that $u$ has zero boundary
values on $\bdy B_R$, either in the Sobolev sense
\ref{dd-Np0} or as the limit
\begin{equation} \label{eq-sing-soln}
   \lim_{\Om \ni y \to x} u(x)=0
   \quad \text{for all } x \in \bdy B_R.
\end{equation}
Then the conclusions in Section~\ref{sect-int-Green} hold for $u$
in $\Om:=B_R$. 
Also Theorems~\ref{thm-integrability-gu}
and~\ref{thm-nonintegrability-gu}
hold for $g_u=|\grad u|$.
\end{thm}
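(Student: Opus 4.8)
The plan is to reduce the whole statement, exactly as for energy minimizers, to the pointwise behaviour of $u$ near $x_0$, and then re-run the proofs of Sections~\ref{sect-int-Green} and~\ref{sect-int-gu} essentially verbatim. Since $w$ is \p-admissible, $\mu=w\,dx$ is globally doubling, globally reverse-doubling (with $\xi=2$) and supports a global \p-Poincar\'e inequality, hence by Keith--Zhong~\cite[Theorem~1.0.1]{KeZh} also a $p_0$-Poincar\'e inequality for some $1\le p_0<p$. Thus Theorem~\ref{thm-metaestimate-int-only} applies (with $R_0=\infty$) and gives $\cpmu(B_r,B_R)\simeq\bigl(\int_r^R(\rho/\mu(B_\rho))^{1/(p-1)}\,d\rho\bigr)^{1-p}$ for all $0<2r\le R<\infty$. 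Combining this with the capacitary estimate~\eqref{eq-est-u-cap} of Heinonen--Kilpel\"ainen--Martio~\cite[Theorem~7.41]{HeKiMa} (applicable since $\Om=B_R$ is a ball and $u$, being a solution with a pole and zero boundary data, is the Green function of $B_R$ with pole $x_0$; its singularity is removable for supersolutions because $\Cpmu(\{x_0\})=0$, so $u$ is superharmonic in $B_R$), together with the local Harnack oscillation estimate for solutions of~\eqref{eq-div-form-Du} to pass from $\bdy B_r$ to the whole sphere $S_r$, we obtain for all sufficiently small $r=d(x,x_0)$ and all $x\in S_r$,
\[
u(x)\simeq\cpmu(B_r,B_R)^{1/(1-p)}\simeq\int_r^R\Bigl(\frac{\rho}{\mu(B_\rho)}\Bigr)^{1/(p-1)}\,d\rho,
\qquad
u(x)\simge\Bigl(\frac{r^p}{\mu(B_r)}\Bigr)^{1/(p-1)}.
\]
These are precisely the estimates~\eqref{eq-uappr-0} and~\eqref{eq-uappr-2} on which Sections~\ref{sect-int-Green}--\ref{sect-int-gu} rest. (That $\Cpmu(\{x_0\})=0$ forces $p\le\uso$ follows from Proposition~\ref{prop-cp-x0}\,\ref{e-c}, so the case $p>\uso$ excluded there does not occur.)

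Given these estimates, the conclusions of Section~\ref{sect-int-Green} are immediate: the proofs of Theorems~\ref{thm-integrability-S}, \ref{thm-nonintegrability-S} and~\ref{thm-u-Q} (and their corollaries and Example~\ref{ex-log}) use only the two displayed bounds together with the doubling, reverse-doubling and exponent-set properties of $\mu$; away from $x_0$ the function $u$ is a continuous solution, hence bounded on $B_R\setm B_\delta$ and vanishing at $\bdy B_R$ (in the sense of~\ref{dd-Np0} or~\eqref{eq-sing-soln}, which are equivalent here), so it contributes a finite amount to every $L^\tau(B_R,w)$. For the positive gradient integrability of Theorem~\ref{thm-integrability-gu} I would repeat the proof of Theorem~\ref{thm-int-Gu-superh-new}\,\ref{kmm-a}: for a ball $B$ with $2B\Subset B_R$, the truncations $u_k=\min\{u-m,k\}+1$, $m=\min_{\itoverline{2B}}u$, are positive supersolutions of~\eqref{eq-div-form-Du} in $2B$ (using removability at $x_0$), so~\eqref{eq-Caccioppoli} takes the place of the Caccioppoli inequality for superminimizers used in~\cite{KiMa}; combined with $u\in L^{p-1}\loc(B_R,w)$ and $u\in L^\tau\loc(B_R,w)$ for all $\tau<\tau_p$ (from the previous step; recall $p-1<\tau_p$) and the identity $t_p=p\tau_p/(\tau_p+1)$, the same H\"older computation yields $|\grad u|\in L^t(B_R,w)$ for $0<t<t_p$, and then $u\in H^{1,t}(B_R,w)$ for $1\le t<t_p$ since already $u\in L^t(B_R,w)$.

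For the nonintegrability of $g_u=|\grad u|$ (Theorem~\ref{thm-nonintegrability-gu}) I would copy the proofs of parts~\ref{m-a}, \ref{m-c} and~\ref{m-d} line by line. The strong minimum principle for supersolutions of~\eqref{eq-div-form-Du} gives $m_r:=\inf_{B_r}u=\min_{\bdy B_r}u$, and the pointwise bounds above yield $m_r\simge(\mu(B_r)/r^p)^{1/(1-p)}$ exactly as in~\eqref{eq-lower-for-m_r}; the competitors $\min\{1,u/m_r\}$ and $v_k=(\min\{u,a_k\}-A_{k-1})_\limplus/(a_k-A_{k-1})$ are admissible for the variational capacities $\capp_t(B_r,B_R)$, respectively $\ctp(B^k,B^{k-1})$, taken with respect to $\mu$, since $u$ extended by $0$ lies in $H^{1,p}\loc$ of a neighbourhood of $\itoverline{B_R}\setm\{x_0\}$; $|\grad u|$ is a $t$-weak upper gradient of $u$ for $t<t_p\le p$ because $\Cpmu(\{x_0\})=0$ and $|\grad u|$ coincides with the minimal \p-weak upper gradient of $u$ in $B_R\setm\{x_0\}$ by~\cite[Proposition~A.13]{BBbook}; and every capacity lower bound invoked there -- from~\cite{BBLeh1}, Proposition~\ref{prop-est-cap-int-upper} and Corollary~\ref{cor-interpolate-cap} -- concerns $\mu$ alone. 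Hence the divergent series produced in those proofs are unchanged, and $g_u\notin L^{t}(B_R,w)$ in each of the three cases.

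The only genuinely non-routine point is to make sure that each auxiliary ingredient -- the capacitary estimate~\eqref{eq-est-u-cap}, the strong minimum principle, the Caccioppoli inequality~\eqref{eq-Caccioppoli}, the removability of $x_0$ for supersolutions, the boundary regularity making $u$ bounded near $\bdy B_R$, and the admissibility of truncations of $u$ as weighted Sobolev test functions -- is actually available for~\eqref{eq-div-form-Du} in weighted $\R^n$. All of these are standard and recorded in Heinonen--Kilpel\"ainen--Martio~\cite{HeKiMa} (with the analogous statements on Riemannian manifolds with nonnegative Ricci curvature available in Holopainen~\cite{Ho}, \cite{Holo-pos}), so no new difficulty arises; the substance of the theorem is entirely carried by~\eqref{eq-est-u-cap} and the purely measure-theoretic capacity estimates of Section~\ref{sect-meta}.
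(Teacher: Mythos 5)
Your proposal is correct and follows essentially the same route as the paper's proof: reduce everything to the capacitary estimate \eqref{eq-est-u-cap} from \cite[Theorem~7.41]{HeKiMa} (which supplies \eqref{eq-uappr-0} and hence the conclusions of Section~\ref{sect-int-Green}), get the positive gradient integrability by rerunning the proof of Theorem~\ref{thm-int-Gu-superh-new}\,\ref{kmm-a} with the Caccioppoli inequality \eqref{eq-Caccioppoli} for the shifted truncations, and get the nonintegrability by observing that the competitors in the proof of Theorem~\ref{thm-nonintegrability-gu} remain admissible and that the argument only uses the minimum principle and capacity estimates for $\mu$. Your additional verifications (removability of the singularity, equivalence of the two boundary conditions) are exactly the points the paper handles in Remark~\ref{rem-mult-Green}.
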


\begin{remark} \label{rem-mult-Green}
In fact, the assumptions \ref{dd-Np0} and \eqref{eq-sing-soln} in
Theorem~\ref{thm-sum-A-superh} are equivalent.
Indeed, \cite[Theorem~6.31]{HeKiMa} 
(applied to $B_R \setm \itoverline{B}_{R/2}$) shows that \ref{dd-Np0}
implies  \eqref{eq-sing-soln}.
The converse implication follows from the uniqueness Theorem~3.12 
in Bj\"orn--Bj\"orn--Mwasa~\cite{BBMwasa}, together with 
\cite[Corollary~9.29]{HeKiMa} applied to the boundary data
$f:=u\eta\in H^{1,p}(B_R \setm \itoverline{B}_{R/2},w)$ with a
suitable cut-off function $\eta\in C^\infty_0(B_R)$.

Moreover, Harnack's inequality on spheres $S_r$ implies that for
nonnegative $u$ the
limit in \eqref{eq-lim=infty} can be replaced by $\limsup$.
The removability Theorem~7.35 in \cite{HeKiMa} shows that $u$ is
$A$-superharmonic in the ball $B_R$.
Hence, by Theorem~2.2 in Mikkonen~\cite{Mikkonen}
(or \cite[Theorem~21.2]{HeKiMa}), there exists a Riesz measure $\mu$
such that $\Div A(x,\grad u) = -\mu$.
As $u$ is $A$-harmonic in $B_R \setm \{x_0\}$ the Riesz measure
must be concentrated to $\{x_0\}$.
Therefore
a multiple of $u$ is a Green function for \eqref{eq-div-form-Du},
as defined in the beginning of this section.
\end{remark}

\begin{proof}[Proof of Theorem~\ref{thm-sum-A-superh}]
Proposition~A.17 in \cite{BBbook} implies that $X=(\R^n,\mu)$
satisfies the general assumptions from the beginning of
Section~\ref{sect-harm}.
Theorem~7.41 in \cite{HeKiMa} shows that \eqref{eq-est-u-cap},
and hence also \eqref{eq-uappr-0}, holds.
Thus $u$ has the same (non)integrability properties near $x_0$ as the
\p-harmonic Green functions studied in this paper for bounded domains
in $X$. 
Moreover, $u$ is locally bounded in
$B_R\setm\{x_0\}$. 
Together with \eqref{eq-sing-soln}, this
proves the results from Section~\ref{sect-int-Green} for~$u$.

To obtain the integrability of $\grad u$, note that
the proof of Theorem~\ref{thm-int-Gu-superh-new} shows that statement
\ref{kmm-a} therein holds for $u$, since its shifted
truncations  $u_k$ in~\eqref{eq-def-trunc}
are supersolutions and thus satisfy the Caccioppoli
inequality \eqref{eq-Caccioppoli} whenever $2B\Subset\Om$.
Combined with the integrability of $u$ itself, we conclude that 
$\grad u\in L^t\loc(\Om)$ when $0<t<t_p$, as in 
Theorem~\ref{thm-integrability-gu}.
As $u \in H^{1,p}(B_R \setm \itoverline{B}_{R/2},w)$ by \ref{dd-Np0},
we conclude  that $\grad u\in L^t(\Om)$ when $0<t<t_p$.

For the nonintegrability of $\grad u$, we follow the proof of
Theorem~\ref{thm-nonintegrability-gu}. 
The functions $\min\{1,u/m_r\}$ and $v_k$, considered therein, are
admissible for $\capp_{t,\mu}(B_r,\Om)$ and $\capp_{t_p,\mu}(B^k,B^{k-1})$ also
in this case. 
The minimum principle holds for $u$ by \cite[Theorem~7.12]{HeKiMa}.
Since the rest of the proof depends only on the capacity estimates
\eqref{eq-lower-for-m_r} and \eqref{eq-ak-cap-lower}, provided in this
case by~\eqref{eq-est-u-cap},
the conclusions of Theorem~\ref{thm-nonintegrability-gu} follow.
\end{proof}

\end{document}